\crefname{equation}{}{}
\DeclareSymbolFont{symbolsC}{U}{pxsyc}{m}{n}
\DeclareMathSymbol{\medcircle}{\mathbin}{symbolsC}{7}
\crefname{algocf}{Algorithm}{Algorithms}
\crefname{equation}{}{} 
\colorlet{refkey}{orange!20}
\colorlet{labelkey}{blue!30}
\crefname{algocf}{Algorithm}{Algorithms}
\numberwithin{equation}{section}
\newtheorem{theorem}{Theorem}[section]
\newtheorem{proposition}[theorem]{Proposition}
\newtheorem{lemma}[theorem]{Lemma}
\newtheorem{claim}[theorem]{Claim}
\newtheorem{observation}[theorem]{Observation}
\crefname{claim}{Claim}{Claims}
\newtheorem{corollary}[theorem]{Corollary}
\newtheorem*{question*}{Question}
\theoremstyle{definition}
\newtheorem{definition}[theorem]{Definition}
\newtheorem*{definition*}{Definition}
\theoremstyle{remark}
\newtheorem*{remark}{Remark}
\newtheorem{assumption}[theorem]{Assumption}
\newcommand{\mb}{\mathbb}
\newcommand{\mbf}{\mathbf}
\newcommand{\mc}{\mathcal}
\newcommand{\mf}{\mathfrak}
\newcommand{\mr}{\mathrm}
\newcommand{\on}{\operatorname}
\newcommand{\wt}{\widetilde}
\let\originalleft\left
\let\originalright\right
\renewcommand{\left}{\mathopen{}\mathclose\bgroup\originalleft}
\renewcommand{\right}{\aftergroup\egroup\originalright}
\newcommand{\ignore}[1]{}
\title{On the chromatic number of random triangle-free graphs}
\author{Clayton Mizgerd}
\address{University of Illinois Chicago, Department of Mathematics, Statistics, and Computer Science}
\email{cmizge2@uic.edu}
\author{Will Perkins}
\address{Georgia Institute of Technology, School of Computer Science}
\email{math@willperkins.org}
\author{Yuzhou Wang}
\address{Georgia Institute of Technology, School of Mathematics}
\email{ywang3694@gatech.edu}
\newcommand{\paren}[1]{\left(#1\right)}
\newcommand{\eps}{\varepsilon}
\renewcommand{\Pr}{\mb{P}}
\newcommand{\TRUE}{\text{TRUE}}
\newcommand{\FALSE}{\text{FALSE}}
\DeclareMathOperator{\SAT}{SAT}
\newcommand{\squig}[1]{\underset{#1}{\rightsquigarrow}}
\newcommand{\notsquig}[1]{\underset{#1}{\not\rightsquigarrow}}
\begin{document}
	
	\begin{abstract}
        We study the chromatic number of typical triangle-free graphs with $\Theta \left( n^{3/2} (\log n)^{1/2}  \right)$ edges and   establish the width of the scaling window for the transitions from $\chi = 3$ to $\chi = 4$ and from $\chi = 4$ to $\chi = 5$.

        The transition from $3$- to $4$-colorability has scaling window of width $\Theta(n^{4/3} (\log n)^{-1/3})$.  To prove this, we show a high probability  equivalence of the $3$-colorability of a random triangle-free graph at this density and the satisfiability of an instance of bipartite random $2$-SAT, for which we establish the width of the scaling window following the techniques of Bollob{\'a}s,  Borgs,  Chayes,  Kim, and Wilson.

        The transition from $4$- to $5$-colorability has scaling window of width $\Theta(n^{3/2} (\log n)^{-1/2})$.  To prove this, we show a high probability equivalence of the $4$-colorability of a random triangle-free graph at this density and the simultaneous $2$-colorability of two independent Erd\H{o}s--R\'enyi random graphs.  For this transition, we  also  establish the limiting probability of $4$-colorability inside the scaling window.
	\end{abstract}
	
	\maketitle

\section{Introduction}

One of the main topics in probabilistic combinatorics is the study of the evolution of random graphs.  This study began with work of Erd\H{o}s and R\'enyi~\cite{erdds1959random,erdos1960evolution} who proposed a systematic study of the emergence of different structural properties of random graphs as the edge density changes.  Structural properties of interest include subgraph containment, connectivity, chromatic number, existence of a perfect matching or other factors, and component structure (see~\cite{bollobas1998random,alon2016probabilistic,janson2011random} for overviews).    
To be more specific, we focus for now on the random graph $G(n,p)$; a graph on $n$ vertices in which every potential edge is included independently with probability $p$, though most of the discussion will also apply to $G(n,m)$, a uniformly random graph on $n$ vertices with $m$ edges. 

For a given structural property $\mathcal P$ (formally a set of graphs on $n$ vertices, closed under isomorphism, usually assumed to be increasing, that is, closed under adding additional edges), there are several questions one can ask.  The first is to determine a \textit{threshold function} for the property; that is, a function $p^* = p^*(n)$ so that when $p \gg p^*$, $\Pr( G(n,p) \in \mathcal P) \to 1$, and when $p \ll p^*$, $\Pr( G(n,p) \in \mathcal P) \to 0$.  For instance the function $p^* = c \log n/n$ is a threshold function for the property of connectivity for any constant $c>0$.

Once a threshold function has been found, a finer understanding can be given by determining if a property has a \textit{sharp threshold} and finding the first-order asymptotics of this threshold.  For instance $p = \log n/n$ is a sharp threshold for connectivity, since for any fixed $\eps>0$, if $p \ge (1+\eps) \log n/n$, then $G(n,p)$ is connected with high probability (henceforth abbreviated `whp'); that is, with probability $1-o(1)$ as $n \to \infty$; and if $p \le (1-\eps) \log n/n$, then whp $G(n,p)$ is not connected.  On the other hand, the property of $G(n,p)$ containing a triangle does not have a sharp threshold since the probability is bounded away from $0$ and from $1$ when $p=c/n$ for any $c>0$. See~\cite{friedgut1999sharp} for a characterization of which properties admit sharp thresholds. 

An even finer understanding of a structural property $\mathcal P$ that admits a sharp threshold comes from an understanding of the \textit{scaling window}: the interval of values of $p$ on which the probability of the property increases from some small constant $\eps$ to $1-\eps$. For connectivity, the width of the scaling window is $\Theta(1/n)$.  Note that a sharp threshold is equivalent to the width of the scaling window being of smaller order than the threshold function itself. 

Beyond simply describing more precisely how the probability of a property depends on $p$, establishing the width of the scaling window very often comes with an explanation of \textit{why} the property typically occurs.  For instance, connectivity typically occurs when the last isolated vertex disappears, and so the scaling window of the global property of connectivity is the same as the scaling window of the local property of having no isolated vertices.  

See~\cite{friedgut2005hunting,park2023threshold,perkins2025searching} for more background on thresholds, sharp thresholds, and scaling windows.

While these questions about thresholds in random graphs have been pursued in most depth for the random graph models $G(n,p)$ and $G(n,m)$, they are also interesting to ask of other models.  Examples include random geometric graphs~\cite{penrose2003random}, models with prescribed degree sequences~\cite{molloy1995critical}, and models of large social networks~\cite{durrett2010random,newman2018networks,van2024random}.  

In this paper we will focus on two models of random triangle-free graphs: a graph chosen uniformly from $\mathcal T(n,m)$, the set of triangle-free graphs on $n$ vertices with $m$ edges, and the closely related model of $G(n,p)$ conditioned on the event $\mathcal T$ of being triangle-free. 

The study of triangle-free graphs (and more generally the study of graphs with a forbidden subgraph) has a long history in extremal, enumerative, and probabilistic combinatorics as well as in the study of non-linear large deviations in probability theory~\cite{chatterjee2017large}.  The story begins with perhaps the first result in extremal combinatorics, Mantel's Theorem~\cite{mantel1907problem}, that states that a balanced, complete bipartite graph has the most edges of any triangle-free graph on $n$ vertices.  Erd\H{o}s, Kleitman, and Rothschild~\cite{MR0463020} showed that this extremal construction is evident in the structure of a typical triangle-free graph: almost every triangle-free graph is bipartite.   This phenomenon continues to hold for much sparser triangle-free graphs: following~\cite{promel1996asymptotic,luczak2000triangle}, Osthus,  Pr{\"o}mel, and Taraz~\cite{osthus2003densities}
proved a sharp threshold for bipartiteness in a triangle-free graph:
\begin{theorem}[\cite{osthus2003densities}]
\label{thmOPT}
    Fix $\eps \in (0,1)$ and let $\mathbf G$ be drawn uniformly from $\mathcal T(n,m)$.  
    \begin{itemize}
        \item If $m \ge \frac{\sqrt{3} +\eps}{4}  n^{3/2} (\log n )^{1/2}$, then $\mathbf G$ is bipartite whp.
        \item  If $n/2 \le  m \le \frac{\sqrt{3} -\eps}{4}  n^{3/2} (\log n )^{1/2}$, then $\chi(\mathbf G) \ge 3$  whp.
    \end{itemize}
\end{theorem}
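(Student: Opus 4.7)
The plan is to estimate $|\mc T(n,m)|$ relative to the number of bipartite graphs on $n$ vertices with $m$ edges, since a uniform $\mbf G\in\mc T(n,m)$ is bipartite whp iff this ratio tends to $1$. Write $\mc B(n,m)\subseteq\mc T(n,m)$ for the bipartite graphs; a standard overcount over bipartitions gives
\[
|\mc B(n,m)| = (1+o(1))\,\binom{n}{\lfloor n/2\rfloor}\binom{\lfloor n^2/4\rfloor}{m},
\]
the $(1+o(1))$ correction accounting for the negligible population of bipartite graphs admitting more than one balanced bipartition. The threshold constant $\tfrac{\sqrt 3}{4}$ will drop out of the precise accounting below.

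For the upper direction ($m\geq \tfrac{\sqrt 3+\varepsilon}{4}n^{3/2}(\log n)^{1/2}$), for each non-bipartite $G\in\mc T(n,m)$ I would fix a max-cut bipartition $(A,B)$ and let $k\geq 1$ denote the number of non-cut edges. Each non-cut edge $\{u,v\}\subseteq A$ forces the cross-neighborhoods $N_B(u)$ and $N_B(v)$ to be disjoint subsets of $B$ by triangle-freeness. With cross-density $q=4m/n^2$, a uniformly random bipartite graph on $(A,B)$ with $m-k$ cross-edges satisfies a single disjointness constraint with probability $(1+o(1))\exp(-nq^2/2)=(1+o(1))\exp(-8m^2/n^3)$, and for distinct non-cut edges these events are asymptotically independent by Janson's inequality. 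Summing over bipartition, non-cut edges, and the constrained cross-configuration gives schematically
\[
\frac{|\mc T(n,m)\setminus\mc B(n,m)|}{|\mc B(n,m)|}\;\leq\;\sum_{k\geq 1}\frac{m^k}{k!}\exp\!\big(-8km^2/n^3\cdot(1-o(1))\big),
\]
which is $o(1)$ precisely when $\log m < 8m^2/n^3\cdot(1+o(1))$. Since $\log m=(1+o(1))\tfrac 32\log n$, this is equivalent to $m^2/n^3>(3/16+o(1))\log n$, giving exactly $m\geq \tfrac{\sqrt 3+\varepsilon}{4}n^{3/2}(\log n)^{1/2}$.

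For the lower direction ($m\leq\tfrac{\sqrt 3-\varepsilon}{4}n^{3/2}(\log n)^{1/2}$), I would construct many non-bipartite triangle-free graphs by the reverse operation: pick $G_0\in\mc B(n,m-1)$ with bipartition $(A,B)$ and adjoin a single non-cut edge $\{u,v\}$ (in $A$ or $B$) whose cross-neighborhoods are disjoint. By concentration the expected number of admissible pairs in $G_0$ is $(1+o(1))(n^2/4)\exp(-8m^2/n^3)$, and a positive fraction of these adjunctions genuinely destroy bipartiteness because $u$ and $v$ lie in the giant bipartite-component of $G_0$ whp, so $\{u,v\}$ together with an even-length $u$-$v$ path through $B$ forms an odd cycle. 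Graphs with exactly one non-cut edge in their max-cut decompose uniquely as $(G_0,\{u,v\})$, so this construction produces at least $(1+o(1))|\mc B(n,m-1)|\cdot(n^2/4)\exp(-8m^2/n^3)$ distinct non-bipartite triangle-free graphs; dividing by $|\mc B(n,m)|\sim|\mc B(n,m-1)|\cdot n^2/(4m)$ yields a ratio $\gtrsim m\exp(-8m^2/n^3)$, which diverges exactly when $m^2/n^3<(3/16-o(1))\log n$.

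The principal obstacle is upgrading these schematic estimates to pin down the sharp constant $\tfrac{\sqrt 3}{4}$ in both directions. Three points need care: (i) justifying asymptotic independence of the disjointness events for distinct non-cut edges, for which a Janson-type lower bound on the probability of the "no forbidden configuration" event is the natural tool; (ii) controlling the max-cut so that it is essentially balanced ($|A|=n/2+O(\sqrt n)$) and essentially unique with high probability, since non-uniqueness would corrupt the injective decomposition $G\mapsto(A,B,\text{non-cut edges})$; and (iii) verifying that the bipartite remainder $G-\{\text{non-cut edges}\}$ behaves as a nearly-uniform random bipartite graph despite the conditioning imposed by triangle-freeness. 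Executed carefully, these three points reduce the argument to the transparent competition between the entropy gained by introducing $k$ non-cut edges ($\approx k\log m$) and the probabilistic cost of their disjointness constraints ($\approx 8km^2/n^3$), whose crossing pins down the constant $\tfrac{\sqrt 3}{4}$.
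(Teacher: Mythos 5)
First, note that the paper does not prove this statement at all: it is quoted as background from Osthus--Pr\"omel--Taraz~\cite{osthus2003densities}, so there is no internal proof to compare against; the closest internal substitutes are the structural inputs \cref{thmLuczak{}} and \cref{thmJKPlight}. Your heuristic does capture the correct mechanism and constant: the competition between the entropy $\approx k\log m$ of placing $k$ defect (non-cut) edges and the cost $\approx 8km^2/n^3$ of the forced disjoint cross-neighborhoods is exactly what underlies the defect-edge density $q/(1-q)=\lambda e^{-\lambda^2 n/2}$ with $\lambda\approx 4m/n^2$ in \cref{thmJKPlight}, and it reproduces $\tfrac{\sqrt3}{4}$.

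However, as a proof the upper-bound direction has a genuine gap. Your bound $\sum_{k\ge1}\frac{m^k}{k!}e^{-8km^2/n^3(1-o(1))}$ presumes that \emph{every} non-bipartite $G\in\mc T(n,m)$ can be charged, via its max cut, to a nearly uniform bipartite graph with $k$ independent disjointness constraints at cost $e^{-8m^2/n^3}$ each, uniformly in $k$. This is only plausible when $k$ is small and the graph is already known to be nearly bipartite; for graphs whose max cut misses a constant fraction of the edges (e.g.\ triangle-free graphs resembling blow-ups of $C_5$), the cross-density is not $\approx 4m/n^2$, the constraints are strongly dependent, and the per-edge cost estimate collapses. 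Bounding the number of such far-from-bipartite triangle-free graphs is precisely the hard content of {\L}uczak's theorem (\cref{thmLuczak{}}) and of the OPT argument, and requires regularity/Kleitman--Winston or container-type counting that your sketch neither supplies nor acknowledges; your items (i)--(iii) address only the nearly-bipartite regime. Two smaller but real issues: in the lower direction the map $(G_0,\{u,v\})\mapsto G$ can be many-to-one (any edge lying on all odd cycles of $G$ could play the role of $uv$), and since your gain is only the polynomial factor $m e^{-8m^2/n^3}=n^{\Theta(\eps)}$, you must show the typical multiplicity is $n^{o(\eps)}$; similarly, the imbalance sum over bipartitions contributes polynomial factors (of order $n^{1/4}$) that do not obviously cancel and can swamp $n^{\Theta(\eps)}$ for small fixed $\eps$ unless tracked consistently on both sides of the ratio.
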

Note that a lower bound on $m$ in the second statement is necessary: when $m=o(n)$ then $\mathbf G$ is again bipartite whp (in fact it is a forest), and when $m = c n$ for constant $c \in (0, 1/2)$ then the probability of bipartiteness is bounded away from $0$ and $1$ (see, e.g.~\cite{bollobas1984evolutionrandom}).  An analogous statement for $G(n,p)$ conditioned on triangle-freeness holds around the threshold $p = \sqrt{3 \frac{\log n}{n}   } $.

Pr{\"o}mel, Steger, and Taraz~\cite{promel2001asymptotic} laid out a general program of understanding the `constrained evolution' of a random graph under a condition like triangle-freeness.
 Pr{\"o}mel and Taraz then asked the specific question~\cite{promel2001random}: for $n/2 \le m \le \frac{\sqrt{3}}{4} n^{3/2} \sqrt{\log n} $, what is the typical chromatic number of $\mathbf G$ drawn from $\mathcal T(n,m)$?

Jenssen, Perkins, and Potukuchi~\cite{jenssen2023evolution} established two more sharp thresholds for the chromatic number of random triangle-free graphs.
\begin{theorem}[\cite{jenssen2023evolution}]\label{thm:jpp-fixed-m}
Fix $0 < \eps < 1/14$.  Let $\mbf{G} \sim \mc{T}(n,m)$ be a uniform random sample.
\begin{itemize}
    \item If $(\sqrt2 + \eps) \frac14 n^{3/2} (\log n)^{1/2} \leq m \leq (\sqrt3 - \eps) \frac14 n^{3/2} (\log n)^{1/2}$ then $\chi(\mbf G) = 3$ whp.
    \item If $(1 + \eps) \frac14 n^{3/2} (\log n)^{1/2} \leq m \leq (\sqrt2 - \eps) \frac14 n^{3/2} (\log n)^{1/2}$ then $\chi(\mbf G) = 4$ whp.
    \item If $(\frac{13}{14} + \eps) \frac14 n^{3/2} (\log n)^{1/2} \leq m \leq (1-\eps) \frac14 n^{3/2} (\log n)^{1/2}$ then $\chi(\mbf G) \to \infty$ with $n$ whp.
\end{itemize}\end{theorem}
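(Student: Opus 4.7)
My approach would begin with a structural dichotomy analogous to the one underlying \cref{thmOPT}: at density $m=\Theta(n^{3/2}(\log n)^{1/2})$, a uniform triangle-free graph should be close to bipartite in the sense that there exists a balanced bipartition $(A,B)$ of the vertex set such that only a small (controlled) set of \emph{defect edges} lie inside $A$ or inside $B$, and conversely such a bipartition is essentially unique. The first task is therefore to quantify the joint distribution of the triple (bipartition, defect edges, bipartite crossing edges) in a typical sample from $\mathcal T(n,m)$, presumably via a cluster-expansion or container-style argument built around counting triangle-free graphs with prescribed bipartition. The triangle-free constraint couples the two layers: a defect edge $uv\subseteq A$ forbids any common neighbor in $B$. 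This reduces the chromatic number question to a $3$- or $4$-coloring question about a bipartite graph decorated with a relatively sparse defect graph obeying the triangle-free coupling.

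For the upper bounds in each regime (that $\chi \le k$ whp for the stated range), I would produce a canonical $k$-coloring. In the $\chi=3$ regime, color $B$ with a single color and $2$-color $A$ using its induced defect graph; this works when the defect graph on each side is with high probability bipartite, which should hold for $m \ge (\sqrt 2+\eps)\tfrac14 n^{3/2}(\log n)^{1/2}$ by a first-moment estimate on odd cycles whose edge density depends on the constant $c$. The $\chi=4$ regime is analogous: split both $A$ and $B$ into two independent sets via $2$-colorings of their defect graphs, which succeeds as long as both defect graphs are bipartite.

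For the lower bounds $\chi \ge k+1$, I would find an obstruction whp. For $\chi \ge 4$ when $m \le (\sqrt 2-\eps)\tfrac14 n^{3/2}(\log n)^{1/2}$, one shows that the defect graph on some side contains odd cycles that cannot be simultaneously eliminated by any near-optimal bipartition, via a second-moment or switching argument over bipartitions. For the regime $\chi\to\infty$ once $m$ drops below $\tfrac14 n^{3/2}(\log n)^{1/2}$, the defect density is large enough that the defect graph behaves like an Erd\H{o}s--R\'enyi graph whose chromatic number itself diverges, and the trivial bound $\chi \ge n/\alpha(G)$ combined with the known $\alpha(G) = O(\sqrt{n\log n /d})$ for triangle-free graphs of average degree $d$ reinforces this; the constant $\tfrac{13}{14}$ presumably emerges as the precise point at which the defect graph's chromatic number (via this counting bound) begins to exceed any absolute constant.

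The main obstacle is the first step: establishing the fine structural characterization that a typical triangle-free graph has a canonical near-bipartition whose defect edges behave like an essentially independent random sample under the induced constraint. Without this, one cannot justify either side of the argument for the precise thresholds $\sqrt 3,\sqrt 2,1,\tfrac{13}{14}$; the upper-bound coloring algorithm is not supported and the lower-bound obstructions are not localizable. Carrying this out requires sharp asymptotic control of $|\mathcal T(n,m)|$ and of weighted sums counting triangle-free graphs with prescribed local structure, a substantial entropy/cluster-expansion computation which is the heart of the work.
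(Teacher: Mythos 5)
This statement is imported by citation from~\cite{jenssen2023evolution}; the present paper never proves it, so the only fair comparison is with the actual mechanism of that proof, which the paper summarizes and builds on. Your step (1) -- the fine structural description of a typical member of $\mc{T}(n,m)$ as a unique near-bipartition with essentially independent defect edges coupled to the crossing edges by triangle-freeness -- is indeed the heart of~\cite{jenssen2023evolution}, and you correctly flag it as the main difficulty. But your coloring mechanism for the $\chi=3$ regime has a genuine gap. Throughout the whole range $(\sqrt2+\eps)\tfrac14 n^{3/2}(\log n)^{1/2} \le m \le (\sqrt3-\eps)\tfrac14 n^{3/2}(\log n)^{1/2}$, \emph{both} sides of the cut carry polynomially many defect edges (this is exactly why bipartiteness fails below $\sqrt3$, cf.\ \cref{thmOPT}), so ``color $B$ with a single color and $2$-color $A$'' is impossible whp; and mere bipartiteness of both defect graphs is not sufficient for $3$-colorability either, because the two $2$-colorings must share a common third color whose class has to remain independent across the cut. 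The correct mechanism is the green-edge coloring: which endpoint of each defect edge receives the shared color is a boolean variable, each crossing edge between defect endpoints imposes a $2$-clause, and the $\sqrt2$ threshold is the SAT/UNSAT transition of the resulting bipartite $2$-SAT instance (this is precisely what the present paper formalizes in \cref{cor:3-coloring-is-GEC}, \cref{prop:GEC-equals-SAT}, and \cref{thm:coloring-equiv-to-SAT} at the finer scaling-window level). Symmetrically, your proposed obstruction for $\chi\ge 4$ just below $\sqrt2$ -- odd cycles in the defect graphs that no bipartition can remove -- does not exist there: in that regime the defect graphs are whp forests (indeed disjoint edges near the critical density, cf.\ \cref{claim:disjoint-edges}); odd defect cycles only appear near the constant $1$, which governs the $4$-to-$5$ transition, not the $3$-to-$4$ one. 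So both directions of your argument at the $\sqrt2$ threshold would fail as stated.

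Two smaller points. First, your reading of the constant $\tfrac{13}{14}$ is off: as the paper notes immediately after the theorem, it reflects the range of validity of the structural approximation in~\cite{jenssen2023evolution} (the analogue of \cref{thm:JPP-full-strength-small-p}), not a point where the defect graph's chromatic number crosses a constant; an $n/\alpha(\mbf G)$ or $\alpha=O(\sqrt{n\log n/d})$ heuristic cannot produce that value. Second, the divergence $\chi\to\infty$ in the third bullet does follow in spirit from the defect graph acquiring polynomially growing average degree, so that part of your sketch is directionally right, but it again rests entirely on the structural theorem you defer, and on transferring high-probability statements through the triangle-free conditioning (as in \cref{obs:graph-property}), which your outline does not address.
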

Here the constant $13/14$ in the last statement reflects limits of the methods of~\cite{jenssen2023evolution} rather than a phenomenon in the model itself.

Moreover, they showed that the scaling window for the sharp threshold for bipartiteness of \cref{thmOPT} is of width $\Theta(n^{3/2} (\log n)^{-1/2} )$.

Our main result is to establish the width of the next two scaling windows, from $\chi(\mbf{G}) = 3$ to $\chi(\mbf{G}) = 4$ and from $\chi(\mbf{G}) = 4$ to $\chi(\mbf{G}) = 5$.

To state the result for $3$-colorability, we first need to locate the center of the scaling window more accurately.  Recall the threshold in question occurs at $n^{3/2} \sqrt{\log n / 8}$ by \cref{thm:jpp-fixed-m}.  Denote by $\psi(n)$ the unique real number greater than 1 such that $\psi e^{-\psi} = 2/n$.  We may write this $\psi(n) = -W_{-1}(-2/n)$ for $W_{-1}$ the negative branch of the Lambert $W$ function (discussed in \cref{subsec:W-function}).  Note that $\psi(n) = (1+o(1)) \log n$.  Let
\[ m_c :=  n^{3/2} \sqrt{\frac{\psi(n)}{8}}. \]

This is the true center of the scaling window.  This is shifted from the more ``natural'' threshold $n^{3/2} \sqrt{\log n / 8}$ by terms of order $n^{3/2} \log \log n/\sqrt{\log n}$, which are significant with respect to the scaling window and must be included.

\begin{theorem}\label{theorem:main-fixed-m}
    Let $\mathbf G$ be drawn uniformly from $\mc{T}(n,m)$.  Then
    \begin{itemize}
	\item If $m = m_c + n^{4/3} (\log n)^{-1/3} \omega$ for $1 \ll \omega \ll n^{1/6} (\log n)^{5/6}$, then $\chi(\mbf G) = 3$ whp.
	\item If $m = m_c + n^{4/3} (\log n)^{-1/3} \omega$ for $\omega$ constant, then $\{ \chi(\mbf G) = 3 \}$ and $\{ \chi(\mbf G) = 4 \}$, each have probability bounded away from $0$ and $1$.
	\item If $m = m_c - n^{4/3} (\log n)^{-1/3} \omega$ for $1 \ll \omega \ll n^{1/6} (\log n)^{5/6}$, then $\chi(\mbf G) = 4$ whp.
    \end{itemize}
    In other words, the width of the scaling window is $\Theta \left( n^{4/3} (\log n)^{-1/3}   \right )$.
\end{theorem}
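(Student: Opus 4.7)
The plan, following the strategy announced in the abstract, is to reduce the $3$-colorability of $\mathbf G$ at $m \sim m_c$ to the satisfiability of an instance of ``bipartite'' random $2$-SAT, and to apply the scaling-window technique of Bollob\'as, Borgs, Chayes, Kim and Wilson to that $2$-SAT.

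The starting point is a structural strengthening of the results of \cite{jenssen2023evolution}: whp $\mathbf G$ admits an essentially unique balanced bipartition $(A,B)$ for which the induced subgraphs $\mathbf G[A]$ and $\mathbf G[B]$ are very sparse (the ``defect edges''), while $\mathbf G[A,B]$ is a dense bipartite triangle-free graph. This should follow from a refinement of the cluster-expansion / entropy machinery used in \cite{jenssen2023evolution}, with sharp control on the typical number of defect edges and their spatial distribution as a function of $m - m_c$. The appearance of $\psi(n) = -W_{-1}(-2/n)$ in the location of $m_c$ comes from matching the expected number of defect edges at criticality to the local structure of the triangle-free constraint.

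Given this structure, I would encode $3$-colorability as follows. In any proper $3$-coloring of a near-bipartite $\mathbf G$, one may whp take most of $A$ to carry color $1$, most of $B$ to carry color $2$, and the set $S$ of vertices of color $3$ to be an independent set meeting every defect edge. Writing $x_v := \mathbf 1[v \in S]$, this reformulates $3$-colorability as satisfiability of a $2$-SAT containing clauses $(x_u \lor x_v)$ for every defect edge $uv$ and $(\neg x_u \lor \neg x_v)$ for every cross edge $uv \in \mathbf G[A,B]$ with both endpoints defect-incident. After unit-propagation (setting $x_v = 0$ for vertices incident to no defect edge), the surviving instance is a bipartite random $2$-SAT on a number of variables controlled by $m-m_c$, and the critical density matches exactly the $\chi = 3 \leftrightarrow \chi = 4$ threshold.

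To locate the scaling window of this bipartite $2$-SAT I would count ``minimal unsatisfiable subinstances'' (bicycles/snakes) in the style of BBCKW, performing first- and second-moment estimates at offset $t = (m - m_c)/(n^{4/3}(\log n)^{-1/3})$ to show that the satisfiability probability transitions from $1-o(1)$ to $o(1)$ as $|t|\to\infty$. The unusual exponent $n^{4/3}(\log n)^{-1/3}$ should emerge by combining the scaling of the number of defect vertices (a power of $n/\log n$) with the Jacobian transforming a one-edge change in $m$ to the density of defect edges (of order $\sqrt{(\log n)/n}$ because a uniform edge of $\mathbf G$ lies inside a part with probability on this scale). I expect the main obstacle to be the structural step: one must show that \emph{every} $3$-coloring — not just the typical one — is aligned with the canonical bipartition, so that the $2$-SAT genuinely captures all witnesses to $3$-colorability, and this alignment must be proved uniformly while respecting the conditioning on triangle-freeness. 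A secondary difficulty is that the reduced bipartite $2$-SAT is inhomogeneous (positive and negative clauses appear at very different densities because $\mathbf G$ is near-bipartite), so the BBCKW bicycle enumeration has to be carried out in a two-type generalization rather than quoted directly.
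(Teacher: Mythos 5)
Your proposal follows essentially the same route as the paper: reduce $3$-colorability at $m\sim m_c$ to a structured coloring adapted to the max cut, encode the residual constraints as a bipartite random $2$-SAT on the defect edges, and transport the BBCKW scaling-window analysis. Three points worth flagging, though.

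First, the defect-edge constraint is not $(x_u\lor x_v)$ but $(x_u\oplus x_v)$: since the third color class must be an independent set and $uv$ is an edge, at most one endpoint may lie in it, and at least one must. So each defect edge contributes both $(x_u\lor x_v)$ and $(\lnot x_u\lor\lnot x_v)$; equivalently one collapses the pair $(x_u,x_v)$ to a single boolean variable per defect edge with an arbitrary orientation, which is exactly what the paper does (the ``green-edge coloring'' variables indexed by $S$ and $T$). Once you do this, all four sign patterns of the crossing-edge clauses appear symmetrically, so the resulting formula is a \emph{homogeneous} bipartite $2$-SAT on $|S|\times|T|$ variables; your closing concern about inhomogeneity of positive versus negative clauses is therefore unfounded — it disappears under the correct encoding.

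Second, and more substantively, the coupling to i.i.d.\ $2$-SAT has a genuine obstruction that your sketch skips: the instance extracted from a triangle-free $\mathbf G$ \emph{cannot} contain a pair of clauses such as $(e\lor f)\land(\bar e\lor f)$, because that would correspond to both endpoints of the defect edge $e$ having a common crossing neighbour, i.e.\ a triangle. An independent random bipartite $2$-SAT at the critical density, however, has $\Theta_{\mathrm p}(1)$ such clause-pairs. One must show that these surplus clauses in the i.i.d.\ model do not change the satisfiability probability to first order — this requires arguing they fall outside the ``giant'' part of the implication digraph that actually governs satisfiability — and this is where the hard-core independent-set distribution on $E_{\mathrm{cr}}$ has to be used nontrivially, rather than a naive dominance argument. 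It is a hole in the proposal as written.

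Third, you correctly identify the alignment step (every, not just the typical, $3$-coloring respects the max cut up to the green-edge form) as ``the main obstacle''; the paper establishes this via quasirandomness of the hard-core cross edges and the expansion across the cut (Lemma~\ref{lem:no-bad-3-coloring}). You should also be aware that the paper proves everything first for $G(n,p)\mid\mathcal T$ and only then transfers to $\mathcal T(n,m)$ by a hard-core fugacity coupling (Proposition~\ref{prop:lambda-to-fixed-m}); working directly in the fixed-$m$ model, where the crossing edges are a conditioned uniform independent set rather than a hard-core sample, is possible but less convenient.
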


The center of the $4$- to $5$-colorability scaling window is also shifted from its ``natural'' center by terms of order $n^{3/2} \log\log n/\sqrt{\log n}$.  However, the scaling window is sufficiently coarse that only one additional term is relevant.  For this result, we are able to not only establish the width but compute the limiting probability inside the scaling window.

\begin{theorem}\label{thm:4-coloring-scaling-window-fixed-m}
    Fix $c \in (0,1)$ and let
    \[ m = \frac14 n^{3/2} \sqrt{\log n + \log\log n - 2\log(2c)}. \]

    Let $\mbf{G}$ be a uniform sample from $\mc{T}(n,m)$.  Then whp $4 \leq \chi(\mbf G) \leq 5$ and more precisely
    \[ \mb{P}[\chi(\mbf G) = 4] = \paren{\frac{1-c}{1+c}}^{1/2} \exp\{ -c - c^3/3 \} + o(1). \]

    Thus the width of the scaling window is $\Theta\left( \frac{n^{3/2}}{\sqrt{\log n}} \right)$.
\end{theorem}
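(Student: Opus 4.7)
The plan is to reduce the $4$-colorability of $\mathbf{G}$ to the simultaneous bipartiteness of two nearly-independent Erd\H{o}s--R\'enyi graphs, as outlined in the abstract. The lower bound $\chi(\mathbf{G}) \geq 4$ whp is immediate from \cref{thm:jpp-fixed-m}: the chosen $m$ satisfies $m/m_c \to 1/\sqrt{2}$, placing us well inside the $\chi = 4$ regime. Building on the container and entropy methods of~\cite{jenssen2023evolution, osthus2003densities} refined to this scaling window, one shows that $\mathbf{G}$ whp admits a unique ``main'' bipartition $V_1 \sqcup V_2$ with parts of size $(1+o(1))n/2$ such that the bipartite graph $\mathbf{G}[V_1, V_2]$ carries all but $\Theta(n)$ edges, and the internal subgraphs $H_i := \mathbf{G}[V_i]$ each have $\Theta(n)$ edges. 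Uniqueness of this bipartition forces any $4$-coloring of $\mathbf{G}$ to split as two disjoint $2$-colorings, one per side, so $\chi(\mathbf{G}) \leq 4$ if and only if $H_1$ and $H_2$ are both bipartite.

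The central technical task is to describe the joint law of $(H_1, H_2)$ conditional on the main bipartition. I would establish, via a switching argument on $\mathcal{T}(n,m)$ that exchanges individual edges between bipartite and internal positions while tracking the triangle-free constraint, that $(H_1, H_2)$ is asymptotically two independent subcritical random graphs whose effective parameter is determined by $m = \tfrac{1}{4} n^{3/2}\sqrt{\log n + \log\log n - 2\log(2c)}$. A saddle-point enumeration in the spirit of~\cite{osthus2003densities, jenssen2023evolution} should pin down this parameter and give $o(1)$-closeness in total variation to the product measure. Given this, the limiting bipartiteness probability follows from the standard Poisson approximation for short cycle counts in subcritical random graphs: bipartiteness is the event that all odd cycle counts vanish, these counts are asymptotically independent Poissons with explicit means, and the product over the two sides and over all odd cycle lengths evaluates to $\left(\tfrac{1-c}{1+c}\right)^{1/2} \exp\{-c - c^3/3\}$.

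The upper bound $\chi(\mathbf{G}) \leq 5$ whp follows from a coloring argument. Each $H_i$ is subcritical and whp admits a $3$-coloring in which only $O(\log n)$ vertices use the ``third'' color (one per short odd cycle component). Using color sets $\{1, 2, 3\}$ on $V_1$ and $\{3, 4, 5\}$ on $V_2$ and reusing color $3$ only on the small defect sets, a union bound over the $O((\log n)^2)$ potentially conflicting bipartite pairs shows no monochromatic bipartite edge arises whp, since the bipartite density is $O(n^{-1/2}\sqrt{\log n})$. The main obstacle of the proof is the distributional step above: quantifying the triangle-free interaction between internal and bipartite edges sharply enough to identify the effective parameter $c$ and to produce precisely the correction factor $\exp\{-c - c^3/3\}$ in the limit, which requires fine control over the entropy of triangle-free bipartite-plus-defect graphs at this density.
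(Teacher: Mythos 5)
The main gap is at the step you pass over in one sentence: ``Uniqueness of this bipartition forces any $4$-coloring of $\mathbf{G}$ to split as two disjoint $2$-colorings, one per side.'' Uniqueness of the max cut does not force this; a priori a $4$-coloring could assign some vertices of $A$ colors that are predominantly used on $B$ (``miscolored'' vertices), and ruling this out is where the paper does its real work (\cref{lem:odd-cycle-no-4-color} and \cref{claim:miscolorings-propagate}). The naive argument --- that a miscolored $w\in A$ sees edges inside $N_B(w)$ which then force further miscolorings --- fails precisely because $\mathbf{G}$ is triangle-free, so $N_B(w)$ is an independent set. The paper instead finds $\Theta(n)$ isolated paths of length $3$ in each defect graph, uses the hard-core crossing edges to produce copies of $C_5$ through $w$, and runs an exploration argument with a union bound over all ``choice functions'' $\varphi$ to show that a single miscoloring propagates to $\Omega(n)$ miscolored vertices, contradicting the expansion of the cut (\cref{lem:expander-small-p}). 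Without some version of this propagation argument your equivalence ``$\chi(\mathbf{G})\le 4$ iff $H_1,H_2$ bipartite'' is unproven, and it is the heart of the theorem. Relatedly, your lower bound $\chi(\mathbf{G})\ge 4$ is not immediate from \cref{thm:jpp-fixed-m}: the present $m$ satisfies $m=(1+o(1))\tfrac14 n^{3/2}(\log n)^{1/2}$, i.e.\ it sits exactly at the boundary of the ranges in that theorem (it is the center of the $4$-to-$5$ window, not ``well inside the $\chi=4$ regime''), and chromatic-number monotonicity in $m$ for uniform $\mathcal{T}(n,m)$ is not a free coupling; one needs a short separate argument (e.g.\ linear-size matchings in both defect graphs plus the cut expansion force any $3$-coloring to have a color class of size $\Omega(n)$ on both sides).

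On the distributional step, your route differs from the paper's: you propose to work directly in $\mathcal{T}(n,m)$ via a switching/saddle-point enumeration to show $(H_1,H_2)$ is close to a product of subcritical binomial random graphs, whereas the paper imports the structural description from Jenssen--Perkins--Potukuchi (the measure $\mu_{\lambda,2}$, with defect graphs sandwiched between Erd\H{o}s--R\'enyi graphs conditioned on triangle-freeness, \cref{prop:sandwiching-ERG-between-ERs}), proves the result for $G(n,p)$ conditioned on triangle-freeness, and transfers to fixed $m$ by a hard-core concentration coupling (\cref{prop:lambda-to-fixed-m}). Your version of this step is only sketched and would be heavy to execute from scratch, but it is at least a plausible program; note also that at this density the defect graphs are themselves conditioned to be triangle-free, so the Poisson cycle computation must (as the paper does) use the asymptotic independence of short cycle counts to see that forbidding $C_3$ does not perturb the odd-cycle generating function, which is where the exponent $-c-c^3/3$ and the factor $((1-c)/(1+c))^{1/2}$ come from. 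Your $\chi\le 5$ argument matches the paper's. But as it stands the proposal mislocates the main difficulty: the decisive missing ingredient is the proof that no $4$-coloring can miscolor even a single vertex across the (unique) max cut.
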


 We also prove the analogs of both results for the Erd\H{o}s--R\'enyi binomial random graph conditioned on triangle-freeness.  Let
\[ p_c :=  \sqrt{\frac{2\psi(n)}{n}}. \]

\begin{theorem}\label{theorem:main-erdos-renyi}
    Let $\mbf G$ be distributed according to $G(n,p)$ conditioned on triangle-freeness.  Then
    \begin{itemize}
	\item If $p = p_c + n^{-2/3} (\log n)^{-1/3} \omega$ for $1 \ll \omega \ll n^{1/6} (\log n)^{5/6}$, then $\chi(\mbf{G}) = 3$ whp.
	\item If $p = p_c + n^{-2/3} (\log n)^{-1/3} \omega$ for $\omega$ constant, then $\chi(\mbf{G}) = 3$ and $\chi(\mbf{G}) = 4$, each with  probability bounded away from $0$ and $1$.
	\item If $p = p_c - n^{-2/3} (\log n)^{-1/3} \omega$ for $1 \ll \omega \ll n^{1/6} (\log n)^{5/6}$, then $\chi(\mbf{G}) = 4$ whp.
    \end{itemize}
    In other words, the width of the scaling window is $\Theta \left( n^{-2/3} (\log n)^{-1/3}   \right )$.
\end{theorem}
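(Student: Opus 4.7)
The plan is to deduce \cref{theorem:main-erdos-renyi} from \cref{theorem:main-fixed-m} by conditioning on the number of edges. The crucial observation is that $G(n,p)$ conditioned on triangle-freeness, further conditioned on having exactly $m$ edges, is distributed uniformly on $\mc{T}(n,m)$. Thus if $\mbf{G}\sim G(n,p)\mid \mc{T}$ and $N = |E(\mbf{G})|$, then
\[
\Pr[\chi(\mbf{G}) = k] = \sum_m \Pr[N=m]\,\Pr[\chi(\mbf{G}_m) = k]
\]
where $\mbf{G}_m \sim \mc{T}(n,m)$, and the inner probabilities are controlled by \cref{theorem:main-fixed-m}.

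The main work is therefore to (i) show that $N$ concentrates around a center $m^*(p)$ within an interval of width $o(n^{4/3}(\log n)^{-1/3})$ whp, and (ii) identify $m^*(p)$ as a smooth function of $p$ with derivative of order $n^2$, so that the Erd\H{o}s--R\'enyi window of width $n^{-2/3}(\log n)^{-1/3}$ maps onto the fixed-$m$ window of width $n^{4/3}(\log n)^{-1/3}$. Both tasks reduce to controlling the ratio $|\mc{T}(n,m+1)|/|\mc{T}(n,m)|$ near $m_c$, since $\Pr[N=m]\propto |\mc{T}(n,m)|\,(p/(1-p))^m$. Heuristically, a typical triangle-free graph at this density is near-bipartite with roughly $n^2/4$ effective potential edges, so one expects $|\mc{T}(n,m)|$ to behave like $\binom{n^2/4}{m}$ up to slowly varying corrections. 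This predicts $m^*(p)\approx p\cdot n^2/4$ and Gaussian fluctuations of $N$ of size $O((pn^2)^{1/2}) = O(n^{3/4}(\log n)^{1/4})$, comfortably smaller than $n^{4/3}(\log n)^{-1/3}$. One verifies that $p_c\cdot n^2/4 = n^{3/2}\sqrt{\psi(n)/8} = m_c$, so the centers line up.

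Given (i) and (ii), the three regimes of \cref{theorem:main-erdos-renyi} follow directly from the corresponding regimes of \cref{theorem:main-fixed-m}: for $p$ above the window, $N$ whp exceeds $m_c + n^{4/3}(\log n)^{-1/3}\omega'$ for a diverging $\omega'$, so $\chi(\mbf{G}_N) = 3$ whp and therefore $\chi(\mbf{G}) = 3$ whp; the other two regimes are handled symmetrically, with the middle (bounded $\omega$) regime using that concentration keeps $N$ inside a compact interval of middle $m$-regime values on which $\Pr[\chi(\mbf{G}_m)=3]$ is bounded away from $0$ and $1$. The main obstacle is step (i)--(ii): obtaining sharp enough asymptotics for $|\mc{T}(n,m)|$ in the near-bipartite regime, accurate to within $o(1)$ additive error in the log-ratio and uniform in a neighborhood of $m_c$ of width $n^{4/3}(\log n)^{-1/3}$. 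This will likely proceed by adapting the enumeration techniques used in \cite{jenssen2023evolution} for the bipartiteness scaling window, or by a direct two-step sampling scheme: first draw a near-bipartition from the appropriate marginal distribution, then sample the edge set across it conditionally on triangle-freeness.
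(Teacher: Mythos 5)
Your proposal inverts the logical structure of the argument and, in doing so, assumes the hard content rather than proving it. In this paper \cref{theorem:main-erdos-renyi} is the primary result: it is proved by working with the approximating measure $\mu_{\lambda,1}$ (\cref{thm:JPP-full-strength}), showing that whp any $3$-coloring can be converted to a green-edge coloring (\cref{cor:3-coloring-is-GEC}), encoding green-edge colorability as a bipartite $2$-SAT instance on the defect edges (\cref{prop:GEC-equals-SAT}), coupling the hard-core-distributed crossing edges with independent clauses (\cref{claim:coupling-succeeds}, \cref{cor:comparison-random-to-HCM-for-SAT}, \cref{lem:comparison-random-to-HCM-for-SAT}), and finally invoking the bipartite $2$-SAT scaling window \cref{theorem:bipartite-2SAT} after the parameter computation of \cref{sec:parameter-comparison}. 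The fixed-$m$ statement \cref{theorem:main-fixed-m} is then \emph{deduced from} \cref{theorem:main-erdos-renyi} via \cref{prop:lambda-to-fixed-m}. Your plan deduces \cref{theorem:main-erdos-renyi} from \cref{theorem:main-fixed-m}, which is circular here: there is no independent proof of the fixed-$m$ window available, and nothing in your proposal supplies one. All of the substantive work --- the equivalence of $3$-colorability with satisfiability of a random bipartite $2$-SAT formula, and the determination of that formula's scaling window --- is absent.

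Even viewed as a standalone reduction (granting \cref{theorem:main-fixed-m} from some other source), the step you flag as ``the main obstacle'' is genuinely unresolved and is not a routine adaptation. Your decomposition identity $\Pr[\chi(\mbf G)=k]=\sum_m \Pr[N=m]\Pr[\chi(\mbf G_m)=k]$ is correct, and the centers do line up ($p_c n^2/4=m_c$), but you need (a) concentration of the edge count $N$ of the \emph{conditioned} model within $o(n^{4/3}(\log n)^{-1/3})$ of a center $m^*(p)$, and (b) identification of $m^*(p)$ to that same accuracy, uniformly across the window. The heuristic ``$|\mc T(n,m)|\approx\binom{n^2/4}{m}$ up to slowly varying corrections'' does not establish this: controlling the log-ratio $|\mc T(n,m+1)|/|\mc T(n,m)|$ to $o(1)$ additive error at this density is precisely the kind of refined structural/enumerative statement that requires the container-free description of \cite{jenssen2023evolution} together with concentration of the hard-core independent set size (\cref{prop:hardcore-size-concentration}) --- i.e., essentially the same machinery the paper uses in \cref{sec:T(nm)}, applied in the opposite direction. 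Until those two steps are carried out at the stated precision (including the shift of the center away from $p\,n^2/4$ coming from the defect structure and the hard-core corrections), the proposal is a plausible transfer scheme but not a proof of the theorem.
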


\begin{theorem}\label{thm:4-coloring-scaling-window}
    Fix $c \in (0,1)$ and let
    \[ p = \sqrt{\frac{\log n + \log\log n - 2\log(2c)}{n}}. \]

    Let $\mbf{G}$ be a sample from $G(n,p)$ conditioned on triangle-freeness.  Then whp $4 \leq \chi(\mbf G) \leq 5$ and more precisely
    \[ \mb{P}[\chi(\mbf G) = 4] = \paren{\frac{1-c}{1+c}}^{1/2} \exp\{ -c - c^3/3 \} + o(1). \]

    Thus the width of the scaling window is $\Theta \left ( \sqrt{\frac{1}{n \log n}}\right )$.
\end{theorem}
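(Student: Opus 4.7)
The plan follows the strategy announced in the abstract: reduce $\chi(\mathbf G) \le 4$ to the simultaneous $2$-colourability of two approximately independent Erd\H{o}s--R\'enyi random graphs, then extract the limiting probability from Poisson cycle counts.  The first step is structural: at the density in \cref{thm:4-coloring-scaling-window} one sits just below the bipartiteness threshold of \cref{thmOPT} (after bookkeeping for the effect of the triangle-free conditioning on the typical edge count), so the theory of random triangle-free graphs (building on~\cite{osthus2003densities,jenssen2023evolution}) yields whp an essentially unique max-cut bipartition $V = A^* \cup B^*$ with $|A^*|, |B^*| = (1+o(1))n/2$, across which almost all edges of $\mathbf G$ lie.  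The ``defect'' subgraphs $\mathbf{G}[A^*]$ and $\mathbf{G}[B^*]$ are sparse.

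Next I would identify the distribution of these defect subgraphs.  A pair $\{u,v\} \subseteq A^*$ can carry a defect edge only if $u,v$ share no common neighbour in $B^*$ (lest a triangle form), an event of probability $\approx e^{-p^2 n/2}$; combining with the base marginal $p$ gives a defect-edge marginal $\approx p\,e^{-p^2 n/2}$, which for the $p$ in the statement is of order $1/n$ with an explicit constant depending on $c$.  A conditional-moments argument then shows $\mathbf{G}[A^*]$ and $\mathbf{G}[B^*]$ are asymptotically independent, each distributed like $G(|A^*|,\lambda/|A^*|)$ conditioned on being triangle-free, for an explicit $\lambda = \lambda(c)$.

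The main combinatorial step is the equivalence
\[
\chi(\mathbf G) \le 4 \quad\iff\quad \mathbf{G}[A^*] \text{ and } \mathbf{G}[B^*] \text{ are both bipartite,}
\]
holding whp.  The forward direction is immediate: colour $A^*$ with $\{1,2\}$ according to its bipartition and $B^*$ with $\{3,4\}$, which makes every cross-edge automatically properly coloured.  Granted the equivalence and the distributional identification, the joint Poisson convergence of cycle counts in sparse Erd\H{o}s--R\'enyi graphs (the $k$-cycle count is asymptotically $\mathrm{Poisson}(\lambda^k/(2k))$, independent across $k$) together with the triangle-free conditioning (removing the $k=3$ contribution) yields a product expression which, after invoking $\sum_{k \ge 1,\,\text{odd}} \lambda^k/k = \tfrac12 \log\tfrac{1+\lambda}{1-\lambda}$ and tracking the precise correspondence between $\lambda$ and $c$, simplifies to $\big(\tfrac{1-c}{1+c}\big)^{1/2}\exp\{-c - c^3/3\}$.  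The lower bound $\chi(\mathbf G) \ge 4$ is provided by \cref{thm:jpp-fixed-m}; the upper bound $\chi(\mathbf G) \le 5$ follows from a colour-sharing $5$-colouring using that each defect graph is $3$-chromatic whp (break a bounded number of disjoint odd cycles with a third colour on each side).

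The main obstacle is the reverse direction of the equivalence: showing that any $4$-colouring essentially respects the canonical bipartition.  One must rule out alternative bipartitions $V = A' \cup B'$ at macroscopic symmetric distance from $(A^*, B^*)$ for which $\mathbf{G}[A']$ and $\mathbf{G}[B']$ could both be bipartite; the argument will show such deviations necessarily introduce too many within-part edges, via a careful union bound combined with container-type estimates for triangle-free graphs.  A secondary subtlety is justifying the approximate independence and Erd\H{o}s--R\'enyi-like behaviour of $\mathbf{G}[A^*]$ and $\mathbf{G}[B^*]$ under the global triangle-free conditioning, which couples the two sides through shared bipartite neighbourhoods.
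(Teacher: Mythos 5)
Your high-level plan matches the paper's: reduce $\chi(\mathbf{G}) \le 4$ whp to bipartiteness of both defect graphs $\mathbf{G}[A]$ and $\mathbf{G}[B]$, identify those whp with (conditioned) sparse Erd\H{o}s--R\'enyi graphs via the sandwiching of \cref{prop:sandwiching-ERG-between-ERs}, extract $\mathbb{P}[\chi(\mathbf{G})=4] = \mathbb{P}[\chi(\mathbf{G}[A])=2]^2 + o(1)$ from joint Poisson convergence of odd-cycle counts (with $C_3$ removed), and prove $\chi \le 5$ by spending a fifth colour on one vertex per cycle in each part. Those steps are essentially the paper's.

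The gap is in the hard direction, that every proper $4$-colouring of $\mathbf{G}$ restricts to proper $2$-colourings of $\mathbf{G}[A]$ and $\mathbf{G}[B]$. You frame this as ruling out alternative bipartitions $(A',B')$ at macroscopic distance from $(A,B)$ via edge counting and container-type estimates, but this does not engage the actual difficulty. First, a $4$-colouring does not a priori induce a bipartition of $V$ at all: one must first show its colour classes pair up, with two classes each of size $\Omega(n)$ in $A$ and two in $B$, and that the two pairs are disjoint; the paper does this by exhibiting $\Theta(n)$-size matchings in each defect graph and using the cross-cut expansion of \cref{lem:expander-small-p}. Second, even granting such a pairing, ``no macroscopically different bipartition works'' is far from what is needed; you must show that \emph{no single vertex} in $A$ may receive a $B$-colour (and vice versa), since otherwise $\mathbf{G}[A]$ bipartite does not follow. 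The paper's mechanism here (\cref{lem:odd-cycle-no-4-color}, \cref{claim:miscolorings-propagate}) is precisely the part tied to triangle-freeness that your sketch never touches: a miscoloured $w$ in an odd cycle of $\mathbf{G}[A]$ cannot be handled by finding an edge in $N_B(w)$ (there are none, $\mathbf{G}$ is triangle-free), so instead one locates $\Theta(\log n)$ isolated $P_3$-components $\gamma$ in $\mathbf{G}[B]$ with both endpoints of $\gamma$ adjacent to $w$, each yielding a $C_5$ on $\{w\}\cup\gamma$ that forces a further miscolouring in $B$; iterating this exploration, with a union bound over all possible propagation choices $\varphi$, reaches $\Omega(n)$ miscoloured vertices on one side and contradicts expansion. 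Without this (or an adequate substitute for it), the claimed equivalence is asserted rather than proved, and a container/edge-counting approach as you sketch it would not close the per-vertex question.
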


These theorems establish the widths of the respective scaling windows, but as in the examples described above, we also obtain an understanding of why the transition occurs.  Somewhat surprisingly, the transition from chromatic number $3$ to $4$ occurs due to a SAT to UNSAT transition in a random $2$-SAT formula implicitly encoded in the random triangle-free graph.  The transition from chromatic number $4$ to $5$ occurs due to the emergence of cycles in  binomial random graphs embedded (as `defect edges' in an otherwise bipartite graph) in $\mbf{G}$.  We make these notions precise in the following sections.  We remark that the widths of the scaling windows for $3$- and $4$-colorability in the usual random graphs $G(n,p)$ and $G(n,m)$ are unknown.

To explain the intuition for the width of the scaling window and how the transition occurs, we begin by summarizing some of the structural results on random triangle-free graphs from~\cite{luczak2000triangle,jenssen2023evolution}.

The starting point is a result of {\L}uczak, that can be thought of as a sparse, approximate  Erd\H{o}s--Kleitman--Rothschild result: typical triangle-free graphs at high enough density are almost bipartite. 
\begin{theorem}[\cite{luczak2000triangle}]
\label{thmLuczak{}}
    For every $\eps>0$ there is $C>0$ large enough so that when $m \ge C n^{3/2}$, almost every graph in $\mathcal T(n,m)$ has a cut containing at least  $(1-\eps)$-fraction of its edges.
\end{theorem}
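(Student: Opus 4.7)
The plan is to employ a container-type counting argument for the triangle hypergraph. Let $H$ be the $3$-uniform hypergraph on vertex set $\binom{[n]}{2}$ whose edges are the triples of pairs forming a triangle in $K_n$; triangle-free graphs on $[n]$ are exactly the independent sets of $H$, and $H$ has degrees all equal to $n-2$ and codegrees at most $1$. The first step is to produce a family $\mathcal{C}$ of container graphs $C \subseteq \binom{[n]}{2}$, built by iterating the Saxton--Thomason / Balogh--Morris--Samotij container lemma and then refining via the Erd\H{o}s--Simonovits stability theorem, so that: (i) every triangle-free graph on $[n]$ is a subgraph of some $C \in \mathcal C$; (ii) $\log_2 |\mathcal C| \leq \alpha n^{3/2}$ for any prescribed $\alpha > 0$ at the cost of enough iterations; and (iii) each $C$ has at most $(\tfrac14 + o(1)) n^2$ edges and is within $\eta n^2$ edge-edits of a complete balanced bipartite graph $K_{A_C, B_C}$, for a constant $\eta = \eta(\eps) > 0$ as small as desired.

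Given such a family, call a triangle-free graph $G \in \mathcal T(n,m)$ \emph{bad} if no bipartition of $[n]$ cuts at least $(1-\eps)m$ of its edges. Any bad $G$ lies inside some $C \in \mathcal C$, and with respect to the container's bipartition $(A_C, B_C)$ the number $Y$ of non-crossing edges of $G$ must exceed $\eps m$. However, for a uniformly random $m$-subset of $E(C)$, $Y$ is a hypergeometric random variable with mean at most $\eta n^2 \cdot m / |E(C)| \leq 4\eta m$, so if $\eta \leq \eps/8$, a standard Chernoff bound for the hypergeometric distribution gives $\Pr[Y > \eps m] \leq e^{-c \eps m}$ for an absolute constant $c > 0$. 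Summing over containers,
\[ \left|\{\text{bad } G \in \mathcal T(n,m)\}\right| \;\leq\; |\mathcal C| \cdot e^{-c\eps m} \cdot \binom{(\tfrac14 + o(1)) n^2}{m}. \]
Comparing with the lower bound $|\mathcal T(n,m)| \geq \binom{\lfloor n^2/4 \rfloor}{m}$ obtained by counting balanced bipartite graphs on $[n]$, and using $\binom{(\tfrac14 + o(1))n^2}{m} / \binom{n^2/4}{m} = e^{o(m)}$, the fraction of bad graphs is at most $2^{\alpha n^{3/2}} \cdot e^{-c \eps m + o(m)}$. Choosing first $\alpha$ small enough and then $C$ large enough, both depending only on $\eps$, this is $o(1)$ whenever $m \geq C n^{3/2}$.

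The principal obstacle is establishing~(ii) and~(iii) simultaneously: to beat the Chernoff savings at the scale $m \asymp n^{3/2}$ one needs a container family whose count grows no faster than $2^{o(m)}$ while still enjoying a \emph{constant}-precision stability statement, and this is exactly the density at which the triangle-removal / container tools become effective. This is the content of~\cite{luczak2000triangle}, proved there by a direct encoding argument that predates the hypergraph container method; a modern treatment invokes the container lemma together with Erd\H{o}s--Simonovits stability in essentially the manner sketched above.
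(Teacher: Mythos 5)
First, note that the paper does not prove this statement at all: it is imported verbatim from \cite{luczak2000triangle}, so your sketch has to stand on its own rather than be compared with an in-paper argument. The overall shape of your argument (containers for the triangle hypergraph, stability of the containers, then a per-container hypergeometric/entropy count of graphs with many non-crossing edges against the lower bound $|\mathcal T(n,m)|\ge\binom{\lfloor n^2/4\rfloor}{m}$) is the standard modern route, and the counting half is essentially fine once the $e^{o(m)}$ comparison is stated as $e^{O(\delta m)}$ with $\delta$ a small constant (and the very dense range $m=(1-o(1))n^2/4$ handled separately or by splitting the count into inside/crossing edges rather than comparing $\binom{|E(C)|}{m}$ with $\binom{n^2/4}{m}$).

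The genuine gap is your property (ii). The Balogh--Morris--Samotij / Saxton--Thomason container lemma for triangle-free graphs produces containers indexed by fingerprints of size $O(n^{3/2})$, hence a family of size $2^{O(n^{3/2}\log n)}$, and ``iterating'' the lemma does not remove the $\log n$: each refinement round multiplies, rather than shrinks, the number of containers, and there is a real trade-off in which sharper structural control (your $\eta$) costs a larger exponent, not a smaller one. With the count $2^{\Theta(n^{3/2}\log n)}$ your union bound only beats $e^{-c\eps m}$ when $m\ge Cn^{3/2}\log n$, which misses exactly the point of Łuczak's theorem, namely the optimal threshold $m\ge Cn^{3/2}$. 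What is actually needed is a container family of size $2^{\beta(\eps)n^{3/2}}$ with no logarithmic loss (and note that $\beta$ need not be arbitrarily small, since the theorem lets $C$ depend on $\eps$; asking for every $\alpha>0$ is both unnecessary and unobtainable simultaneously with constant-precision stability). Such log-free container counts are supplied only by the later ``efficient container'' technology of Balogh and Samotij, or by Łuczak's original counting argument in \cite{luczak2000triangle}; your sketch acknowledges this as ``the principal obstacle'' and then defers it to the very result being proved, so as written the proposal does not constitute a proof at the claimed density.
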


The work~\cite{jenssen2023evolution} refines this result by determining very accurately the distribution of these `defect edges', edges inside the two parts of a max cut.

\begin{theorem}[\cite{jenssen2023evolution}]
    \label{thmJKPlight}
    Suppose $(\frac14+\eps) n^{3/2}(\log n)^{1/2} \le m \le \frac{1}{2} n^{3/2} (\log n)^{1/2}$ for some $\eps > 0$.  Then with $\mbf{G}$ drawn uniformly from $\mathcal T(n,m)$, the following hold whp:
    \begin{itemize}
        \item $\mbf{G}$ has a unique max cut $(A,B)$.
        \item Conditioned on the max cut being $(A,B)$, the edges in $A$ and $B$ are distributed as independent edges with probability $q$, up to $o(1)$ total variation distance, where $q$ is defined via
        \begin{equation}
            \frac{q}{1-q} = \lambda e^{-\lambda^2 n/2}
        \end{equation}
        and 
        $\lambda =(4+o(1))m/n^2 $, given precisely below in~\eqref{eq:def-of-lambda(m)}. 
        \item Conditioned on $(A,B)$ and the graphs $\mbf{G}[A]$, $\mbf{G}[B]$, the distribution of the crossing edges is that of an independent set model, defined precisely below in \cref{sec:JPP-summary}. 
    \end{itemize}
\end{theorem}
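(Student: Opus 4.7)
The plan is to prove the three structural claims by organizing the uniform measure on $\mathcal T(n,m)$ by bipartition and running a polymer/cluster expansion in the defect edges.  The starting point, \cref{thmLuczak{}}, already guarantees that almost every $\mbf G \in \mathcal T(n,m)$ has a cut containing a $(1-o(1))$-fraction of its edges; the task is to sharpen this qualitative picture into precise distributional statements.

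I would first stratify by bipartition, writing $|\mathcal T(n,m)| = \sum_{(A,B)} Z_{A,B}(m)$, where $Z_{A,B}(m)$ counts triangle-free graphs with $m$ edges for which $(A,B)$ is a max cut.  Standard moment estimates show that the dominant contribution comes from near-balanced bipartitions with an appropriately bounded defect count, so one truncates to this ``good'' family.  For a fixed good $(A,B)$, parameterize a graph by its defect sets $D_A \subseteq \binom{A}{2}$, $D_B \subseteq \binom{B}{2}$ and bipartite crossing edges $C \subseteq A \times B$.  Triangle-freeness reduces to the purely local constraint that for every $\{u,v\} \in D_A$ and every $w \in B$, not both $uw$ and $vw$ lie in $C$ (and symmetrically for $D_B$), yielding
\[ Z_{A,B}(m) = \sum_{D_A, D_B} I(D_A, D_B;\, m - |D_A|-|D_B|), \]
where $I(D_A, D_B;\ell)$ counts bipartite configurations of size $\ell$ avoiding the cherries induced by the defects.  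This count is by definition the partition function of a hard-core/independent-set model on an auxiliary graph whose vertices are the potential crossing edges, so the description in~(c) is essentially immediate from this decomposition once the defect distribution in~(b) is pinned down.

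The main calculation is to evaluate $I(D_A,D_B;\ell)$ at $\ell = (1+o(1))m$.  A uniformly chosen bipartite configuration of density $\lambda = (4+o(1))m/n^2$ survives the inclusion of a defect $\{u,v\}$ only if no $w$ in the opposite part is adjacent to both endpoints; for a single $w$ this has probability $1-\lambda^2$, and the $(1+o(1))n/2$ candidates are asymptotically independent, giving a survival factor $\exp(-\lambda^2 n/2 + o(1))$ per defect edge.  Combined with the factor $\lambda$ arising from the local-CLT conversion between the fixed-$m$ crossings ensemble and the corresponding Bernoulli model, each defect carries effective activity $\lambda e^{-\lambda^2 n/2}$, which is precisely the $q/(1-q)$ of the theorem.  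To upgrade this heuristic to an $o(1)$ total-variation bound, I would run a cluster expansion on the defect polymer, verify convergence via a Koteck\'y--Preiss-type criterion in the relevant density regime, and show that contributions from polymers of size $\ge 2$ are negligible; this controls correlations among defect edges and proves~(b).

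Uniqueness of the max cut in~(a) then follows by a comparison argument.  Any competing bipartition $(A',B')$ either differs from $(A,B)$ by a positive fraction of vertex swaps — in which case it loses $\Theta(\lambda n)$ crossings per swap and so cannot be optimal whp — or differs by $o(n)$ swaps, where a union bound over the comparatively few such bipartitions, combined with the sharp defect count around each, makes simultaneous optimality have probability $o(1)$.  The main obstacle throughout is the cluster expansion step: at the critical density $m = \Theta(n^{3/2}\sqrt{\log n})$ the effective polymer activity $q$ is only polynomially small, so care is required with the multi-polymer interaction terms in order to achieve total-variation error $o(1)$ rather than merely a match of first moments.
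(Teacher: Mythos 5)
This statement is cited from~\cite{jenssen2023evolution}; the present paper does not reprove it, so there is no ``paper's own proof'' to compare against beyond the citation and the algorithmic summary in \cref{sec:JPP-summary}.  Your reconstruction does, however, closely mirror the actual strategy of that reference: stratifying by bipartition, encoding triangle-freeness as a cherry-avoidance constraint between defect edges and crossings, recognizing the crossing-edge distribution as a hard-core model on $S\operatorname{\Box} T$, and extracting the effective defect activity $\lambda e^{-\lambda^2 n/2}$ from the $\approx n/2$ forbidden cherries that a single defect edge creates.  That is the right picture, and the subsequent analysis in~\cite{jenssen2023evolution} is indeed a polymer-style argument whose difficulty you correctly locate: at this density the defect activity is only polynomially small, so controlling the multi-defect interaction to total-variation accuracy is the whole game.

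Two small corrections to the mechanics.  First, the factor $\lambda$ in $q/(1-q) = \lambda e^{-\lambda^2 n/2}$ is not a local-CLT conversion factor; it is simply the grand-canonical edge activity carried by a defect edge itself.  The passage from the fixed-$m$ ensemble $\mathcal T(n,m)$ to the Bernoulli ensemble $\mu_\lambda$ is a separate step done by choosing $\lambda = \lambda(m)$ as in \cref{eq:def-of-lambda(m)} and invoking a local limit theorem for $|E(\mathbf G)|$ under $\mu_\lambda$ (this is exactly what \cref{prop:lambda-to-fixed-m} formalizes in the present paper for the chromatic number application), and it is orthogonal to where the factor $\lambda$ in $q$ comes from.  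Second, your uniqueness-of-max-cut sketch needs a little more care in the ``$o(n)$ swaps'' regime: a competing near-balanced cut can in principle have the same crossing count, so the argument in~\cite{jenssen2023evolution} couples the counting to the defect structure around the swapped vertices rather than relying on a crude union bound over cuts.  Neither point changes the fact that you have identified essentially the same route the cited reference takes.
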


\subsection{\texorpdfstring{$3$}{3}- to \texorpdfstring{$4$}{4}-colorability}

Given this structural description of a triangle-free graph, we approach the question of the chromatic number when $m$ is near $\frac{\sqrt{2}}{4} n^{3/2} (\log n )^{1/2}$ by considered a special class of $3$-colorings we call `green-edge colorings'.  We describe a green-edge coloring of $\mbf{G}$ under the assumption that the graphs of defect edges, $\mbf{G}[A]$ and $\mbf{G}[B]$, both consist of isolated vertices and isolated edges, which holds whp (\cref{claim:disjoint-edges}).

\medskip

\textbf{Green-edge coloring of $\mbf{G}$}
\begin{enumerate}
    \item Color isolated vertices of $\mbf{G}[A]$ red.
    \item Color isolated vertices of $\mbf{G}[B]$ blue.
    \item Properly $2$-color edges of $\mbf{G}[A]$ green and red.
     \item Properly $2$-color edges of $\mbf{G}[B]$ green and blue.
\end{enumerate}
Since every edge has $2$ possible  $2$-colorings, the above description leaves room for many choices (which end point of each defect edge to color green).  

One step of our proof (\cref{cor:3-coloring-is-GEC}) is to show that whp in the relevant range of densities, $3$-colorable graphs must have green-edge colorings.

The next step is to determine if a valid green-edge coloring exists.  The only risk is of an edge crossing the partition $(A,B)$ joining two vertices colored green, so we must, if possible, choose which vertices to color green to avoid this.  We encode these constraints in a $2$-SAT formula.

\begin{enumerate}
    \item For each edge $e \in E(\mbf{G}[A])$, create a boolean variable $x_e$ and fix an (arbitrary) orientation $e = (u,v)$.  If $x_e = \TRUE$, set $\chi(u) = \text{red}, \chi(v) = \text{green}$.  Else set $\chi(u) = \text{green}, \chi(v) = \text{red}$.
    \item For each edge $e \in E(\mbf{G}[B])$, create a boolean variable $x_e$ and fix an (arbitrary) orientation $e = (u,v)$.  If $x_e = \TRUE$, set $\chi(u) = \text{blue}, \chi(v) = \text{green}$.  Else set $\chi(u) = \text{green}, \chi(v) = \text{blue}$.
    \item For each edge $f$ crossing the bipartition $A \cup B$, create a boolean disjunction.
\end{enumerate}

\begin{figure}
    \centering
    \begin{minipage}{0.45\textwidth}
        \centering
        \includegraphics[width=0.9\textwidth]{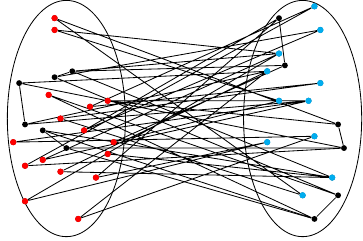} 
        \caption{\label{fig:coloring-isolated-vertices}An example of a graph $\mbf{G}$ separated along its max cut after steps (1) and (2) of the coloring.}
    \end{minipage}\hfill
    \begin{minipage}{0.45\textwidth}
        \centering
        \includegraphics[width=0.9\textwidth]{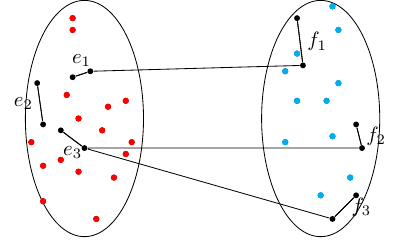} 
        \caption{\label{fig:coloring-defect-edges}With edges oriented from top to bottom, this gives the 2-SAT formula $(e_1 \vee \bar f_1) \wedge (\bar e_3 \vee \bar f_2) \wedge (\bar e_3 \vee \bar f_3)$.}
    \end{minipage}
\end{figure}

Each crossing edge gives a constraint that one boolean assignment of the edges containing its two endpoints is illegal.  This can be written as a disjunction.  The possibility of the properness of $\chi$ depends on the satisfiability of the conjunction of all of these disjunctions.

The remaining obstruction to determining the threshold is understanding the distribution of the resulting random boolean formula.  The formula is a bipartite 2-SAT formula, meaning we have two sets of variables $\{x_i\}$ and $\{y_i\}$ (corresponding to edges in $\mbf{G}[A]$ and $\mbf{G}[B]$), and all clauses must contain exactly one literal from each part.

\begin{theorem}\label{thm:coloring-equiv-to-SAT}
    Let $\mbf{G}$ be distributed according to $G(n,p)$ conditioned on triangle-freeness where $p = (1+o(1)) p_c$.  Let $\varphi$ be a random bipartite 2-SAT formula with $n^2 p \exp(-np^2/2)/8$ variables in each partition and each bipartite clause occurring independently with probability $p$.  Then
    \[ \mb{P}[\chi(\mbf{G}) = 3] = \mb{P}[\SAT(\varphi)] + o(1). \]
\end{theorem}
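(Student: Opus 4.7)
The plan is to apply the structural description of random triangle-free graphs (the analog of \cref{thmJKPlight} for $G(n,p) \mid \mathcal{T}$), extract from $\mbf{G}$ the bipartite 2-SAT formula $\tilde{\varphi}$ corresponding to its green-edge-coloring constraints, and then compare the distribution of $\tilde{\varphi}$ with that of $\varphi$. By the structural theorem, whp $\mbf{G}$ has a unique max cut $(A, B)$ with balanced sides; the defect graphs $\mbf{G}[A], \mbf{G}[B]$ are $o(1)$-TV-close to two independent Erd\H{o}s--R\'enyi graphs on the respective sides with edge probability $q = p e^{-np^2/2} (1+o(1))$; and the crossing edges are drawn from the independent-set model conditional on the defect data.

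First I would show that $\mbf{G}[A]$ and $\mbf{G}[B]$ are matchings whp: the expected number of paths of length two in either defect graph is $O(n^3 q^2) = O(1/\psi(n)) = o(1)$, so Markov does it. Combined with the claim that at this density every $3$-coloring arises as a green-edge coloring (the upcoming corollary \textbf{cor:3-coloring-is-GEC}), this reduces the question to computing $\mb{P}[\SAT(\tilde{\varphi})]$. A standard binomial concentration argument gives that the number of variables on each side of $\tilde{\varphi}$ equals $N(1+o(1))$ for $N = n^2 p e^{-np^2/2}/8$, matching the variable count of $\varphi$. For each ordered pair $(e_A, e_B)$ of defect edges, the four possible crossing edges among the endpoints of $e_A, e_B$ correspond exactly to the four possible bipartite 2-SAT clauses on the variables $x_{e_A}$ and $x_{e_B}$, so the remaining task is to compare the distribution of crossings between defect-edge endpoints in $\mbf{G}$ to iid Bernoulli$(p)$.

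The main obstacle is controlling the effect of the triangle-free conditioning on these clause-generating crossings. A useful structural observation is that every triangle in $\mbf{G}$ uses exactly one defect edge together with two crossing edges sharing their non-defect endpoint, so the triangle constraints decouple between (a) crossings incident to an isolated vertex, which impose no clause on $\tilde{\varphi}$, and (b) crossings between two defect-edge endpoints, which are exactly the clauses of $\tilde{\varphi}$. After conditioning on the type-(a) crossings, the type-(b) crossings are iid Bernoulli$(p)$ subject only to the local constraint that for each pair (defect edge $e_A$, opposite-side defect vertex $v$), at most one of the two $v$-to-$e_A$ crossings is present (and symmetrically). A first-moment computation gives that the expected number of such local violations under the unconditioned iid model is $4 N^2 p^2 = 1 + o(1)$ at $p = p_c$, so the conditioning has non-negligible effect and a pure TV coupling of $\tilde{\varphi}$ with $\varphi$ cannot succeed.

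To close the gap, I plan a sprinkling/coupling argument in which $\tilde{\varphi}$ is built from $\varphi$ by correcting the $O(1)$ pairs of defect edges that exhibit local constraint violations, using the correct conditional law. It then remains to show that modifying $O(1)$ clauses of a random bipartite 2-SAT formula at the critical density only changes the satisfiability probability by $o(1)$; this follows from the fact that the scaling window for the satisfiability of random bipartite 2-SAT (established elsewhere in this paper via a BBCKW-style bicycle analysis) has width tending to infinity, so the sensitivity of $\mb{P}[\SAT]$ to individual clauses vanishes asymptotically. Putting these pieces together yields $\mb{P}[\chi(\mbf{G}) = 3] = \mb{P}[\SAT(\varphi)] + o(1)$, as required.
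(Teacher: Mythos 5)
Your overall reduction is the paper's: pass to the $\mu_{\lambda,1}$ approximation, show the defect graphs are matchings, reduce $3$-colorability to green-edge-colorability, encode the green-edge-coloring constraints as a bipartite $2$-SAT formula $\tilde\varphi$ over the defect edges, and observe that the only clause structure at play arises from crossings on the $C_4$ components $e_A \mathbin{\Box} e_B \subset S \mathbin{\Box} T$. You also correctly pinpoint the obstruction to an exact coupling: the triangle-free constraint forbids precisely the clause pairs of the form $(x \vee y) \wedge (\bar x \vee y)$ (two adjacent crossings on a $C_4$), and at $p_c$ these occur with frequency $\Theta(1)$ in the independent-clause model, so a straight TV argument cannot work. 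All of this agrees with the paper.

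The gap is in your final step. You assert that modifying the $O_{\mathrm{p}}(1)$ offending clause pairs changes $\mb{P}[\SAT]$ by $o(1)$ ``because the scaling window has width tending to infinity, so the sensitivity of $\mb{P}[\SAT]$ to individual clauses vanishes.'' That implication does not hold, and indeed the naive version of it fails: the total-variation distance between $F_{N,M,q}$ and $F_{N,M,q'}$ with $q - q'$ chosen to alter $\Theta(1)$ clauses in expectation is $\Theta(1)$, not $o(1)$, since the discrepancy is spread over $\approx N^2$ independent Bernoulli coordinates. A wide scaling window controls how $\mb{P}[\SAT]$ varies as $p$ moves, but it does not by itself bound the effect of adding or deleting a \emph{specific} clause that the formula has flagged as relevant. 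The paper instead proves directly that the offending clause pairs are whp not structurally relevant: each configuration $(x \vee y) \wedge (\bar x \vee y)$ creates a literal $y$ with two in-arcs $x \to y$, $\bar x \to y$ in the implication digraph, and Claim~\ref{claim:variable-partial-assignment} (proved via the first-moment estimate on the spine, Proposition~\ref{prop:spine-expectation}) shows that whp no such $y$ lies in the spine $S(F)$. Since unsatisfiability is equivalent to the spine failing to be strictly distinct, these clauses cannot flip satisfiability. You would need an argument of this flavor — a quantitative bound showing the probability that a \emph{given} literal lies in the spine is $o(1)$ (here $O(n^{-1/3}|\lambda|)$ in the critical window), together with the first-moment bound that there are $O_{\mathrm{p}}(1)$ offending literals — rather than appealing to the window width. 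Also note a secondary imprecision: the directional asymmetry matters. The paper's explicit $C_4$-by-$C_4$ coupling produces $E_{\mathrm{cr}} \subset X$ whp, so $\mb{P}[\SAT(\varphi_X)] \le \mb{P}[\SAT(\varphi_{E_{\mathrm{cr}}})] + o(1)$ is immediate by monotonicity; only the reverse inequality requires the spine argument. Your ``correcting $O(1)$ clause pairs using the correct conditional law'' phrasing obscures this, and in any case it is that reverse direction that carries the whole content of the theorem.
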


We prove this result via an explicit coupling between the two formulas.  Unfortunately we cannot hope for an exact coupling: the 2-SAT instance encoded in $\mbf{G}$ cannot have a pair of clauses of the form $(x \vee y) \wedge (\bar x \vee y)$ as this would correspond to a triangle in $\mbf{G}$ (both endpoints of a defect edge sharing a neighbor), and $\mbf{G}$ is conditioned to be triangle-free.  However, at the critical density, $\varphi$ has Poisson many such clauses.  We instead show that $\mbf{G}$ and $\varphi$ can be coupled in such a way to agree off these clauses, and that there are few enough of these as to not affect satisfiability.

$3$-colorability has now been completely reduced to an instance of random bipartite $2$-SAT, which we handle separately; see \cref{subsec:intro-2-SAT}.  Note that the question of $\mb{P}[\SAT(\varphi)]$ inside the critical window is open; a formula for this quantity would allow us to exactly compute $\mb{P}[\chi(\mbf{G}) = 3]$ inside the critical window.

\subsection{\texorpdfstring{$4$}{4}- to \texorpdfstring{$5$}{5}-colorability}

The obstruction to $4$-coloring a graph is easier to see than the obstruction to $3$-coloring.
Clearly, if $\mbf{G}[A]$ and $\mbf{G}[B]$ are both bipartite, then $\mbf{G}$ may be $4$-colored by assigning two distinct colors to each side.
In fact, due to expansion properties of $\mbf{G}$ that hold whp, we can show that this is the approximate structure of any $4$-coloring of $\mbf{G}$: two color classes must consist of $\Omega(n)$ vertices in $A$ and $o(n)$ vertices in $B$, and the other two vice versa.  Call a vertex $A$-colored (resp.~$B$-colored) if its color contains $\Omega(n)$ vertices in $A$ (resp.~$B$).  Call a vertex $v \in A$ (resp.~$v \in B$) \emph{miscolored} if $v$ is $B$-colored (resp.~$A$-colored).

\begin{proposition}
    Let $w \in V$.  Whp, there is no $4$-coloring of $\mbf{G}$ where $w$ is miscolored.
\end{proposition}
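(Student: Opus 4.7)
WLOG assume $w \in A$; the case $w \in B$ is symmetric. The plan is to bound $\mb{P}[\exists \text{ 4-coloring of } \mbf{G} \text{ miscoloring } w]$ by $o(1)$, working conditional on the typical structure of $\mbf{G}$ from \cref{thmJKPlight}. If $w$ is miscolored, its color class $C$ satisfies $|C \cap B| = \Omega(n)$, and by the expansion-based structural control on 4-colorings mentioned just above the proposition --- each color class is approximately one of $A_1, A_2, B_1, B_2$ --- $C \cap B$ must coincide, up to $o(n)$ exceptions, with one side of a bipartition of $\mbf{G}[B]$. Since $\mbf{G}[B]$ is bipartite whp in this regime, miscoloring $w$ forces $N(w) \cap B$ to lie essentially on one side of some bipartition of $\mbf{G}[B]$.

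This condition factors over the connected components of $\mbf{G}[B]$: for each non-singleton component, the restriction of $N(w)$ to that component must lie on a single side of the component's bipartition. Triangle-freeness is essential here --- $w$ cannot be adjacent to both endpoints of any edge in $\mbf{G}[B]$, so the constraint is automatic for components isomorphic to a single edge or a 2-path. The first non-trivial obstruction comes from a 3-path $u_1 - v_1 - v_2 - u_2$ in $\mbf{G}[B]$ with $\{u_1, u_2\} \subseteq N(w)$, which forces these two neighbors of $w$ onto opposite sides of the component's bipartition. Using $q \sim 2c/n$ and $p = \Theta(\sqrt{\log n/n})$ in the scaling window, the expected number of 3-paths in $\mbf{G}[B]$ is of order $|B|^4 q^3 = \Theta(nc^3)$, and each becomes obstructing with probability $\Theta(p^2)$ via the (approximately independent) crossing edges. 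Putting this together, I expect a bound of the form
\[
\mb{P}[N(w) \cap B \text{ fits on one side of a bipartition of } \mbf{G}[B]] \lesssim (1 - p^2)^{\Theta(nc^3)} = n^{-\Theta(c^3)} = o(1).
\]

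The main obstacle will be the reduction step: rigorously controlling the $o(n)$ ``exceptional'' $B$-vertices $T_B$ that may be placed into $A$-color classes in a non-standard 4-coloring, since removing $T_B$ from $\mbf{G}[B]$ could in principle destroy the obstructing 3-paths. I plan to handle this by combining the display above with a companion estimate --- the symmetric argument applied to $\mbf{G}[A]$ shows that each $B$-vertex is individually miscoloreable with only polynomially small probability $n^{-\Theta(c^3)}$, so simultaneously miscoloring enough $B$-vertices to disable the $\Theta(\log n)$ expected obstructions is itself improbable. Together these give the desired $o(1)$ bound for the fixed vertex $w$.
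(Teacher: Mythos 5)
Your first step is the same mechanism the paper uses: the obstructions are components of $\mbf{G}[B]$ isomorphic to a path $P_3$ whose two endpoints are both adjacent to $w$, giving copies of $C_5$ that cannot be colored with the two $B$-colors, and there are $\Theta(\log n)$ of these whp. But the conclusion you draw from such an obstruction is too strong. A single obstructing path does not forbid miscoloring $w$; it only forces \emph{some vertex of that path} to be miscolored in turn. Hence the event whose probability you bound, namely ``$N(w)\cap B$ fits on one side of a bipartition of $\mbf{G}[B]$,'' is not the event ``some $4$-coloring miscolors $w$,'' and the estimate $(1-p^2)^{\Theta(nc^3)}=n^{-\Theta(c^3)}$ only controls the absence of level-one obstructions. (As a side remark, $\mbf{G}[B]$ is \emph{not} bipartite whp in this window; its probability of bipartiteness is exactly the constant appearing in \cref{thm:4-coloring-scaling-window}. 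Working with isolated $P_3$-components, as the paper does, sidesteps this.)

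The repair you propose does not close this gap. The ``companion estimate'' that each $B$-vertex is individually miscolorable only with polynomially small probability is precisely the proposition you are proving, applied to $B$-vertices, so invoking it is circular; and even a non-circular version (probability that a fixed $B$-vertex has no obstructing path into $A$) is not what you need, because a miscolored $B$-vertex need not be obstruction-free -- its own obstructions can again be disabled by further miscolorings, and so on. The adversarial coloring chooses, after seeing the graph, which of the $4$ vertices of each of the $\Theta(\log n)$ obstructing paths to miscolor, giving $4^{\Theta(\log n)}=n^{\Theta(1)}$ correlated patterns, against per-vertex bounds of only $n^{-\Theta(c^3)}$ with $c$ a fixed but possibly small constant; a two-level union bound of this type has no reason to converge. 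The paper instead follows the cascade all the way to a global contradiction: in \cref{claim:miscolorings-propagate} it builds a digraph on the $P_3$-components (with a choice function $\varphi$ recording which vertex of each path is miscolored) and runs a layered exploration showing that one miscoloring propagates geometrically, with layer sizes growing like $(\mu/20)^k$ for $\mu=\Theta(\log n)$, until $\Omega(n)$ vertices on one side are miscolored, contradicting the expansion property of \cref{lem:expander-small-p}; the union bound over all $4^{\sum_i |Y_i|}$ choices of $\varphi$ is absorbed precisely because the layers grow geometrically. Some propagation-to-a-global-contradiction step of this kind (this is how \cref{lem:odd-cycle-no-4-color} is proved) is the missing ingredient in your plan.
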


For a heuristic on why this should hold, consider the following procedure.  It is a simplification of \cref{def:mu-lambda-2}, which approximates the true distribution up to $o(1)$ total variation distance.

\begin{enumerate}
    \item Choose $A \in \binom{n}{n/2}$ uniformly at random and let $B = [n] \setminus A$.
    \item Sample $S \sim G(A,q)$ and $T \sim G(B,q)$ where $q = \Theta(1/n)$ is the solution to
    \[ \frac{q}{1-q} = \lambda e^{-\lambda^2 n/2} \]
    and $\lambda = 4m/n^2$.
    \item Sample $E_\mr{cr} \subset A \times B$ by including each element independently with probability $\lambda = 4m/n^2 = \Theta(\sqrt{\log n/n})$.
    \item Output $S \cup T \cup E_\mr{cr}$.
\end{enumerate}

Let $w \in A$.  In expectation, $N_B(w)$ contains $\Theta(\log n)$ many edges.  As $w$ is miscolored and so $B$-colored, these edges must each contain an $A$-colored vertex.  Thus one miscoloring propagates into logarithmically many miscolorings.  Their neighborhoods also contain $\Theta(\log n)$ many edges, and this propagation continues until each side has linearly many miscolored vertices, contradicting the structure of a $4$-coloring.

However, since $\mbf{G}$ is triangle-free, $N_B(w)$ cannot contain edges.  The true distribution of $E_\mr{cr}$ is an independent set model defined in \cref{sec:preliminaries}.  Instead, we count pairs $(u,v) \in N_B(w)$ connected by some path of length 3, of which we can locate $\Theta(\log n)$ many.  Each such pair results in a copy of $C_5$ consisting of $w$ and four vertices in $B$.  Since $w$ is $B$-colored and $\chi(C_5) = 3$, some vertex on this path must be $A$-colored and so miscolored.  Once again, one miscoloring propagates into logarithmically many, eventually resulting in linearly many miscolored vertices and a contradiction. 

Thus the question of $4$-colorability of $\mbf{G}$ reduces to the question of $2$-colorability of both $\mbf{G}[A]$ and $\mbf{G}[B]$, which are close in distribution to independent Erd\H{o}s--R\'enyi random graphs (\cref{prop:sandwiching-ERG-between-ERs}).  Understanding the bipartite to non-bipartite threshold in such graphs can be done following~\cite{pittel1988random}. 

\subsection{Bipartite \texorpdfstring{$2$}{2}-SAT}\label{subsec:intro-2-SAT}
To complete the analysis of the $3$- to $4$-colorability threshold, we need the scaling window of bipartite random $2$-SAT.  Non-bipartite random 2-SAT is a well-studied random constraint satisfaction problem (CSP) in theoretical computer science.  The sharp threshold for satisfiability of random $2$-SAT was established in the 1990's~\cite{goerdt1996threshold,chvatal1992mick,de2001random}.  Results of Goerdt~\cite{goerdt1999remark} and Verhoeven~\cite{verhoeven1999random} made progress on the size of the scaling window before it was fully determined by Bollob\'as, Borgs, Chayes, Kim, and Wilson~\cite{bollobas2001scaling}.

\begin{theorem}[{\cite[Theorem 1.1]{bollobas2001scaling}}]
    There are constants $\kappa_0$ large and $\eps_0$ small such that the following holds.  Let $F_{N,q}$ denote a random 2-SAT formula on $N$ variables with each clause occurring independently with probability
    \[ q := \frac{1+\eps}{2N} = \frac{1 + \kappa N^{-1/3}}{2N}, \]
    where $\kappa_0 < |\kappa|$ and $|\eps| < \eps_0$ (i.e.~$1 \ll |\kappa| \ll N^{1/3}$).  Then as $N \to \infty$,
    \[ \mb{P}(\SAT(F_{N,q})) = \begin{cases} \exp[-\Theta(|\kappa|^{-3})] & \kappa < 0, \\ \exp[-\Theta(|\kappa|^3)] & \kappa > 0. \end{cases} \]
\end{theorem}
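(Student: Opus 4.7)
The plan is to prove this via the standard implication-digraph representation of a 2-SAT instance. Associate with any 2-SAT formula $\varphi$ on $N$ variables the digraph $D(\varphi)$ on the $2N$ literal vertices $\{x_i, \bar x_i : 1 \leq i \leq N\}$, where each clause $(l_1 \vee l_2)$ contributes the two implication arcs $\bar l_1 \to l_2$ and $\bar l_2 \to l_1$. It is standard that $\varphi$ is unsatisfiable iff some variable has both of its literals in a single strongly connected component of $D(\varphi)$, and the Chv\'atal--Reed bicycle criterion reformulates this as the existence of a \emph{bicycle}: a directed path $w_0 \to w_1 \to \cdots \to w_s \to w_{s+1}$ whose $s$ internal vertices are literals of $s$ distinct variables, with both endpoints $w_0$ and $w_{s+1}$ lying in $\{w_i, \bar w_i : 1 \leq i \leq s\}$.

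I would then carry out a first-moment calculation on the number of bicycles of each length $s$ in $D(F_{N,q})$. There are at most $2^{s+2}s^2 N^s$ ways to choose the ordered sequence of internal literals together with the two endpoints, and the $s+1$ required arcs are each present independently with probability $q = (1+\eps)/(2N)$, giving an expected count of order $s^2 (1+\eps)^{s+1}/N$. In the subcritical regime $\eps = \kappa N^{-1/3} < 0$, the sum over $s$ is dominated by $s$ of order $1/|\eps| = N^{1/3}/|\kappa|$ and totals $\Theta(|\kappa|^{-3})$, so Markov's inequality gives $\mb P(\SAT(F_{N,q})) \geq 1 - O(|\kappa|^{-3}) = \exp[-\Theta(|\kappa|^{-3})]$. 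In the supercritical regime $\kappa > 0$, the calibration is reversed: the family of ``short'' bicycles (those of length up to $\Theta(N^{1/3}/\kappa)$) has collective expected count of order $\kappa^3$, so a Poisson-type heuristic predicts $\mb P(\text{no short bicycle}) \approx \exp[-\Theta(\kappa^3)]$.

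Promoting these heuristics to matching two-sided bounds requires a second-moment argument together with Janson- or Suen-type correlation inequalities applied to the family of bicycle events. For the matching subcritical upper bound on $\mb P(\SAT)$, one shows that the variance of the count of bicycles of length $\sim N^{1/3}/|\kappa|$ is dominated by the mean, so that Paley--Zygmund yields $\mb P(\text{UNSAT}) = \Omega(|\kappa|^{-3})$. The \textbf{main obstacle}, and the reason this theorem is delicate, is the supercritical lower bound $\mb P(\SAT) = \Omega(\exp[-C\kappa^3])$: rigorously ruling out \emph{every} bicycle with only exponentially small slack, in the presence of strong positive correlations between bicycles that share arcs, requires a careful Suen-type inequality whose error term involves a cluster expansion over pairs of overlapping bicycles, as in Bollob\'as--Borgs--Chayes--Kim--Wilson. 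The natural guide is a branching-process exploration of $D(\varphi)$ from a random literal, with offspring mean $1+\eps$, whose closure-into-contradiction probability has the claimed $\kappa^3$ tail; converting this intuition into a rigorous matching bound is the technical heart of the result and the step I would expect to be the most involved.
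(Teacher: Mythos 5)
The theorem you are proving is a cited result from \cite{bollobas2001scaling}; this paper imports it rather than reproving it, but it does prove the bipartite analog (\cref{theorem:bipartite-2SAT}) by closely following the strategy of that reference, and that strategy is not the bicycle-based one you propose. Your subcritical Markov step (first moment on bicycles, giving $\mb{P}(\SAT)\geq 1-O(|\kappa|^{-3})$) is sound and is essentially Chv\'atal--Reed pushed into the scaling window. The other three bounds have genuine gaps. A bicycle is only a \emph{necessary} condition for unsatisfiability, not a sufficient one (for instance the clauses $(\bar w_2 \vee w_1)\wedge(\bar w_1 \vee w_2)\wedge(\bar w_2 \vee \bar w_1)$ produce the bicycle $w_2\to w_1\to w_2\to\bar w_1$ yet are satisfied by $w_1=w_2=\FALSE$), so a Paley--Zygmund or Janson/Suen argument showing that bicycles exist with probability $\Omega(|\kappa|^{-3})$ does not give $\mb{P}(\on{UNSAT})=\Omega(|\kappa|^{-3})$: you need a witness that actually forces a contradictory cycle through some literal and its negation. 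This is precisely why \cite{bollobas2001scaling} introduce hourglasses (\cref{def:hourglass}) for the upper bounds; compare \cref{lem:hourglass-subcritical}, \cref{lem:hourglass-supercritical}, and \cref{thm:upper-bound}. Separately, your supercritical first-moment accounting is miscalibrated: with $q=(1+\eps)/(2N)$ and $\eps=\kappa N^{-1/3}>0$, the expected number of bicycles of length at most $C/\eps$ is $\Theta(\kappa^{-3})$, not $\Theta(\kappa^{3})$, since truncating at $O(1/\eps)$ keeps $(1+\eps)^{s}$ bounded and the sum then behaves exactly as in the subcritical case; the $\kappa^{3}$ exponent in this regime tracks the surplus of the giant component of the implication digraph and is carried by long structures, not by a Poissonization of short bicycles.

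The supercritical lower bound $\mb{P}(\SAT)\geq\exp[-O(\kappa^{3})]$, which you correctly identify as the technical heart, is not obtained in \cite{bollobas2001scaling} by a Suen-type cluster expansion over overlapping bicycles. Instead, as in the bipartite proof here (\cref{thm:SAT-lower-bound}), one runs a reduced formula process $\Phi_p$ that adds clauses in random order and censors any clause whose addition would create a contradiction, yielding the exact identity
\[ \mb{P}(\SAT(F_p))=\mb{E}\left[\exp\left(-\int_0^p \frac{U(\Phi_s)}{1-s}\,\mathrm{d}s\right)\right], \]
where $U(\Phi)$ counts contradiction-creating clauses. Jensen's inequality then reduces both the positive- and negative-$\kappa$ lower bounds to a single two-literal spine moment $\mb{P}(x\rightsquigarrow\bar x,\ y\rightsquigarrow\bar y)$, which is controlled by first- and second-moment computations on trimmed out-neighborhoods. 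This sidesteps the bicycle correlation structure entirely and handles both signs of $\kappa$ uniformly. If you want to persist with a bicycle-style argument, you would at minimum need to replace bicycles by a witness that certifies unsatisfiability and to rework the supercritical calibration before any Suen-type bound is even in a position to help.
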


Our result mirrors theirs in the bipartite setting.  

\begin{theorem}[bipartite 2-SAT]\label{theorem:bipartite-2SAT}
    For all $C > 0$, there exist $\kappa_0$ large and $\eps_0$ small such that the following holds.  Let $N,M$ be two integers with $N \to \infty$, $|N - M| \leq C N^{2/3}$.  Suppose we have $N+M$ variables $\{x_1,\ldots,x_N\}$ and $\{y_1,\ldots,y_M\}$.  Let $F_{N,M,q}$ be a random bipartite 2-SAT formula on $\{x_i\} \cup \{y_j\}$ with each bipartite clause occurring independently with probability
    \[ q := \frac{1 + \eps}{2(MN)^{1/2}} = \frac{1 + \kappa (MN)^{-1/6}}{2(MN)^{1/2}}, \]
    where $\kappa_0 < |\kappa|$ and $|\eps| < \eps_0$ (i.e.~$1 \ll |\kappa| \ll N^{1/3}$).  Then as $n \to \infty$,
    \[ \mb{P}(\SAT(F_{N,M,q})) = \begin{cases} \exp[-\Theta(|\kappa|^{-3})] & \kappa < 0, \\ \exp[-\Theta(|\kappa|^3)] & \kappa > 0. \end{cases} \]
\end{theorem}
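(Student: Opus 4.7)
The plan is to adapt the argument of Bollob\'as, Borgs, Chayes, Kim, and Wilson~\cite{bollobas2001scaling} to the bipartite setting. First I would associate to $F_{N,M,q}$ its implication digraph $D$ on the $2(N+M)$ literals: each clause $(\ell \vee \ell')$ contributes arcs $\bar\ell \to \ell'$ and $\bar{\ell'} \to \ell$. Because every clause contains exactly one $X$-literal and one $Y$-literal, every arc of $D$ crosses the bipartition and every directed cycle has even length. Recall the standard criterion that $F_{N,M,q}$ is unsatisfiable if and only if some variable $v$ satisfies $v \squig{D} \bar v$ and $\bar v \squig{D} v$.

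Following BBCKW, I would define a bicycle as a minimal witness of unsatisfiability: a directed walk $\ell_0 \to \ell_1 \to \cdots \to \ell_s$ in $D$ with $\ell_0, \ell_s \in \{v, \bar v\}$ for some variable $v$, with intermediate literals coming from distinct variables, exactly one of which also appears at an endpoint. A bicycle exists if and only if $F_{N,M,q}$ is unsatisfiable. The central calculation is to estimate the expected number of bicycles of length $s$: after enumerating sequences of variables alternating between the two sides, choosing signs, and imposing the $s$ arcs (each at cost $2q$), one obtains an expected count of order $\paren{2q\sqrt{MN}}^{s+1}$ times subexponential corrections. With $2q\sqrt{MN} = 1 + \kappa (MN)^{-1/6}$, summing over $s$ and optimizing yields the familiar $\Theta(\kappa^3)$ (respectively $\Theta(|\kappa|^{-3})$) behavior in the super- and subcritical regimes.

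For $\kappa < 0$, Markov's inequality gives $\mb{P}[\mr{UNSAT}] \leq \mb{E}[\#\text{bicycles}] = \Theta(|\kappa|^{-3})$, and a matching lower bound comes from a second moment argument restricted to vertex-disjoint bicycles (as in BBCKW), giving an approximate Poisson distribution and hence $\mb{P}[\SAT] = \exp(-\Theta(|\kappa|^{-3}))$. For $\kappa > 0$, I would argue via a two-step exposure. The first step controls forward and backward sets $R^\pm(\ell) := \{u : \ell \squig{D} u\}$ via a bipartite branching process alternating between the two sides with geometric-mean offspring $2q\sqrt{MN} = 1 + \eps$, showing $|R^\pm(\ell)| = \Theta(\eps \sqrt{MN})$ for a positive fraction of literals. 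A second round of exposure then reveals that $\ell \in R^+(\bar\ell) \cap R^-(\bar\ell)$ with essentially independent probabilities across variables, and the resulting count has a Poisson limit with parameter of order $\kappa^3$, yielding $\mb{P}[\SAT] = \exp(-\Theta(\kappa^3))$.

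The main obstacle I anticipate is the bookkeeping in the bipartite bicycle count. In the non-bipartite setting of BBCKW a walk of length $s$ contributes a factor $N^{s-1} q^s$; here we must split according to how many intermediate literals lie on each side, producing terms of the form $N^a M^b q^{a+b+1}$. The hypothesis $|N - M| \leq C N^{2/3}$ is exactly what is needed to guarantee $N^a M^b = (MN)^{(a+b)/2} (1 + o(1))$ uniformly for $a+b = O((MN)^{1/6})$, so that $2q\sqrt{MN}$ is the natural critical parameter and the BBCKW analysis transfers with $\sqrt{MN}$ playing the role of $N$. The branching-process step for $\kappa > 0$ is technically the most delicate: to apply concentration arguments one needs to avoid the saturation regime where $|R^\pm(\ell)|$ approaches $\sqrt{MN}$, which is precisely why the scaling window cuts off at $\kappa \ll (MN)^{1/6}$.
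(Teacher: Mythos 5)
There are two genuine gaps. First, your claimed equivalence ``a bicycle exists if and only if $F_{N,M,q}$ is unsatisfiable'' is false: a bicycle is only a necessary witness (unsatisfiability implies a bicycle, as in Chv\'atal--Reed), not a sufficient one. Markov's inequality on the expected number of bicycles therefore does give $\mb{P}[\mathrm{UNSAT}] = O(|\kappa|^{-3})$ in the subcritical window, i.e.\ the bound $\mb{P}[\SAT] \geq \exp(-O(|\kappa|^{-3}))$ (and this is a legitimately simpler route to that one direction than the paper's), but your matching step --- a second moment over vertex-disjoint bicycles --- does not lower-bound the probability of unsatisfiability, because the structures you count do not certify a contradiction. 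You would need to redo it for genuinely contradictory configurations, e.g.\ a directed cycle through both $v$ and $\bar v$ so that $v \squig{} \bar v \squig{} v$; the paper instead gets this direction in \cref{sec:SAT-upper-bounds} by finding $\Theta(|\kappa|^3)$ disjoint hourglasses (\cref{lem:hourglass-subcritical}) and sprinkling, the disjointness supplying the independence your sketch leaves implicit.

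Second, and more seriously, the entire supercritical case rests on the assertion that variables become contradictory ``with essentially independent probabilities'' and that the count has a Poisson limit of mean $\Theta(\kappa^3)$. That is the crux, not a step: by the skew-symmetry of the implication digraph, if $v$ is contradictory then so is every variable on a path $v \squig{} \bar v$, so contradictory variables come in large clusters and their count is far from Poisson over variables; moreover ``$v$ is not contradictory'' is a decreasing event, so Harris/FKG makes these events positively correlated, which is precisely the wrong direction for the upper bound $\mb{P}[\SAT] \leq \exp(-\Omega(\kappa^3))$ --- a product heuristic only yields a lower bound on $\mb{P}[\SAT]$. A true Poisson limit would in fact determine the limiting constant inside the window, which is open even for ordinary $2$-SAT (the paper notes this explicitly, citing the conjecture of Dovgal, de Panafieu, and Ravelomanana), so it cannot be obtained by the soft argument outlined. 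The paper handles $\kappa>0$ by a giant hourglass plus sprinkling for the upper bound (\cref{lem:hourglass-supercritical}) and, for the lower bound, by the reduced-formula/Jensen integral identity together with first- and second-moment control of the spine (\cref{thm:SAT-lower-bound}, \cref{lemma:second-moment-bound}, \cref{prop:spine-expectation}); in the bipartite setting the latter additionally requires two-sided enumeration bounds on connected bipartite graphs with given excess, $C(k,\ell,s)$ (\cref{prop:lower-bound-on-bipartite-connected-graphs}, \cref{prop:upper-bound-on-bipartite-connected-graphs}), a technical obstacle your proposal does not engage with. Your bookkeeping point that $|N-M| \leq C N^{2/3}$ makes $2q\sqrt{MN}$ the correct critical parameter is right, but of the four bounds to prove, only the subcritical Markov direction is actually carried by the sketch as written.
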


Our proof of this theorem closely follows that of Bollob\'as, Borgs, Chayes, Kim, and Wilson~\cite{bollobas2001scaling} for random $2$-SAT, though we must keep track of more indices to handle the bipartiteness.  The lower bound is proven using a first- and second-moment computation.  The upper bound is proven using  structures known as ``hourglasses.''

  The biggest challenge to carrying through this proof strategy is a lack of enumerative results.  Their methods require control on $C(k,s)$ the number of connected graphs with $k$ vertices and excess $s$ (i.e.~$k-1+s$ edges).  In a series of papers, Wright~\cite{wright1977number,wright1980number} determined the asymptotics of $C(k,s)$ for any $s = o(k^{1/3})$ up to the leading-order coefficient using involved generating function arguments.  This was extended by Bender, Canfield, and McKay~\cite{bender1990asymptotic} to an accurate formula for any $s = s(k) \leq \binom k2 - (k-1)$.

However, the bipartite version $C(k,\ell,s)$ the number of connected (spanning) subgraphs of $K_{k,\ell}$ with excess $s$ is much less well-studied.  A recent paper of Clancy~\cite{clancy2024asymptotics} matches the result of Wright for $s = O(1)$, but does not hold for $s$ a function of $k,\ell$.  We instead use an upper bound on $C(k,\ell,s)$ by Do, Erde, Kang, and Missethan~\cite{do2021component} and prove a lower bound (\cref{sec:proof-of-lower-bound-bipartite}).  However, while $C(k,s)$ is known up to $(1+o(1))$ accuracy, these upper and lower bounds on $C(k,\ell,s)$ only agree up to $\exp(\Theta(s))$ factors, and so additional arguments are required to show that these estimates suffice.

Note that we do not have enough precision to determine  the limiting value of $\mb{P}(\SAT(\cdot))$ inside the critical window.  This remains open for the standard (non-bipartite) 2-SAT problem as well.  A formula for the correct limiting probability in the non-bipartite case was conjectured by Dovgal, de Panafieu, and Ravelomanana~\cite{dovgal2021exact}.

\subsection{Outline of paper}

In \cref{sec:preliminaries}, we gather some preliminary results for the first part of the paper.  In \cref{sec:structural-results}, we show that any $3$-coloring will be found by our algorithm whp.  
In \cref{sec:reduction-to-SAT}, we relate success probability of our algorithm to satisfiability of a random bipartite $2$-SAT formula.
In \cref{sec:4-coloring-scaling-window}, we prove \cref{thm:4-coloring-scaling-window} on $4$-colorability.  In \cref{sec:T(nm)}, we derive \cref{theorem:main-fixed-m,thm:4-coloring-scaling-window-fixed-m} on $\mc{T}(n,m)$ from \cref{theorem:main-erdos-renyi,thm:4-coloring-scaling-window} on binomial random graphs.

In \cref{sec:2-SAT-intro}, we establish some standard terminology and lemmas for the study of $2$-SAT formulas.  In \cref{sec:SAT-lower-bounds}, we prove the lower bounds in \cref{theorem:bipartite-2SAT}.  In \cref{sec:SAT-upper-bounds}, we prove the upper bounds in \cref{theorem:bipartite-2SAT}.

\section{Preliminaries for triangle-free graphs}\label{sec:preliminaries}

\subsection{Basic graph theory}

A graph is a pair $G = (V,E)$ where $E \subset \binom V2$.  We will write $V(G)$ and $E(G)$ to denote the vertex and edge sets respectively.  A set $I \subset V$ is called an independent set if $e \not\subset I$ for any $e \in E$.  We denote by $\mc{I}(G)$ the set of all independent sets in $G$.

A proper $k$-coloring of $G$ is a map $\chi : V(G) \to [k]$ such that $\chi^{-1}(i) \in \mc{I}(G)$ for all $i \in [k]$.  We say $G$ is $k$-colorable if there exists a proper $k$-coloring.  A graph is said to be bipartite if it is $2$-colorable.

Given two graphs $G$, $H$, the Cartesian product $G \on{\Box} H$ is defined by $V(G \on{\Box} H) = V(G) \times V(H)$ and
\begin{multline*} E(G \on{\Box} H) = \left\{ \{ (u,v),(u',v) \} : uu' \in E(G), v \in V(H) \right\} \\ \cup \left\{ \{ (u,v),(u,v') \} : u \in V(G), vv' \in E(H) \right\}. \end{multline*}

\subsection{Hard-core model}

The hard-core model is a one-parameter probability measure on the set of independent sets of a graph $G$.  Given a graph $G$ and a parameter $\lambda > 0$ called the activity or fugacity, we define the hard-core model $\mu$ by
\[ \mu(I) = \frac{1}{Z_G(\lambda)} \lambda^{|I|}, \hspace{30pt} Z_G(\lambda) := \sum_{I \in \mc{I}(G)} \lambda^{|I|}. \]

The function $Z_G(\lambda)$ is called the partition function.  The hard-core model satisfies some quasirandomness conditions.  Intuitively, for sufficiently small $\lambda$, the hard-core model exhibits similar behavior to independent random sampling of vertices with probability $\lambda/(1+\lambda)$.  We will use the following lemma for this notion.

\begin{lemma}[quasirandomness of the hard-core model, {\cite[Lemma 4.3]{jenssen2023evolution}}]\label{lem:hardcore-quasirandom}
    Let $G$ be a graph of maximum degree $\Delta$ and let $U \subset V(G)$.  Let $\lambda \leq \frac{1}{16e^2\Delta}$ and let $\mathbf{I}$ be a random sample from the hard-core model on $G$ at activity $\lambda$.  Then
    \[ \mb{P}(|\mathbf{I} \cap U| \geq 5\lambda |U|) \leq e^{-\lambda |U|}, \]
    and
    \[ \mb{P}(|\mathbf{I} \cap U| \leq \lambda |U|/10) \leq e^{-\lambda |U|/8}. \]
\end{lemma}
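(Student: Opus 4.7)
The strategy is the factorial moment method combined with Chernoff-type bounds, aiming to compare $X := |\mbf{I} \cap U|$ to a Poisson random variable with mean $\mu := \lambda|U|$. The key identity is that for any independent set $S \subset V(G)$,
\[ \mb{P}(S \subset \mbf{I}) = \lambda^{|S|} \frac{Z_{G \setminus N[S]}(\lambda)}{Z_G(\lambda)}, \]
obtained from the hard-core definition via the bijection $I \leftrightarrow S \cup J$ between independent sets of $G$ containing $S$ and independent sets of $G \setminus N[S]$ (for non-independent $S$, the probability is $0$).

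For the upper tail, the trivial bound $Z_{G \setminus N[S]}(\lambda) \leq Z_G(\lambda)$ (any independent set of $G \setminus N[S]$ is still independent in $G$) gives $\mb{P}(S \subset \mbf{I}) \leq \lambda^{|S|}$, whence the factorial moment bound $\mb{E}\binom{X}{k} \leq \binom{|U|}{k}\lambda^k$. Expanding $(1+s)^X = \sum_k \binom{X}{k} s^k$ and taking expectations yields $\mb{E}[(1+s)^X] \leq (1 + \lambda s)^{|U|} \leq \exp(\mu s)$ for $s \geq 0$. Markov at $s = 4$ then gives $\mb{P}(X \geq 5\mu) \leq \exp(\mu(4 - 5\log 5)) \leq e^{-\mu}$, using $\log 5 > 1$.

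For the lower tail, the factorial-moment approach only controls the MGF on $[0,\infty)$, since the series $\sum_k (e^t - 1)^k \binom{X}{k}$ alternates in sign for $t < 0$. One therefore needs the complementary estimate $Z_{G \setminus N[S]}(\lambda)/Z_G(\lambda) \geq \exp(-C\lambda |N[S]|)$ for a small constant $C$, provable via cluster expansion: in the regime $\lambda \leq 1/(16e^2\Delta)$, well below the tree-uniqueness threshold $\lambda_c(\Delta) \sim e/\Delta$, the log-partition function is essentially linear in the vertex count with $O(\lambda)$ sensitivity per vertex, and $|N[S]| \leq (\Delta+1)|S|$ gives the required per-set bound. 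This yields $\mb{P}(S \subset \mbf{I}) \geq (\lambda')^{|S|}$ where $\lambda' = \lambda(1 - O(\lambda\Delta)) = (1 - o(1))\lambda$. Pairing upper and lower factorial-moment estimates in the alternating series for $\mb{E}[e^{-|t|X}]$ then produces $\mb{E}[e^{-|t|X}] \leq \exp(\mu'(e^{-|t|} - 1))$ with $\mu' = \lambda'|U|$, and Chernoff at $|t| = \log 10$ yields
\[ \mb{P}(X \leq \mu/10) \leq \exp\bigl(\tfrac{\log 10}{10}\mu - \tfrac{9}{10}\mu'\bigr) \leq e^{-\mu/8}, \]
since $\mu' = (1-o(1))\mu$ and $(9 - \log 10)/10 > 1/8$ with room to spare.

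The principal obstacle is precisely the lower tail: bounding $\mb{E}[e^{tX}]$ for negative $t$ demands genuinely two-sided control on partition function ratios rather than the trivial upper bound that suffices for the upper tail. The explicit factor $16e^2$ in the activity bound is chosen to leave enough margin for the cluster-expansion error terms to be absorbed while preserving the specific tail constants $5$ and $1/10$ stated in the lemma.
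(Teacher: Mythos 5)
Your upper-tail argument is correct (the identity $\mathbb{P}(S\subseteq\mathbf I)=\lambda^{|S|}Z_{G\setminus N[S]}(\lambda)/Z_G(\lambda)\le\lambda^{|S|}$, the factorial-moment bound, and Markov at $s=4$ all check out, and indeed need no degree hypothesis). The genuine gap is in the lower tail, at the step where you claim that ``pairing upper and lower factorial-moment estimates in the alternating series'' yields $\mathbb{E}[e^{-|t|X}]\le\exp\{\mu'(e^{-|t|}-1)\}$. Writing $\mathbb{E}[(1+s)^X]=\sum_k\mathbb{E}\binom Xk s^k$ with $s=e^{-|t|}-1\in(-1,0)$, the individual terms are as large as $e^{\Theta(\lambda|s||U|)}$ (around $k\approx\lambda|s||U|$), while the quantity you want to bound is of order $e^{-\Theta(\lambda|s||U|)}$: the bound rests on cancellation by a factor $e^{-\Theta(\lambda|U|)}$. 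Bounding even terms above by $\binom{|U|}{k}\lambda^k$ and odd terms below by roughly $\binom{|U|}{k}(\lambda(1-O(\lambda\Delta)))^k$ incurs an error of order $\lambda\Delta\cdot\lambda|U|\,e^{\lambda|U|}$, which overwhelms the target $e^{-\Theta(\lambda|U|)}$ unless $\lambda|U|=O(\log(1/(\lambda\Delta)))$; grouping consecutive terms does not help. Since the lemma is invoked in this paper with $\lambda|U|$ polynomially large in $n$, the argument as described would fail exactly in the regime where it is needed. Two smaller points: your lower bound $\mathbb{P}(S\subseteq\mathbf I)\ge(\lambda')^{|S|}$ holds only for independent $S$, so the factorial-moment lower bound also needs a count of independent $k$-subsets of $U$; and no cluster expansion is required for the ratio bound, since $Z_G(\lambda)\le Z_{G[N[S]]}(\lambda)\,Z_{G\setminus N[S]}(\lambda)\le(1+\lambda)^{|N[S]|}Z_{G\setminus N[S]}(\lambda)$ already gives $Z_{G\setminus N[S]}/Z_G\ge(1+\lambda)^{-|N[S]|}$.

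The MGF inequality you want is in fact true, but the clean derivation avoids the alternating series altogether: $\mathbb{E}[e^{-tX}]$ is itself a ratio of partition functions in which the activity on $U$ has been lowered from $\lambda$ to $\lambda e^{-t}$, so $-\log\mathbb{E}[e^{-tX}]=\int_{\lambda e^{-t}}^{\lambda}\sum_{u\in U}\xi^{-1}\mathbb{P}_\xi(u\in\mathbf I)\,\mathrm d\xi$, and the same decomposition as above applied to $N[u]$ gives the pointwise occupancy bound $\mathbb{P}_\xi(u\in\mathbf I)\ge\xi(1+\lambda)^{-(\Delta+1)}$ for every boundary activity profile at most $\lambda$. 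Integrating yields $\mathbb{E}[e^{-tX}]\le\exp\{-(1+\lambda)^{-(\Delta+1)}\mu(1-e^{-t})\}$, and since $\lambda(\Delta+1)\le 2/(16e^2)$ this is $\exp\{-(1-O(\lambda\Delta))\mu(1-e^{-t})\}$; your Chernoff step at $t=\log 10$ then gives $\mathbb{P}(X\le\mu/10)\le e^{-\mu/8}$ with room to spare. Note also that the paper does not prove this lemma itself but quotes it from Jenssen--Perkins--Potukuchi, so any write-up must supply a complete argument such as the one sketched here.
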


Note that this is neither optimal in range of $\lambda$ or constants, but is strong enough for our purposes.  

We will also need the following concentration result.

\begin{proposition}\label{prop:hardcore-size-concentration}
    Let $G$ be a triangle-free graph with $n$ vertices and maximum degree $\Delta$.  Let $\xi \leq 1/(4e\Delta)$ and let $\mbf{I}_\xi$ be a sample from the hard-core model on $G$ at fugacity $\xi$.  Then
    \[ \mb{E}|\mbf{I}_\xi| = \frac{\xi}{1+\xi} n - 2 e(G) \xi^2 + 3(P_2(G) + 2 e(G)) \xi^3 + O(n \Delta^3 \xi^4) \]
    and
    \[ \on{Var}|\mbf{I}_\xi| = \frac{\xi}{(1+\xi)^2} n - 4 e(G) \xi^2 + O(n \Delta^2 \xi^3). \]
\end{proposition}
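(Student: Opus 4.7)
The plan is to derive both moments from a cluster expansion of $\log Z_G(\xi)$, using the standard identities
\[ \mb{E}|\mbf{I}_\xi| \;=\; \xi \frac{d}{d\xi}\log Z_G(\xi), \qquad \on{Var}|\mbf{I}_\xi| \;=\; \xi \frac{d}{d\xi}\mb{E}|\mbf{I}_\xi|. \]
Setting $y := \xi/(1+\xi)$, I would begin by rewriting the partition function through inclusion--exclusion over edges violating independence:
\[ Z_G(\xi) \;=\; (1+\xi)^n \sum_{F \subseteq E(G)} (-1)^{|F|} y^{|V(F)|}, \]
where $V(F)$ denotes the set of vertices incident to some edge of $F$. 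Each $F$ decomposes uniquely into its vertex-disjoint connected components, so the inner sum is a hard-polymer partition function in which polymers are the connected nonempty edge subsets $\gamma$ of $G$, with activity $w(\gamma) = (-1)^{|E(\gamma)|} y^{|V(\gamma)|}$, and compatibility equal to vertex-disjointness.

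Next I would verify convergence. The standard bound that the number of connected subgraphs of $G$ on $k$ vertices containing a given vertex is at most $(e\Delta)^{k-1}$, together with $\xi \leq 1/(4e\Delta)$ (which forces $y \leq 1/(4e\Delta)$), places the system well inside the Kotecký--Preiss/Dobrushin regime, giving
\[ \log Z_G(\xi) - n \log(1+\xi) \;=\; \sum_\gamma w(\gamma) + R(y), \]
where $R$ collects all multi-polymer clusters. Among single-polymer terms, single edges contribute $-e(G) y^2$, cherries contribute $P_2(G) y^3$, triangles contribute $0$ by triangle-freeness, and the total single-polymer contribution from $\geq 4$ vertices is at most $n \sum_{k \geq 4}(e\Delta)^{k-1} y^k = O(n\Delta^3 y^4)$. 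Each multi-polymer cluster has weight magnitude $O(y^4)$ (an incompatible pair of polymers shares a vertex and each carries at least a $y^2$ factor), and summing using the same exploration bound gives $R(y) = O(n\Delta^3 y^4)$. Hence
\[ \log Z_G(\xi) = n\log(1+\xi) - e(G) y^2 + P_2(G) y^3 + O(n\Delta^3 \xi^4). \]

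Finally I would apply $\xi \partial_\xi$ using $\xi \partial_\xi y^k = k y^k(1-y)$, then expand $y = \xi - \xi^2 + \xi^3 + O(\xi^4)$ and collect terms. For the mean, the $\xi^2$-coefficient is $-2e(G)$, and the $\xi^3$-coefficient accumulates $4e(G)$ from the quadratic expansion of $-2e(G) y^2$, plus $2e(G)$ from the $(1-y)$ factor generated by differentiating $y^2$, plus $3 P_2(G)$ from $\xi\partial_\xi(P_2(G) y^3)$, totaling $3(P_2(G) + 2e(G))$, as claimed. Differentiating once more yields the variance; the resulting $9(P_2(G) + 2e(G))\xi^3$ term and the cluster tail are both absorbed into the stated $O(n\Delta^2 \xi^3)$ error using $e(G) \leq n\Delta/2$, $P_2(G) \leq n\binom{\Delta}{2}$, and $\Delta\xi \leq 1/(4e)$.

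The main obstacle is the polymer bookkeeping: verifying the Kotecký--Preiss criterion uniformly in $\Delta$ and confirming that every cluster contribution of order $\geq y^4$ really can be absorbed into $O(n\Delta^3 y^4)$. This is routine Ursell-function estimation, but requires care; it is made substantially easier by triangle-freeness (which removes the only competing $y^3$ polymer) and by the activity restriction $\xi \leq 1/(4e\Delta)$.
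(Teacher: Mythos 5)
The paper does not actually print a proof of \cref{prop:hardcore-size-concentration}, so I can only assess your argument on its merits. Your overall plan is sound and your algebra is right: the identity $Z_G(\xi)=(1+\xi)^n\sum_{F\subseteq E(G)}(-1)^{|F|}y^{|V(F)|}$ with $y=\xi/(1+\xi)$ is correct, the single-polymer terms $-e(G)y^2+P_2(G)y^3$ (triangles absent) are correct, and applying $\xi\partial_\xi$ with $\xi\partial_\xi y^k=ky^k(1-y)$ does reproduce exactly the stated coefficients $-2e(G)\xi^2+3(P_2(G)+2e(G))\xi^3$ for the mean and $-4e(G)\xi^2$ for the variance, with the lower-order pieces absorbable into the stated errors via $e(G)\le n\Delta/2$, $P_2(G)\le n\Delta^2/2$.

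The genuine gap is in the convergence step, which you yourself flag as the crux but then dispatch with a bound that does not apply to your polymers. Your polymers are connected \emph{edge subsets}, while the bound $(e\Delta)^{k-1}$ counts connected \emph{vertex sets} (equivalently $k$-vertex subtrees) containing a fixed vertex. For a fixed connected vertex set $U$ with $|U|=k$ there can be as many as $2^{e(G[U])}$ connected spanning edge subsets, and triangle-freeness does not help: if $U$ induces a complete bipartite graph (which only needs $\Delta\ge k/2$, and in your application $\Delta=\Theta(\sqrt{n\log n})$ is huge), this is $2^{\Theta(k^2)}$. Consequently the absolute polymer sums $\sum_{\gamma\ni v}|w(\gamma)|$ are not controlled by $\sum_k(e\Delta)^{k-1}y^k$, and the Kotecky--Preiss criterion fails as you have set it up, uniformly in $\Delta$ at $\xi\asymp 1/\Delta$; the same flaw infects both your tail bound $O(n\Delta^3y^4)$ for single polymers with $\ge 4$ vertices and your bound on $R(y)$. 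The repair is to exploit the sign cancellation before taking absolute values: group the edge subsets by their vertex set $U$, use the Tutte-polynomial identity $\bigl|\sum_{F\ \mathrm{conn.\ spanning}\ G[U]}(-1)^{|F|}\bigr|=T_{G[U]}(1,0)\le\tau(G[U])$ (number of spanning trees), and note that $\sum_{U\ni v,\,|U|=k}\tau(G[U])$ is exactly the number of $k$-vertex subtrees of $G$ containing $v$, which \emph{is} at most $(e\Delta)^{k-1}$; with this regrouped polymer model (polymers are connected vertex sets, activity the signed sum) your Kotecky--Preiss verification, tail bounds, and termwise differentiation of the error series all go through at $\xi\le 1/(4e\Delta)$. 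Alternatively, the more standard route is to run the cluster expansion for the hard-core model directly with single vertices as polymers (incompatibility being equality or adjacency) and compute the Ursell contributions of clusters of size $\le 3$; that avoids the edge-subset counting issue entirely. As written, though, the convergence argument would fail, and since the proposition is applied with $\Delta$ polynomially large in $n$, uniformity in $\Delta$ is not a dispensable technicality.
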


\subsection{Lambert \texorpdfstring{$W$}{W} function}\label{subsec:W-function}

The Lambert $W$ function is the (multivalued) inverse of $x \mapsto x e^x$.  We will only be interested in the behavior over $x \in \mb{R}$, where there are two branches.  The upper branch is generally called $W_0 : [-1/e,\infty) \to [-1,\infty)$ while the lower branch is often denoted $W_{-1} : [-1/e,0) \to (-\infty,-1]$.

\begin{figure}
    \centering
    \includegraphics[width=0.5\linewidth]{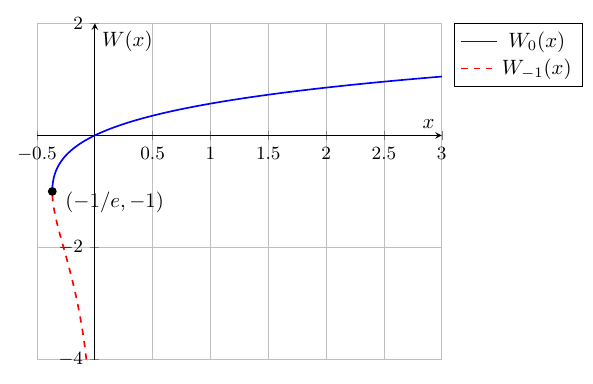}
    \caption{Real branches of the Lambert $W$ function}
\end{figure}

The function we will be interested in is the inverse of $2/y = x e^{-x}$, which, for all $y > 0$, has a unique solution in $[1,\infty)$, given by $x = -W_{-1}(-2/y)$; we will denote this $x = \psi(y)$ for brevity.  The function $\psi$ is continuous, increasing, differentiable, and has asymptotics
\[ \psi(y) = \log (y/2) + \log\log y + O\paren{\frac{\log\log y}{\log y}} \]
as $y \to \infty$.  See, e.g.~\cite{corless1996lambert} for more discussion of this function and proofs of these facts.

\subsection{Triangle-free graphs}\label{sec:JPP-summary}

Here we review the work of Jenssen, Perkins, and Potukuchi~\cite{jenssen2023evolution} on which we build.  Let $\mu_\lambda$ be the measure
\[ \mu_\lambda(G) = \frac{\lambda^{|G|}}{Z(\lambda)}, \qquad Z(\lambda) = \sum_{G \in \mc{T}(n)} \lambda^{|G|}. \]

Note that $\mu_\lambda$ is the distribution of $G(n,\lambda/(1+\lambda))$ conditioned on triangle-freeness.  

\begin{definition}[{\cite[Algorithm 2]{jenssen2023evolution}}]\label{def:mu-lambda-1}

Given $n$ and $\lambda$, define the measure $\mu_{\lambda,1}$ to be the law of the output of the following procedure for generating a random graph $\mbf{G}$ on $[n]$.

\begin{enumerate}
    \item Choose $\zeta \in \mb{Z}$ via
    \[ \mb{P}[\zeta = t] \propto (1+\lambda)^{-t^2}. \]
    \item Choose $A \in \binom{[n]}{\lfloor n/2 \rfloor - \zeta}$ uniformly at random, and let $B = [n] \setminus A$.  If $\zeta > \lfloor n/2 \rfloor$ or $\zeta < -\lceil n/2 \rceil$, abort and output the empty graph.
    \item Choose $S \subset \binom A2$ and $T \subset \binom B2$ according to independent Erd\H{o}s--R\'enyi random graphs on $A$ and $B$ with edge probability $q_0$ the unique solution in $(0,1)$ to
    \[ \frac{q_0}{1 - q_0} = \lambda e^{-\lambda^2n/2}. \]

    If $\mbf{G}[A]$ or $\mbf{G}[B]$ contains a triangle, then abort and output the empty graph.
    \item Choose $E_\mr{cr} \subset A \times B$ according to the hard-core model on $S \on{\Box} T$ at fugacity $\lambda$.
    \item Output $S \cup T \cup E_\mr{cr}$.
\end{enumerate}
\end{definition}

Note that selecting $E_\mr{cr}$ according to the hard-core model forbids the creation of triangles.

\begin{theorem}[{\cite[Theorem 2.1]{jenssen2023evolution}}]\label{thm:JPP-full-strength}
    Fix $\eps > 0$ and suppose $p \geq (1+\eps) \sqrt{\frac{\log n}{n}}$.  Then, with $\lambda = p/(1-p)$, we have
    \[ \| \mu_\lambda - \mu_{\lambda,1}\|_\mr{TV} = o(1), \]
    where $\mu_\lambda$ denotes the distribution of a sample from $G(n,p)$ conditioned on triangle-freeness.
\end{theorem}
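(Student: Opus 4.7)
The plan is to match partition functions. Since $\mu_\lambda(G) = \lambda^{|G|}/Z(\lambda)$ where $Z(\lambda) = \sum_{G \text{ triangle-free}} \lambda^{|G|}$, I would compute $Z(\lambda)$ by decomposing each triangle-free $G$ along its max cut $(A,B)$ (unique whp at this density, by arguments mirroring~\cref{thmJKPlight}). Setting $S = G[A]$, $T = G[B]$, and $E_\mr{cr}$ for the crossing edges, triangle-freeness forces $S, T$ to be triangle-free and $E_\mr{cr}$ to be an independent set in the Cartesian product $S \on{\Box} T$: a defect edge $uu' \in S$ together with crossing edges $uv, u'v$ yields a triangle, which corresponds exactly to an edge $\{(u,v),(u',v)\}$ of $S \on{\Box} T$. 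This decomposition gives
\[
Z(\lambda) = (1 + o(1)) \sum_{(A,B)} \sum_{\substack{S,\, T \\ \text{triangle-free}}} \lambda^{|S|+|T|}\, Z_{S \on{\Box} T}(\lambda),
\]
where the outer sum is over ordered bipartitions of $[n]$ and the $1+o(1)$ error absorbs the overcounting from non-max-cut bipartitions.

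The core step is a cluster expansion analysis of $Z_{S \on{\Box} T}(\lambda)$ for typical sparse $S, T$. Since $q_0 = \Theta(1/n)$, a typical defect graph has $O(n)$ edges, and $S \on{\Box} T$ has maximum degree $2$. Polymer cluster expansion (with polymers being connected subgraphs of $S \on{\Box} T$ weighted by their hard-core contribution) converges and yields to leading order
\[
Z_{S \on{\Box} T}(\lambda) \approx (1+\lambda)^{|A||B|} \exp\Bigl( -e(S \on{\Box} T) \cdot \tfrac{\lambda^2}{(1+\lambda)^2} \Bigr).
\]
Using $e(S \on{\Box} T) = |S| \cdot |B| + |T| \cdot |A| \approx (|S|+|T|)\, n/2$, the effective per-defect-edge activity becomes
\[
\lambda\, \exp\bigl( -\tfrac{\lambda^2 n}{2(1+\lambda)^2} \bigr) = (1+o(1))\, \lambda\, e^{-\lambda^2 n/2} = \frac{q_0}{1-q_0},
\]
exactly the ratio that makes independent $\mr{Bernoulli}(q_0)$ draws on each potential defect edge the correct conditional distribution, reproducing step (3) of~\cref{def:mu-lambda-1}. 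Summing over bipartitions with $|A| = \lfloor n/2 \rfloor - \zeta$ produces the factor $(1+\lambda)^{|A||B|} = (1+\lambda)^{n^2/4 - \zeta^2}$, yielding the discrete Gaussian weight $(1+\lambda)^{-\zeta^2}$ on $\zeta$ appearing in step (1). Combining these pieces shows $\mu_{\lambda,1}(G) = (1+o(1))\mu_\lambda(G)$ pointwise on a set of $G$ of probability $1 - o(1)$ under both measures, which suffices for the total variation conclusion.

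The principal obstacle is controlling the cluster expansion error uniformly in $S, T$. Because $\lambda^2 n \asymp \log n$, the factor $e^{-\lambda^2 n/2}$ is a genuine leading-order correction rather than a small perturbation, so the next-order polymer contributions (longer paths in $S \on{\Box} T$ and higher-order interactions between defect edges) must be shown to contribute only $o(1)$ to the normalized log-partition-function per defect edge. I would address this via a Koteck\'y--Preiss convergence criterion, using the quasirandomness bound~\cref{lem:hardcore-quasirandom} to control hard-core deviations and sharp enumeration of short walks in $S \on{\Box} T$ to bound the polymer entropy. Finally, controlling tail events (clumping of defect edges, non-max-cut bipartitions, atypical total defect edge count) with vanishing total probability on both sides then promotes pointwise agreement to agreement in total variation.
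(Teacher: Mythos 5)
This statement is not proved in the paper at all: it is Theorem 2.1 of \cite{jenssen2023evolution}, imported as a black box, so there is no in-paper argument to compare against; the relevant benchmark is the (long) proof of Jenssen--Perkins--Potukuchi. Your sketch does capture the mechanism behind the formula --- decompose along a near-bipartition, treat the crossing edges as a hard-core configuration on $S \on{\Box} T$, extract the effective defect-edge activity $\lambda e^{-\lambda^2 n/2} = q_0/(1-q_0)$ from a first-order expansion, and let the $(1+\lambda)^{|A||B|}$ factor produce the discrete Gaussian in $\zeta$ --- and this is indeed the skeleton of their argument.

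As a proof, however, there are concrete gaps. First, your partition-function identity is false as written: for \emph{every} fixed ordered bipartition $(A,B)$, the map $G \mapsto (G[A],G[B],E_\mr{cr})$ is a bijection between $\mc{T}(n)$ and triples (triangle-free $S$, triangle-free $T$, independent set of $S \on{\Box} T$), so your double sum equals $2^{n}\,Z(\lambda)$ exactly --- an overcount by a factor $2^{n}$, not $1+o(1)$. Repairing it requires restricting to graphs whose max cut is $(A,B)$ and proving that the measure concentrates on nearly bipartite graphs with a unique, nearly balanced max cut; that is precisely the container/stability half of \cite{jenssen2023evolution} (building on {\L}uczak's approximate bipartiteness theorem \cite{luczak2000triangle}), and you cannot get it ``by arguments mirroring \cref{thmJKPlight}'', since that description of the defect and crossing-edge distributions is itself a consequence of the theorem you are trying to prove --- the appeal is circular. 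Second, the cluster-expansion step is where the real work lies: at the lower end of the range, $\lambda = (1+\eps)\sqrt{\log n/n}$, the defect graphs are not matchings (their maximum degree is only $O_\eps(1)$, so $S \on{\Box} T$ does not have maximum degree $2$), and because $e^{-\lambda^2 n/2}$ is only polynomially small in $n$, the higher-order polymer corrections (e.g.\ the $\lambda^3 n P_2$-type terms that force the modified measure $\mu_{\lambda,2}$ of \cref{def:mu-lambda-2} at slightly smaller $p$) must be shown to contribute $o(1)$ to the \emph{total variation distance} after summing over all defect edges, all bipartitions, and all atypical events, uniformly in $S,T$ --- not merely $o(1)$ per defect edge. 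A Koteck\'y--Preiss criterion gives convergence, but this uniform quantitative error control, together with the tail estimates you defer, constitutes the bulk of the JPP proof; your outline presupposes rather than supplies it.
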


This characterization implies several niceness conditions for $\mbf{G}$.

\begin{lemma}[\cite{jenssen2023evolution}]\label{lem:strongly-balanced}
     $\mbf G$ has a unique max cut $V(\mbf{G}) = A \cup B$ with $|A| = n/2 + O((n\log n)^{1/4})$ and $|B| = n/2 + O((n \log n)^{1/4})$ whp.
\end{lemma}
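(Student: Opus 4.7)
The plan is to use \cref{thm:JPP-full-strength} to replace $\mu_\lambda$ with the explicit measure $\mu_{\lambda,1}$ of \cref{def:mu-lambda-1} at a cost of $o(1)$ in total variation, so that it suffices to verify both claims under $\mu_{\lambda,1}$, where the bipartition $(A,B)$ is produced directly by the sampler. Under $\mu_{\lambda,1}$ one has $|A| = \lfloor n/2 \rfloor - \zeta$ with $\mb{P}[\zeta = t] \propto (1+\lambda)^{-t^2}$. In the regime $p \geq (1+\eps)\sqrt{\log n / n}$ relevant here, $\lambda = \Theta(\sqrt{\log n / n})$, so this distribution is sub-Gaussian with variance proxy of order $1/\lambda = \Theta(\sqrt{n / \log n})$. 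A direct Gaussian-type tail bound then gives $|\zeta| = O\bigl(\sqrt{\log n / \lambda}\bigr) = O\bigl((n \log n)^{1/4}\bigr)$ whp, which immediately yields the stated balance on $|A|$ and $|B|$.

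For uniqueness of the max cut, I would compare the density of crossing edges to that of defect edges and show that any alternative partition strictly decreases the cut size. Decomposing $\mbf{G} = S \cup T \cup E_\mr{cr}$ as in \cref{def:mu-lambda-1}, the defect-edge probability $q_0 = \lambda e^{-\lambda^2 n / 2}$ is smaller than $\lambda$ by a factor of $n^{-(1+\eps)^2/2 + o(1)}$, so a Chernoff bound gives $\deg_S(v) + \deg_T(v) = O(\log n)$ uniformly in $v$ whp. In contrast, since the product graph $S \on{\Box} T$ has maximum degree $O(\log n) \ll 1/\lambda$, \cref{lem:hardcore-quasirandom} applied to the vertex set $\{v\} \times B$ (or $A \times \{v\}$) yields $\deg_{E_\mr{cr}}(v) = \Theta(\lambda n) = \Theta(\sqrt{n \log n})$ for every $v$ whp, so crossing-edge degrees dominate defect-edge degrees by a factor of $n^{\Omega(1)}$.

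The uniqueness argument is then completed as follows. For any $X \subset A$ and $Y \subset B$ with $(X,Y) \notin \{(\emptyset,\emptyset), (A,B)\}$, moving $X$ and $Y$ across the partition changes the cut size by
\[ e_S(X, A \setminus X) + e_T(Y, B \setminus Y) - e_\mr{cr}(X, B \setminus Y) - e_\mr{cr}(A \setminus X, Y), \]
and one must show this is strictly negative simultaneously for all such $(X,Y)$ whp. For small $k = |X| + |Y|$, a union bound over $\binom{n}{k}$ swap sets combined with the vertex-wise degree dominance above suffices. For larger $k$, \cref{lem:hardcore-quasirandom} applied to the bipartite subgraphs $X \times (B \setminus Y)$ and $(A \setminus X) \times Y$ inside $S \on{\Box} T$ concentrates the subtracted terms around $\lambda \bigl( |X|(|B|-|Y|) + (|A|-|X|)|Y| \bigr)$, while Chernoff bounds the added terms by $O(q_0 k n + k \log n)$; the $n^{\Omega(1)}$ separation between $\lambda$ and $q_0$ absorbs the union-bound entropy at every scale. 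This scale-by-scale union bound is the technically most delicate step and is where the main work lies, though it follows the template of~\cite{jenssen2023evolution}.
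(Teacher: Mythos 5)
This lemma is not proved in the paper at all: it is imported verbatim from~\cite{jenssen2023evolution}, so there is no in-paper argument to compare against, and your proposal should be judged as a reconstruction of the cited proof. As such it is essentially sound, and it follows the same strategy that makes the cited result true: pass to the planted-partition sampler $\mu_{\lambda,1}$ via \cref{thm:JPP-full-strength} (legitimate, since the conclusion is a graph property, so a $o(1)$ total-variation bound transfers it), read off the balance from the law $\mb{P}[\zeta=t]\propto(1+\lambda)^{-t^2}$ (indeed $\mb{P}[|\zeta|>(n\log n)^{1/4}]\le e^{-\Omega(\lambda\sqrt{n\log n})}=n^{-\Omega(1)}$), and prove that the planted cut is the strict maximizer by the swap identity you wrote down, using \cref{lem:hardcore-quasirandom} on the subset $U=(X\times(B\setminus Y))\cup((A\setminus X)\times Y)$ of the vertex set $A\times B$ of $S\on{\Box}T$ against a Chernoff/max-degree bound on the defect edges. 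Two small points would tighten the write-up. First, you should reduce to $|X|+|Y|\le n/2$ by the complementation symmetry $(X,Y)\mapsto(A\setminus X,B\setminus Y)$, after which a short case analysis gives $|X|(|B|-|Y|)+(|A|-|X|)|Y|=\Omega\bigl((|X|+|Y|)\,n\bigr)$ uniformly; without this normalization the "expected loss" can be small for near-total swaps and your concentration step would not apply as stated. Second, your two-scale structure can be streamlined: once you have the uniform bounds $\min_v d_{\mbf G}(v,B)\ge\lambda n/30$ and $\Delta(\mbf G[A]\cup\mbf G[B])=O(\log n)$ (both whp), swaps with $|X|+|Y|\le\lambda n/100$ are ruled out deterministically, and for larger swaps the union bound is exactly the computation of \cref{app:expander}: the tail $e^{-\lambda|U|/8}\le e^{-\Omega(k\sqrt{n\log n})}$ beats the entropy $e^{O(k\log n)}$ (and $2^n$ at the top scale, since $\lambda n^2\gg n$). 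With those adjustments your argument is complete and consistent with the approach of~\cite{jenssen2023evolution}.
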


\begin{lemma}[\cite{jenssen2023evolution}]\label{lem:expander}
    Let $\mbf{G} \sim \mu_{\lambda,1}$ and let $(A,B)$ be the (unique) max cut of $\mbf{G}$.  Then whp, for all $v \in A$,
    \[ \frac{\lambda n}{30} \leq d_\mbf{G}(v,B) \leq 5 \lambda n \]
    and
    \[ X \subset A, \ |X| \geq \lambda n/100, \ Y \subset B, \ |Y| \geq n/9 \implies d_\mbf{G}(X,Y) > 0, \]
    and likewise with $A$ and $B$ exchanged.
\end{lemma}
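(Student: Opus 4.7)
The plan is to exploit the explicit construction in \cref{def:mu-lambda-1}, under which $\mbf{G} = S \cup T \cup E_\mr{cr}$ with $E_\mr{cr}$ drawn from the hard-core model on $S \on{\Box} T$ at fugacity $\lambda$, and to deduce both conclusions from the quasirandomness bound \cref{lem:hardcore-quasirandom} applied to this hard-core sample. The crossing degree of $v \in A$ in $\mbf{G}$ equals $|E_\mr{cr} \cap (\{v\} \times B)|$, and the number of crossing edges between $X \subset A$ and $Y \subset B$ equals $|E_\mr{cr} \cap (X \times Y)|$, so both statements reduce to controlling how the hard-core sample intersects subsets $U \subset V(S \on{\Box} T)$.

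Before invoking \cref{lem:hardcore-quasirandom}, I would first show that $\Delta(S \on{\Box} T) = \Delta(S) + \Delta(T)$ is small whp. Since $\lambda = \Theta(\sqrt{\log n/n})$, we have $q_0 = \Theta(\sqrt{\log n}/n)$, so each vertex has expected $S$- or $T$-degree $\Theta(\sqrt{\log n})$, and a Chernoff plus union bound over the $n$ vertices gives $\Delta(S), \Delta(T) = O(\log n)$ whp. Consequently $\lambda \cdot \Delta(S \on{\Box} T) = O((\log n)^{3/2}/\sqrt{n}) = o(1)$, so the hypothesis $\lambda \leq 1/(16 e^2 \Delta(S \on{\Box} T))$ of \cref{lem:hardcore-quasirandom} holds for $n$ large. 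All subsequent applications of the quasirandomness lemma are conditional on $(S,T)$ lying in this good event.

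For Part 1, combining the above with $|B| = n/2 + O((n \log n)^{1/4})$ from \cref{lem:strongly-balanced} and taking $U = \{v\} \times B$ with $|U| = |B|$, \cref{lem:hardcore-quasirandom} yields
\[ d_\mbf{G}(v,B) \in [\lambda |B|/10,\, 5 \lambda |B|] \subset [\lambda n/30,\, 5\lambda n] \]
with failure probability at most $2 e^{-\lambda |B|/8} = e^{-\Omega(\sqrt{n \log n})}$. A union bound over the $n$ vertices (and over $v \in B$ by symmetry) preserves whp. For Part 2, take $U = X \times Y$ with $|U| = |X||Y| \geq \lambda n^2 / 900$; the lower tail of \cref{lem:hardcore-quasirandom} gives
\[ \mb{P}\paren{d_\mbf{G}(X,Y) = 0} \leq \mb{P}\paren{|E_\mr{cr} \cap U| \leq \lambda|U|/10} \leq e^{-\lambda |U|/8} = e^{-\Omega(\lambda^2 n^2)} = e^{-\Omega(n \log n)}, \]
and since there are at most $2^{2n}$ eligible pairs $(X,Y)$, a union bound is again comfortable.

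The main obstacle is mostly organizational rather than technical: combining the several whp events (balance of the cut, small $\Delta(S)$ and $\Delta(T)$, and the quasirandomness conclusions on $E_\mr{cr}$) into one whp event, and verifying that the exponential decay rates $e^{-\Theta(\sqrt{n\log n})}$ for the degree statement and $e^{-\Theta(n \log n)}$ for the expansion statement comfortably beat the respective union-bound factors of $n$ and $2^{2n}$. Once this bookkeeping is handled, everything follows directly from \cref{lem:hardcore-quasirandom}.
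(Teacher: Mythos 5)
Your proposal is correct and follows essentially the same route as the paper's proof in \cref{app:expander}: identify $d_\mbf{G}(v,B)$ and $d_\mbf{G}(X,Y)$ with $|E_\mr{cr}\cap(\{v\}\times B)|$ and $|E_\mr{cr}\cap(X\times Y)|$, apply the hard-core quasirandomness bound \cref{lem:hardcore-quasirandom} to these sets, and finish with union bounds over $v$ and over pairs $(X,Y)$, using \cref{lem:strongly-balanced} for the size of $B$. Your extra step of checking the hypothesis $\lambda\le 1/(16e^2\Delta(S\on{\Box}T))$ via a whp bound $\Delta(S),\Delta(T)=O(\log n)$ is a sound addition (the paper leaves this implicit), and the minor imprecision in your stated order of $q_0$ is harmless since only an upper bound on the degrees is needed.
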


\cref{lem:expander} is a minor modification of \cite[Lemma 5.3]{jenssen2023evolution}, where we have improved some constants and added the upper bound on degree.  We prove this version in \cref{app:expander}.

Throughout this paper, we will work with $\mbf{G}$ a sample from $\mu_{\lambda,1}$ and apply Theorem~\ref{thm:JPP-full-strength} to get our desired results on the random triangle-free graph.

\section{Structural results at critical \texorpdfstring{$\lambda$}{lambda}}\label{sec:structural-results}

We begin with several results about the typical $\mbf{G} \sim \mu_\lambda$ in the range of $\lambda$ we are interested in.

\begin{assumption}\label{assumption:3-to-4}
    Fix
\[ p_c := \sqrt{\frac{2\psi(n)}{n}} = (1+o(1)) \sqrt{\frac{2\log n}{n}} \]
and let
\[ p = p_c + n^{-2/3} (\log n)^{-1/3} \omega, \qquad |\omega| \ll n^{1/6} (\log n)^{5/6}. \]

\begin{itemize}
    \item Let $\lambda = p/(1-p)$ and let $q_0$ be the unique solution in $(0,1)$ to
    \[ \frac{q_0}{1 - q_0} = \lambda e^{-n \lambda^2/2}. \]
    \item $\mbf{G}$ is a random graph $\mbf{G} \sim \mu_{\lambda,1}$ on $n$ vertices (see \cref{def:mu-lambda-1}).
    \item $\mbf{G}$ has a unique max cut $V(\mbf{G}) = A \cup B$.  (This exists whp by \cref{lem:strongly-balanced}.)
    \item $S$ and $T$ are the edge sets of $\mbf{G}[A], \mbf{G}[B]$ respectively.  We will call $\mbf{G}[A]$ and $\mbf{G}[B]$ the defect graphs.
\end{itemize}
\end{assumption}

This will be our setting for \cref{sec:structural-results,sec:reduction-to-SAT}.  We may compute
\begin{align*}
    \frac{q_0}{1-q_0} & = \lambda e^{-\lambda^2n/2} = (1+\widetilde{O}(n^{-1/2})) p \exp\left\{-\left( \psi(n) + \widetilde{O}(n^{-1/6}) \right) \right\} \\
    & = (1+o(1)) \paren{\frac{2\log n}{n}}^{1/2} \exp\left\{ -\left( \log n + \log\log n + o(1) \right) \right\} \\
    & = (1+o(1)) \frac{\sqrt2}{n^{3/2} (\log n)^{1/2}} \ll 1 \\
    q_0 & = (1+o(1))\frac{\sqrt2}{n^{3/2} (\log n)^{1/2}}.
    \addtocounter{equation}{1}\tag{\theequation}\label{eq:q_0-leading-order}
\end{align*}

\begin{lemma}\label{lem:concentration-of-S}
    Let $\mbf{G}$ follow \cref{assumption:3-to-4}. Whp,
    \[ |S| = q_0 \frac{n^2}{8} + O(n^{1/4}) = \Theta\paren{n^{1/2} (\log n)^{-1/2}} \]
    and
    \[ |T| = q_0 \frac{n^2}{8} + O(n^{1/4}) = \Theta\paren{n^{1/2} (\log n)^{-1/2}}. \]
\end{lemma}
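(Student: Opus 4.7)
The plan is to work directly from the generative procedure in \cref{def:mu-lambda-1}. Conditional on the choice of $\zeta$ and the resulting partition $A \cup B$, the edge set $S$ is distributed as an Erd\H{o}s--R\'enyi random graph on $A$ with edge probability $q_0$, further conditioned on triangle-freeness of $\mbf{G}[A]$. I would establish the claim in three steps: first control $|A|$; then show the triangle-free conditioning is negligible; then concentrate the unconditioned edge count via Chebyshev.

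For the first step, \cref{lem:strongly-balanced} gives $|A| = n/2 + O((n\log n)^{1/4})$ whp, so $\binom{|A|}{2} = n^2/8 + O(n^{5/4} (\log n)^{1/4})$. Combined with $q_0 = \Theta(n^{-3/2} (\log n)^{-1/2})$ from \cref{eq:q_0-leading-order}, this yields $q_0 \binom{|A|}{2} = q_0 n^2/8 + O(n^{-1/4} (\log n)^{-1/4})$, so the deviation coming from the variation in $|A|$ is easily absorbed into the target error $O(n^{1/4})$. For the second step, the expected number of triangles in the unconditioned $G(|A|, q_0)$ is $\binom{|A|}{3} q_0^3 = O(n^{-3/2} (\log n)^{-3/2}) = o(1)$; by Markov's inequality the unconditioned Erd\H{o}s--R\'enyi graph is triangle-free with probability $1 - o(1)$, hence the conditional and unconditional laws agree up to $o(1)$ in total variation, and any whp statement under the unconditioned law transfers to $S$.

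For the third step, under the unconditioned Erd\H{o}s--R\'enyi law, $|S| \sim \mr{Bin}(\binom{|A|}{2}, q_0)$ has mean $q_0 \binom{|A|}{2}$ and variance $q_0(1-q_0)\binom{|A|}{2} = \Theta(n^{1/2}(\log n)^{-1/2})$. Applying Chebyshev's inequality at $\omega_0 = (\log n)^{1/4}$ standard deviations gives $|S| = q_0 \binom{|A|}{2} + O(n^{1/4})$ with probability $1 - O((\log n)^{-1/2})$. Combining with the first step yields $|S| = q_0 n^2/8 + O(n^{1/4})$ whp, and the order of magnitude $\Theta(n^{1/2} (\log n)^{-1/2})$ follows from the explicit expression for $q_0$ in \cref{eq:q_0-leading-order}. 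The argument for $|T|$ is identical after swapping $A$ and $B$. The only real subtlety is the triangle-free conditioning in the second step, but it is not a significant obstacle, since the expected triangle count in the unconditioned defect graph is $o(1)$ and a first-moment bound suffices.
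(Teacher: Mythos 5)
Your proposal is correct and follows essentially the same route as the paper: condition on $|A|$ via \cref{lem:strongly-balanced}, observe that $|S|$ is Binomial$\left(\binom{|A|}{2}, q_0\right)$, and concentrate it within $O(n^{1/4})$ of its mean, with the $q_0\binom{|A|}{2} = q_0 n^2/8 + O(n^{-1/4})$ computation absorbing the fluctuation in $|A|$. The only differences are cosmetic: you use Chebyshev at $(\log n)^{1/4}$ standard deviations where the paper uses a Chernoff bound (either suffices for a whp statement), and you explicitly dispose of the triangle-freeness step of \cref{def:mu-lambda-1} via a first-moment bound, a point the paper leaves implicit since that event has probability $o(1)$ at this density.
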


\begin{proof}
    By \cref{lem:strongly-balanced}, we may assume $|A| = n/2 + \widetilde O(n^{1/4})$ except with vanishing probability.  Conditioning in $|A|$, we have $|S| \sim \on{Binom}(\binom{|A|}{2}, q_0)$.  By a Chernoff bound,
    \[ \mb{P}\left[ \left| |S| - q_0 \binom{|A|}{2} \right| \geq n^{1/4} \right] \leq \exp\left\{-\frac12 n^{1/2} \left(q_0 \binom{|A|}{2}\right)^{-1} \right\} = \exp\{-\Omega((\log n)^{1/2}) \}. \]

    We can now evaluate
    \[ q_0 \binom{|A|}{2} = q_0 \paren{\frac{n^2}{8} + \widetilde{O}(n^{5/4})} = q_0 \frac{n^2}{8} + \widetilde{O}(n^{-1/4}), \]
    and so if $|A| = n/2 + \widetilde O(n^{1/4})$ then $|S| = q_0 n^2/8 + O(n^{1/4})$ except with probability $o(1)$.
\end{proof}

\begin{lemma}\label{claim:disjoint-edges}
    Let $\mbf{G}$ follow \cref{assumption:3-to-4}. Whp, $\mbf{G}[A]$ and $\mbf{G}[B]$ have no connected components of size $>2$.
\end{lemma}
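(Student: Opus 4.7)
The plan is to reduce the claim to a first-moment calculation. I would first observe that any graph with a connected component on more than two vertices must contain a path $P_3$ on three vertices as a subgraph: such a component has a vertex $v$ of degree at least $2$ (otherwise it would consist of an isolated vertex or a single edge), and $v$ together with any two of its neighbors forms a $P_3$. Hence it suffices to show that neither $\mbf{G}[A]$ nor $\mbf{G}[B]$ contains a $P_3$ whp.

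By \cref{def:mu-lambda-1}, conditional on the partition $(A, B)$ produced in steps 1--2 (so that $|A|, |B| \leq n$), the defect graphs $\mbf{G}[A]$ and $\mbf{G}[B]$ are independent Erd\H{o}s--R\'enyi random graphs $G(|A|, q_0)$ and $G(|B|, q_0)$ conditioned on both being triangle-free (the procedure aborts otherwise). This conditioning is harmless: the probability that either contains a triangle is at most $2 \binom{n}{3} q_0^3 = O(n^{-3/2}(\log n)^{-3/2}) = o(1)$ by \cref{eq:q_0-leading-order}, so probabilities under the two distributions agree up to a $1 + o(1)$ factor. It therefore suffices to bound the probability of a $P_3$ appearing in the unconditioned $G(|A|, q_0)$ and $G(|B|, q_0)$.

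Next I would compute the expectation directly. Since the number of copies of $P_3$ on $|A|$ labeled vertices is $3 \binom{|A|}{3} \leq n^3/2$ and each copy appears with probability $q_0^2$,
\[ \mb{E}[\text{number of copies of } P_3 \text{ in } \mbf{G}[A]] \leq \frac{n^3}{2} q_0^2 = O\!\paren{\frac{1}{\log n}} = o(1), \]
using $q_0 = (1+o(1))\sqrt{2}/(n^{3/2}(\log n)^{1/2})$ from \cref{eq:q_0-leading-order}. Markov's inequality then yields that $\mbf{G}[A]$ contains no $P_3$ whp, and the same holds for $\mbf{G}[B]$ by symmetry. There is no real obstacle here: the argument is essentially a first-moment bound, with the only subtlety being the triangle-conditioning built into the sampling procedure of $\mu_{\lambda,1}$, which contributes only a $(1 + o(1))$ factor.
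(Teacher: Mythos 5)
Your proof is correct and follows essentially the same route as the paper: a first-moment bound showing the expected number of $3$-vertex paths in the defect graphs is $O(1/\log n)=o(1)$, hence whp no component exceeds two vertices. The only difference is that you explicitly handle the triangle-freeness built into \cref{def:mu-lambda-1}, which the paper glosses over; your $(1+o(1))$-factor argument disposes of it correctly.
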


    \begin{proof}
        This is a first moment calculation.  Let $X$ be the number of (not necessarily induced) paths of length 2 in $S \cup T$.  Then using \cref{eq:q_0-leading-order} and \cref{lem:strongly-balanced},
        \begin{align*}
            \mb{E}[X] & = \frac{1}{|\operatorname{Aut}(P_2)|} \left(\sum_{(u,v,w) \in A} q_0^2 + \sum_{(u,v,w) \in B} q_0^2 \right) \leq \frac12(1+o(1))n^3q_0^2 = (1+o(1)) \frac{1}{\log n}.
        \end{align*}

        Thus whp, $\mbf{G}[A] \cup \mbf{G}[B]$ has no copies of $P_2$ and so no connected components of size $>2$.
    \end{proof}

Thus \cref{lem:strongly-balanced,lem:expander,lem:concentration-of-S,claim:disjoint-edges} combine to imply that \cref{assumption:3-to-4} holds whp for $\mbf{G} \sim \mu_{\lambda,1}$, and \cref{thm:JPP-full-strength} implies that \cref{assumption:3-to-4} holds whp for $\mbf{G} \sim \mu_\lambda$.

\subsection{Reduction from general \texorpdfstring{$3$}{3}-colorability to ``green-edge coloring''}\label{sec:GEC-proof}

In this section, we demonstrate that under \cref{assumption:3-to-4}, whp the existence of any $3$-coloring of $\mbf{G}$ yields a $3$-coloring with a particular structure, which we described informally above as a green-edge coloring and formalize the definition below. 
Since the converse implication is immediate, this establishes the high probability equivalence between $\mbf{G}$ being $3$-colorable and $\mbf{G}$ admitting a green-edge coloring.

\begin{definition}\label{def:green_edge_coloring}
    Let $\sigma : V(\mbf{G}) \to \{\text{red, green, blue}\}$ be a proper coloring where $\mbf{G}$ has (unique) max cut $(A,B)$.  Then $\sigma$ is a \emph{green-edge coloring} if all isolated vertices of $\mbf{G}[A]$ are red, all isolated vertices of $\mbf{G}[B]$ are blue, no vertices of $A$ are blue, and no vertices of $B$ are red.
\end{definition}

We will show that whp in the given parameter regime, if a graph fails to be green-edge colorable, then it fails to be 3-colorable.  First, we show the following lemma.

\begin{lemma}\label{lem:no-bad-3-coloring}
    Let $\mbf{G}$ follow \cref{assumption:3-to-4}.  Whp, there is no 3-coloring of $\mbf{G}$ in which one part receives all three colors.
\end{lemma}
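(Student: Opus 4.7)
Argue by contradiction: assume $\sigma$ is a proper 3-coloring of $\mbf{G}$ with color classes $A_i := \sigma^{-1}(i) \cap A$ and $B_i := \sigma^{-1}(i) \cap B$ and $|A_i| \geq 1$ for each $i\in\{1,2,3\}$. Since each $A_i \cup B_i$ is independent, $e(A_i, B_i) = 0$, and applying \cref{lem:expander} together with its symmetric version yields for each $i$ the dichotomies ``$|A_i| < \lambda n/100$ or $|B_i| < n/9$'' and ``$|B_i| < \lambda n/100$ or $|A_i| < n/9$''. Pigeonhole on $|A|, |B| \approx n/2$ identifies dominant colors $i_A, i_B$ with $|A_{i_A}|, |B_{i_B}|\geq n/6$; these must differ (else expansion yields a monochromatic edge), so WLOG $i_A=1, i_B=2$, giving $|B_1|, |A_2| < \lambda n/100$.

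\textbf{Case split on the third color.} If $|A_3| < \lambda n/100$, then $|A_2\cup A_3| < \lambda n/50 < \lambda n/30$, and the degree lower bound $d_\mbf{G}(u,A) \geq \lambda n/30$ for $u \in B$ from \cref{lem:expander} forces $B_1 = \emptyset$, since any $u \in B_1$ would need that many neighbors outside $A_1$. The structure becomes rigid: $B = B_2 \cup B_3$ with $N_B(A_2) \subseteq B_3$ and $N_B(A_3) \subseteq B_2$ (hence disjoint), and every defect edge of $\mbf{G}[A]$ has at least one endpoint in the small set $A_2 \cup A_3$. In the complementary case $|A_3| \geq \lambda n/100$, expansion forces $|B_3| < n/9$ and thus $|B_2| > 7n/18$; here $A_2$ is the lone small class in $A$ and an analogous structural rigidity holds after renaming.

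\textbf{First moment and main obstacle.} I would close by a first moment bound on the number of rigid configurations. Two key probability factors control each contribution: (a) the super-polynomially small probability that all $|S| = \Theta(\sqrt{n/\log n})$ defect edges of $\mbf{G}[A]$ concentrate on a miscolored set of size $k$ much below $\mb{E}|S|$; and (b) the $(1-\lambda)^{\sum_i |A_i||B_i|}$ bound on the probability that $E_\mr{cr}$ avoids every monochromatic pair, which for disjoint $A_2, A_3$ gives $\mb{P}[N_B(A_2) \cap N_B(A_3) = \emptyset] \lesssim n^{-|A_2||A_3|}$ using that two fixed $A$-vertices share $(1+o(1))\lambda^2 |B|/2 = \Theta(\log n)$ common $B$-neighbors in expectation. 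Splitting the sum over $k$, factor (a) kills small $k$ while factor (b) kills large $k$ against the $\binom{|A|}{a_2}\binom{|A|}{a_3}$ enumeration. The main obstacle is the intermediate ``symmetric'' sub-case where $|A_3|$ and $|B_3|$ are both moderate (in $[\lambda n/100, n/9)$): here neither rare event alone dominates the enumeration, and closing requires leveraging the triangle-free / hard-core structure of $E_\mr{cr}$ together with the quasirandomness bound \cref{lem:hardcore-quasirandom} to sharpen the probability estimate and rule out the remaining configurations.
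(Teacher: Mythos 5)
Your opening moves (using \cref{lem:expander} to force one dominant color per side, distinct dominant colors, and smallness of the off-dominant classes) match the start of the paper's third case, but the argument is not closed, and the step you propose to close it with does not work as stated. First, the ``first moment over rigid configurations'' treats the location of the defect edges and of the crossing edges relative to the coloring as random events to be priced: factor (a) is vacuous because the coloring is chosen adversarially \emph{after} $S$, $T$, $E_\mr{cr}$ are revealed, so there is no improbability in ``all defect edges having an endpoint in the miscolored set'' --- that is simply the definition of properness, and the miscolored set (of size up to $\lambda n/100 = \Theta(\sqrt{n\log n})$) can easily absorb one endpoint of each of the $|S| = \Theta(\sqrt{n/\log n})$ edges. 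Second, factor (b) assumes the crossing edges are independent with probability $\lambda$; they are a hard-core sample on $S \on{\Box} T$, so the bound $(1-\lambda)^{\sum_i |A_i||B_i|}$ and the common-neighborhood estimate $\mb{P}[N_B(A_2)\cap N_B(A_3)=\emptyset] \lesssim n^{-|A_2||A_3|}$ are unproven (a usable substitute is $\mb{P}[\mathbf{I}\cap U = \emptyset] \le e^{-\lambda|U|/8}$ from \cref{lem:hardcore-quasirandom}, but you never invoke it). Most importantly, you explicitly concede the case where both $|A_3|$ and $|B_3|$ lie in $[\lambda n/100, n/9)$; that is exactly the regime where a union bound over all colorings of the moderate classes costs $\exp(\Theta(\lambda n))$ or more and your probability estimates do not beat it, so the proof has a genuine hole there.

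The paper closes precisely this bottleneck by union-bounding over \emph{structured witnesses} rather than over colorings. When $B$ sees only two colors, the green class of $A$ must contain a quarter of the defect endpoints $\bar S$ (using \cref{claim:disjoint-edges} and \cref{lem:concentration-of-S}), and the green class of $B$ must contain $N_B(v)$ for a single blue vertex $v\in A$; the union is then only over $v$ and over $\binom{|\bar S|}{|\bar S|/4} = \exp(\widetilde O(\sqrt{n/\log n}))$ subsets, which is beaten by the quasirandomness bound $\exp\{-\Omega(\lambda^2 n |\bar S|)\} = \exp\{-\Omega(\sqrt{n\log n})\}$. When $B$ sees all three colors (your ``symmetric'' case), the degree lower bound applied to one vertex of each minority class forces both green classes to contain subsets of size $\lambda n/50$ of the crossing-neighborhoods $N_B(v)$, $N_A(w)$ of single vertices, so the union is over $n^2\binom{5\lambda n}{\lambda n/50}^2 = \exp(O(\sqrt{n\log n}))$ choices against a failure probability $\exp\{-\Omega(\lambda^3 n^2)\} = \exp\{-\Omega(\sqrt{n(\log n)^3})\}$. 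Without some device of this kind that replaces the entropy of arbitrary moderate color classes by the entropy of neighborhood- and defect-anchored sets, your approach cannot be completed.
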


This will immediately give us the desired reduction.

\begin{corollary}\label{cor:3-coloring-is-GEC}
    For $\mbf{G}$ following \cref{assumption:3-to-4}, whp, if $\chi(\mbf{G}) = 3$, then $\mbf{G}$ has a green-edge coloring.
\end{corollary}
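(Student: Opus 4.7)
The plan is to deduce the corollary from Lemma~\ref{lem:no-bad-3-coloring} by a short deterministic rearrangement argument, once we gather the structural consequences of Assumption~\ref{assumption:3-to-4}. First, condition on the whp event where the max cut $(A,B)$ is unique (Lemma~\ref{lem:strongly-balanced}), where $\mbf G[A]$ and $\mbf G[B]$ each contain at least one edge (which holds since by Lemma~\ref{lem:concentration-of-S}, $|S|, |T| = \Theta(n^{1/2}(\log n)^{-1/2}) \to \infty$), and where the conclusion of Lemma~\ref{lem:no-bad-3-coloring} holds. I claim that on this event, any proper 3-coloring of $\mbf G$ can be modified into a green-edge coloring.

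Assume $\chi(\mbf G) = 3$ and fix a proper 3-coloring $\sigma$. Each defect edge must receive two distinct colors, so $\sigma$ uses at least two colors on $A$ and at least two on $B$. Lemma~\ref{lem:no-bad-3-coloring} forbids either side from using all three colors, so each uses exactly two; since the full palette has size three, the two palettes share exactly one color. Relabel so that the shared color is green, the color unique to $A$ is red, and the color unique to $B$ is blue. Conditions~(3) and~(4) of Definition~\ref{def:green_edge_coloring} are now automatic.

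For conditions~(1) and~(2), modify $\sigma$ at isolated vertices of the defect graphs. If $v$ is isolated in $\mbf G[A]$ and currently green, every neighbor of $v$ lies in $B$ and is therefore colored green or blue, so changing $v$ to red preserves properness. Apply this swap to every green vertex that is isolated in $\mbf G[A]$ and, symmetrically, recolor every green vertex isolated in $\mbf G[B]$ to blue. The resulting coloring satisfies all four conditions of Definition~\ref{def:green_edge_coloring} and is therefore a green-edge coloring.

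In short, the corollary reduces to a three-line deterministic argument once Lemma~\ref{lem:no-bad-3-coloring} is in hand, so the real work lies in proving that lemma. I would expect its proof to proceed via a first moment bound on the number of proper 3-colorings in which one side of the max cut uses all three colors, leveraging the expansion estimates of Lemma~\ref{lem:expander} to control the number of crossing edges forced between color classes and drive the expected count to zero.
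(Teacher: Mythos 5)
Your proposal is correct and matches the paper's own argument: both invoke Lemma~\ref{lem:no-bad-3-coloring} to conclude that (after relabeling) $A$ uses only red/green and $B$ only blue/green, and then recolor the isolated vertices of the defect graphs red resp.\ blue, noting properness is preserved because such a vertex's neighbors all lie on the opposite side, which contains no red (resp.\ blue). The only cosmetic difference is your extra conditioning on $S,T\neq\emptyset$ to force exactly two colors per side; the paper gets the needed palette structure directly from $\chi(\mbf G)=3$ (all three colors appear, so the two sides cannot share the same pair), which your ``full palette has size three'' remark implicitly uses as well.
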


\begin{proof}
    Since $\chi(\mbf{G}) \leq 3$, $\mbf{G}$ has a proper $3$-coloring $\sigma : V(\mbf{G}) \to \{\text{red, blue, green}\}$.  By \cref{lem:no-bad-3-coloring}, whp, $\sigma$ cannot assign either side all three colors.  Since $\chi(\mbf{G}) > 2$, all three colors appear, so we do not use the same two on both sides.  Without loss of generality, $\sigma$ assigns red and green to $A$ and blue and green to $B$.  Define the coloring
    \[ \widetilde{\sigma} : \mbf{G} \to \{\text{red, blue, green}\}, \ \ \ \ \ \ \widetilde{\sigma}(v) = \begin{cases} \text{red} & v \in A \setminus \bigcup_{e \in S} e \\ \text{blue} & v \in B \setminus \bigcup_{e \in T} e \\ \sigma(v) & v \in \bigcup_{e \in S} e \cup \bigcup_{e' \in T} e'. \end{cases} \]

    Observe that $\widetilde{\sigma}$ is also a proper coloring.  The only changes are that vertices isolated in $\mbf{G}[A]$ have been changed to red (which cannot cause problems inside $\mbf{G}[A]$ since they are isolated or in the crossing edges since $B$ contains no red) and vertices isolated in $\mbf{G}[B]$ have been changed to blue.  Thus we have explicitly constructed $\widetilde\sigma$, a green-edge coloring of $\mbf{G}$.
\end{proof}

\begin{proof}[Proof of \cref{lem:no-bad-3-coloring}]
    We first establish some notation.  For $v \in V$, we write $N_B(v) := N_\mbf{G}(v) \cap B$ and similarly $N_A(v) := N_\mbf{G}(v) \cap A$.  We will use $\bar{S} := \bigcup_{e \in S} e$ to refer to the vertices contained in defect edges $S$ and similarly $\bar{T} := \bigcup_{e \in T} e$.  Define the color classes $A = \mf{r}_A \cup \mf{b}_A \cup \mf{g}_A$ and $B = \mf{r}_B \cup \mf{b}_B \cup \mf{g}_B$, where some parts may be empty.  Without loss of generality, suppose $A$ contains all three colors, i.e.~$\mf{r}_A,\mf{b}_A,\mf{g}_A$ are nonempty.

    The part $B$ cannot be monochromatic, as $A$ has vertices of all three colors and for all $v \in A$, we have $d_\mbf{G}(v,B) \geq \lambda n/30 > 0$.

    Suppose $B$ has only two colors.  WLOG let $B = \mf{b}_B \cup \mf{g}_B$.  Recall $S = \binom{A}{2} \cap E(\mbf{G})$ is the set of defect edges in $A$.  
    By \cref{lem:concentration-of-S}, whp, $\frac18 \sqrt{n/\log n} \leq |S| \leq 4\sqrt{n/\log n}$.  Note that each edge in $S$ must contain at least one non-red vertex.  
    By \cref{claim:disjoint-edges}, whp, all edges are disjoint.  
    By the pigeonhole principle, one of $\mf{b}_A$ and $\mf{g}_A$ (WLOG $\mf{g}_A$) contains at least one-quarter of the vertices in $\bar{S}$.  
    Thus there is $U \subset \bar S$ with $|U| = |\bar{S}|/4$ and $U \subset \mf{g}_A$.

    By assumption, there is some $v \in \mf{b}_A$.  Since this is a proper coloring and $\mf{r}_B$ is empty, $N_B(v) \subset \mf{g}_B$.  We claim that whp, there are edges between $\mf{g}_A$ and $\mf{g}_B$.  Crucially, we know something about not just the size but also the structure of $\mf{g}_A$ and $\mf{g}_B$ so we can union bound over a smaller set.
    \begin{align*}
        \Pr[d_\mbf{G}(\mf{g}_A,\mf{g}_B) = 0] & \leq \Pr\left[ \bigvee_{v \in A} \bigvee_{U \in \binom{\bar S}{|\bar S|/4}} d_\mbf{G}(N_B(v),U) = 0 \right] \\ 
        & \leq |A| \cdot \binom{|\bar S|}{|\bar S|/4} \cdot \exp\left\{ - \frac{\lambda}{8} \cdot \frac{\lambda n}{30} \cdot \frac{|\bar S|}{4}  \right\} & \text{\cref{lem:hardcore-quasirandom}} \\
        & \leq \frac{2n}{3} \cdot (4e)^{\sqrt{n/\log n}} \cdot \exp\left\{ -\frac{(1+o(1))\log n}{120} \cdot \frac{\sqrt n}{32 \sqrt{\log n}} \right\} & \text{\cref{lem:concentration-of-S}} \\
        & = \exp\left\{-\Omega(\sqrt{n \log n})\right\}.
    \end{align*}

    The second inequality is a union bound and the hard-core quasirandomness property \cref{lem:hardcore-quasirandom}.  The third is the concentration of $|S|$ by \cref{lem:concentration-of-S}.  Thus whp, there is no way to avoid monochromatic edges if $B$ only uses two colors.
    
    Finally, suppose part $B$ also contains all three colors $B = \mf{r}_B \cup \mf{b}_B \cup \mf{g}_B$ with all nonempty.  By the pigeonhole principle, one color contains at least $1/3$ of the vertices in $A$, so without loss of generality, $|\mf{r}_A| \geq |A|/3 \geq n/9$ since the partition is weakly balanced.  By \cref{lem:expander}, this means $|\mf{r}_B| < \lambda n/100$ as $d_\mbf{G}(\mf{r}_A,\mf{r}_B) = 0$.  Thus $|\mf{r}_B| \ll |B|/3$, so without loss of generality, $|\mf{b}_B| \geq |B|/3 \geq n/9$.  By the expansion property, this means $|\mf{b}_A| \leq \lambda n/100$.

    However, by assumption, $\mf{r}_B$ is not empty, so there is some $v \in \mf{r}_B$.  Since we have a proper coloring, $N_A(v) \subset \mf{b}_A \cup \mf{g}_A$.  By the first expansion property, $d_\mbf{G}(v,B) \geq \lambda n/30$, but $|\mf{b}_A| \leq \lambda n/100$, so $|\mf{g}_A| \geq \lambda n/30 - \lambda n/100 > \lambda n/50$.  By the same reasoning, we have $w \in \mf{b}_A$ with at least $\lambda n/50$ neighbors in $\mf{g}_B$, so $|\mf{g}_A|,|\mf{g}_B| \geq \lambda n/50$.  We now do a union bound here.
    \begin{align*}
        \Pr[d_\mbf{G}(\mf{g}_A,\mf{g}_B = 0)] & \leq \Pr\left[ \bigvee_{v \in A} \bigvee_{w \in B} \text{no edges between some pair of large subsets of }N_B(v), N_A(w) \right] \\ & \leq |A| \cdot |B| \cdot \binom{\max_{v \in A} d_\mbf{G}(v,B)}{\lambda n/50} \cdot \binom{\max_{w \in B} d_\mbf{G}(w,A)}{\lambda n/50} \cdot \exp\left\{ -\frac{\lambda}{8} \cdot \paren{\frac{\lambda n}{50}}^2 \right\} \\
        & \leq \frac{n^2}{4} \cdot \binom{5 \lambda n}{\lambda n/50} \cdot \binom{5 \lambda n}{\lambda n/50} \cdot \exp\left\{ - \Omega(\sqrt{n(\log n)^3}) \right\} \\
        & = \exp\left\{ - \Omega(\sqrt{n(\log n)^3}) \right\}.
    \end{align*}

    Hence whp there is no way to avoid crossing edges between $\mf{g}_A$ and $\mf{g}_B$.

    By a union bound over these three cases, whp $\mbf{G}$ has no coloring where either $A$ or $B$ receive all three colors.
\end{proof}

\section{Reduction to 2-SAT}\label{sec:reduction-to-SAT}

\subsection{Green-edge coloring to satisfiability}
Given $\mbf{G}$ following \cref{assumption:3-to-4}, let the event $\on{GEC}(\mbf{G})$ refer to the existence of a green-edge coloring of $G$.  We will control $\on{GEC}(G)$ by attempting to create a green-edge coloring.
First, recall that all isolated vertices  in $\mbf{G}_A$ are colored red and all isolated vertices  in $\mbf{G}_B$ are colored blue by the definition of green-edge coloring.  Thus the only edges that could cause failure to be a proper coloring are defect edges and crossing edges with both endpoints are incident to defect edges.  Let
\[ \Gamma = (S \times \{ \TRUE, \FALSE \}) \times (T \times \{ \TRUE, \FALSE \} ). \]

Place an arbitrary total order $\preceq$ on the set of all vertices in $A$ and in $B$.  As all edges are disjoint by \cref{claim:disjoint-edges}, we may create a canonical bijection $\bar{S} \times \bar{T} \to \Gamma$ by mapping a vertex $u$ to its unique edge $uv$ and the boolean valuation of $(u \preceq v)$.  A subset $H \subset \Gamma$ induces a bipartite 2-SAT formula $\varphi_H$ on variables $S$, $T$ via
\begin{align*}
    \varphi_H = {\bigwedge_{(e,\TRUE,f,\TRUE) \in H}} (e \vee f) &\wedge \bigwedge_{(e,\TRUE,f,\FALSE) \in H} (e \vee \bar{f}) \\ \wedge \bigwedge_{(e,\FALSE,f,\TRUE) \in H} (\bar{e} \vee f) &\wedge \bigwedge_{(e,\FALSE,f,\FALSE) \in H} (\bar{e} \vee \bar{f}).
\end{align*}

\begin{proposition}\label{prop:GEC-equals-SAT}
    Let $\mbf{G}$ follow \cref{assumption:3-to-4}.  Let $\varphi_\mbf{G}$ denote the 2-SAT formula $\varphi_H$ corresponding to $H = E_\mr{cr} \cap (\bar S \times \bar T)$.  Then we have an equality of events
    \[ \on{GEC}(\mbf{G}) = \SAT(\varphi_H). \]
\end{proposition}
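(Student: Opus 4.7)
The plan is a direct bijective argument with essentially no probabilistic content: the proposition is really a reformulation of the GEC definition in boolean logic, made possible once \cref{claim:disjoint-edges} ensures defect edges are vertex-disjoint (so that the construction of $\Gamma$ and $\varphi_H$ is well-posed).

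I would first set up a bijection between boolean assignments $\vec{x} \in \{\TRUE, \FALSE\}^{S \cup T}$ and candidate green-edge colorings. Adopting the convention that each defect edge $e = \{u,v\}$ is oriented with $u \preceq v$, an assignment $\vec{x}$ determines a coloring $\sigma_{\vec{x}}$: color each oriented edge $(u,v) \in S$ as red/green if $x_e = \TRUE$ and green/red otherwise, analogously for $T$ with blue in place of red, and color all remaining isolated vertices of $\mbf{G}[A]$ red and all remaining isolated vertices of $\mbf{G}[B]$ blue. Since defect edges are vertex-disjoint, $\sigma_{\vec{x}}$ is well-defined, and it automatically satisfies the structural conditions of \cref{def:green_edge_coloring}. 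Conversely, any green-edge coloring arises as $\sigma_{\vec{x}}$ for a unique $\vec{x}$ read off from the color pattern on each defect edge, so $\on{GEC}(\mbf{G})$ holds iff some $\sigma_{\vec{x}}$ is proper.

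I would then identify the edges of $\mbf{G}$ that can be improperly colored in $\sigma_{\vec{x}}$ and match them to clauses of $\varphi_H$. Defect edges are red/green or blue/green by construction, and every crossing edge incident to an isolated defect-free vertex is automatically proper (if $a \in A \setminus \bar{S}$ then $a$ is red and its $B$-endpoint is blue or green, and symmetrically for $b \in B \setminus \bar{T}$). Thus the only crossing edges that can fail are those in $H = E_\mr{cr} \cap (\bar{S} \times \bar{T})$, and $f = \{a,b\} \in H$ fails exactly when both endpoints are green. Under the bijection $\bar{S} \times \bar{T} \to \Gamma$, the endpoint $a \in \bar{S}$ maps to $(e_a, [a \preceq v_a])$, where $e_a = \{a, v_a\}$ is the unique defect edge at $a$; a brief case check on the two orientations of $e_a$ shows that $a$ is green under $\sigma_{\vec{x}}$ iff $x_{e_a} = \neg[a \preceq v_a]$. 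Hence ``$a,b$ not both green'' translates into the disjunction $x_{e_a} = [a \preceq v_a]$ or $x_{e_b} = [b \preceq v_b]$, which is precisely the clause attached to $(e_a, [a \preceq v_a], e_b, [b \preceq v_b]) \in H$ in the definition of $\varphi_H$ (e.g., if both indicators are $\TRUE$, the clause is $(e_a \vee e_b)$). The equality of events follows immediately.

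The main ``obstacle'' is purely bookkeeping: keeping the orientation convention consistent so that the TRUE/FALSE labels on clauses line up correctly with the ``not both green'' constraint. I do not expect to need any concentration or new structural input about $\mbf{G}$ beyond \cref{assumption:3-to-4} and \cref{claim:disjoint-edges}, both of which are already in force.
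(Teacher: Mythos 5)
Your proposal is correct and follows essentially the same route as the paper: both set up the bijection between assignments on $S \cup T$ and green-edge colorings (isolated defect-graph vertices forced red/blue, one green endpoint per defect edge via the $\preceq$-orientation) and then verify that a clause of $\varphi_H$ is violated exactly when the corresponding crossing edge in $H$ has two green endpoints, the only way properness can fail. Your explicit check that crossing edges meeting $A \setminus \bar S$ or $B \setminus \bar T$ are automatically proper is stated only implicitly in the paper, but the argument is the same.
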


\begin{proof}
    We will create a correspondence between green-edge colorings and satisfying assignments.  Since a green-edge coloring necessarily maps $A \setminus \bar S$ to red and $B \setminus \bar T$ to blue, we regard a green-edge coloring as a map
    \[ \chi : (\bar S \cup \bar T) \to \{\text{ red, green, blue } \} \]
    and an assignment as a map
    \[ \sigma : (S \cup T) \to \{ \TRUE, \FALSE \}. \]

    Given a green-edge coloring $\chi$, define $\sigma : (S \cup T) \to \{ \TRUE, \FALSE \}$ as follows.  For $e \in S \cup T$, let $e = uv$ where $u \preceq v$.  Then set $\sigma(e) = \FALSE$ if and only if $\chi(u) = \text{green}$.  
    We will show $\chi$ properly colors $\mbf{G}$ if and only if $\sigma$ satisfies $\varphi_H$.

    Let $uv \in S$ and $u'v' \in T$ with $u \preceq v$, $u' \preceq v'$.  Then
    \[ (uv \vee u'v') \in \varphi_H \iff (uv, \TRUE, u'v', \TRUE) \in H \iff uu' \in E(\mbf{G}). \]

    The first clause is violated if and only if $\sigma(uv) = \sigma(u'v') = \FALSE$, while $\chi$ is not a proper coloring if and only if $\chi(u) = \chi(u') = \text{green}$ (since $\chi$ must be a green-edge coloring regardless of properness).  By our definition of $\sigma$, these are the same event.  The other cases follow similarly.
\end{proof}

\subsection{Reduction to independent random \texorpdfstring{$2$}{2}-SAT clauses}
We now consider the distribution of $\varphi_H$ for $\mbf{G} \sim \mu_{\lambda,1}$.  We will also call this $\varphi_{E_\mr{cr}}$.  We wish to compare to a random bipartite 2-SAT formula on $S \cup T$ with each clause occurring independently.  We will couple $E_\mr{cr}$ with an independent random subset $X \subset S \on{\Box} T$, and then use \cref{prop:GEC-equals-SAT} to convert both $E_\mr{cr}$ and $X$ into bipartite 2-SAT formulas $\varphi_{E_\mr{cr}}$, $\varphi_X$.

Let \begin{equation}\label{eq:lambda_0-def} \lambda_0 = 1 - \left(1 + 4\lambda + 2\lambda^2\right)^{-1/4} = \lambda - 2\lambda^2 + O(\lambda^3), \end{equation}
and let $X$ be a random subset of $S \on{\Box} T$ where each vertex is included independently with probability $\lambda_0$. 

Notice that, once the isolated vertices of $\mbf{G}[A]$ and $\mbf{G}[B]$ have been removed, $S \on{\Box} T$ consists of disjoint copies of $C_4$ as $S$ and $T$ are both matchings by \cref{claim:disjoint-edges}.  
Fix $C \subset S \on\Box T$ a $4$-cycle.  We say a coupling $(E_\mr{cr},X)$ \textit{succeeds on $C$} if
\begin{align*}
    E_\mr{cr} \cap C = \emptyset & \iff X \cap C = \emptyset \\
    E_\mr{cr} \cap C = \ ^{\bullet \hspace{8pt}} & \iff X \cap C \in \{ ^{\bullet \hspace{8pt}}, ^{\bullet \hspace{4pt} \bullet} \} \\
    E_\mr{cr} \cap C = \ ^{\hspace{8pt} \bullet} & \iff X \cap C \in \{ ^{\hspace{8pt}\bullet }, ^{\hspace{8pt} \bullet}_{\hspace{8pt} \bullet} \} \\
    E_\mr{cr} \cap C = \ _{\hspace{8pt} \bullet} & \iff X \cap C \in \{ _{\hspace{8pt}\bullet }, _{\bullet \hspace{4pt} \bullet} \} \\
    E_\mr{cr} \cap C = \ _{\bullet \hspace{8pt}} & \iff X \cap C \in \{ _{\bullet \hspace{8pt}}, _{\bullet \hspace{8pt}}^{\bullet \hspace{8pt}}  \} \\
    E_\mr{cr} \cap C = \ ^\bullet_{\hspace{8pt} \bullet} & \iff X \cap C = \ ^\bullet_{\hspace{8pt} \bullet} \\
    E_\mr{cr} \cap C = \ _\bullet^{\hspace{8pt} \bullet} & \iff X \cap C = \ _\bullet^{\hspace{8pt} \bullet}.
\end{align*}

\begin{claim}\label{claim:coupling-succeeds}
    Let $\mbf{G}$ follow \cref{assumption:3-to-4}.  Then there exists a coupling $(E_\mr{cr},X)$ that succeeds on all cycles $C \subset S \on{\Box} T$  whp.
\end{claim}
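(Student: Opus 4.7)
The plan is to construct the coupling component-by-component. By \cref{claim:disjoint-edges}, we may assume whp that both $\mbf G[A]$ and $\mbf G[B]$ are matchings, in which case every connected component of $S \on{\Box} T$ is a Cartesian product of $K_2$'s and isolated vertices, hence is an isolated vertex, a $K_2$, or a $C_4$. All cycles of $S \on{\Box} T$ therefore occur inside $C_4$-components, and these are in bijection with pairs $(e,f) \in S \times T$. The hard-core measure on $S \on{\Box} T$ and the product Bernoulli measure of $X$ each factorize over components, so it suffices to produce, independently for each $C_4$-component $C$, a coupling of $(E_\mr{cr} \cap C, X \cap C)$ with correct marginals and per-cycle failure probability $O(\lambda^3)$; on non-cyclic components, any coupling matching the marginals makes the success condition vacuous.

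For a fixed $C_4$-component $C$, the success rules define a map $\Phi$ from $2^{V(C)}$ to $\mc I(C) \cup \{\bot\}$: $\Phi$ sends $\emptyset$ and each singleton to itself, each adjacent pair to its designated ``winner'' singleton (following the cyclic rule in the claim), each diagonal to itself, and every subset of size $\geq 3$ to $\bot$. The coupling succeeds on $C$ exactly when $\Phi(X \cap C) = E_\mr{cr} \cap C$. Writing $Z := 1 + 4\lambda + 2\lambda^2$, the definition of $\lambda_0$ forces $(1-\lambda_0)^4 = 1/Z$. The push-forward $\mu := \Phi_\ast \mr{Law}(X \cap C)$ then satisfies $\mu(\emptyset) = 1/Z$, $\mu(\{v\}) = \lambda_0(1-\lambda_0)^2$ for each singleton, $\mu(\{v,v'\}) = \lambda_0^2(1-\lambda_0)^2$ for each diagonal, and $\mu(\bot) = 4\lambda_0^3(1-\lambda_0) + \lambda_0^4$. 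Comparing to the hard-core law $\nu$ via Taylor expansion of $Z^{1/4}$ (noting $\lambda_0 = \lambda - 2\lambda^2 + O(\lambda^3)$), I find that $\mu(\emptyset) = \nu(\emptyset)$ exactly; that the singleton discrepancy is $O(\lambda^4)$, arising from the cancellation $Z^{1/2} - Z^{1/4} = \lambda + O(\lambda^4)$; and that the diagonal discrepancy and $\mu(\bot)$ are each $\Theta(\lambda^3)$. Summing, $\|\mu - \nu\|_\mr{TV} = O(\lambda^3)$, and the maximal coupling of $\mu$ with $\nu$ — lifted to a coupling of $X \cap C$ with $E_\mr{cr} \cap C$ by Bernoulli resampling of $X \cap C$ conditional on $\Phi(X \cap C)$ — has success probability $1 - O(\lambda^3)$.

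To close the argument, by \cref{lem:concentration-of-S} whp $|S| \cdot |T| = \Theta(n/\log n)$, while under \cref{assumption:3-to-4} we have $\lambda = \Theta(\sqrt{\log n/n})$ and hence $\lambda^3 = \Theta((\log n/n)^{3/2})$. Summing the per-cycle failure probabilities over the $|S| \cdot |T|$ independent $C_4$-components yields an expected number of failed cycles at most $|S| \cdot |T| \cdot O(\lambda^3) = O(\sqrt{\log n / n}) = o(1)$, so by Markov's inequality whp no cycle fails, i.e., the global coupling succeeds on every cycle.

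The main technical obstacle is the $O(\lambda^3)$ bound on $\|\mu - \nu\|_\mr{TV}$, and in particular the cancellation $Z^{1/2} - Z^{1/4} = \lambda + O(\lambda^4)$ that makes the singleton discrepancy one order smaller than the ``naive'' expectation $\Theta(\lambda^2)$. The choice $\lambda_0 = 1 - (1+4\lambda+2\lambda^2)^{-1/4}$ is precisely calibrated for this: weaker choices such as $\lambda_0 = \lambda/(1+\lambda)$ would give a singleton discrepancy of order $\Theta(\lambda^2)$ per cycle and destroy the union bound.
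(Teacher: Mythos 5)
Your proposal is correct and follows essentially the same route as the paper: factorize both measures over the $C_4$-components $e\times f$ with $(e,f)\in S\times T$, use the calibration of $\lambda_0$ to match the empty-set probabilities exactly, bound the per-cycle coupling failure by $O(\lambda^3)$, and union bound over the $\Theta(n/\log n)$ cycles. Your maximal-coupling formulation via the pushforward map $\Phi$ (and the sharper $O(\lambda^4)$ bound on the singleton discrepancy) is just a cleaner packaging of the paper's term-by-term comparison, not a different argument.
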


\begin{proof}
As both $X$ and $E_\mr{cr}$ behave independently on different connected components, we will explicitly couple on a copy of $C_4$, and then union bound over the number of copies.  Fix $C \subset S \on{\Box} T$ inducing $C_4$.  For $F \subset C$, let $\Pr_X[F] := \Pr[X \cap C = F]$ and $\Pr_E[F] = \Pr[E_\mr{cr} \cap C = F]$.  Let the event $B_C$ denote the coupling failing on $C$.  We will show $\Pr[B_C] = O(\lambda^3) = \widetilde{O}(n^{-3/2})$.  Note that $\Pr_X[\emptyset] = \Pr_E[\emptyset]$ by choice of $\lambda_0$.  Thus by a union bound
\[ \Pr[B_C] \leq 4\big|\Pr_X[^{\bullet \hspace{8pt}}] + \Pr_X[^{\bullet \hspace{4pt} \bullet}] - \Pr_E[^{\bullet \hspace{8pt}}]\big| + 2\big|\Pr_X[^\bullet_{\hspace{8pt} \bullet}] - \Pr_E[^\bullet_{\hspace{8pt} \bullet}]\big| + \Pr[|X \cap C| \geq 3]. \]

We now bound these term-by-term.
\begin{align*}
    \Pr_X[^{\bullet \hspace{8pt}}] + \Pr_X[^{\bullet \hspace{4pt} \bullet}] - \Pr_E[^{\bullet \hspace{8pt}}] & = \lambda_0(1-\lambda_0)^3 + \lambda_0^2(1-\lambda_0)^2 - \frac{\lambda}{1+4\lambda+2\lambda^2} \\
    & = (\lambda - 2\lambda^2)(1 - 3\lambda) + \lambda^2(1) - \lambda(1-4\lambda) + O(\lambda^3) = O(\lambda^3). \\
    \Pr_X[^\bullet_{\hspace{8pt} \bullet}] - \Pr_E[^\bullet_{\hspace{8pt} \bullet}] & = \lambda_0^2(1-\lambda_0)^2 - \frac{\lambda^2}{1+4\lambda+2\lambda^2} \\
    & = \lambda^2(1-O(\lambda)) - \lambda^2(1+O(\lambda)) = O(\lambda^3). \\
    \Pr[|X \cap C| \geq 3] & = 4\lambda_0^3(1-\lambda_0) + \lambda_0^4 = O(\lambda^3).
\end{align*}

Thus $\Pr[B_C] = O(\lambda^3)$. By a union bound  and \cref{lem:concentration-of-S},
\[ \Pr\left[ \bigvee_{C \subset S \on{\Box} T} B_C \right] \leq |S| \cdot |T| \cdot \Pr[B_C] = \widetilde{O}(n^{1/2}) \cdot \widetilde{O}(n^{1/2}) \cdot \widetilde{O}(n^{-3/2}) = \widetilde{O}(n^{-1/2}). \]

Thus our coupling succeeds on all cycles $C$ whp.
\end{proof}

Notice that if the coupling succeeds, then $E_\mr{cr} \subseteq X$.  Monotonicity of satisfiability immediately gives the following.

\begin{corollary}\label{cor:comparison-random-to-HCM-for-SAT}
    For $\mbf{G}$ following \cref{assumption:3-to-4},
    \[ \mb{P}[\SAT(\varphi_X)] \leq \mb{P}[\SAT(\varphi_{E_\mr{cr}})] + o(1). \]
\end{corollary}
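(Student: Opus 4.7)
The plan is to deduce this corollary directly from the coupling constructed in \cref{claim:coupling-succeeds} together with the elementary monotonicity observation that adding clauses to a 2-SAT formula can only destroy satisfiability. The only real content is set-theoretic bookkeeping.

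First, I would verify that on every $4$-cycle $C \subset S \on\Box T$ on which the coupling succeeds, one has $E_\mr{cr} \cap C \subseteq X \cap C$. This is essentially tautological from the case analysis defining ``succeeds on $C$'': in each listed case the right-hand alternatives for $X \cap C$ contain the prescribed $E_\mr{cr} \cap C$ as a subset. The one subtle point to note is that $E_\mr{cr}$ is drawn from the hard-core model on $S \on\Box T$, so $E_\mr{cr} \cap C$ is an independent set in $C_4$; this is why no case with $|E_\mr{cr} \cap C| = 2$ and the two vertices adjacent appears in the list.

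By \cref{claim:disjoint-edges}, whp $S$ and $T$ are each matchings, so $S \on\Box T$ (restricted to $\bar S \times \bar T$) decomposes as a disjoint union of $4$-cycles. Hence the event $\mc{S}$ that the coupling succeeds on every $4$-cycle $C \subset S \on\Box T$ forces $E_\mr{cr} \subseteq X$ as subsets of $\bar S \times \bar T$. Transporting this inclusion through the canonical bijection $\bar S \times \bar T \to \Gamma$ from \cref{sec:reduction-to-SAT}, the formula $\varphi_X$ is obtained from $\varphi_{E_\mr{cr}}$ by appending additional $2$-clauses, so any assignment that satisfies $\varphi_X$ also satisfies $\varphi_{E_\mr{cr}}$. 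In particular, on the event $\mc{S}$, $\SAT(\varphi_X)$ implies $\SAT(\varphi_{E_\mr{cr}})$.

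Finally, \cref{claim:coupling-succeeds} gives $\mb{P}[\mc{S}^c] = o(1)$, and combining the above with \cref{claim:disjoint-edges} (which holds whp under \cref{assumption:3-to-4}), we conclude
\[ \mb{P}[\SAT(\varphi_X)] \leq \mb{P}[\SAT(\varphi_X) \cap \mc{S}] + \mb{P}[\mc{S}^c] \leq \mb{P}[\SAT(\varphi_{E_\mr{cr}})] + o(1), \]
which is the desired bound. The main obstacle has already been handled in \cref{claim:coupling-succeeds}; the only step here is verifying the set-inclusion and invoking monotonicity of 2-SAT satisfiability under the addition of clauses.
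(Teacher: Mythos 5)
Your proposal is correct and follows exactly the paper's argument: the coupling of \cref{claim:coupling-succeeds} gives $E_\mr{cr} \subseteq X$ whp, and monotonicity of satisfiability under adding clauses yields the bound. The extra bookkeeping you include (the case-by-case inclusion check and the use of \cref{claim:disjoint-edges} to decompose $S \on\Box T$ into disjoint $4$-cycles) just makes explicit what the paper leaves implicit.
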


We now bound in the other direction.
\begin{lemma}\label{lem:comparison-random-to-HCM-for-SAT}
    For $\mbf G$ following \cref{assumption:3-to-4},
    \[ \mb{P}[\SAT(\varphi_X)] \geq \mb{P}[\SAT(\varphi_{E_\mr{cr}})] - o(1). \]
\end{lemma}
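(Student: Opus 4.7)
Under the coupling from \cref{claim:coupling-succeeds}, whp $E_\mr{cr} \subseteq X$, and the extras $X \setminus E_\mr{cr}$ consist of at most one vertex per $4$-cycle $C \subseteq S \on{\Box} T$, adjacent in $C$ to the singleton $E_\mr{cr} \cap C$. Each such extra adds a clause to $\varphi_X$ that shares one variable with an existing $\varphi_{E_\mr{cr}}$-clause, so together the pair of clauses forces one literal to $\TRUE$. The expected number of extras is $|S|\cdot|T|\cdot\Theta(\lambda^2)=\Theta(n/\log n)\cdot\Theta(\log n/n)=\Theta(1)$, and concentration (e.g.~Markov) gives $O(1)$ extras whp.

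Conditioning on the successful coupling, $\varphi_{E_\mr{cr}} \subseteq \varphi_X$, hence $\SAT(\varphi_X)\Rightarrow\SAT(\varphi_{E_\mr{cr}})$. Therefore
\[ \mb{P}[\SAT(\varphi_{E_\mr{cr}})] - \mb{P}[\SAT(\varphi_X)] \leq \mb{P}[\SAT(\varphi_{E_\mr{cr}}) \wedge \neg\SAT(\varphi_X)] + o(1), \]
and the event $\SAT(\varphi_{E_\mr{cr}}) \wedge \neg\SAT(\varphi_X)$ is exactly the event that $\varphi_{E_\mr{cr}}$ is satisfiable but every satisfying assignment violates at least one of the $O(1)$ literal forcings imposed by the extras. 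It suffices to show this event has probability $o(1)$.

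By a union bound over the extras (of which there are only $O(1)$ whp), it is enough to argue that a single typical literal forcing is compatible with some satisfying assignment of $\varphi_{E_\mr{cr}}$ with probability $1-o(1)$. The forced literal is supported on a random $2$-SAT variable of $\varphi_{E_\mr{cr}}$ with a sign determined by the coupling's (arbitrary) orientation, so it is essentially uniform among all literals; compatibility fails exactly when the forced literal is ``rigidly false'', i.e.\ set to $\FALSE$ by every satisfying assignment of $\varphi_{E_\mr{cr}}$. Hence it suffices to show that, conditional on $\SAT(\varphi_{E_\mr{cr}})$, the number of rigid literals in $\varphi_{E_\mr{cr}}$ is $o(N)$ whp, where $N=|S|=\Theta(\sqrt{n/\log n})$ is the number of variables per side.

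The main obstacle is establishing this rigidity estimate inside the critical window, where $\mb{P}[\SAT(\varphi_{E_\mr{cr}})]$ is bounded away from both $0$ and $1$. I expect it to follow from the second-moment and ``hourglass'' structural analyses developed in \cref{sec:SAT-lower-bounds,sec:SAT-upper-bounds} to prove \cref{theorem:bipartite-2SAT}: a rigid literal corresponds to a sub-formula that imposes an implication chain forcing that literal to a fixed value, and the expected number of such sub-formulas at criticality is $o(N)$ (bounded by the same kind of enumerative estimate used for the unsatisfiability witnesses, thinned by the logarithmic probability of rigidity). Once the rigidity estimate is in hand, the displayed inequality yields $\mb{P}[\SAT(\varphi_X)] \geq \mb{P}[\SAT(\varphi_{E_\mr{cr}})] - o(1)$, as desired.
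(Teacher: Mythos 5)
Your overall strategy is the paper's: use the coupling of \cref{claim:coupling-succeeds}, observe that the surplus clauses of $\varphi_X$ over $\varphi_{E_\mr{cr}}$ come from adjacent-pair configurations and each such pair forces a single literal to $\TRUE$, note there are $O_\mr{p}(1)$ of them, and argue that these forcings are whp harmless. The genuine gap is that the entire quantitative content of this last step is left unproved: you write that the probability a forced literal is ``rigidly false'' should follow from the second-moment/hourglass machinery, but you do not derive it, and it is not a formal consequence of \cref{theorem:bipartite-2SAT}. This estimate is exactly what the paper isolates as \cref{claim:variable-partial-assignment} and proves in an appendix: for a literal $z$ admitting a variable $v$ with $v \to z$ and $\bar v \to z$, one bounds $\mb{P}[z \squig{} \bar z]$ by deleting the partner variable, treating $z$ as generic in the reduced formula, and invoking the spine first moment (\cref{prop:spine-expectation}) together with the neighborhood-moment bounds (\cref{lem:moments-of-Pnp}), yielding $O(|\eps|)=o(1)$ per forced literal and $\mb{E}|Z|=O(1)$ overall. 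Without that computation (or an equivalent), the proof is incomplete at its crux.

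Two further points in your reduction need repair. First, a union bound showing each forced literal is \emph{individually} non-rigid does not give a single satisfying assignment setting \emph{all} forced literals to $\TRUE$: with forced literals $e_1,\dots,e_k$ one must also exclude cross-implications $e_i \squig{\varphi_{E_\mr{cr}}} \bar e_j$ for $i\neq j$ (e.g.\ the clause $(\bar e_1 \vee \bar e_2)$ makes each $e_i$ individually settable but not jointly). The spine formulation closes this hole cheaply: each forced literal automatically satisfies $\bar e_i \squig{\varphi_X} e_i$, so any such cross-implication places some $e_j$ on the spine of $\varphi_X$, which is what the paper's claim rules out. Second, the forced literal is not ``essentially uniform among all literals'': it is the shared literal of a clause already present in $\varphi_{E_\mr{cr}}$, hence its law is tied to the formula (literals occurring in clauses are biased toward the spine), and a conditioning on $\SAT(\varphi_{E_\mr{cr}})$ inside the window (where satisfiability has probability bounded away from $0$ and $1$) further entangles things. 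You cannot simply divide an $o(N)$ bound on the number of rigid literals by $N$; you need the conditional computation the paper performs, where the pair clause is fixed, its variable removed, and the remaining clauses are again independent so the spine estimates apply verbatim.
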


\begin{proof}
For a cycle $C \subset S \on{\Box} T$, if the coupling succeeded, notice that $X \cap C \supsetneq E_\mr{cr} \cap C$ if and only if $X \cap C$ is two adjacent vertices.  Thus we have
\[ \Pr[\on{UNSAT}(\varphi_X)] \leq \Pr[\on{UNSAT}(\varphi_{E_\mr{cr}})] + \Pr[\on{UNSAT}(\varphi_X)\text{ due to }^{\bullet \hspace{4pt} \bullet} \text{ configurations}] + \Pr \left[ \bigvee_{C \subset S \on{\Box} T} B_C \right]. \]

By \cref{claim:coupling-succeeds}, the last probability is $o(1)$.  Thus to show \cref{lem:comparison-random-to-HCM-for-SAT}, it suffices to show that whp
, the $^{\bullet \hspace{4pt} \bullet}$ configurations do not cause satisfiability to fail.  

\begin{claim}\label{claim:variable-partial-assignment}
Under \cref{assumption:3-to-4}, $^{\bullet \hspace{4pt} \bullet}$-type configurations have $o(1)$ impact on satisfiability of $\varphi_X$.
\end{claim}

This claim is almost entirely a corollary of \cref{prop:spine-expectation} and uses the language of \cref{sec:2-SAT-intro,sec:SAT-lower-bounds,sec:SAT-upper-bounds}.  We leave the proof to \cref{app:variable-partial-assignment}.  The heuristic explanation is that in the scaling window, there are $\Theta_\mr{p}(1)$ many $^{\bullet \hspace{4pt} \bullet}$-type configurations while the ``giant component'' of $G[\bar S \cup \bar T]$ contains $o(1)$ proportion of the vertices.  The giant component is the dominant term in determining satisfiability (which is equivalent to $3$-colorability), and by a union bound, none of these $^{\bullet \hspace{4pt} \bullet}$-type configurations will be in the giant component.

Thus $\mb{P}[\on{UNSAT}(\varphi_X)] \leq \mb{P}[\on{UNSAT}(\varphi_{E_\mr{cr}})] + o(1)$, and subtracting each side from 1 gives the result.
\end{proof}

Thus
\[ \mb{P}[\SAT(\varphi_X)] = \mb{P}[\SAT(\varphi_{E_\mr{cr}}] + o(1). \]

\subsection{Parameter comparison}\label{sec:parameter-comparison}

\begin{proof}[Proof of \cref{theorem:main-erdos-renyi}]

We have now completely reduced to the bipartite random 2-SAT.  By \cref{cor:3-coloring-is-GEC}, \cref{prop:GEC-equals-SAT}, \cref{cor:comparison-random-to-HCM-for-SAT}, and \cref{lem:comparison-random-to-HCM-for-SAT},
\[ \mb{P}[\chi(\mbf{G}) = 3] = \mb{P}[\SAT(\varphi_X)] + o(1) \]
where $\varphi_X$ is a random bipartite 2-SAT formula on $S \cup T$ where each clause is included independently with probability $\lambda_0$.  Notice that $\varphi_X$ is equal in distribution to $F_{|S|,|T|,\lambda_0}$ as in the statement of \cref{theorem:bipartite-2SAT}.  Thus to learn the scaling window, we need to evaluate
    \begin{equation}\label{eq:kappa} \kappa := \paren{2 (|S||T|)^{1/2} \lambda_0 - 1} \cdot (|S| |T|)^{1/6}. \end{equation}
To simplify calculations, instead of working with $\omega$, let $\gamma$ be such that
    \[ \lambda = \paren{\frac{2\psi(n)}{n}(1+\gamma)}^{1/2}. \]
    We will not assume that $\gamma$ is either positive or negative, but will assume $|\gamma| = o(1)$ but may decay arbitrarily slowly.

    We begin by computing that under \cref{assumption:3-to-4}
    \begin{align*}
        & \qquad 2 \lambda_0 (|S| |T|)^{1/2} = (1+\widetilde{O}(n^{-1/2})) 2 \lambda (|S||T|)^{1/2} & \text{by \cref{eq:lambda_0-def} (def of $\lambda_0$)} \\
        & = (1+\widetilde{O}(n^{-1/4})) 2\lambda \paren{q_0 \frac{n^2}{8} + \widetilde{O}(n^{1/4})} & \text{by \cref{lem:concentration-of-S}} \\
        & = (1+\widetilde{O}(n^{-1/4})) \frac{n^2}{4} \lambda^2 \exp\left\{ - \frac n2 \lambda^2 \right\} & \text{plug in }q_0 \\
        & = (1+\widetilde{O}(n^{-1/4})) \frac n2 \left(\psi(n) ( 1 + \gamma) \right) \exp\left\{-\left(\psi(n) + \gamma \psi(n) \right) \right\} & \text{plug in }\lambda \\
        & = (1+\widetilde{O}(n^{-1/4})) (1+\gamma) \exp\{-\gamma \psi(n) \} & \text{definition of }\psi(n) \\
        & =  (1+\widetilde{O}(n^{-1/4})) \exp\{ - (1+O(1/\log n)) \gamma \psi(n) \}. & \text{Taylor expand $\log(1+\gamma)$}
    \end{align*}
    At this point, we will assume $|\gamma| = o(1/\log n)$.  Monotonicity of satisfiability will allow us to extend to $|\gamma| = \Omega(1/\log n)$.  Thus by Taylor's theorem, we get
    \[ 2\lambda_0 (|S| |T|)^{1/2} - 1 = (1+o(1)) \gamma \psi(n) + \widetilde{O}(n^{-1/4}) = \frac{2^{-1/6}+o(1)}{(|S||T|)^{1/6}} \paren{ \gamma n^{1/6} (\log n)^{5/6} + \widetilde{O}(n^{-1/12})  }, \]
    recalling $(|S| |T|)^{-1/6} = (1+o_\mr{p}(1))2^{-1/6} n^{-1/6} (\log n)^{1/6}$.  Returning to \cref{eq:kappa}, we have $\kappa = (2^{-1/6}+o(1)) \gamma n^{1/6} (\log n)^{5/6}$ for $|\gamma| \gg n^{-1/4}$ (to dominate the error term).  Thus \cref{theorem:bipartite-2SAT} gives us the scaling for
    \[ n^{-1/6} (\log n)^{-5/6} \ll |\gamma| \ll (\log n)^{-1}. \]
    With this in mind, let $\omega' = \gamma n^{1/6} (\log n)^{5/6}$ so that $\kappa = \Theta(\omega')$.  Then
    \begin{align*}
        p & = (1+\widetilde{O}(n^{-1/2})) \lambda = (1+\widetilde{O}(n^{-1/2})) \paren{\frac{2\psi(n)}{n} (1+\gamma)}^{1/2} \\
        & = \paren{\frac{2\psi(n)}{n}}^{1/2} + \frac{\gamma}{2} \paren{\frac{2\psi(n)}{n}}^{1/2} + O\paren{\gamma^2 \paren{\frac{\psi(n)}{n}}^{1/2}} \\
        & = \paren{\frac{2\psi(n)}{n}}^{1/2} + (1+o(1)) \gamma \paren{\frac{\log n}{2n}}^{1/2} \\
        & = \paren{\frac{2\psi(n)}{n}}^{1/2} + (1+o(1)) 2^{-1/2} \omega' (\log n)^{-1/3} n^{-2/3}.
    \end{align*}

    Thus we have $\omega = \Theta(\omega') = \Theta(\kappa)$, and so for any
    \[ 1 \ll \omega \ll n^{1/6} (\log n)^{-1/6}, \]
    we can apply \cref{theorem:bipartite-2SAT} to get the correct scaling window.  For $|\omega| = \Omega(n^{1/6} (\log n)^{-1/6})$ but $|\omega| = o(n^{1/6} (\log n)^{5/6})$, by monotonicity of satisfiability, we retain (un)satisfiability whp.
    
\end{proof}

\section{From \texorpdfstring{$4$}{4}-colorability to \texorpdfstring{$5$}{5}-colorability}\label{sec:4-coloring-scaling-window}

In this section, we prove \cref{thm:4-coloring-scaling-window} on the transition from $4$-colorability to $5$-colorability.

\subsection{Preliminaries at critical \texorpdfstring{$p$}{p}}\label{subsec:preliminaries-for-small-p}
Through the previous sections, we have worked with $\mbf{G} \sim \mu_{\lambda,1}$ (see \cref{def:mu-lambda-1}) and converted our results to $\mbf{G} \sim \mu_\lambda$ the desired distribution by using \cref{thm:JPP-full-strength}.  Note that \cref{thm:JPP-full-strength} only holds when $\lambda \geq (1+\eps)\sqrt{\log n/n}$ for $\eps>0$ a constant, while under \cref{assumption:4-to-5} below, $\lambda = (1 + o(1)) \sqrt{\log n/n}$.  

We will instead use the following result for smaller $p$.
\begin{definition}[{\cite[Algorithm 3]{jenssen2023evolution}}]\label{def:mu-lambda-2}

Given $n$ and $\lambda$, define the measure $\mu_{\lambda,2}$ to be the law of the output of the following procedure for generating a random graph $\mbf{G}$ on $[n]$.

\begin{enumerate}
    \item Choose $\zeta \in \mb{Z}$ via
    \[ \mb{P}[\zeta = t] \propto (1+\lambda)^{-t^2}. \]
    \item Choose $A \in \binom{[n]}{\lfloor n/2 \rfloor - \zeta}$ uniformly at random, and let $B = [n] \setminus A$.  If $\zeta > \lfloor n/2 \rfloor$ or $\zeta < -\lceil n/2 \rceil$, abort and output the empty graph.
    \item\label{item:algorithm-step-with-ERG} Choose $S \subset \binom A2$ and $T \subset \binom B2$ according to independent exponential random graphs $G(A,q_2,\psi)$ and $G(A,q_2,\psi)$ respectively conditioned on triangle-freeness.  Formally, define the parameters
    \begin{align*}
        \frac{q_0}{1-q_0} & := \lambda e^{-n\lambda^2/2}, \\
        \frac{q_1}{1-q_1} & := \lambda e^{-\lambda^2n/2+\lambda^3n - 7\lambda^4n/4}, \\
        \mu & := \binom{n/2}{2} q_1 e^{\lambda^3 n^2 q_0}, \\
        \frac{q_2}{1-q_2} & := \frac{q_1}{1-q_1} e^{4\lambda^3\mu}, \\
        \psi & := \lambda^3n/2,
    \end{align*}
    and then define the probability measure $G(A,q_2,\psi)$ via
    \[ \mb{P}[G(A,q_2,\psi) = H] \propto \paren{\frac{q_2}{1-q_2}}^{e(H)} \exp\{\psi P_2(H)\} \mbf{1}_{H \in \mc{T}} \mbf{1}_{\Delta(H) \leq 50 \max\{q_2n,\log n\}}, \]
    where $P_2(H)$ denotes the number of copies of $P_2$ in $H$.  Note that $q_2 = (1+o(1))q_0$ and $G(A,q_2,\psi)$ is well-approximated by $G(A,q_2 \mid \mc{T})$ (see \cref{prop:sandwiching-ERG-between-ERs}).
    \item Choose $E_\mr{cr} \subset A \times B$ according to the hard-core model on $S \on{\Box} T$ at fugacity $\lambda$.
    \item Output $S \cup T \cup E_\mr{cr}$.
\end{enumerate}
\end{definition}

Replacing $\mu_{\lambda,1}$ with $\mu_{\lambda,2}$ gives equivalent statements to those in \cref{sec:preliminaries} but allowing for smaller $p$.

\begin{theorem}[{\cite[Theorem 2.4]{jenssen2023evolution}}]\label{thm:JPP-full-strength-small-p}
    Fix $0 < \eps \leq 1/14$ and suppose $p \geq (1-\eps) \sqrt{\frac{\log n}{n}}$.  Then, with $\lambda = p/(1-p)$, we have
    \[ \| \mu_\lambda - \mu_{\lambda,2}\|_\mr{TV} = o(1), \]
    where $\mu_\lambda$ denotes the distribution of a sample from $G(n,p)$ conditioned on triangle-freeness.
\end{theorem}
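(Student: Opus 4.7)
The plan is to establish the theorem by a cluster expansion (polymer model) analysis of the partition function $Z(\lambda) = \sum_{G \in \mc{T}(n)} \lambda^{|G|}$ organized by bipartitions of $[n]$. The starting point is the sparse Erd\H{o}s--Kleitman--Rothschild type result that a typical triangle-free graph at these densities is almost bipartite, so one associates each triangle-free graph with a (nearly) maximum bipartition and then identifies the measure on $(A, S, T, E_\mr{cr})$ that arises as the marginal.

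First I would fix a bipartition $(A,B)$ with imbalance $\zeta = |A| - \lfloor n/2 \rfloor$ and analyze the contribution
\[ W_{(A,B)} = \sum_{\substack{S \subset \binom{A}{2}, T \subset \binom{B}{2} \\ E_\mr{cr} \subset A \times B}} \lambda^{|S| + |T| + |E_\mr{cr}|} \mathbf{1}_{S \cup T \cup E_\mr{cr} \in \mc{T}} \]
by first summing over $E_\mr{cr}$. Given $(S,T)$, that sum is the hard-core partition function on $S \on{\Box} T$ at activity $\lambda$, and a cluster expansion of $\log Z_{S \on{\Box} T}(\lambda)$ produces the leading factor $(1+\lambda)^{|A||B|}$ together with correction factors of the form $\exp\{ -\tfrac12 \lambda^2 n \, e(S \cup T) + \tfrac12 \lambda^3 n \, P_2(S \cup T) + \cdots\}$ coming from walks of length $2, 3, \ldots$ in the product graph. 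The first correction tilts the edge activity from $\lambda$ to $q_0$, the second is exactly the $\exp\{\psi P_2(H)\}$ factor appearing in $G(A, q_2, \psi)$, and the higher-order interaction between $S$ and $T$ through shared endpoints produces the multiplicative correction $e^{\lambda^3 n^2 q_0}$ inside $\mu$, which is what tilts $q_1$ to the final $q_2$.

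Next I would establish absolute convergence of the cluster expansion by controlling the maximum degrees of the defect graphs: the truncation $\Delta(H) \leq 50 \max\{q_2 n, \log n\}$ built into \cref{def:mu-lambda-2} is precisely what keeps the fugacity $\lambda$ comfortably inside the radius of convergence. An analogous expansion applied to the triangle-freeness constraint \emph{within} the defect graphs themselves (triangles supported entirely in $A$ or in $B$) reproduces the $\exp\{\psi P_2\}$ factor at the level of $\mbf G[A]$ and $\mbf G[B]$. Summing the resulting products over bipartitions recovers the $(1+\lambda)^{-\zeta^2}$ distribution on the imbalance, and a marginal-by-marginal coupling with the algorithm then converts the partition-function asymptotics into a total variation bound.

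The main obstacle is the shrinking signal-to-noise ratio as $p \searrow \sqrt{\log n/n}$. For $p = (1+\eps)\sqrt{\log n/n}$, as in \cref{thm:JPP-full-strength}, the defect graphs are well approximated by independent Erd\H{o}s--R\'enyi samples at density $q_0$ with higher-order corrections suppressed; but at $p = (1-\eps)\sqrt{\log n/n}$ the exponent $\lambda^3 n^2 q_0$ appearing in $\mu$ is already polylogarithmically large, so the exponential-random-graph measure cannot be replaced by independent edges and the three-level tilt $q_0 \to q_1 \to q_2$ together with the $\exp\{\psi P_2\}$ tilt must all be tracked explicitly. The delicate step is showing that the cluster-expansion remainder --- in particular walks of length $\geq 4$ in $S \on{\Box} T$ together with triangle corrections of order $\geq 2$ in $q_0$ --- contributes only $1 + o(1)$ multiplicatively to the partition function even though individual terms are of constant size, and this is precisely what forces the restriction $\eps \leq 1/14$.
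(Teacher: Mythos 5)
You are trying to reprove a theorem that this paper does not prove at all: \cref{thm:JPP-full-strength-small-p} is imported verbatim as \cite[Theorem 2.4]{jenssen2023evolution}, so there is no internal argument to compare against, and what you have written is a sketch of the strategy of that external paper rather than a proof. As a sketch it points in the right direction --- decomposing $Z(\lambda)$ by bipartitions, recognizing the sum over $E_\mr{cr}$ given $(S,T)$ as the hard-core partition function on $S \on{\Box} T$, and cluster-expanding to produce the tilts $\lambda \to q_0 \to q_1 \to q_2$ and the $\exp\{\psi P_2(H)\}$ weight in \cref{def:mu-lambda-2} --- but the substantive content of the theorem is missing. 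The central difficulty at densities as low as $p = (1-\eps)\sqrt{\log n/n}$ is not the expansion conditional on a bipartition; it is showing that the total weight of triangle-free graphs that are far from bipartite, or that have max cuts which are unbalanced, non-unique, or only defined up to relocating many vertices, is $o(1)$ relative to the bipartition-indexed sum, and simultaneously that no graph is materially overcounted across bipartitions. Your proposal waves at this with "{\L}uczak's almost-bipartiteness result as a starting point," but that result only guarantees a cut containing a $(1-\eps)$ fraction of edges at much cruder precision; upgrading it to the sharp structural control needed here is the bulk of \cite{jenssen2023evolution} (container/entropy-type arguments), and it is there --- not in the tail of the cluster expansion --- that the restriction $\eps \leq 1/14$ actually arises, so your attribution of that constant is also unsupported.

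Beyond that, the steps you do describe are asserted rather than carried out: convergence of the cluster expansion at this fugacity, the claim that the degree truncation $\Delta(H) \leq 50\max\{q_2 n,\log n\}$ keeps you inside the radius of convergence, the identification of the specific constants in $q_1$, $\mu$, $q_2$, $\psi$, and the passage from partition-function asymptotics to a total variation bound (which requires quantitative control of every marginal and of the conditional law of $E_\mr{cr}$ given $(A,S,T)$, not just leading-order matching of $\log Z$) would each need full arguments. So the proposal is a reasonable roadmap of how \cite{jenssen2023evolution} proceeds, but it is not a proof, and the gap it leaves open --- the reduction to a unique, balanced max cut with precisely distributed defect edges in the regime $p \geq (1-\eps)\sqrt{\log n/n}$ --- is exactly the hard part of the cited theorem.
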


\begin{lemma}[\cite{jenssen2023evolution}]\label{lem:strongly-balanced-small-p}
    Whp, $|A| = n/2 + O((n\log n)^{1/4})$ and $|B| = n/2 + O((n \log n)^{1/4})$.
\end{lemma}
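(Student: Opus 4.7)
The plan is to read the distribution of the max cut sizes directly off step (1) of \cref{def:mu-lambda-2} and then invoke a Gaussian tail bound. Since steps (3)--(5) of the algorithm do not reject on the sampled cut (the ERG and hard-core distributions are honest probability measures on fixed ground sets), and the abort event in step (2) occurs only when $|\zeta| > n/2$ (probability super-polynomially small), the marginal of $|A|$ under $\mu_{\lambda,2}$ equals $\lfloor n/2 \rfloor - \zeta$, where $\zeta \in \mb{Z}$ has the discrete Gaussian law $\mb P[\zeta=t] \propto (1+\lambda)^{-t^2}$. So the lemma is reduced to showing $|\zeta| = O((n \log n)^{1/4})$ whp.

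Under the assumption $\lambda = (1+o(1))\sqrt{\log n/n}$ (which is the regime of this section), one has $\log(1+\lambda) = (1+o(1))\sqrt{\log n/n}$, so that the ``variance parameter'' of $\zeta$ is $\frac{1}{2\log(1+\lambda)} = \Theta(\sqrt{n/\log n})$. The plan is therefore to compare both the normalizing sum and the tail sum to Gaussian integrals by Riemann-sum approximation:
\[ Z := \sum_{k \in \mb{Z}} (1+\lambda)^{-k^2} = \Theta\!\left((\log(1+\lambda))^{-1/2}\right) = \Theta\!\left((n/\log n)^{1/4}\right), \]
and for $t = C(n\log n)^{1/4}$ with $C$ a large constant,
\[ \sum_{|k| \geq t} (1+\lambda)^{-k^2} \;\leq\; 2\!\int_{t-1}^{\infty}\! e^{-x^2 \log(1+\lambda)}\,dx \;=\; O\!\left(\frac{e^{-t^2 \log(1+\lambda)}}{t \log(1+\lambda)}\right). \]

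Plugging in $t^2 \log(1+\lambda) = (1+o(1)) C^2 \log n$, the numerator is $n^{-(1+o(1))C^2}$ and the prefactor is polynomial in $n$. Dividing by $Z$ gives
\[ \mb{P}[|\zeta| \geq C (n\log n)^{1/4}] = n^{-(1+o(1))C^2} \cdot n^{O(1)}, \]
which is $o(1)$ once $C$ is any constant larger than, say, $1$. Hence $|\zeta| = O((n \log n)^{1/4})$ whp, and using $|B| = n - |A|$ gives the companion bound.

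The main (very mild) obstacle is just bookkeeping: one must check that no rejection or conditioning in steps (3)--(5) of \cref{def:mu-lambda-2} perturbs the marginal of $\zeta$, and that the degree truncation $\Delta(H) \leq 50 \max\{q_2 n,\log n\}$ used inside $G(A,q_2,\psi)$ is a.s.\ inactive for the parameters in question (so the definition is consistent and does not surreptitiously reweight $|A|$). Once these are checked, the rest is the one-line Gaussian tail estimate above, exactly as in the corresponding argument in \cite{jenssen2023evolution}.
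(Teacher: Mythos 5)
Your argument is correct and is essentially the natural proof of this fact; note that the paper itself does not prove this lemma but imports it from \cite{jenssen2023evolution}, so there is no internal proof to compare against. Reading the balance off step (1) of \cref{def:mu-lambda-2} is exactly right: the abort in step (2) has probability $\exp\{-\Omega(n^{3/2}\sqrt{\log n})\}$, steps (3)--(5) are honest probability measures on the already-chosen parts (the triangle-freeness indicator and degree truncation are built into the normalized measure $G(A,q_2,\psi)$, so they cannot reweight $\zeta$ -- your worry about the truncation being ``inactive'' is not even needed), and the discrete-Gaussian tail computation with $\log(1+\lambda) = (1+o(1))\sqrt{\log n/n}$ gives $\mb{P}[|\zeta| \geq C(n\log n)^{1/4}] = n^{-(1+o(1))C^2}\cdot n^{O(1)} = o(1)$ as you state.

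One caveat worth making explicit: your proof controls the \emph{planted} bipartition $(A,B)$ produced by the sampling procedure. In the companion statements (\cref{lem:strongly-balanced}, \cref{lem:expander-small-p}) the pair $(A,B)$ denotes the unique max cut of $\mbf{G}$, and that is how the lemma is used in \cref{assumption:4-to-5}. To conclude balance of the max cut you must additionally invoke the fact (also from \cite{jenssen2023evolution}) that whp the max cut of $\mbf{G}\sim\mu_{\lambda,2}$ is unique and coincides with the planted partition; the Gaussian tail bound alone does not give this. With that one citation added, your argument is complete.
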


\begin{lemma}[\cite{jenssen2023evolution}]\label{lem:expander-small-p}
    Let $\mbf{G} \sim \mu_{\lambda,2}$ and let $(A,B)$ be the (unique) max cut of $\mbf{G}$.  Then whp, for all $v \in A$,
    \[ \frac{\lambda n}{30} \leq d_\mbf{G}(v,B) \leq 5 \lambda n \]
    and
    \[ X \subset A, \ |X| \geq \lambda n/100, \ Y \subset B, \ |Y| \geq n/9 \implies d_\mbf{G}(X,Y) > 0, \]
    and likewise with $A$ and $B$ exchanged.
\end{lemma}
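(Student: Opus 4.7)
The plan is to imitate the proof of \cref{lem:expander} for $\mu_{\lambda,1}$ almost verbatim, exploiting the fact that conditional on $(A,B)$ and the defect graphs $S,T$, the distribution of $E_\mr{cr}$ is \emph{the same} under $\mu_{\lambda,1}$ and $\mu_{\lambda,2}$: both sample from the hard-core model on $S \on{\Box} T$ at fugacity $\lambda$. Thus the only thing that needs checking is that the quasirandomness input (\cref{lem:hardcore-quasirandom}) applies, which amounts to controlling $\Delta(S \on{\Box} T)$ in the new model.

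First I would verify the hypothesis of \cref{lem:hardcore-quasirandom}. Under \cref{assumption:4-to-5}, $\lambda = (1+o(1))\sqrt{\log n/n}$, so $q_2 n = (1+o(1))q_0 n = \widetilde{O}(n^{-1/2})$. The explicit truncation in the definition of $G(A,q_2,\psi)$ (step~\ref{item:algorithm-step-with-ERG} of \cref{def:mu-lambda-2}) forces $\Delta(S),\Delta(T) \leq 50\max\{q_2 n,\log n\} = 50\log n$, so $\Delta(S \on{\Box} T) \leq 100\log n$. Since $\lambda = O(\sqrt{\log n/n}) \ll (16 e^2 \cdot 100\log n)^{-1}$, \cref{lem:hardcore-quasirandom} applies to the hard-core sample $E_\mr{cr}$.

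For the degree bound, fix $v \in A$ and take $U_v := \{v\} \times B \subset V(S \on{\Box} T)$; then $d_\mbf{G}(v,B) = |E_\mr{cr} \cap U_v|$. By \cref{lem:strongly-balanced-small-p}, $|B| = n/2 + \widetilde O(n^{1/4})$, and \cref{lem:hardcore-quasirandom} gives
\[ \mb{P}\bigl[ |E_\mr{cr} \cap U_v| \notin [\lambda |B|/10,\, 5\lambda |B|]\bigr] \leq 2 e^{-\lambda |B|/8} = e^{-\Omega(\sqrt{n\log n})}, \]
which comfortably produces $\lambda n/30 \leq d_\mbf{G}(v,B) \leq 5\lambda n$ after a union bound over all $n$ vertices $v$. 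For the expansion property, fix $X \subset A$ with $|X| \geq \lambda n/100$ and $Y \subset B$ with $|Y| \geq n/9$, and apply \cref{lem:hardcore-quasirandom} to $U = X \times Y$ of size $\geq \lambda n^2/900$:
\[ \mb{P}\bigl[ E_\mr{cr} \cap (X \times Y) = \emptyset \bigr] \leq \mb{P}\bigl[|E_\mr{cr} \cap U| \leq \lambda |U|/10\bigr] \leq e^{-\lambda^2 n^2/7200} = e^{-\Omega(n\log n)}. \]
A union bound over the at most $2^{2n}$ pairs $(X,Y)$ controls this term since $n\log n \gg n$.

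The main conceptual obstacle is that $S$ and $T$ under $\mu_{\lambda,2}$ are no longer independent Erd\H{o}s--R\'enyi graphs but exponential random graphs conditioned on triangle-freeness and a max-degree truncation, so one cannot directly quote the calculation from \cite{jenssen2023evolution}. However, this turns out to be harmless: the truncation in the definition of $G(A,q_2,\psi)$ is precisely what we need to invoke \cref{lem:hardcore-quasirandom}, and the rest of the argument is entirely about the conditional distribution of $E_\mr{cr}$, which is unchanged. The improved upper degree bound $5\lambda n$ (not present in~\cite{jenssen2023evolution} Lemma~5.3) comes for free from the upper-tail half of \cref{lem:hardcore-quasirandom}, which was not stated or used there.
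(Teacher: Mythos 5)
Your proposal is correct and follows essentially the same route as the paper: the paper proves the $\mu_{\lambda,1}$ analog in \cref{app:expander} via \cref{lem:hardcore-quasirandom} plus union bounds, and the passage to $\mu_{\lambda,2}$ rests exactly on the observation you make, namely that the conditional law of $E_\mr{cr}$ given $(A,B,S,T)$ is the same hard-core model in both definitions, while the degree truncation built into $G(A,q_2,\psi)$ supplies the hypothesis $\lambda \leq 1/(16e^2\Delta(S\on{\Box}T))$ needed for quasirandomness. One harmless slip: under \cref{assumption:4-to-5} one has $q_2 n = 2c = \Theta(1)$ rather than $\widetilde{O}(n^{-1/2})$, but since $\max\{q_2 n,\log n\} = \log n$ either way, your bound $\Delta(S\on{\Box}T)\leq 100\log n$ and the rest of the argument are unaffected.
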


The main new difficulty is that the exponential graphs in step \ref{item:algorithm-step-with-ERG} of \cref{def:mu-lambda-2} appear much more complicated than the Erd\H{o}s--R\'enyi random graphs of \cref{def:mu-lambda-1}.  The following `sandwiching' result clarifies the behavior.

\begin{proposition}[{\cite[Proposition 10.11]{jenssen2023evolution}}]\label{prop:sandwiching-ERG-between-ERs}
    Suppose $\lambda \geq \frac{13}{14} \sqrt{\log n/n}$.  Let $q_\ell = q_2 (1-n^{-2/5})$ and $q_u = q_2(1+n^{-2/5})$.  Then for $A \subset [n]$, there is a coupling of $G(A,q_\ell \mid \mc{T}),G(A,q_2,\psi),G(A,q_u \mid \mc{T})$ so that whp,
    \[ G(A,q_\ell \mid \mc{T}) \subset G(A,q_2,\psi) \subset G(A,q_u \mid \mc{T}). \]
\end{proposition}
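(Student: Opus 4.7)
The plan is to reduce the sandwich to a single-edge Glauber-type domination argument. In the parameter range $\lambda \geq \frac{13}{14}\sqrt{\log n/n}$, the two non-standard features of the ERG $G(A, q_2, \psi)$ in \cref{def:mu-lambda-2}---the $P_2$-weighting $\exp(\psi P_2(H))$ and the hard max-degree cutoff $\Delta(H) \leq D := 50 \max\{q_2 n, \log n\}$---contribute only negligible perturbations. The goal is to show that both of these can be absorbed into a $(1 \pm n^{-2/5})$ factor on the base activity $q_2$, after which the sandwich against the two triangle-free-conditioned ER models follows from a standard monotone coupling.

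First I would compute the single-edge conditional odds in the ERG. Given any $H' \subset \binom{A}{2}$ that is triangle-free with $\Delta(H') < D$ and any non-edge $e = uv$ such that $H' \cup \{e\}$ also satisfies both constraints, the conditional odds of including $e$ in $G(A, q_2, \psi)$ are
\[ \frac{q_2}{1-q_2} \exp\{\psi(d_{H'}(u) + d_{H'}(v))\}. \]
Since $\psi = \lambda^3 n/2$ and $d_{H'}(u)+d_{H'}(v) \leq 2D$, a direct computation in the given range of $\lambda$ shows $2\psi D = o(n^{-2/5})$, so the exponential factor lies in $[1-n^{-2/5},\, 1+n^{-2/5}]$. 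The prescribed $q_\ell = q_2(1-n^{-2/5})$ and $q_u = q_2(1+n^{-2/5})$ precisely absorb this factor into the base activity of an unweighted ER model.

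Second, I would verify that the degree cutoff is inactive in all three models whp. Since $q_u = (1+o(1))q_2$ and $q_2 n \ll D/100$ in the regime in question, a Chernoff bound yields $\Delta(G(A, q_u)) \leq D/2$ whp, and conditioning on $\mc{T}$ (a decreasing event) preserves this. The same bound for $G(A, q_2, \psi)$ follows by a direct tail estimate (or from the coupling itself, bootstrapping from the upper bound). On the high-probability event that the maximum degree stays below $D/2$ in all three processes, the cutoff never binds, and the conditional edge probabilities satisfy
\[ \mb{P}[e \in G(A, q_\ell \mid \mc{T}) \mid H'] \leq \mb{P}[e \in G(A, q_2, \psi) \mid H'] \leq \mb{P}[e \in G(A, q_u \mid \mc{T}) \mid H'] \]
for every admissible history $H'$ (with all three sides zero when $H' \cup \{e\}$ contains a triangle). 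A standard monotone Glauber coupling or Strassen-type argument then produces a joint distribution realizing the pointwise inclusions $G(A, q_\ell \mid \mc{T}) \subset G(A, q_2, \psi) \subset G(A, q_u \mid \mc{T})$.

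The main obstacle is the interplay of the non-monotonic $P_2$-weight and the hard degree cutoff, which together break the natural monotone structure of the ER model. The argument works precisely because both are vanishing perturbations in the stated parameter range: the boundary $\lambda \geq \frac{13}{14}\sqrt{\log n/n}$ is nearly tight for simultaneously guaranteeing $\psi D = o(n^{-2/5})$ and keeping typical degrees well below $D$. Outside this range, either $\psi D$ or the typical degree would approach $D$, and the simple Glauber domination argument would no longer suffice.
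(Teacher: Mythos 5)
First, note that the paper you are working from does not prove this proposition at all: it is imported verbatim from \cite[Proposition 10.11]{jenssen2023evolution}, so there is no in-paper argument to compare against, and your attempt has to stand on its own. The parts of your plan that are purely computational are fine: the conditional odds of an edge $e=uv$ in $G(A,q_2,\psi)$ given an admissible configuration $H'$ on the other pairs are indeed $\frac{q_2}{1-q_2}e^{\psi(d_{H'}(u)+d_{H'}(v))}$, the estimate $2\psi D=o(n^{-2/5})$ does hold in the stated range (this is where $\frac{13}{14}$ matters, since one needs $c^2/2>2/5$ for $\lambda=c\sqrt{\log n/n}$, together with $\lambda=O(\sqrt{\log n/n})$), and the degree cutoff being inactive whp is a routine Chernoff--plus--Harris observation.

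The genuine gap is the final step, which is where all the difficulty of the statement lives. Your chain of inequalities compares the three conditional edge probabilities at a \emph{common} configuration $H'$, but a monotone Glauber coupling requires comparing them across \emph{ordered pairs} of configurations: if the coupled chains currently sit at $H_1\subseteq H_2$ and you update the edge $e$, you need $\Pr[e\mid H_1]$ under the lower measure to be at most $\Pr[e\mid H_2]$ under the upper one. The hard triangle-freeness constraint destroys this: it can happen that $H_2\cup\{e\}$ contains a triangle while $H_1\cup\{e\}$ does not, so the upper conditional probability is $0$ while the lower one is positive, and the same problem occurs with the degree cap, which is present only in the middle measure. So the ``standard monotone Glauber coupling'' does not apply, and invoking Strassen is circular: Strassen requires domination for all increasing events, which you have not established; indeed the measures $G(A,q\mid\mc T)$ fail the FKG lattice condition (triangle-freeness is not closed under unions), conditioning on $\mc T$ induces negative correlations between edges, and the proposition is deliberately phrased as an inclusion holding \emph{whp} under some coupling rather than as exact stochastic domination. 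A correct proof has to either construct the coupling explicitly and show that the configurations at which the conditional comparison fails are encountered only with probability $o(1)$ along the revealing process, or take a different route entirely (as in \cite{jenssen2023evolution}); asserting that the single-configuration inequalities plus ``a standard argument'' finish the job skips exactly this obstacle.
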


We will show below that the difference $q_u - q_\ell = 2n^{-2/5}$ is insignificant in our calculations regarding the chromatic number, so it will suffice to study the Erd\H{o}s--R\'enyi random graph conditioned on triangle-freeness.

Note that conditioning on triangle-freeness is now meaningful.  In previous sections, $q$ was sufficiently small that $G(A,q) \in \mc{T}$ whp.  
This is no longer true for the parameter regime of this section.  
At the critical $p$, we have $q = 2c/n = (1+o(1)) c/|A|$, so $G(A,q)$ has a Poisson distribution of triangles.  
We will show that when $c$ is close to $0$, the probability of $4$-colorability is close to 1, and when $c$ is close to $1$, the probability of $4$-colorability is small.  Translating these values of $q$ to values of $p$ will give the desired scaling window.

\begin{observation}\label{obs:graph-property}
    Let $\mc{A}$ be a monotone increasing graph property and $q_\ell = 2c/n$ for $c \in \mb{R}$.  If $|A| = (1+o(1)) \frac n2$ and $G(A,q_\ell) \in \mc{A}$ whp, then $\mbf{G}[A] \in \mc{A}$ whp.
\end{observation}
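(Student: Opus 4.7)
The plan is to chain three reductions: first, from the true distribution $\mu_\lambda$ of $\mbf G$ to the explicit measure $\mu_{\lambda,2}$ of \cref{def:mu-lambda-2}; second, from the defect graph $\mbf G[A] \sim G(A, q_2, \psi)$ to a triangle-free-conditioned Erd\H{o}s--R\'enyi random graph $G(A, q \mid \mc T)$; and third, from that conditioned graph to the unconditional $G(A, 2c/n)$ appearing in the hypothesis.

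By \cref{thm:JPP-full-strength-small-p}, $\|\mu_\lambda - \mu_{\lambda,2}\|_\mr{TV} = o(1)$ in the parameter regime of this section, so we may assume $\mbf G \sim \mu_{\lambda,2}$ while paying only an additive $o(1)$. Under this distribution, conditional on $A$, the defect graph $\mbf G[A]$ has law $G(A, q_2, \psi)$. By \cref{prop:sandwiching-ERG-between-ERs}, there is a coupling under which, whp,
\[ G(A, q_\ell^{\mr P} \mid \mc T) \;\subseteq\; G(A, q_2, \psi) \;\subseteq\; G(A, q_u \mid \mc T), \]
with $q_\ell^{\mr P} := q_2(1-n^{-2/5})$ and $q_u := q_2(1+n^{-2/5})$. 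Since $\mc A$ is monotone increasing, it suffices to prove $G(A, q_\ell^{\mr P} \mid \mc T) \in \mc A$ whp.

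Repeating the computation from \eqref{eq:q_0-leading-order} at the (smaller) critical $p$ used in this section gives $q_0 = (1+o(1))\cdot 2c/n$, whence $q_2 = (1+o(1))\cdot 2c/n$ and $q_\ell^{\mr P} = (1+o(1))\cdot 2c/n$. Triangle-freeness for $G(A, q_\ell^{\mr P})$ has asymptotic probability $\Omega(1)$ since the triangle count is Poisson with bounded mean, so conditioning on $\mc T$ inflates probabilities by only a bounded factor and preserves whp events. A monotone coupling of $G(A, q_\ell^{\mr P})$ and $G(A, 2c/n)$, together with the near-equality of the two parameters, transfers the hypothesis $G(A, 2c/n) \in \mc A$ whp to $G(A, q_\ell^{\mr P}) \in \mc A$ whp. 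I expect the main technical hurdle to lie in this last comparison: the discrepancy $|q_\ell^{\mr P} - 2c/n|$ is $o(1/n)$ but not $o(1/n^2)$, so a naive monotone coupling leaves $\omega(1)$ residual edges in the symmetric difference, and one must exploit that the hypothesis really holds uniformly over perturbations $c' = c(1+o(1))$ (as is the case in the applications of this observation) in order to apply monotonicity and conclude.
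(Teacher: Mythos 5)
Your proposal is correct and takes essentially the same route as the paper: the paper's proof is exactly the Bayes step (conditioning on $\mc{T}$ only costs an $\Omega(1)$ factor, since the triangle count of $G(A,q_\ell)$ is asymptotically Poisson with bounded mean) combined with the lower half of the sandwich coupling of \cref{prop:sandwiching-ERG-between-ERs} and monotonicity of $\mc{A}$. The final ``hurdle'' you flag does not arise in the paper's reading, because the hypothesis is interpreted (and in every application verified) directly at the lower sandwich parameter $q_\ell=(1+o(1))\,2c/n$ itself, so no separate comparison between $G(A,2c/n)$ and $G(A,q_2(1-n^{-2/5}))$ is needed; likewise the total-variation reduction to $\mu_{\lambda,2}$ is already part of the section's setup rather than part of this observation.
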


\begin{proof}
    Note that $\mb{P}[G(A,q_\ell) \in \mc{T}]$ is bounded away from both 0 and 1 for $q_\ell = 2c/n$.  By Bayes' law,
    \[ \mb{P}[G(A,q_\ell \mid \mc{T}) \notin \mc{A}] = \frac{\mb{P}[G(A,q_\ell) \in \overline{\mc{A}} \cap \mc{T}]}{\mb{P}[G(A,q_\ell) \in \mc{T}]} \leq \frac{\mb{P}[G(A,q_\ell) \in \overline{\mc{A}}]}{\mb{P}[G(A,q_\ell) \in \mc{T}]} = \frac{o(1)}{\Omega(1)} = o(1). \]

    Thus $G(A,q_\ell \mid \mc{T}) \in \mc{A}$ whp
    .
    Since $\mc{A}$ is an increasing property, the first half of the coupling in \cref{prop:sandwiching-ERG-between-ERs} gives the result.
\end{proof}

\subsection{Proof of \texorpdfstring{\cref{thm:4-coloring-scaling-window}}{theorem}}

\begin{assumption}\label{assumption:4-to-5}
    Fix $c \in (0,1)$.  Let $q_2 = 2c/n$.

\begin{itemize}
    \item Let $p = p(q_2)$ be the unique choice of $p$ resulting in our choice of $q_2$.  Let $\lambda = p/(1-p)$.
    \item $\mbf{G}$ is a random graph $\mbf{G} \sim \mu_{\lambda,2}$ on $n$ vertices (see \cref{def:mu-lambda-2}).
    \item $\mbf{G}$ has a unique max cut $V(\mbf{G}) = A \cup B$.  (This exists whp by \cref{lem:strongly-balanced}.)
    \item $S$ and $T$ are the edge sets of $\mbf{G}[A], \mbf{G}[B]$ respectively.  We will call $\mbf{G}[A]$ and $\mbf{G}[B]$ the defect graphs.
    \item Let $q_\ell = (1-n^{-2/5}) q_2$ and $q_u =  (1+n^{-2/5}) q_2$.
\end{itemize}

\end{assumption}

Note that
\[ p = \sqrt{\frac{\log n + \log\log n - 2\log(2c) + o(1)}{n}} \,. \]

\begin{proposition}\label{lem:odd-cycle-no-4-color}
    Let $\mbf{G}$ follow \cref{assumption:4-to-5}.  Suppose $\mbf G[A]$ contains an odd cycle.  Then whp, $\mbf G$ cannot be $4$-colored.
\end{proposition}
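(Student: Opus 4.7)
My plan is to argue by contradiction. Assume $\mbf{G}$ admits a proper $4$-coloring $\chi$ with color classes $C_1, C_2, C_3, C_4$. The goal is to show that, whp, any such $\chi$ must split its four color classes as two lying inside $A$ and two inside $B$, after which the restriction to $A$ gives a proper $2$-coloring of $\mbf{G}[A]$ --- contradicting the odd cycle.

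The key input is the no-miscoloring phenomenon highlighted in the introduction preceding \cref{subsec:intro-2-SAT}: whp, no $4$-coloring of $\mbf{G}$ contains any miscolored vertex (an $A$-vertex with a color whose bulk lies in $B$, or vice versa). In its per-vertex form stated in the introduction, a union bound over $V(\mbf{G})$ is required to upgrade this to the global statement that whp every $4$-coloring has zero miscolored vertices; I expect the quantitative form of the no-miscoloring result to carry failure probability $o(1/n)$ (indeed the propagation argument should give much more), so this union bound is free. Combined with $|A|, |B| = (1+o(1))n/2$ from \cref{lem:strongly-balanced-small-p} and pigeonhole, two classes --- say $C_1, C_2$ --- must each intersect $A$ in $\Omega(n)$ vertices; the absence of miscoloring then gives $C_1, C_2 \subset A$ and the remaining $C_3, C_4 \subset B$. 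Hence $\chi|_A$ is a proper $2$-coloring of $\mbf{G}[A]$, so $\mbf{G}[A]$ is bipartite, contradicting the hypothesis that $\mbf{G}[A]$ contains an odd cycle.

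The main obstacle is of course the no-miscoloring proposition itself, not its consequences for the present statement. Its proof, sketched in the introduction, propagates a single putative miscolored vertex $w \in A$ through copies of $C_5$ formed by $w$ together with length-$3$ paths in $\mbf{G}$ joining pairs of neighbors of $w$ in $B$ (such paths must replace edges in $N_B(w)$ since $\mbf{G}$ is triangle-free, so $N_B(w)$ is an independent set). Because $\chi(C_5) = 3$, each such $C_5$ forces at least one further miscolored vertex; iterating produces geometric growth of miscolorings until the count exceeds $\lambda n/100$, at which point the expansion bounds in \cref{lem:expander-small-p} are violated. Making this iteration quantitative, controlling the probability uniformly in $w$ so the union bound survives, is the technical heart; in proving the present proposition I would simply invoke the resulting statement.
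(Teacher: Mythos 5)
Your high-level strategy is the same as the paper's: force a miscolored vertex on the odd cycle, propagate miscolorings through copies of $C_5$ built from length-$3$ paths in the defect graphs, and derive a contradiction with the expansion property. The problem is that your proposal defers the entire technical content to the per-vertex ``no miscolored vertex'' proposition from the introduction and then ``simply invokes'' it. That statement appears in the introduction only as motivation; it is never proved as a standalone result, and the only place the propagation is actually carried out is the proof of the present proposition itself, via \cref{claim:miscolorings-propagate} (the directed graph $\Gamma_\varphi$ on isolated $P_3$-components, the union bound over all assignments $\varphi$ of which vertex in each $C_5$ is declared miscolored, and the reservoir exploration that makes this union bound uniform in $\varphi$ and in the start vertex). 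So as a proof of this proposition your argument is circular: the black box you invoke is exactly what has to be proved here, and the adversarial choice of which vertex in each forced $C_5$ gets miscolored --- the main source of difficulty, since it requires a union bound over exponentially many maps $\varphi$ --- is not addressed in your sketch.

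Even granting the per-vertex statement, the reduction you spell out has two concrete gaps. First, upgrading it to ``no $4$-coloring has any miscolored vertex'' by a union bound over $V(\mbf{G})$ needs per-vertex failure probability $o(1/n)$, which the propagation machinery does not deliver: the base step gives only $n^{-\Omega(1)}$ with an uncontrolled exponent, and the supporting structural events (concentration of the number of isolated $P_3$-components and of the matching size, proved by Chebyshev, plus the sandwiching of \cref{prop:sandwiching-ERG-between-ERs}) hold whp but not with polynomially small failure of the needed order. The paper sidesteps this entirely: only vertices of the odd cycle matter (if none of them is miscolored, the cycle would be properly $2$-colored), and by \cref{eq:no-large-cycles} all cycles have length at most $\log n$ whp, so the union bound is over $O(\log n)$ vertices; restricting your union bound to the cycle would repair this. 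Second, ``two classes must each intersect $A$ in $\Omega(n)$ vertices'' does not follow from $|A|=(1+o(1))n/2$ and pigeonhole --- pigeonhole yields one large class, and a single color could a priori cover almost all of $A$. The paper gets two linear classes on each side from a $\Theta(n)$-size matching in each defect graph (a second-moment computation transferred to $\mbf{G}[A]$, $\mbf{G}[B]$ via \cref{obs:graph-property}), and then needs \cref{lem:expander-small-p} to see that the two large classes of $A$ are disjoint from the two large classes of $B$; without these inputs the very notion of ``miscolored'' that your argument relies on is not well defined.
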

\begin{proof}
Let $\mbf{G} \sim \mu_{\lambda,1}$ and suppose that $\mbf{G}$ follows the balance conditions in \cref{lem:strongly-balanced} and the expander conditions in \cref{lem:expander}, both of which hold whp.

We begin by showing that $\mbf G[B]$ has a matching of size $\Theta(n)$ whp.  This is a second moment calculation; for $M$ the number of components of $G(B,q_\ell)$ of size exactly 2, we have
\begin{align*}
    \mb{E}[M] & = \binom{|B|}{2} q_\ell (1-q_\ell)^{2(n/2-2)} = (1+O(n^{-3/4})) \frac{c e^{-2c}}{4} n, \\
    \mb{E}[M^2] & = \mb{E}[M] + \frac{(|B|)_4}{4} q_\ell^2 (1-q_\ell)^{2(n/2-2) + 2(n/2-4)} = (1+O(1/n)) \mb{E}[M]^2.
\end{align*}

Thus by Chebyshev's inequality, whp, we have a matching of size $\Theta(n)$ in $G(B,q_\ell)$ and so in $\mbf{G}[B]$ by \cref{obs:graph-property} (note that the matching in $\mbf{G}[B]$ need not be induced).  By almost analogous reasoning, $\mbf{G}[A]$ also has a matching of size $\Theta(n)$.  There is the slight complication that $\mbf{G}[A]$ has also been conditioned to contain an odd cycle; since this is also an event that occurs with $\Omega(1)$ probability, the same computation as in \cref{obs:graph-property} shows that high-probability events transfer to $\mbf{G}[A]$ as well.

Suppose $\chi : V \to \{ \text{red, blue, green, yellow} \}$ is a proper $4$-coloring of $\mbf G$.  Due to the large matching, there must be at least two colors with $\Omega(n)$ many vertices in $A$ and similarly at least two colors with $\Omega(n)$ many vertices in $B$.  By the expansion properties of \cref{lem:expander-small-p}, these two pairs of colors must be all distinct; without loss of generality, red and blue for $A$ and green and yellow for $B$.  We say that a vertex $u \in A$ (resp.~$v \in B$) is \emph{miscolored} if $\chi(u) \in \{\text{green, yellow}\}$ (resp.~$\chi(v) \in \{\text{blue, red}\}$).

By assumption, there is an odd cycle $C_\mr{odd} \subset A$.  This cannot be red/blue-colored and so requires a miscolored vertex $w \in C_\mr{odd}$.  We analyze how much one miscoloring in $A$ propagates to cause further miscolorings in $B$.

Let $\mc{P}_A$ (resp.~$\mc{P}_B$) be the collection of all components of $\mbf{G}[A]$ (resp.~$\mbf{G}[B]$) isomorphic to a path of length 3.  Fix a map $\varphi : \mc{P}_A \cup \mc{P}_B \to A \cup B$ where for all paths $\gamma$, we have $\varphi(\gamma) \in \gamma$.  This will indicate a vertex we choose to miscolor if $\gamma$ cannot be $2$-colored due to a miscolored vertex across the way.  Define the random directed bipartite graph $\Gamma_\varphi$ with $V(\Gamma_\varphi) = \mc{P}_A \cup \mc{P}_B$ and
\[ (\gamma,\gamma') \in E(\Gamma_\varphi) \iff \varphi(\gamma) \text{ connected in } \mbf G \text{ to both endpoints of } \gamma'. \]

Add a third component to $\Gamma_\varphi$ consisting of the vertices $w \in C_\mr{odd}$ with
\[ (w, \gamma) \in E(\Gamma_\varphi) \iff w \text{ connected in }\mbf G \text{ to both endpoints of }\gamma. \]

\begin{claim}\label{claim:miscolorings-propagate}
    Whp, for all maps $\varphi$ and all $w \in C_\mr{odd}$, the component $C(w)$ in $\Gamma_\varphi$ has $|C(w)| = \Omega(n)$.
\end{claim}

We show how this finishes the proof before proving the claim.  Suppose $\mbf{G}$ follows the condition of \cref{claim:miscolorings-propagate}.  As noted, there must be some $w \in C_\mr{odd}$ which is miscolored.  Consider those $\gamma \in N(w)$.  This means that $w$ is $\mbf G$-adjacent to both endpoints of $\gamma \in \mc{P}_B$, meaning $\mbf G[\gamma \cup \{w\}] \cong C_5$.  Since $w$ is miscolored and all 5 vertices cannot be green/yellow-colored, this forces there to be a miscolored vertex $v_\gamma \in \gamma$.  Assign $\varphi(\gamma) = v_\gamma$.  Iterating this process, we build some partial map $\varphi$, and may arbitrarily assign the remaining values of $\varphi$.

Notice that for each $\gamma \in C_{\Gamma_\varphi}(w)$, we have $\varphi(\gamma)$ is miscolored.  Thus we have $|C(w)|$ miscolored vertices, and so by pigeonhole, $|C(w)|/2$ miscolored vertices on the same side and so by pigeonhole again $|C(w)|/4$ vertices on the same side miscolored to the same color.

Thus this color has $\Omega(n)$ vertices in $A$ and $\Omega(n)$ vertices in $B$.  This contradicts the expander property in \cref{lem:expander-small-p}.  Since we only conditioned on high probability events, this means there is no such $4$-coloring $\chi$ whp.
\end{proof}

\begin{proof}[Proof of \cref{claim:miscolorings-propagate}]
We need to understand the structure of $\Gamma_\varphi$.  First, we lower bound $|\mc{P}_A|$.

Without conditioning on triangle-freeness, for $\mc{P}_A^\ell$ the number of isolated induced $P_3$s in $G(A,q_\ell)$ we compute
    \begin{align*}
        \mb{E}[|\mc{P}_A^\ell|] & = \frac{1}{|\on{Aut}(P_3)|} \sum_{(v_1,v_2,v_3,v_4) \in A^4} \mb{P}(\mbf G[\{v_1,v_2,v_3,v_4\}] \cong P_3) \\
        & = \frac{(|A|)_4}{2} q_\ell^3 (1-q_\ell)^{4(|A|-4) + 3} = (1+O(n^{-3/4})) \frac{c^3 n}{4} \exp\left\{ - 4c + O(1/n) \right\}\\
        \mb{E}[|\mc{P}_A^\ell|^2] & = \frac{1}{|\on{Aut}(P_3)|^2} \sum_{(v_1,\ldots,v_8) \in A^8} \mb{P}(\mbf G[\{v_1,v_2,v_3,v_4\}] \cong \mbf G[\{v_5,v_6,v_7,v_8\}] \cong P_3) \\
        & = \frac{(|A|)_8}{4} q_\ell^6 (1-q_\ell)^{8(|A|-8) + 22} + O(n) = (1+O(1/n)) \mb{E}[|\mc{P}_A^\ell|]^2.
    \end{align*}

    Thus by Chebyshev's inequality,
    \[ |\mc{P}_A^\ell| = \frac{c^3 e^{-4c}}{4} n + O_\mr{p}(\sqrt n). \]

    By Bayes' law, $G(A,q_\ell \mid \mc{T})$ has $\Omega(n)$ many components isomorphic to $P_3$ as well.

    We now consider how many of these remain induced isolated $P_3$s in $G(A,q_u \mid \mc T)$ under the coupling in \cref{prop:sandwiching-ERG-between-ERs}.  Note that whp
    , $G(A,q_\ell \mid \mc{T})$ and $G(A,q_u \mid \mc{T})$ each have $q_2 \binom{n}{2} + O(n^{3/5})$ edges.  Thus, under the coupling, $|G(A,q_\ell) \setminus G(A,q_u)| = O(n^{3/5})$, and so at most $O(n^{3/5})$ of these $P_3$s can be touched.  Thus there are $\Omega(n)$ copies of $P_3$ isolated and induced in both $G(A,q_\ell \mid \mc{T})$ and $G(A,q_u \mid \mc{T})$, and so in $\mbf{G}[A]$ by \cref{prop:sandwiching-ERG-between-ERs}.

Thus $|\mc{P}_A| = \Theta(n)$ and similarly $|\mc{P}_B| = \Theta(n)$ both whp.

Recall that the edges $(\gamma,\gamma') \in E(\Gamma_\varphi)$ occur if $\varphi(\gamma)$ is connected to both endpoints of $\gamma'$ in $G$.  The crossing edges in $\mbf G$ are according to the hard-core model at $\lambda$ on $S \on{\Box} T$.  Since each $\gamma \in \mc{P}_A, \gamma' \in \mc{P}_B$ is a connected component, their product $\gamma \on{\Box} \gamma'$ is a connected component of $S \on{\Box} T$ isomorphic to $P_3 \on{\Box} P_3$.  For any choice of $\varphi(\gamma)$, we have $\mb{P}[(\gamma,\gamma') \in E(\Gamma_\varphi)] = (1+O(\lambda))\lambda^2$.  Furthermore, for any fixed $\varphi$, a collection of edges $F \subset E$ are mutually independent unless there is some $\gamma,\gamma'$ with $(\gamma,\gamma') \in F$ and $(\gamma',\gamma) \in F$.

    We now iteratively grow the component containing $w$ using the exploration method.  Let $Y_0 = \{w\}$.  We will then build up a sequence $Y_{k+1} \subset N(Y_k)$ of ``layers'' of the component, showing by a union bound that $|Y_{k+1}|$ is large for all $\varphi$, until we have $\Omega(|\mc{P}_A|)$ vertices.

    Let $N = \frac12\min\{|\mc{P}_A|, |\mc{P}_B|\}$ and fix arbitrary total orders on $\mc{P}_A,\mc{P}_B$.  We will always explore a vertex's out-neighborhood among only the first $N$ vertices not yet in the component, setting the remainder aside as a reservoir and refilling so there are always $N$ vertices to be explored.  Let $\mu = \lambda^2 N$.  Notice $\mu = \Theta(\log n)$.  We will show that $|Y_k| \geq (\mu/20)^k$.

    The base case is $Y_1 \subset N(w)$ has size at least $\mu/20$.  By a Chernoff bound, each $w \in C_\mr{odd}$ has $\deg_{\Gamma_\varphi} w \geq \mu/4$ except with probability $n^{-\Omega(1)}$.  Further, whp, $|C_\mr{odd}| \leq \log n$.  This follows from a first moment computation.  The expected number of cycles of length $k$ is $(n)_k q_2^k/k \leq c^k/k$, and so the expected number of cycles longer than $k$ is
    \begin{equation}\label{eq:no-large-cycles} \sum_{s > k} \frac1s c^s \leq \frac{1}{k} \sum_{s=0}^\infty c^s = \frac{1}{(1-c)k}. \end{equation}

Thus for $k = \log n$, we have no cycles longer than $\log n$ whp, and so by a union bound, all vertices $w \in C_\mr{odd}$ have $|Y_1| \geq \mu/4$ whp.

    We describe the exploration process to get from a ``layer'' $Y_k =: Y$ to the next layer $Y_{k+1}$.

    Given a vertex $\gamma \in Y$, we walk through the paths $\gamma'$ one by one checking if $(\gamma,\gamma') \in \Gamma_\varphi$.  We continue until we have either (a) found $\mu/2$ neighbors, succeeding, or (b) checked $N$ vertices of $\Gamma_\varphi$ without finding $\mu/2$ neighbors.  By a Chernoff bound, the probability of failure is $\exp\{-\Omega(\mu)\} $.

    We will now go through the whole layer $Y$ checking each $\gamma \in Y$ as above.  We want at least $|Y|/10$ successes to guarantee $|Y| \mu/20$ new neighbors.  Recall that the successes are independent.  By Hoeffding's inequality, we have
    \begin{multline*}
     \mb{P}\left[ \on{Binom}(|Y|,e^{-\Omega(\mu)}) \leq |Y|/10\right] \leq \exp\left\{ - \frac{2 ((1-e^{-\Omega(\mu)})|Y| - |Y|/10)^2}{|Y|} \right\} \\ = \exp\{-(1.62+e^{-\Omega(\mu)}) |Y|\}. \end{multline*}

    We can now show inductively that as long as the reservoir is not exhausted, $|Y_k| \geq (\mu/20)^k$ for any assignment $\varphi$.  We will then discard everything but $(\mu/20) |Y_k|$ vertices before continuing to the next step.  As long as this process succeeds, $|Y_{k+1}| \geq (\mu/20) |Y_k|$ and so $|Y_i| \geq (\mu/20)^i$ for all $i \leq k$.

    We now need to union bound over all possible $\varphi$.  Note that each $\varphi$ can take $4$ distinct values since $\varphi(\gamma) \in \gamma$.  We have at most $4^{\sum_{i \leq k} |Y_i|}$ possible choices of $\varphi$ thus far.  The probability of success in constructing layer $k+1$ conditioned on success for all $\varphi$ in all the earlier layers $Y_1,\ldots,Y_k$ is then
    \begin{align*}
        & \qquad \exp\{-(1.62+e^{-\Omega(\mu)}) |Y_k| \} \cdot \exp\left\{ \paren{ \sum_{i=1}^k |Y_i| } \log 4 \right\} \\ & = \exp\left\{ - (1.62+e^{-\Omega(\mu)}) \paren{\frac\mu{20}}^k + \log 4 \sum_{i=1}^k \paren{\frac\mu{20}}^i \right\} \\
        & = \exp\left\{ - (1.62+e^{-\Omega(\mu)}) \paren{\frac\mu{20}}^k + \log 4 \frac{(\mu/20)^{k+1} - (\mu/20)}{(\mu/20) - 1} \right\} \\
        & = \exp\left\{ - (1.62 - \log 4 + O(1/\mu)) \paren{\frac\mu{20}}^k \right\} = \exp\{-\Omega((\mu/{20})^k) \}.
    \end{align*}

    The probability that all rounds up through round $k$ have been successful is then
    \[ \sum_{i=1}^k \exp\{ -\Omega((\mu/20)^i) \} = O\paren{\exp\{-\Omega(\mu) \} } \]
    for any $k$.  Notice that we have not assumed any control on $k$; the above holds for any $k = k(\mu)$.

    Thus whp, we always have $|Y_k| \geq (\mu/20)^k$ until the reservoir has run out.  At that point, at least $N$ vertices have been exhausted.  Note that at each step, we discard at most half of the vertices viewed, meaning the final component $\bigcup_i Y_i$ contains at least $N/2 = \Omega(n)$ vertices on one side for all choice $w \in C_\mr{odd}$ and $\varphi$.
\end{proof}

\begin{proof}[Proof of \cref{thm:4-coloring-scaling-window}]
    If $\mbf G[A]$ and $\mbf G[B]$ have no odd cycles, then certainly $\mbf G$ is $4$-colorable.  By \cref{lem:odd-cycle-no-4-color}, if $\mbf G[A] \cup \mbf G[B]$ has an odd cycle, then $G$ is not $4$-colorable whp.  Thus since $S$ and $T$ are independent (conditioned on $(A,B)$), it comes down to
    \begin{equation}\label{eq:chi=4 chi=2^2} \mb{P}[\chi(\mbf G) = 4] = \mb{P}[\chi(\mbf G[A]) = \chi(\mbf G[B]) = 2] + o(1) = \mb{P}[\chi(\mbf G[A]) = 2]^2 + o(1). \end{equation}
    
    By \cref{eq:no-large-cycles}, for $q_2 = 2c/n$ with $c \in (0,1)$,
    \[ \lim_{k\to\infty} \lim_{n\to\infty} \mb{P}[\text{cycle of length }\geq k] = 0. \]
    
    By classical results of Bollob\'as~\cite{bollobas1984evolutionrandom} and Karo\'nski and Ruci\'nski~\cite{karonski1983number}, the number of copies of $C_k$ in $G(n/2,2c/n)$ is Poisson with parameter $c^k/2k$, and these are independent for different $k$.  Thus conditioning on $C_3$-freeness does not affect the other distributions, and so we may compute
    \begin{multline}\label{eq:cycle-counting} \mb{P}[\chi(\mbf G[A]) = 2] = \lim_{k\to\infty}\lim_{n\to\infty} \mb{P}[\text{no odd cycles with length}\leq k] \\ = \lim_{k\to\infty}\exp\left\{ \sum_{s=5}^k \frac{1-(-1)^s}{2} \frac{c^s}{2s} \right\} = \paren{\frac{1-c}{1+c}}^{1/4} \exp\{-c/2 - c^3/6\}. \end{multline}

    We now do a computation of $q_2 = q_2(\lambda)$ to find the scaling window in $p$.  Fix some $c \in (0,1)$.  Let
    \[ \lambda = \sqrt{\frac{-W_{-1}(-(2c)^2/n)}{n}} = \sqrt{\frac{\log n + \log\log n - 2 \log(2c) + o(1)}{n}}. \]

    Then we may compute $p = (1+\widetilde{O}(n^{-1/2})) \lambda$, $q_0 = 2c/n$, and $q_2 = (1+o(1))q_0$ and so by \cref{eq:chi=4 chi=2^2,eq:cycle-counting}
    \[ \mb{P}[\chi(\mbf G) = 4] = \paren{\frac{1-c}{1+c}}^{1/2} \exp\{-c - c^3/3\} + o(1). \]

    Finally, we show $\chi(\mbf G) \leq 5$ whp.  We first bound the number of cycles in $\mbf G[A]$ via a simple first moment calculation.  The expected number of $k$-cycles is $c^k/2k$, and so the expected total number of cycles is $\Theta(1)$, and so by Markov's inequality, whp there are at most $\log n$ many cycles in $A$ and in $B$.  Choose a set $C$ consisting of a random vertex in each cycle.  Since the crossing edges are stochastically dominated by independent choice with probability $\lambda$, the expected number of crossing edges in $C$ is at most $(\log n)^2 \lambda = O(n^{-1/2} (\log n)^{5/2}) = o(1)$.  Thus whp, we may color $C$ all one color.  Now, $\mbf{G}[A \setminus C]$ and $\mbf{G}[B \setminus C]$ are both bipartite and so can be $2$-colored with distinct sets of colors, giving a $5$-coloring of $\mbf G$.
\end{proof}

\section{Results for \texorpdfstring{$\mc{T}(n,m)$}{T(n,m)}}\label{sec:T(nm)}

In this section, we prove the following.

\begin{proposition}\label{prop:lambda-to-fixed-m}
    Let $\mbf{G}_m$ be a uniform sample from $\mc{T}(n,m)$.  Let $\lambda_0 = 4m/n^2$ and let
    \begin{equation}\label{eq:def-of-lambda(m)} \lambda = \lambda(m) := \lambda_0 + \lambda_0^2 + (\lambda_0^2n - 1) \lambda_0 e^{-\lambda_0^2n/2}. \end{equation}

    Let $\mbf{G}_\lambda$ be a sample from $\mbf{G}(n,\lambda/(1+\lambda))$ conditioned on triangle-freeness.  Then
    \[ \mb{P}[\chi(\mbf{G}_m) = 3] = \mb{P}[\chi(\mbf{G}_\lambda) = 3] + o(1) \]
    and
    \[ \mb{P}[\chi(\mbf{G}_m) = 4] = \mb{P}[\chi(\mbf{G}_\lambda) = 4] + o(1). \]
\end{proposition}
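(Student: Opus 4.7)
The plan is to deduce Proposition~\ref{prop:lambda-to-fixed-m} from the binomial-model results (Theorems~\ref{theorem:main-erdos-renyi} and~\ref{thm:4-coloring-scaling-window}) via the standard ``$G(n,p)$-to-$G(n,m)$'' style transfer, using that $\mc T(n,m)$ is exactly the distribution of $\mbf G_\lambda$ conditioned on $|E(\mbf G_\lambda)| = m$. The key identity is
\[ \mb P[\chi(\mbf G_\lambda) = k] = \sum_{m'} \mb P[|E(\mbf G_\lambda)| = m'] \cdot \mb P[\chi(\mbf G_{m'}) = k], \]
and the strategy is to choose $\lambda = \lambda(m)$ so that $|E(\mbf G_\lambda)|$ is concentrated around $m$ on a scale much smaller than the scaling-window widths in Theorems~\ref{theorem:main-erdos-renyi} and~\ref{thm:4-coloring-scaling-window}, so that up to $o(1)$ the sum on the right essentially picks out the single value $\mb P[\chi(\mbf G_m) = k]$.

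First I would verify that the formula~\eqref{eq:def-of-lambda(m)} is exactly what solves $\mb E|E(\mbf G_\lambda)| = m$ to the required accuracy, using the structural description $\mbf G_\lambda \sim \mu_{\lambda,1}$ (or $\mu_{\lambda,2}$ in the smaller-$p$ regime of Theorem~\ref{thm:4-coloring-scaling-window}). The expected defect contribution is $\mb E|S| + \mb E|T| = (1+o(1)) q_0 n^2/4 = (1+o(1))\lambda e^{-\lambda^2 n/2} n^2/4$, and the expected crossing-edge count comes from Proposition~\ref{prop:hardcore-size-concentration} applied to the hard-core model on the triangle-free graph $S \on{\Box} T$, which by \cref{claim:disjoint-edges} has maximum degree at most $2$ whp; this yields $\frac{\lambda}{1+\lambda}|A||B| - 2\lambda^2 \cdot e(S\on{\Box} T) + \text{lower order}$. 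Summing these contributions and inverting to solve for $\lambda$ in terms of $\lambda_0 = 4m/n^2$ recovers precisely~\eqref{eq:def-of-lambda(m)}.

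Next, the same Proposition~\ref{prop:hardcore-size-concentration} applied to $S \on{\Box} T$ shows that $\on{Var}|E_\mr{cr}|$ is dominated by the leading term $\frac{\lambda}{(1+\lambda)^2}|A||B| = \widetilde \Theta(n^{3/2}\sqrt{\log n})$, with the defect-edge variance and the partition-size fluctuations (which enter only quadratically in $|A| - n/2$, by \cref{lem:strongly-balanced-small-p}) contributing strictly smaller terms. Hence $\on{sd}|E(\mbf G_\lambda)| = \widetilde O(n^{3/4}(\log n)^{1/4})$, which is $o(n^{4/3}(\log n)^{-1/3})$ and $o(n^{3/2}(\log n)^{-1/2})$, i.e.\ much smaller than either scaling-window width. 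Chebyshev's inequality then gives $|E(\mbf G_\lambda)| = m + o(w_m)$ whp, where $w_m$ denotes the scaling-window width of the relevant transition.

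The main obstacle is the final extraction step: the identity above only expresses $\mb P[\chi(\mbf G_\lambda) = k]$ as a weighted average of $\mb P[\chi(\mbf G_{m'}) = k]$ over the $o(w_m)$-window of $|E(\mbf G_\lambda)|$, and to pin down the value at the single point $m' = m$ one must rule out large variations of $m' \mapsto \mb P[\chi(\mbf G_{m'}) = k]$ inside this window. I would handle this by running the argument at two nearby parameters $\lambda^\pm$ corresponding via~\eqref{eq:def-of-lambda(m)} to $m \pm \delta$ with $\on{sd}|E(\mbf G_\lambda)| \ll \delta \ll w_m$, so that by concentration $|E(\mbf G_{\lambda^-})| \le m \le |E(\mbf G_{\lambda^+})|$ whp, while by Theorems~\ref{theorem:main-erdos-renyi} and~\ref{thm:4-coloring-scaling-window} the three probabilities $\mb P[\chi(\mbf G_{\lambda^\pm}) \le k]$ and $\mb P[\chi(\mbf G_\lambda) \le k]$ agree up to $o(1)$. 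Using that $\{\chi \le k\}$ is monotone-decreasing in the edge set, this should sandwich $\mb P[\chi(\mbf G_m) \le k]$ between the values at $\lambda^\pm$ and give $\mb P[\chi(\mbf G_m) \le k] = \mb P[\chi(\mbf G_\lambda) \le k] + o(1)$; the statement for $\{\chi = k\}$ then follows by subtracting the analogous equality for $\{\chi \le k-1\}$. The subtlety here is that one cannot directly couple $\mbf G_m \subset \mbf G_{m+1}$ in a distribution-preserving way---deleting a uniform edge from a uniform $\mc T(n,m+1)$ sample yields a biased distribution on $\mc T(n,m)$---so this monotonicity must be exploited through the $\mbf G_\lambda$ average against $|E|$ rather than via a pointwise coupling between the uniform models at different $m$.
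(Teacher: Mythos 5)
Your setup is fine (the identity $\mc T(n,m') = \mathcal L(\mbf G_\lambda \mid |E(\mbf G_\lambda)| = m')$, the calibration of $\lambda(m)$ so that $|E(\mbf G_\lambda)|$ centers at $m$, and the $\widetilde O(n^{3/4})$ concentration of $|E(\mbf G_\lambda)|$, which is indeed far below both window widths), but the final extraction step has a genuine gap. The mixture identity only tells you that a weighted average of $m' \mapsto \mb P[\chi(\mbf G_{m'}) \le k]$ over an $\widetilde O(n^{3/4})$-window equals $\mb P[\chi(\mbf G_{\lambda^\pm}) \le k]$; to pin down the value at the single point $m' = m$ you need some regularity (approximate monotonicity or continuity) of this function of $m'$, and --- as you yourself observe --- there is no monotone coupling between uniform samples of $\mc T(n,m')$ at different $m'$, so that regularity is exactly what is unavailable. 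Your sentence ``this monotonicity must be exploited through the $\mbf G_\lambda$ average against $|E|$'' is not an argument: knowing $|E(\mbf G_{\lambda^-})| \le m \le |E(\mbf G_{\lambda^+})|$ whp and that the two averages are close does not prevent $\mb P[\chi(\mbf G_m) \le k]$ from differing by a constant from $\mb P[\chi(\mbf G_{m'}) \le k]$ at all neighboring $m'$, since the single value $m$ carries only $\widetilde O(n^{-3/4})$ of the mixture weight. A secondary issue: inside the $3$-to-$4$ window, Theorem~\ref{theorem:main-erdos-renyi} only asserts that the probability is bounded away from $0$ and $1$, so ``the three probabilities at $\lambda^-,\lambda,\lambda^+$ agree up to $o(1)$'' does not follow from the theorem statement; one has to re-run the parameter comparison to see that such a shift moves the $2$-SAT parameter $\kappa$ by $o(1)$.

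The paper avoids this obstruction by never conditioning on the total edge count. It compares the JPP sampler $\mu_{m,1}$ for $\mc T(n,m)$ with a modified sampler $\mu_{\lambda,1}^{(\xi)}$, coupling the first three steps (the partition and the defect graphs $S,T$) identically; the two models then differ only in the law of the crossing edges, which are respectively a uniform independent set of size $m-|S|-|T|$ and a hard-core sample $\mbf I_\xi$ in $S \on{\Box} T$. Conditionally on $S,T$, colorability is a monotone event in the crossing-edge set, so monotonicity applies pointwise and yields the sandwich $\mb P[A_\xi] - \mb P[|\mbf I_\xi| < m] \le \mb P[A_m] \le \mb P[A_{\xi'}] + \mb P[|\mbf I_{\xi'}| > m]$; concentration of $|\mbf I_\xi|$ (Proposition~\ref{prop:hardcore-size-concentration}) with $\xi,\xi' = \lambda \mp n^{-1+\eps}$, together with the robustness of the analysis of Sections~\ref{sec:parameter-comparison} and~\ref{sec:4-coloring-scaling-window} to shifts of this size, closes the argument. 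If you want to rescue your route, you would effectively need to reintroduce this conditional structure (or otherwise prove regularity of $m' \mapsto \mb P[\chi(\mbf G_{m'}) \le k]$), at which point you are reproducing the paper's proof.
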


\begin{proof}
The following algorithm of Jenssen, Perkins, and Potukuchi approximates a uniform sample from $\mc{T}(n,m)$ up to $o(1)$ total variation distance~\cite[Theorem 1.7]{jenssen2023evolution}.

\begin{definition}[{\cite[Algorithm 1]{jenssen2023evolution}}]
    Given $n$ and $m$, let $\lambda_0 = 4m/n^2$ and let $\lambda$ be as in \cref{eq:def-of-lambda(m)}.
    Define the measure $\mu_{m,1}$ to be the law of the output of the following procedure for generating a random graph on $V$.
    \begin{enumerate}
        \item Choose $\zeta \in \mb{Z}$ via
        \[ \mb{P}[\zeta = t] \propto (1+\lambda)^{-t^2}. \]
        \item Choose $A \in \binom{[n]}{\lfloor n/2 \rfloor - \zeta}$ uniformly at random, and let $B = [n] \setminus A$.  If $\zeta > \lfloor n/2 \rfloor$ or $\zeta < -\lceil n/2 \rceil$, abort and output the empty graph.
        \item Choose $S \subset \binom A2$ and $T \subset \binom B2$ according to independent Erd\H{o}s--R\'enyi random graphs on $A$ and $B$ with edge probability $q_0$ the unique solution in $(0,1)$ to
        \[ \frac{q_0}{1 - q_0} = \lambda e^{-\lambda^2n/2}, \]
        conditioned on triangle-freeness.
        \item Choose $E_\mr{cr} \subset A \times B$ a uniform independent set in $S \on{\Box} T$ of size $m - |S| - |T|$.
        \item Output $S \cup T \cup E_\mr{cr}$.
    \end{enumerate}
\end{definition}

Consider also the following modification of $\mu_{\lambda,1}$ (see \cref{def:mu-lambda-1}).
\begin{definition}
    Given parameters $n$, $\lambda$, and $\xi$, define $\mu_{\lambda,1}^{(\xi)}$ to be the law of the output of the following procedure for generating a random graph on $V$.
    \begin{enumerate}
        \item Choose $\zeta \in \mb{Z}$ via
        \[ \mb{P}[\zeta = t] \propto (1+\lambda)^{-t^2}. \]
        \item Choose $A \in \binom{[n]}{\lfloor n/2 \rfloor - \zeta}$ uniformly at random, and let $B = [n] \setminus A$.  If $\zeta > \lfloor n/2 \rfloor$ or $\zeta < -\lceil n/2 \rceil$, abort and output the empty graph.
        \item Choose $S \subset \binom A2$ and $T \subset \binom B2$ according to independent Erd\H{o}s--R\'enyi random graphs on $A$ and $B$ with edge probability $q_0$ the unique solution in $(0,1)$ to
        \[ \frac{q_0}{1 - q_0} = \lambda e^{-\lambda^2n/2}, \]
        conditioned on triangle-freeness.
        \item Choose $E_\mr{cr} \subset A \times B$ according to the hard-core model on $S \on{\Box} T$ at fugacity $\xi$.
        \item Output $S \cup T \cup E_\mr{cr}$.
    \end{enumerate}
\end{definition}

Note that for $\xi = \lambda$ we have $\mu_{\lambda,1}^{(\lambda)} = \mu_{\lambda,1}$ is our approximation for sampling from $G(n,\lambda/(1+\lambda))$ conditioned on triangle-freeness.  Fix some $m$ in the critical range and let $\lambda = \lambda(m)$ as in \cref{eq:def-of-lambda(m)}.  
We will attempt to couple a sample $\mbf{G}_m \sim \mu_{m,1}$ with $\mbf{G}_\xi \sim \mu_{\lambda,1}^{(\xi)}$ for some $\xi$ to be determined later.  
Begin with the first three steps coupled perfectly, as they are identical.  

Let $\mbf{I}_\xi$ denote a sample from the hard-core model on $S \on{\Box} T$.  Let $A_{\xi}$ denote the event that $\mbf{G}_\xi$ is $3$-colorable and $A_m$ denote the event that $\mbf{G}_m$ is $3$-colorable.  Then
\[ \mb{P}[A_\xi] = \sum_{m=0}^N \mb{P}[|\mbf{I}_\xi| = m]\mb{P}[A_m]. \]

By monotonicity of the chromatic number, for any $m$ and $\xi$,
\[ \mb{P}[A_m] - \mb{P}[|\mbf{I}_\xi| > m] \leq \mb{P}[A_\xi] \leq \mb{P}[A_m] + \mb{P}[|\mbf{I}_\xi| < m]. \]

Thus for any $m,\xi,\xi'$, we rearrange this to get
\begin{equation}\label{eq:xi-m-xi'} \mb{P}[A_\xi] - \mb{P}[|\mbf{I}_\xi| < m] \leq \mb{P}[A_m] \leq \mb{P}[A_{\xi'}] + \mb{P}[|\mbf{I}_{\xi'}| > m]. 
\end{equation}

Applying \cref{prop:hardcore-size-concentration} with $\xi = (1+o(1))\lambda$ gives $\mb{E}|\mbf{I}_\xi| = \xi n^2/4 + \widetilde{O}(n)$ and $\on{Var}(|\mbf{I}_\xi|) = \widetilde{O}(n^{3/2})$.  Let $\xi = \lambda - n^{-1+\eps}$ and $\xi' = \lambda + n^{-1+\eps}$ for $0 < \eps < 1/100$ a constant.  These are sufficiently small shifts that the analysis in \cref{sec:parameter-comparison} carries through and so in particular $|\mb{P}[A_\xi] - \mb{P}[A_{\xi'}]| = o(1)$.  By \cref{prop:hardcore-size-concentration}, we have that $\mb{P}[|\mbf{I}_\xi| < m] = o(1)$ and $\mb{P}[|\mbf{I}_{\xi'}| > m] = o(1)$.  Thus by these facts, by monotonicity of chromatic number and $\xi < \lambda < \xi'$, we get
\[ \mb{P}[A_m] = \mb{P}[A_\lambda] + o(1). \]

Instead letting $A_m$ denote the event of $4$-colorability of $\mbf{G}_m$ and $A_\xi$ the event of $4$-colorability of $\mbf{G}_\xi$, the analysis in \cref{sec:4-coloring-scaling-window} is similarly unaffected by a shift of order $n^{-1+\eps}$ and so the result for $4$-colorability is proven identically.
\end{proof}

\cref{theorem:main-fixed-m} follows from \cref{prop:lambda-to-fixed-m,theorem:main-erdos-renyi}, and \cref{thm:4-coloring-scaling-window-fixed-m} follows from \cref{prop:lambda-to-fixed-m,thm:4-coloring-scaling-window}.

\section{Bipartite random \texorpdfstring{$2$}{2}-SAT}\label{sec:2-SAT-intro}

We will spend the remainder of the paper establishing the scaling window for bipartite random $2$-SAT.  Note that the notation for these sections will be disjoint from the notation in the previous sections of the paper for consistency with the notation of the $2$-SAT literature, namely the paper of Bollob\'as, Borgs, Chayes, and Wilson~\cite{bollobas2001scaling}.  Notably $n$, $p$, and $\lambda$ refer to different quantities than they have thus far.  We begin by restating \cref{theorem:bipartite-2SAT} with more standard variable names.

\begin{theorem}
    For all $C > 0$, there exist $\lambda_0$ large and $\eps_0$ small such that the following holds.  Let $n,m$ be two integers with $n \to \infty$, $|n - m| \leq C n^{2/3}$.  Suppose we have $n+m$ variables $\{x_1,\ldots,x_n\}$ and $\{y_1,\ldots,y_m\}$.  Let $F_{n,m,p}$ be a random bipartite 2-SAT formula on $\{x_i\} \cup \{y_j\}$ with each clause $x_i \vee y_j$ occurring independently with probability
    \[ p := \frac{1 + \eps}{2(mn)^{-1/2}} = \frac{1 + \lambda (mn)^{-1/6}}{2(mn)^{-1/2}}, \]
    where $\lambda_0 < |\lambda|$ and $|\eps| < \eps_0$ (i.e.~$1 \ll |\lambda| \ll n^{1/3}$).  Then as $n \to \infty$,
    \[ \mb{P}(\SAT(F_{n,m,p})) = \begin{cases} \exp[-\Theta(|\lambda|^{-3})] & \lambda < 0 \\ \exp[-\Theta(|\lambda|^3)] & \lambda > 0. \end{cases} \]
\end{theorem}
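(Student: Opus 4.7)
The plan is to follow Bollob\'as, Borgs, Chayes, Kim, and Wilson~\cite{bollobas2001scaling} closely, tracking the bipartition $\{x_i\}\cup\{y_j\}$ throughout. The first step is to encode $F_{n,m,p}$ as its implication digraph $D$ on the $2(n+m)$ literals, where each clause $(\ell_1\vee\ell_2)$ contributes arcs $\bar\ell_1\to\ell_2$ and $\bar\ell_2\to\ell_1$. Since every clause is bipartite, every arc crosses between the $x$- and $y$-sides, so directed walks in $D$ alternate sides and any walk from $z$ to $\bar z$ has length of a fixed parity. By the Aspvall--Plass--Tarjan criterion, $F_{n,m,p}$ is UNSAT iff $D$ contains a \emph{bicycle}: a pair of internally disjoint directed paths from some $z$ to $\bar z$.

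For the lower bound on $\mb{P}(\on{UNSAT})$, I would carry out a first/second moment computation on short bicycles. A bicycle using $r$ literal-slots contributes expectation of order $(2p\sqrt{nm})^r/r$ with bipartite parity constraints on $r$; writing $2p\sqrt{nm}=1+\lambda(nm)^{-1/6}$ and Taylor-expanding, optimization over $r$ gives total expected count of order $|\lambda|^3$ for $\lambda>0$ and of order $|\lambda|^{-3}$ for $\lambda<0$. A Poisson-approximation step on disjoint minimal bicycles (the ``snake'' construction of BBCKW, adapted to alternate sides) then upgrades these moment estimates to $\mb{P}(\SAT)\le\exp[-c|\lambda|^3]$ for $\lambda>0$ and $\mb{P}(\on{UNSAT})\gtrsim|\lambda|^{-3}$ for $\lambda<0$.

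For the matching upper bound on $\mb{P}(\on{UNSAT})$, the argument passes through \emph{hourglass} structures: any UNSAT witness lies in a strongly connected component of $D$ containing both literals of some variable, and one union-bounds over the shape of such a component. With $k,\ell,s$ denoting the numbers of $x$-literals, $y$-literals and excess edges used, the expected count is bounded by $\sum_{k,\ell,s} C(k,\ell,s)\cdot N_{k,\ell}\cdot p^{k+\ell-1+s}$, where $C(k,\ell,s)$ is the number of connected spanning subgraphs of $K_{k,\ell}$ with $k+\ell-1+s$ edges and $N_{k,\ell}$ counts polarity and embedding choices. Matching term-by-term with the moment calculation and the dominant $r\asymp|\lambda|(nm)^{1/6}$ from above then yields the required $\exp[-\Theta(|\lambda|^3)]$ and $O(|\lambda|^{-3})$ bounds.

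The main obstacle is this enumeration: Wright's sharp asymptotics for $C(k,s)$ drive the non-bipartite analysis, whereas the bipartite analog $C(k,\ell,s)$ is much less well understood. I would combine an upper bound of Do, Erde, Kang, and Missethan~\cite{do2021component} with a matching (up to $\exp(\Theta(s))$) lower bound proved in \cref{sec:proof-of-lower-bound-bipartite} by adapting Wright's generating-function recursion to the bipartite setting. These estimates are too loose to pin down the implicit constants in $\Theta(|\lambda|^{\pm3})$---so the limiting probability inside the scaling window remains open, as it does in the non-bipartite case---but they suffice for identifying the exponent provided one verifies that the $\exp(\Theta(s))$ slack does not blow up when summed against the $p^s$ factors. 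The balance assumption $|n-m|\le Cn^{2/3}$ ensures $\sqrt{nm}\asymp n$, so moment computations can be carried out essentially as in the balanced case.
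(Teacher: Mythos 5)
Your plan breaks at the very first step: the criterion you attribute to Aspvall--Plass--Tarjan is not correct, and the error propagates through both halves of the argument. Unsatisfiability is equivalent to the existence of a variable $z$ with \emph{both} $z \rightsquigarrow \bar z$ and $\bar z \rightsquigarrow z$; it is not equivalent to a pair of internally disjoint directed paths from $z$ to $\bar z$ (indeed, by the skew-symmetry of the implication digraph, any strictly distinct path $z \rightsquigarrow \bar z$ of length at least two already comes with a disjoint partner, yet forces nothing beyond $z=\FALSE$: the bipartite formula $(\bar z \vee a)\wedge(\bar a \vee \bar z)$ has two disjoint paths $z \to a \to \bar z$ and $z \to \bar a \to \bar z$ and is satisfiable). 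The Chv\'atal--Reed bicycle is only a \emph{necessary} condition for unsatisfiability, so moment estimates on bicycles bound $\mb P[\on{UNSAT}]$ from above, never from below. Consequently your first/second moment plus Poisson-approximation step cannot deliver $\mb P[\on{UNSAT}]\gtrsim|\lambda|^{-3}$ for $\lambda<0$, nor $\mb P[\SAT]\le\exp[-c\lambda^3]$ for $\lambda>0$; at best the bicycle first moment recovers the subcritical bound $\mb P[\SAT]\ge 1-O(|\lambda|^{-3})$. Genuine certificates of unsatisfiability (a variable with contradictory paths in both directions) are large and strongly correlated inside the window, which is exactly why the paper builds hourglasses by local exploration and then uses a two-round exposure (sprinkling): $\Theta(|\lambda|^3)$ disjoint hourglasses give $\Omega(|\lambda|^{-3})$ chance of a contradiction when $\lambda<0$, and for $\lambda>0$ the hourglasses percolate into a giant one, after which sprinkling fails to create the two contradictory clauses only with probability $\exp[-\Omega(\lambda^3)]$.

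The other direction has the same structural problem. A union bound over the shapes of a strongly connected component containing $z$ and $\bar z$ can only show $\mb P[\on{UNSAT}]\le \mb E[\#\text{witnesses}]$, which is vacuous in the supercritical regime where $\mb P[\on{UNSAT}]\to 1$ and one must instead prove that $\mb P[\SAT]$ is at least the exponentially small quantity $\exp[-O(\lambda^3)]$; no first-moment bound over witnesses can produce a lower bound on a vanishing probability. The paper (following Bollob\'as--Borgs--Chayes--Kim--Wilson) gets this by a clause-by-clause exposure process: Jensen's inequality gives $\mb P[\SAT]\ge\exp[-\int_0^p \mb E[U(\Phi_s)]/(1-s)\,\mr ds]$, and $\mb E[U(\Phi_s)]$ reduces to the joint spine probability $\mb P(x\rightsquigarrow\bar x,\ y\rightsquigarrow\bar y)$, which is controlled by first and second moments of trimmed out-neighborhoods (the quantities $P_{n,m,p}(k,\ell)$, $Q_{n,m,p}(k,\ell)$, and the approximate-FKG comparison). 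This is where the bipartite enumeration $C(k,\ell,s)$ that you correctly flag actually enters, together with the extra argument needed because the upper and lower bounds on $C(k,\ell,s)$ differ by $\exp(\Theta(s))$ (the $s_0 2^{-s_0}$ comparison between $P$ and $Q$). None of this machinery appears in your outline, so as written the proposal establishes neither the supercritical upper bound nor either lower bound on $\mb P[\SAT]$.
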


Throughout the following sections, there will be little exposition as the arguments are largely the same as those of~\cite{bollobas2001scaling}; we refer the reader there for more discussion.

The main difference in the proof is that where the authors of~\cite{bollobas2001scaling} control a value $P_{n,p}(k)$ regarding a vertex having out-neighborhood of size $k$, we instead must control  a value $P_{n,m,p}(k,\ell)$ the probability a vertex has bipartite out-neighborhood with $k$ vertices on one side and $\ell$ on the other.  We then fix $j$ and show that $\sum_\ell P_{n,m,p}(j-\ell,\ell)$ is asymptotic to their quantity $P_{n,p}(j)$.  

The other difficulty that arises is a lack of sharp enumerative bounds on the number of connected sparse bipartite graphs.  We prove a lower bound (\cref{prop:lower-bound-on-bipartite-connected-graphs}) which may be independently useful.  However, having an exponential gap between upper and lower bounds on these quantities leads to some additional hurdles in the proof of \cref{lem:Pnp-m0}.

\subsection{Digraph representation}

Given a 2-SAT formula $F$ on variables $\{x_i\}_{i \in [n]} \cup \{y_i\}_{i \in [m]}$, we will create a directed graph $G = G_F$.  Define
\[ V(G) = X \cup Y, \qquad X = \{x_i\}_{i \in [n]} \cup \{\bar x_i\}_{i \in [n]}, \qquad Y = \{y_i\}_{i \in [m]} \cup \{\bar y_i\}_{i \in [m]}, \] 
\[ E(G) = \{ zz' : (\bar z \vee z') \in F \}. \]

The vertices of $G$ are the literals, and the arcs indicate implication.  Note that $G$ is bipartite if and only $F$ is a bipartite 2-SAT formula.  Note also that $zz' \in E(G)$ if and only if $\bar z' \bar z \in E(G)$; thus $G$ is not simply a directed Erd\H{o}s--R\'enyi random graph.  We write $z \xrightarrow[F]{} z'$ to mean the arc $zz' \in G$ and $z \squig{F} z'$ to mean there is a (directed) path from $z$ to $z'$ in $G$.  When the formula is clear from context, we may simply write $z \to z'$ or $z \squig{} z'$.

A set $S \subset X \cup Y$ is said to be strictly distinct if there does not exist a variable $z$ with $z \in S$ and $\bar z \in S$.

For $z \in X \cup Y$, we will let $L_F^+(z)$ denote the \textit{trimmed outgraph} of $z$.  We produce it as follows.  Let $S = \{z\}$ and iterate the following steps until $S = \emptyset$.
\begin{enumerate}
    \item Choose an arbitrary literal $t \in S$ and remove it from $S$.  If $t \in L_F^+(z)$ or $\bar t \in L_F^+(z)$, skip step (2).
    \item Add $t$ to $L_F^+(z)$.  Add $\{ t' \in X \cup Y : t \xrightarrow[F]{} t' \}$ to $S$.
\end{enumerate}

The final result will be $L_F^+(z)$.  Note that $L_F^+(z)$ is always strictly distinct.

We define the spine of $F$ to be $S(F) = \{ z \in X \cup Y : z \squig{F} \bar z \}$.  Note that $F$ is satisfiable if and only if $S(F)$ is strictly distinct.  This is the main power of this graphical formulation of a 2-SAT formula.  We will use this notion to bound satisfiability.

\subsection{A note on asymptotic notation}
We must be careful with our asymptotic notation, as different parameters are controlled in very different ways.
Throughout this section, we will be assuming $n \to \infty$, and asymptotic notation will generally be with respect to $n$.  As $m = (1+o(1))n$, we are also simultaneously taking $m \to \infty$.  For simplicity, when we do not need lower order terms, we will remain in terms of $n$, e.g.~$(mn)^{-1/2} = O(1/n)$.  Our asymptotic notation will also be assuming $\lambda \gg 1$ and $\eps \ll 1$, so, e.g., $\lambda^2 \ne O(\lambda)$ but $\eps^2 = O(\eps)$.  The parameters $\lambda$ and $\eps$ are not as well controlled as $n$, as we are first fixing $\lambda_0$ and $\eps_0$ as absolute constants and then taking $n \to \infty$.  When we use $o(\cdot)$ notation, we will always explicitly denote the parameter(s) with which the quantity tends to 0, e.g.~$o_n(1)$ or $o_{\lambda,\eps}(1)$.  We will treat $C$ as an absolute constant and absorb it into $O(\cdot)$ notation freely.

\section{Lower bounds}\label{sec:SAT-lower-bounds}

In this section, we show the lower bounds on the probability of satisfiability.  The lower bounds are achieved via first and second moment control on the size of the spine.

\subsection{Satisfiability bounds}

We will begin by demonstrating how the bound on satisfiability follows from the control on the moments of the spine.

\begin{theorem}\label{thm:SAT-lower-bound}
    We have
    \[ \mb{P}[\SAT(F_{n,m,p})] = \begin{cases} \exp\left[ -O(|\lambda|^{-3}) \right] & \lambda < 0, \\ \exp\left[ -O(\lambda^3) \right] & \lambda > 0. \end{cases} \]
\end{theorem}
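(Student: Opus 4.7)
The plan is to reduce satisfiability to a property of the spine $S(F)$ of the bipartite implication digraph $G_F$, then control the resulting random variable by first and (for the supercritical case) second moment methods. Recall that $F$ is unsatisfiable if and only if $S(F)$ is not strictly distinct, so letting $B$ count the variables $z$ for which both $z \squig{F} \bar z$ and $\bar z \squig{F} z$, one has $\mb{P}[\SAT(F_{n,m,p})] = \mb{P}[B = 0]$. I will handle the two regimes of $\lambda$ separately.

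\textbf{Subcritical case} ($\lambda < 0$). Markov's inequality gives $\mb{P}[\on{UNSAT}] \leq \mb{E}[B]$. Expanding $\mb{E}[B]$ over the lengths of the two implication paths witnessing $z, \bar z \in S(F)$, one obtains
\[ \mb{E}[B] \leq \sum_z \sum_{k,\ell \geq 1} \mb{P}\paren{\exists\text{ paths } z \squig{F} \bar z \text{ of length } k \text{ and } \bar z \squig{F} z \text{ of length } \ell}. \]
Each summand decomposes into a sum over \emph{bicycles}, combinatorial structures present with probability $\asymp p^{k+\ell}$. The number of such bicycles of total length $k+\ell$ and excess $s$ I would bound using the bipartite enumerative estimates of \cref{sec:proof-of-lower-bound-bipartite} and of Do--Erde--Kang--Missethan. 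With $N:=(mn)^{1/2}$ and $Np = (1+\eps)/2$ for $\eps = \lambda N^{-1/3}$, the resulting sum collapses to a series geometric in $(1+\eps)$, naturally truncated near length $\asymp |\eps|^{-1}$. A careful accounting, mirroring \cite{bollobas2001scaling}, evaluates the series to $O(|\lambda|^{-3})$, giving $\mb{P}[\SAT] \geq 1 - O(|\lambda|^{-3}) \geq \exp(-O(|\lambda|^{-3}))$.

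\textbf{Supercritical case} ($\lambda > 0$). Now $\mb{E}[B]$ diverges in the window, so Markov is useless. Instead I would apply Janson's inequality to the collection of short-bicycle events $\{\mc{E}_\alpha\}$ of total length at most a cutoff $k_\star$, chosen so that bicycles of length $> k_\star$ contribute $o(1)$ overall (a separate first-moment tail estimate in the same vein as above). Janson yields
\[ \mb{P}\paren{\bigcap_\alpha \ol{\mc{E}_\alpha}} \geq \exp\paren{-\mu - \tfrac12 \Delta}, \qquad \mu = \sum_\alpha \mb{P}(\mc{E}_\alpha), \ \ \Delta = \sum_{\alpha \neq \beta,\, \alpha \sim \beta} \mb{P}(\mc{E}_\alpha \cap \mc{E}_\beta). \]
The same length-expansion gives $\mu = O(\lambda^3)$, and a direct pairwise-overlap calculation (bounding by the number of literals shared between two bicycles) gives $\Delta = O(\lambda^3)$ as well. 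Combined with the long-bicycle tail bound, this yields $\mb{P}[\SAT] \geq \exp(-O(\lambda^3))$.

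\textbf{Main obstacle.} The principal technical difficulty is the enumeration. Where \cite{bollobas2001scaling} invoke Wright's $(1+o(1))$-precision asymptotics for $C(k,s)$, I must instead work with bounds on $C(k,\ell,s)$ that agree only up to an $\exp(\Theta(s))$ factor. Fortunately, for these lower bounds only the orders $O(\lambda^3)$ and $O(|\lambda|^{-3})$ (not the constants inside them) are needed, so the exponential slack can be absorbed into a shift of $\eps$ by $1 + o_\eps(1)$ and does not alter the scaling. Matching constants precisely would be needed only for the companion upper bound in \cref{sec:SAT-upper-bounds}, which I expect to be the more delicate half of the argument.
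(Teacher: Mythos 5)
Your subcritical half is fine and is genuinely different from the paper: for $\lambda<0$ the classical Chv\'atal--Reed argument (unsatisfiability forces a bicycle, and the expected number of bicycles is $O(p\,|\eps|^{-3})=O(|\lambda|^{-3})$, with only elementary path counting needed -- the $C(k,\ell,s)$ machinery is overkill here) gives $\mb{P}[\mathrm{UNSAT}]=O(|\lambda|^{-3})$ and hence the claimed bound. The paper instead treats both regimes uniformly: it couples all densities via clause birthdays, passes to the reduced formula process, applies Jensen to get $\mb{P}[\SAT(F_p)]\ge \exp\bigl[-O(n^{2/3})\int \mb{P}(x\squig{}\bar x,\ y\squig{}\bar y)\,\mathrm dt\bigr]$, and then feeds in the two-point spine estimate of \cref{lemma:second-moment-bound}; your route avoids that machinery below the window, at the cost of not extending above it.

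The supercritical half has a genuine gap. First, a small point: the inequality you attribute to Janson goes the wrong way (Janson gives $\mb{P}[\bigcap_\alpha\ol{\mc E_\alpha}]\le e^{-\mu+\Delta/2}$; the lower bound you want is Harris/FKG, $\ge\prod_\alpha(1-\mb{P}(\mc E_\alpha))$, and needs no $\Delta$ but needs the \emph{full} family of certificates). The real problem is the cutoff scheme. For $\lambda>0$ the expected number of bicycles (or minimal contradiction certificates) of length $t$ behaves like $t^2p\,e^{\eps t-t^2/(2\sqrt{mn})}$, which is maximized near $t\asymp\eps\sqrt{mn}$ where it is $\exp\{\Theta(\lambda^2 n^{1/3})\}$; so no choice of $k_\star$ keeps $\mu=O(\lambda^3)$ for the short certificates while making the first moment of the long ones $o(1)$. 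Worse, above the window long bicycles are not rare at all -- the implication digraph contains directed cycles, and hence bicycles, with probability $1-o(1)$ (for $\lambda$ of constant order, $\mb{P}[\text{no bicycle}]=n^{-\Omega(1)}$ while $\mb{P}[\SAT]$ is bounded below by a constant) -- so the event ``no certificate occurs,'' which is what any FKG/Janson bound over a certificate family controls, is far rarer than satisfiability itself. What is rare is not that long potential-contradiction structures exist but that one of them is actually contradictory, and controlling that is exactly the content of the paper's argument: run the clause-exposure process and bound, at each density, the chance that a newly added clause joins two spine literals, i.e.\ $\mb{P}(x\squig{}\bar x,\ y\squig{}\bar y)=O(n^{-2/3}\lambda^2)$, which rests on the first- and second-moment spine analysis (\cref{prop:spine-expectation}, \cref{lem:FKG-for-Pxx}); integrating this over the window yields $\exp[-O(\lambda^3)]$. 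Without an ingredient of that type, your supercritical bound does not go through.
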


\begin{proof}

Fix $n$ and $m$ and create the literals $X \cup Y$.  We will couple all formulas $F_p := F_{n,m,p}$.  For each of the $4nm$ possible bipartite disjunctions $c$, assign an i.i.d.~random variable $\xi_c \sim \on{Unif}([0,1])$.  Let
\[ F_p = \bigwedge_{c : \xi_c < p} c. \]

Define the reduced formula process $\Phi_p$ as follows. Start from $\emptyset$, and iteratively, as we hit the ``birthday'' $\xi_c$ of a clause $c$, add $c$ to $\Phi_p$ if and only if $\Phi_p \wedge c$ is satisfiable.

We now define $H_p$ as follows.  For each clause $c \notin \Phi_1$, there is some minimum $p(c)$ so that $\Phi_{p(c)} \wedge c$ is not satisfiable but $\Phi_{p(c) - \delta} \wedge c$ is for all $\delta > 0$.  Choose $\xi'_c \sim \on{Unif}([p(c),1])$ independently for all $c \notin \Phi_1$, and let
\[ H_p = \Phi_p \wedge \bigwedge_{c : \xi'_c \leq p} c. \]

Note that $H_p$ becomes unsatisfiable the first time it diverges from $\Phi_p$, and that $H_p$ has the same distribution as $F_p$.  For a formula $F$, let $U(F)$ denote the total number of clauses $c$ so that $F \wedge c$ is unsatisfiable.  We may then compute
\begin{align*}
    \mb{P}[\SAT(F_p)] & = \mb{P}[\SAT(H_p)] = \mb{E}_{{\Phi}}[\mb{P}[\SAT(H_p)\,|\,{\Phi}]] \\
    & = \mb{E}_{{\Phi}}\left[ \exp\left[ -\int_0^p \frac{U(\Phi_s)}{1-s} \mr ds \right] \right] \\
    & \geq \exp\left[ \mb{E}_{{\Phi}} \left[ - \int_0^p \frac{U(\Phi_s)}{1-s} \mr ds \right] \right] \\
    & = \exp\left[ - \int_0^p \frac{\mb{E}_{{\Phi}}[U(\Phi_s)]}{1-s} \mr ds \right] \\
    & = \exp\left[ - \int_0^p \frac{4mn \mb{P}[x \squig{F_s} \bar x \text{ and } y \squig{F_s} \bar y \text{ for }x \in X, \ y \in Y]}{1-s} \mr ds \right].
\end{align*}

The third line is Jensen's inequality, the next line is the Fubini--Tonelli theorem, and the last is by definition of $U(\Phi_s)$ as the number of clauses such that $F \wedge C$ is unsatisfiable.  Noting that $p \leq 1/2$, we may rewrite the above as
\[ \mb{P}[\SAT(F_p)] \geq \exp\left[ - O(n^2) \int_0^p \mb{P}[x \squig{F_s} \bar x \text{ and } y \squig{F_s} \bar y \text{ for }x \in X, \ y \in Y] \ \mr ds \right]. \]

We now change variables via
\[ s = s(t) = \frac{1 + t n^{-1/3}}{2n} \]
to get
\[ \mb{P}[\SAT(F_p)] \geq \exp\left[ - O(n^{2/3}) \int_{-n^{1/3}}^\lambda \mb{P}[x \squig{F_{s(t)}} \bar x \text{ and } y \squig{F_{s(t)}} \bar y \text{ for }x \in X, \ y \in Y] \ \mr dt \right]. \]

In subsequent sections, we will show the following lemma.

\begin{lemma}\label{lemma:second-moment-bound}
    For arbitrary literals $x \in X$ and $y \in Y$ in $F_{n,m,p}$,
    \[ \mb{P}(x \rightsquigarrow \bar x, y \rightsquigarrow \bar y) = \begin{cases} O(n^{-2/3} \lambda^{-4}) & \lambda < 0, \\ O(n^{-2/3} \lambda^2) & \lambda > 0. \end{cases} \]
\end{lemma}

Proving this lemma is the content of the remainder of this section.  We will first finish the proof of \cref{thm:SAT-lower-bound} assuming \cref{lemma:second-moment-bound}.

\subsubsection*{Subcritical regime (\texorpdfstring{$\lambda < 0$}{negative})}

For $-n^{1/3} \leq t \leq -\lambda_0$, by \cref{lemma:second-moment-bound},
\begin{align*}
    \mb{P}[\SAT(F_p)] & \geq \exp\left[ -O \paren{ \int_{-n^{1/3}}^\lambda t^{-4} \mr dt } \right] = \exp\left[ - O(|\lambda|^{-3}) \right].
\end{align*}

\subsubsection*{Supercritical regime (\texorpdfstring{$\lambda > 0$}{positive})}

We break up
\[ \int_{-n^{1/3}}^\lambda \mb{P}(x \rightsquigarrow \bar x, y \rightsquigarrow \bar y) \mr dt = \left[ \int_{-n^{1/3}}^{-\lambda_0} + \int_{-\lambda_0}^{\lambda_0} + \int_{\lambda_0}^{\lambda} \right] \mb{P}(x \rightsquigarrow \bar x, y \rightsquigarrow \bar y) \mr dt. \]

By the subcritical case, the first integral is $O(n^{-2/3} \lambda_0^{-3})$.  In the last interval, the integrand is $O(n^{-2/3}t^2)$ by \cref{lemma:second-moment-bound}, and so the integral is $O(n^{-2/3}\lambda^3)$.  By monotonicity, throughout the range of the middle integral, the integrand is $O(n^{-2/3} \lambda_0^2)$ and so the middle integral is $O(n^{-2/3} \lambda_0^3)$.  Thus
\[ \mb{P}[\SAT(F_p)] \geq \exp\left[ - O(\lambda^3) \right], \]
provided $1 \ll \lambda \ll n^{1/3}$.
\end{proof}

\subsection{Random graph theory}

We spend the remainder of \cref{sec:SAT-lower-bounds} proving \cref{lemma:second-moment-bound}.  We first introduce some notation.

\begin{itemize}
    \item For $v \in X \cup Y$, recall $L^+_{n,m,p}(v)$ the distinct out-neighborhood defined in \cref{sec:2-SAT-intro}.
    \item Define $Q_{n,m,p}(k,\ell)$ to be the probability that, for a generic $x \in X$, $|L^+_{n,m,p}(x) \cap X| = k$ and $|L^+_{n,m,p}(y) \cap Y| = \ell$.
    \item Define $P_{n,m,p}(k,\ell)$ to be the probability that $|L^+_{n,m,p}(x) \cap X| = k$, $|L^+_{n,m,p}(x) \cap Y| = \ell$, and $x \notsquig{} \bar x$ (i.e.~$L^+_{n,m,p}(x)$ is the full out-neighborhood).
    \item Define $G_{k,\ell,p}$ to be the random subgraph of the complete bipartite graph $K_{k,\ell}$ where each edge is removed with probability $1-p$.
\end{itemize}

The goal of this section is to control the moments of $L^+_{n,m,p}(v)$ using the two probabilities $Q_{n,m,p}$ and $P_{n,m,p}$ by relating them to standard random graph theoretic probabilities.

\begin{lemma}\label{lem:Pnp-exact}
    For all $n,k,\ell,p$, we have
    \[ P_{n,m,p}(k,\ell) = 2^{k+\ell-1} \binom{n-1}{k-1} \binom m\ell (1-p)^{k(2m-\ell) + \ell(2n-k) - k\ell} \mb{P}(G_{k,\ell,p} \text{ is connected}). \]
\end{lemma}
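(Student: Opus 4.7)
The strategy is to decompose $P_{n,m,p}(k,\ell)$ by summing over the possible reachable sets. Fix $x \in X$, and for each strictly distinct set $L \subseteq X \cup Y$ with $x \in L$, $|L \cap X| = k$, and $|L \cap Y| = \ell$, I would compute the probability of the event that the set of literals reachable from $x$ in $G_F$ is exactly $L$, and then sum over $L$.

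The first step is to show that this event coincides with $\{L^+(x) = L,\ x \notsquig{} \bar x\}$. The forward direction is immediate; for the converse, if some $z \notin L^+(x)$ were reachable, then $\bar z$ must have been present in $L^+(x)$ (the only way $z$ would be skipped during the trimmed BFS), and the 2-SAT duality $a \to b \iff \bar b \to \bar a$ combined with $x \squig{} z$ and $x \squig{} \bar z$ would give $x \squig{} z \squig{} \bar x$, contradicting $x \notsquig{} \bar x$. The number of eligible sets $L$ is $\binom{n-1}{k-1}\binom{m}{\ell}2^{k+\ell-1}$, accounting for the choice of $k-1$ additional $X$-variables, $\ell$ $Y$-variables, and the signs of each variable other than $x$.

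Next, for a fixed $L$, the event further decomposes into two conditions: (a) no arc of $G_F$ leaves $L$, and (b) $x$ reaches every vertex of $L$ in the induced digraph $G_F[L]$. These depend on disjoint collections of clauses, so by independence the probability factors. For (a), a clause $\alpha \vee \beta$ (with $\alpha$ an $X$-literal and $\beta$ a $Y$-literal) produces arcs $\bar\alpha \to \beta$ and $\bar\beta \to \alpha$, and is \emph{forbidden} precisely when at least one of these arcs starts in $L$ and ends outside $L$. Inclusion-exclusion over the conditions $\{\bar\alpha \in L \cap X,\ \beta \notin L\}$ and $\{\bar\beta \in L \cap Y,\ \alpha \notin L\}$, using strict distinctness of $L$ to evaluate the intersection (a clause with $\bar\alpha \in L \cap X$ and $\bar\beta \in L \cap Y$ automatically has $\alpha \notin L$ and $\beta \notin L$), yields exactly $k(2m-\ell) + \ell(2n-k) - k\ell$ forbidden clauses, each of which must be absent.

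The key step is identifying $\mb{P}(x\text{ reaches all of }L\text{ in }G_F[L])$ with $\mb{P}(G_{k,\ell,p}\text{ is connected})$. For each pair $(\gamma,\delta) \in (L\cap X) \times (L \cap Y)$, the two arcs $\gamma \to \delta$ and $\delta \to \gamma$ originate from the distinct clauses $\bar\gamma \vee \delta$ and $\bar\delta \vee \gamma$, and are therefore independent Bernoulli($p$) variables. I would couple the BFS exploration from $x$ in $G_F[L]$ with the BFS exploration from $x$ in $G_{k,\ell,p}$: at each step the directed BFS queries one forward arc from a frontier vertex to an unexplored vertex, which is a fresh Bernoulli($p$) trial (its reverse counterpart has not yet been examined and concerns a different clause). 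This matches the BFS in $G_{k,\ell,p}$ in distribution, so both explorations exhaust $L$ with the same probability. The main subtlety is in verifying this coupling; it relies crucially on the bipartite structure, ensuring that $\gamma \to \delta$ and $\delta \to \gamma$ come from distinct clauses and hence that the reverse arcs never need to be examined during the forward exploration. Combining the three factors across all admissible $L$ yields the claimed identity.
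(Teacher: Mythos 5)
Your proof is correct and follows essentially the same route as the paper's, which is quite terse: you fill in exactly the steps the paper leaves implicit, namely the characterization $\{L^+(x)=L,\,x\notsquig{}\bar x\}=\{R(x)=L\}$, the inclusion--exclusion giving $k(2m-\ell)+\ell(2n-k)-k\ell$, and the BFS coupling between directed reachability in $G_F[L]$ and connectivity of $G_{k,\ell,p}$. One small quibble: the independence of the arcs $\gamma\to\delta$ and $\delta\to\gamma$ uses only that they arise from the distinct clauses $\bar\gamma\vee\delta$ and $\bar\delta\vee\gamma$ (which holds whenever $\gamma\neq\delta$), so strict distinctness of $L$ rather than bipartiteness per se is what keeps the exploration clean.
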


\begin{proof}
    Fix  $X_0 \in \binom Xk$, $Y_0 \in \binom Y\ell$ sets of strictly distinct literals with $x \in X_0$.  Then
    \[ \mb{P}(L^+_{n,m,p}(x) = X_0 \cup Y_0) = (1-p)^{k(2m-\ell) + \ell(2n-k) - k\ell} \mb{P}(G_{k,\ell,p} \text{ is connected}), \]
    as all edges from $X_0$ to $Y \setminus Y_0$ and all edges from $Y_0$ to $X \setminus X_0$ cannot be present.  This is $k(2m-\ell) + \ell(2n-k)$ edges, but for each $x_0 \in X_0$, $y_0 \in Y_0$, we check both $x_0 \to \bar{y}_0$ and $y_0 \to \bar{x}_0$, which are the same implication.  We also must have connectivity inside $X_0 \cup Y_0$, hence the connectedness term.

    Finally, we must count the sets $X_0,Y_0$.  As a strictly distinct set, each can be chosen by first picking the variables and then choosing the signs for the literals.  Thus we have $2^{k-1} \binom{n-1}{k-1}$ choices for $X_0$ as we must include $x$ and then $2^\ell \binom m\ell$ choices for $Y_0$.
\end{proof}

We now use some enumerative results to move this into a more workable equation.

\begin{corollary}
    For all $n,k,\ell,p$, we have
    \begin{equation}\label{eq:Pnp-Sp} P_{n,m,p}(k,\ell) = \frac{1}{2np\ell} \binom nk \binom m\ell (2pk)^\ell (2p\ell)^{k} (1-p)^{2n\ell + 2mk - (k+\ell) - 2k\ell +1} S_p(k,\ell) \end{equation}
    where
    \begin{equation}\label{eq:Sp-summation-form} S_p(k,\ell) = \sum_{s=0}^{k\ell - (k+\ell-1)} \frac{C({k,\ell,s})}{k^{\ell-1} \ell^{k-1}} \paren{\frac{p}{1-p}}^s \end{equation}
    for $C(k,\ell,s)$ the number of connected (spanning) subgraphs of $K_{k,\ell}$ with exactly $k+\ell-1+s$ edges (excess $s$).

    In particular, if $k/\ell$ converges to a positive real number and $(k+\ell)^{3/2}p \to 0$, then
    \begin{equation}\label{eq:Sp-leading-order} S_p(k,\ell) = 1 + \sqrt{\frac\pi8} \frac{(k\ell(k+\ell))^{1/2} p}{1-p} \left[ 1 + O\paren{(k+\ell)^{-3/2} + (k+\ell)^{3/2} p} \right]. \end{equation}
\end{corollary}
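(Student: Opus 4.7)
The proof splits into two parts: establishing the exact formula \eqref{eq:Pnp-Sp}-\eqref{eq:Sp-summation-form}, and then extracting the asymptotic expansion \eqref{eq:Sp-leading-order}.

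For the exact formula, the plan is to expand the connectivity probability in \cref{lem:Pnp-exact} by stratifying over the number of excess edges. Every connected spanning subgraph of $K_{k,\ell}$ with excess $s$ has exactly $k+\ell-1+s$ edges, and there are $C(k,\ell,s)$ such graphs, giving
\[
\mb{P}(G_{k,\ell,p}\text{ is connected}) = \sum_{s=0}^{k\ell-(k+\ell-1)} C(k,\ell,s)\, p^{k+\ell-1+s}(1-p)^{k\ell-(k+\ell-1)-s}.
\]
Substituting into \cref{lem:Pnp-exact}, the combined $(1-p)$-exponent becomes $2n\ell+2mk-(k+\ell)-2k\ell+1-s$, which factors as $(1-p)^{2n\ell+2mk-(k+\ell)-2k\ell+1}\cdot(1-p)^{-s}$ and pairs with $p^s$ to produce the factor $(p/(1-p))^s$ inside $S_p(k,\ell)$. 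The remaining rewriting uses $\binom{n-1}{k-1}=(k/n)\binom{n}{k}$ together with $k^\ell \ell^k = k\ell\cdot k^{\ell-1}\ell^{k-1}$ to separate the leading factor $(2pk)^\ell(2p\ell)^k/(2np\ell)$ from the summand, leaving the normalization $k^{\ell-1}\ell^{k-1}$ in the denominator of \eqref{eq:Sp-summation-form}; since this equals the number of spanning trees of $K_{k,\ell}$ by the bipartite analog of Cayley's formula, this is exactly $C(k,\ell,0)$, so the $s=0$ term of $S_p(k,\ell)$ is $1$.

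For the asymptotic expansion, the plan is to split $S_p(k,\ell)$ into the contributions from $s=0$, $s=1$, and $s\geq 2$. The $s=0$ term equals $1$ exactly. For the $s=1$ term, I would invoke Clancy's bipartite analog of Wright's asymptotic~\cite{clancy2024asymptotics}, valid for bounded excess when $k/\ell$ is bounded away from $0$ and $\infty$, which yields
\[
\frac{C(k,\ell,1)}{k^{\ell-1}\ell^{k-1}} = \sqrt{\tfrac{\pi}{8}}\,(k\ell(k+\ell))^{1/2}\bigl(1+O((k+\ell)^{-3/2})\bigr),
\]
producing the stated leading-order correction with the claimed $(1+O((k+\ell)^{-3/2}))$ relative error. (A self-contained derivation is also possible by summing over cycle lengths $2j$, counting Hamiltonian cycles in $K_{j,j}$, and attaching bipartite rooted Cayley forests to the cycle vertices, then extracting the leading-order via Stirling/saddle-point.)

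The last step, which I expect to be the main technical obstacle, is bounding the tail $\sum_{s\geq 2}$. As the introduction notes, the available upper and lower bounds on $C(k,\ell,s)$ only agree up to $\exp(\Theta(s))$ factors, so no tight asymptotic is available uniformly in $s$. Fortunately, only an upper bound of the shape $C(k,\ell,s)\leq C_1\bigl(C_2(k\ell(k+\ell))^{1/2}\bigr)^s k^{\ell-1}\ell^{k-1}$ for absolute constants $C_1,C_2$ is needed, which~\cite{do2021component} provides. Under the hypothesis $k/\ell$ bounded, $(k\ell(k+\ell))^{1/2}\asymp (k+\ell)^{3/2}$, so $(k+\ell)^{3/2}p\to 0$ forces the common ratio $C_2(k\ell(k+\ell))^{1/2}p/(1-p)$ of the geometric-like tail to be $o(1)$, and the tail is dominated by its $s=2$ term:
\[
\sum_{s\geq 2}\frac{C(k,\ell,s)}{k^{\ell-1}\ell^{k-1}}\paren{\frac{p}{1-p}}^s = O\!\paren{k\ell(k+\ell)\cdot p^2} = (k\ell(k+\ell))^{1/2}\frac{p}{1-p}\cdot O((k+\ell)^{3/2}p),
\]
which is absorbed into the stated error. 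The key point is that the exponential looseness of the bounds on $C(k,\ell,s)$ only affects the absolute constants $C_1,C_2$, which become irrelevant once we require $(k+\ell)^{3/2}p\to 0$; this is exactly why this hypothesis appears in the statement.
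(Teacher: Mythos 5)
Your proposal is correct and follows essentially the same route as the paper: substitute the excess-stratified expansion of $\mb{P}(G_{k,\ell,p}\text{ is connected})$ into \cref{lem:Pnp-exact} and reorganize to get \cref{eq:Pnp-Sp,eq:Sp-summation-form}, then use Clancy's asymptotic for bounded excess (the constants $\rho_0=1$, $\rho_1=\sqrt{\pi/8}$) together with the $\exp[O(s)]\,(k+\ell)^{3s/2}$ upper bound from \cite{do2021component} to absorb the $s\geq 2$ tail into the stated error. Your write-up simply makes explicit the algebraic reorganization and the geometric tail estimate that the paper leaves implicit.
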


\begin{proof}
    Trivially
    \begin{align*}
        \mb{P}(G_{k,\ell,p}\text{ is connected}) & = \sum_{s=0}^{k\ell - (k+\ell-1)} C(k,\ell,s) p^{k+\ell-1+s} (1-p)^{k\ell - (k + \ell - 1 + s)},
    \end{align*}
    and \cref{eq:Sp-summation-form} comes from reorganization.

    To motivate the definition of $S_p$, by a result of Clancy~\cite{clancy2024asymptotics}, for constant $s$ and $k,\ell \to \infty$ with $k/\ell$ converging to a positive real,
\[ C(k,\ell,s) = (1+O(k^{-3/2})) \paren{k\ell (k+\ell)}^{s/2} \rho_s k^{\ell-1} \ell^{k-1} \]
for explicit constants $\rho_s$ defined in terms of Brownian excursions (see, e.g.~\cite{janson2007brownian} for discussion and evaluation of these coefficients).  In particular, $\rho_0 = 1$ and $\rho_1 = \sqrt{\pi/8}$.

Equation \cref{eq:Sp-leading-order} comes from these two evaluations together with
\[ \frac{C(k,\ell,s)}{k^{\ell-1} \ell^{k-1} (k+\ell)^{3s/2}} \leq \exp[O(s)], \]
which is a corollary of results in \cite{do2021component}.  We will explicitly cite their theorem later as \cref{prop:upper-bound-on-bipartite-connected-graphs}; we defer its statement until we require the full strength.
\end{proof}

We now seek a comparison result between $P_{n,m,p}(k,\ell)$ and $Q_{n,m,p}(k,\ell)$, since the latter is easier to control.  For small $k+\ell$, we use the following.

\begin{lemma}\label{lem:Qnp-minus-Pnp}
    For $(k+\ell)^{3/2}p$ bounded, we have
    \begin{equation}\label{eq:Qnp-minus-Pnp} Q_{n,m,p}(k,\ell) - P_{n,m,p}(k,\ell) = \sqrt{\frac\pi8} P_{n,m,p}(k,\ell) \frac{(k\ell(k+\ell))^{1/2}p}{1-p} \left[ 1 + O\paren{(k+\ell)^{-1/2} + {(k+\ell)^{3/2}p}} \right]. \end{equation}
\end{lemma}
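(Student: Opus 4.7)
My approach is to adapt the argument of Bollob\'as, Borgs, Chayes, Kim, and Wilson~\cite{bollobas2001scaling} for the non-bipartite version. The first step is to derive an exact formula for $Q_{n,m,p}(k,\ell)$ analogous to the one for $P_{n,m,p}(k,\ell)$ in \cref{lem:Pnp-exact}, by enumerating over strictly distinct sets $X_0 \cup Y_0$ with $x \in X_0$, $|X_0| = k$, $|Y_0| = \ell$, and computing $\Pr[L^+(x) = X_0 \cup Y_0]$ for each. Relative to $P$, the event defining $Q$ allows ``shortcut'' edges from $X_0 \cup Y_0$ into $\bar X_0 \cup \bar Y_0$, reducing the number of forbidden clauses by $k\ell$ (the clauses $\bar x_0 \vee \bar y_0$ for pairs $x_0 \in X_0$, $y_0 \in Y_0$). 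However, not every shortcut configuration is compatible with the trimming algorithm producing $L^+ = X_0 \cup Y_0$: the trimming can be ``corrupted'' if a shortcut edge delivers $\bar z$ into the exploration queue before $z$ has been added to $L^+$. Carefully accounting for which configurations are compatible is the heart of the proof.

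After this accounting, I expect $Q - P$ to be expressible as a sum paralleling the $S_p(k,\ell)$ expansion of \cref{eq:Sp-summation-form,eq:Sp-leading-order}, with the dominant contribution arising from excess $s = 1$ (one extra edge beyond a spanning tree, creating a cycle in the within-graph). Using the asymptotic $C(k,\ell,1)/(k^{\ell-1}\ell^{k-1}) = \sqrt{\pi/8}\,(k\ell(k+\ell))^{1/2}(1 + O((k+\ell)^{-3/2}))$ from Clancy's theorem~\cite{clancy2024asymptotics} yields the leading factor $\sqrt{\pi/8}\,(k\ell(k+\ell))^{1/2}\,p/(1-p)$ relative to $P$. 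Higher-excess contributions ($s \geq 2$) are controlled using the bound $C(k,\ell,s) \leq \exp[O(s)] \cdot k^{\ell-1}\ell^{k-1}(k+\ell)^{3s/2}$ from~\cite{do2021component}, contributing $O((k+\ell)^{3/2}p)$ to the error, while finite-size corrections to the $s = 1$ asymptotic contribute $O((k+\ell)^{-1/2})$.

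The main obstacle is the compatibility analysis of shortcut edges with the trimming. The order of exploration matters: when a shortcut edge $u \to \bar z$ is followed, $\bar z$ enters the exploration queue, and whether it gets added to $L^+$ (corrupting the result) or skipped (preserving $L^+ = X_0 \cup Y_0$) depends on whether $z$ has already been processed. Relating the set of compatible shortcut configurations to the $C(k,\ell,s)$ enumeration--rather than to the naive count over all shortcut subsets, which would yield a much larger correction of order $k\ell\,p$--mirrors the key step in~\cite{bollobas2001scaling} and requires careful combinatorial bookkeeping with the two-index structure inherent to the bipartite setting.
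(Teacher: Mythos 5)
Your outline correctly isolates the only difference between the two events for a fixed strictly distinct set $X_0\cup Y_0$ (the $k\ell$ clauses $\bar x_0\vee\bar y_0$) and names the right asymptotic inputs (Clancy for excess $1$, Do--Erde--Kang--Missethan for excess $\geq 2$), but the step you yourself call the heart of the proof --- deciding which shortcut configurations are compatible with the trimming producing $L^+(x)=X_0\cup Y_0$ --- is not carried out, and as posed it is not even well-defined: once shortcut clauses are allowed, whether $t$ or $\bar t$ ends up in $L^+(x)$ depends on the arbitrary order in which the trimming algorithm processes its queue, so $\Pr[L^+(x)=X_0\cup Y_0]$ depends on the tie-breaking rule; only the sum over all strictly distinct sets with prescribed side-sizes is canonical. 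Resolving this forces you to sum over sets (equivalently, to pass to the level of variables) before enumerating anything, and at that point the per-set compatibility bookkeeping you plan to do evaporates.

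That variable-level reduction is exactly how the paper proves the lemma, and it is also the analogous step in~\cite{bollobas2001scaling} (their argument is the $2p-p^2$ identification, not a literal-level shortcut-compatibility analysis, so your appeal to them for that bookkeeping is misplaced): since each new variable is reached from the current literal through one of two fresh, independent clauses, the variable set of the trimmed out-graph of $x$ has precisely the law of the component of a root vertex in the bipartite graph $G_{n,m,2p-p^2}$ on the variables. Hence $Q_{n,m,p}(k,\ell)$ obeys the exact formula of \cref{lem:Pnp-exact} with $p$ replaced by $2p-p^2$, i.e.\ the analogue of \cref{eq:Pnp-Sp} with $S_{2p-p^2}(k,\ell)$, and the lemma follows by dividing the two closed forms as in \cref{eq:Pnp-Qnp-ratio} and applying \cref{eq:Sp-leading-order} to both $S_p$ and $S_{2p-p^2}$: the factor $\sqrt{\pi/8}\,(k\ell(k+\ell))^{1/2}p/(1-p)$ is the difference of the excess-one terms at the two edge probabilities, and the $(k+\ell)^{-1/2}$ in the error comes from the prefactor ratio $(1-p/2)^{k+\ell}(1-p)^{-(k+\ell-1)}=1+O((k+\ell)p)$, not (as you suggest) from the finite-size correction in Clancy's asymptotic, which is only $O((k+\ell)^{-3/2})$. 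So while your route might in principle be forced through after fixing an exploration rule and redoing the enumeration, as written it has a genuine gap precisely at its decisive step, and the clean argument is the identification it tries to avoid.
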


\begin{proof}
Note that, for $v$ a vertex on the $n$-side of $G_{n,m,2p-p^2}$,
\[ Q_{n,m,p}(k,\ell) = \mb{P}[ C(v) \text{ contains exactly } k \text{ vertices on the }n\text{-side and }\ell\text{ on the }m\text{-side} ]. \]

By the same arguments as for \cref{lem:Pnp-exact},
\begin{align*}
    Q_{n,m,p}(k,\ell) & = \binom{n-1}{k-1} \binom{m}{\ell} (1-2p+p^2)^{k(m-\ell) + \ell(n-k)} \mb{P}(G_{k,\ell,2p-p^2} \text{ is connected}) \\
    & = \frac 1{2np\ell} \binom nk \binom m\ell (k(2p-p^2))^{\ell}  (\ell(2p-p^2))^k ((1-p)^2)^{n\ell + mk - (k+\ell) - k\ell + 1} S_{2p-p^2}(k,\ell). \stepcounter{equation} \tag{\theequation} \label{eq:Qnp-Sp}
\end{align*}
Dividing this by \cref{eq:Pnp-Sp},
\begin{equation}\label{eq:Pnp-Qnp-ratio} \frac{Q_{n,m,p}(k,\ell)}{P_{n,m,p}(k,\ell)} = \paren{1-p/2}^{k+\ell} (1-p)^{-(k+\ell-1)} \frac{S_{2p-p^2}(k,\ell)}{S_p(k,\ell)} = (1+O((k+\ell)p) \frac{S_{2p-p^2}(k,\ell)}{S_p(k,\ell)}. \end{equation}
By \cref{eq:Sp-leading-order},
\[ \frac{S_{2p-p^2}(k,\ell)}{S_p(k,\ell)} = 1 + \sqrt{\frac\pi8} \frac{(k\ell(k+\ell))^{1/2}p}{1-p} \left[ 1 + O\paren{(k+\ell)^{-1/2} + \frac{(k\ell(k+\ell))^{1/2}p}{1-p}} \right], \]
giving \cref{eq:Qnp-minus-Pnp}.
\end{proof}

For large $k + \ell$, we will need a different comparison between $P_{n,m,p}(k,\ell)$ and $Q_{n,m,p}(k,\ell)$.

\begin{lemma}\label{lem:Pnp-m0}
    There is an absolute constant $c_0 > 0$ (independent of $\lambda_0,\eps_0$) such that the following holds.
    For $(k+\ell)^{3/2}p \geq 1$ and $k/\ell \in (1/5,5)$, we have
    \begin{equation} \label{eq:Pnp-m0} P_{n,m,p}(k,\ell) = O\paren{s_0 2^{-s_0} Q_{n,s,p}(k,\ell)} \end{equation}
    where
    \[ s_0 = s_0(k+\ell) = \min\left\{ c_0 \frac{(k+\ell)^3p^2}{(1-p)^2}, n^{1/5} \right\}. \]
\end{lemma}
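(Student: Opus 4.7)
The plan is to start from the identity \cref{eq:Pnp-Qnp-ratio}, which expresses the ratio $Q_{n,m,p}(k,\ell)/P_{n,m,p}(k,\ell)$ as a bounded factor $1+O((k+\ell)p) = O(1)$ (since $(k+\ell)p \leq (n+m)p = O(1)$) times $S_{2p-p^2}(k,\ell)/S_p(k,\ell)$. The task thus reduces to showing
\[ \frac{S_{2p-p^2}(k,\ell)}{S_p(k,\ell)} \gtrsim \frac{2^{s_0}}{s_0}. \]
The key structural observation is that, in the series expansion \cref{eq:Sp-summation-form}, the summand at excess $s$ in $S_{2p-p^2}(k,\ell)$ is exactly the corresponding summand $a_s$ of $S_p(k,\ell)$ multiplied by
\[ \paren{\frac{(2p-p^2)(1-p)}{p(1-p)^2}}^s = \paren{\frac{2-p}{1-p}}^s \geq 2^s. \]
Writing $\pi(s) := a_s/S_p(k,\ell)$ as a probability distribution on excesses, it suffices to prove $\mb{E}_\pi[2^s] \gtrsim 2^{s_0}/s_0$.

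To control the distribution $\pi$, we will use enumerative bounds of the form
\[ e^{-Ks}(k+\ell)^{3s/2} \leq \frac{C(k,\ell,s)}{k^{\ell-1}\ell^{k-1}} \leq e^{Ks}(k+\ell)^{3s/2} \]
for some absolute constant $K$; the upper bound comes from Do, Erde, Kang, and Missethan \cite{do2021component} (already invoked in deriving \cref{eq:Sp-leading-order}) and the matching lower bound is the content of \cref{sec:proof-of-lower-bound-bipartite}; both apply throughout the range $s \leq s_0 \leq n^{1/5}$, and $k/\ell \in (1/5,5)$ gives $k\ell(k+\ell) \asymp (k+\ell)^3$. Hence $a_s$ is comparable to $u^s$ with $u := (k+\ell)^{3/2}p/(1-p) \geq 1$, up to factors $e^{\Theta(s)}$. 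Choosing $c_0$ small so that $s_0 = c_0 u^2$ lies well below the true saddle point $s^* \asymp u^2$ (where $a_{s+1}/a_s \approx 1$), a comparison of the partial sums $\sum_{s < s_0} a_s$ and $\sum_{2s_0 \leq s \leq 3s_0} a_s$ shows that $\pi$ places at least a constant fraction of its mass on $[2s_0, 3s_0]$. Consequently $\mb{E}_\pi[2^s] \geq \Omega(1) \cdot 2^{2s_0}$, which dominates $2^{s_0}/s_0$ for $s_0$ large; the boundary case of small $s_0$ is handled by a direct evaluation of the first few terms of the series.

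The main obstacle is exactly the $e^{\Theta(s)}$ discrepancy between the known upper and lower bounds on $C(k,\ell,s)$, which could in principle swallow the $2^s$ gain we are trying to extract. The resolution is to take $c_0 > 0$ small relative to $1/K$ in the definition of $s_0$: then across the whole range $s \leq 3s_0$, the worst-case lower-bound evaluation of $a_{2s_0}$ still exceeds the worst-case upper-bound evaluation of $\sum_{s < s_0} a_s$, preserving the mass concentration on high excesses needed for the Jensen-style lower bound on $\mb{E}_\pi[2^s]$. The cap $s_0 \leq n^{1/5}$ plays only the role of keeping us inside the regime where the bounds on $C(k,\ell,s)$ apply uniformly; the resulting factor $2^{n^{1/5}}$ is astronomically larger than any polynomial correction, so the lemma as stated follows with comfortable slack.
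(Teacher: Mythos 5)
Your proposal follows the same opening reduction as the paper --- via \cref{eq:Pnp-Qnp-ratio}, the lemma reduces to lower-bounding $S_{2p-p^2}(k,\ell)/S_p(k,\ell)$, and since each summand of $S_{2p-p^2}$ exceeds the corresponding summand $a_s$ of $S_p$ by at least $2^s$, the task becomes showing $\mb{E}_\pi[2^s]\gtrsim 2^{s_0}/s_0$ for the normalized coefficient distribution $\pi$. But the step that is supposed to deliver this has a genuine gap. Comparing $\sum_{s<s_0}a_s$ with $\sum_{2s_0\leq s\leq 3s_0}a_s$ cannot show that $\pi$ puts $\Omega(1)$ mass on $[2s_0,3s_0]$: that comparison leaves $\sum_{s_0\leq s<2s_0}a_s$ and $\sum_{s>3s_0}a_s$ completely uncontrolled, and because the bounds on $C(k,\ell,s)$ differ by $e^{\Theta(s)}$ factors, the bulk of $\pi$'s mass could sit anywhere in a window whose width scales with $c_2/c_1$ and need not intersect $[2s_0,3s_0]$ at all. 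You also invoke ``$s^*\asymp u^2$'' without proof, and the preliminary line ``$a_s$ comparable to $u^s$ up to $e^{\Theta(s)}$'' drops the $s^{-s/2}$ factor that creates the interior saddle in the first place.

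What you actually need is weaker, and it is precisely what the paper isolates: that the index $s^*$ of the largest summand of $S_p$ satisfies $s^*\geq s_0$. Given this one fact the paper closes in a single line --- $\sum_{s<s^*}a_s\leq s^*a_{s^*}$, hence $S_p\leq(s^*+1)\sum_{s\geq s^*}a_s\leq 2^{-s^*}(s^*+1)\,S_{2p-p^2}$, and $s\mapsto 2^{-s}(s+1)$ is decreasing. Establishing $s^*\geq s_0$ is the real work: one compares the upper-bound envelope of the summands over $s\leq s_0$ (which is monotone increasing there, since $s_0$ lies below the envelope's mode $c_2u^2/e$) against the lower bound on a \emph{single} later summand $a_{s_1}$ with $s_1=(c_1/c_0)s_0$, and the choice $c_0=c_1^2/c_2$ reduces this to a scalar inequality. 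Your route can be patched in the same spirit --- compare $\sum_{s<2s_0}a_s\leq 2s_0\cdot\mathrm{UB}(2s_0)$ against $a_{3s_0}\geq\mathrm{LB}(3s_0)$, shrink $c_0$, and conclude $\pi([2s_0,\infty))=\Omega(1)$ --- but as written the mass-localization claim is unsupported, and that claim is the crux of the lemma.
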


The proof of this lemma will require the following enumerative bounds on bipartite connected graphs.
\begin{proposition}\label{prop:lower-bound-on-bipartite-connected-graphs}
    There is an absolute constant $c_1>0$ such that for any $k,\ell,s$ subject to $s \leq \frac14(k+\ell)$ and $k/\ell \in (1/5,5)$,
    \[ C(k,\ell,s) \geq k^{\ell-1} \ell^{k-1} (k+\ell)^{\frac32s} \paren{\frac{c_1}s}^{s/2}. \]
\end{proposition}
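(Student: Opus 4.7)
The plan is to lower-bound $C(k, \ell, s)$ via the $2$-core / kernel decomposition, mirroring the strategy used in the non-bipartite case by Wright and by Bender, Canfield, and McKay, and in the bipartite $s = O(1)$ case by Clancy. Any connected graph $G$ of excess $s$ has a $2$-core $H$ (obtained by iteratively deleting leaves) carrying the same excess, with $G$ reconstructible from $H$ by attaching a rooted forest that covers the remaining vertices; in turn, $H$ is built by subdividing the edges of a \emph{kernel} $K$ of minimum degree $\geq 3$ having $\kappa \leq 2s$ vertices and $\kappa - 1 + s$ edges. A direct approach via $\sum_{G} \tau(G) = k^{\ell - 1}\ell^{k-1} \binom{k\ell - k - \ell + 1}{s}$ (counting tree/extra-edges pairs) together with a blunt upper bound on $\tau(G)$ loses a factor of $(k + \ell)^{s / 2}$, so an explicit construction is essential.

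To produce the lower bound, I would count graphs arising from a single fixed bipartite kernel $K_0$ of this form --- for example a carefully chosen union of four-cycles --- in three stages: (i) embed $K_0$'s vertex set into $[k] \cup [\ell]$ respecting the bipartition; (ii) subdivide each kernel edge by a vertex-disjoint path of suitable length and parity; (iii) attach the remaining $k + \ell - c$ vertices as a rooted spanning forest of the appropriate bipartite graph, rooted at the $c$ $2$-core vertices, using a bipartite Cayley-type formula for rooted forests of $K_{k, \ell}$. Optimizing over the $2$-core size $c$, the dominant contribution comes from $c$ of order $(s (k + \ell))^{1/2}$ --- matching the scaling of the $2$-core in the critical random graph --- and plugging this in with Stirling's formula recovers $k^{\ell - 1} \ell^{k - 1} (k + \ell)^{3 s / 2} (c_1 / s)^{s / 2}$ with an absolute constant $c_1$.

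The main technical obstacle is the forest count in stage (iii) with growing $s$: a uniform lower bound is required, since any constant factor lost per excess edge would multiply to an unacceptable $c_1^{-s}$. Bipartite bookkeeping adds friction --- the split $c = c_k + c_\ell$ of the $2$-core between the two sides must respect $k / \ell \in (1/5, 5)$, and the subdivision path lengths must have parities matching the kernel's bipartite structure. Clancy's asymptotics are only established for $s = O(1)$, so its constants cannot be invoked directly; uniform control in $s$ most likely requires restricting to a sufficiently rigid kernel template where the forest count can be made explicit and the overcounting across choices of $K_0$'s embedding can be tracked by hand.
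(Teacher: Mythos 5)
Your skeleton (kernel--core decomposition, subdividing kernel edges, attaching a rooted forest counted by a bipartite Moon/Cayley formula, optimizing the core size at $u\asymp\sqrt{s(k+\ell)}$, Stirling) is the same as the paper's, but the proposal has a genuine quantitative gap: counting only graphs whose kernel is a \emph{single fixed isomorphism type} $K_0$ cannot yield the stated bound once $s$ grows. With $m=s-1$ kernel vertices per side, a fixed template admits at most $\binom{k}{m}\binom{\ell}{m}(m!)^2/|\operatorname{Aut}(K_0)|$ labelled placements, whereas the paper sums over \emph{all} $3$-regular bipartite kernels on the chosen $2m$-set, of which there are $\Theta\bigl((3m)!/36^m\bigr)\approx (cm)^{3m}$ by the configuration model --- larger than $(m!)^2\approx(m/e)^{2m}$ by a factor of order $m^m=s^{(1+o(1))s}$. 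Tracing the rest of the computation (stars-and-bars for the subdivisions, $(k)_u(\ell)_u/(k^u\ell^u)=e^{-\Theta(s)}$, Moon's formula) at the optimal $u$, the kernel multiplicity $m^m$ is exactly what upgrades $\bigl((k+\ell)/m\bigr)^{3m/2}$ to $(k+\ell)^{3m/2}m^{-m/2}$; a single template instead tops out near $k^{\ell-1}\ell^{k-1}(k+\ell)^{3s/2}(c/s)^{3s/2}$, i.e.\ short of the claimed $(c_1/s)^{s/2}$ by $s^{\Theta(s)}$. So ``restricting to a sufficiently rigid kernel template'' works against you: the heart of the lower bound is the enumeration of kernels, not the rigidity of one.

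Two further points. A disjoint union of four-cycles is disconnected, hence is not the kernel of any connected graph (the $2$-core of a connected graph, and therefore its kernel, is connected), so that template is not admissible even as a starting point. And the step you flag as the main technical obstacle --- a forest count uniform in $s$ --- is not an obstacle at all: Moon's exact formula $F(k,\ell,a,b)=k^{\ell-b-1}\ell^{k-a-1}(a\ell+bk-ab)$ for bipartite forests with prescribed roots holds for all parameters and is precisely what the paper invokes; the parity of the subdivision paths is handled by inserting the new core vertices in opposite-side pairs. With the kernel enumeration via the configuration model in place of the fixed template, the rest of your outline does go through and coincides with the paper's proof.
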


    \begin{proposition}[{\cite[Theorem 3.5\footnote{While they prove this only for $k/\ell \in (1/2,2)$, their proof techniques do not depend on $2$ and can handle any absolute constant.}]{do2021component}}]\label{prop:upper-bound-on-bipartite-connected-graphs}
        There is an absolute constant $c_2>0$ such that for any $k,\ell,s \in \mb{N}$ with $s \leq k\ell - (k+\ell-1)$ and $k/\ell \in (1/5,5)$,
        \[ C(k,\ell,s) \leq k^{\ell-1} \ell^{k-1} (k+\ell)^{\frac32s} \paren{\frac{c_2}s}^{s/2}. \]
    \end{proposition}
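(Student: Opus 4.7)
My plan is to mimic the classical ``kernel'' decomposition used by Wright for non-bipartite connected graphs, adapted to the bipartite setting. The rough picture is: every connected bipartite graph of excess $s$ decomposes into a small, dense ``kernel'' plus a much larger sparse forest-like piece, and the target bound $k^{\ell-1}\ell^{k-1}(k+\ell)^{3s/2}(c_2/s)^{s/2}$ splits naturally as (forest-count) $\times$ (kernel/subdivision contribution), with the first factor giving the $k^{\ell-1}\ell^{k-1}$ term and the second giving the $(k+\ell)^{3s/2}(c_2/s)^{s/2}$ term.

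\textbf{Step 1 (kernel).} For a connected bipartite graph $G$ on $(V_1,V_2)$ of sizes $(k,\ell)$ with excess $s\geq 1$, define the $2$-core $H$ by iteratively deleting degree-$1$ vertices, and define the kernel $K$ by further suppressing degree-$2$ vertices of $H$ (concatenating their incident edges). Then $K$ is a bipartite multigraph with minimum degree $\geq 3$ and the same excess $s$, from which the handshake identity $\sum_v(\deg_K(v)-2)=2s$ gives $|V(K)|\leq 2s$ and $|E(K)|\leq 3s$. This decomposition is a bijection between $G$ and the triple (kernel $K$, subdivision data, rooted-forest attachment).

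\textbf{Step 2 (counting kernels).} Partition kernels by their bipartition sizes $(k_0,\ell_0)$ with $k_0+\ell_0\leq 2s$. The number of bipartite multigraphs on such a fixed vertex set with exactly $k_0+\ell_0-1+s$ edges and all degrees $\geq 3$ can be bounded by choosing an edge-multiset of total size $\leq 3s$ on at most $2s$ vertices; this yields a bound of the form $(c's)^{s}$ for an absolute constant~$c'$. This is the crucial factor that, after combining with Step~3, produces $(c_2/s)^{s/2}$.

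\textbf{Step 3 (subdivision and forest attachment).} Given $K$, reconstruct $G$ by (a) subdividing each of the $\leq 3s$ edges of $K$ into a path of some length (with the parity forced by the bipartition), choosing how to split the internal vertices between $V_1$ and $V_2$; and (b) attaching, to each of the $\leq 2s$ resulting $2$-core vertices, a rooted bipartite tree using the remaining vertices of $V_1$ and $V_2$. The number of bipartite rooted forests on $V_1\cup V_2$ with a prescribed set of $k_H$ roots on the $V_1$-side and $\ell_H$ roots on the $V_2$-side is computed by Scoins' bipartite analogue of Cayley's formula, and is at most $k^{\ell-\ell_H}\ell^{k-k_H}$ up to a combinatorial factor. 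The key dimensional observation is that each of the $\sim 3s$ edge-subdivisions contributes a factor of order $k+\ell$ (the maximal path length) and each of the $\sim 2s$ root placements contributes a factor of order $k+\ell$; balancing these against the $s!$-type cancellation from the forest formula leaves an excess of $(k+\ell)^{3s/2}$.

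\textbf{Combining and main obstacle.} Summing over kernel bipartitions $(k_0,\ell_0)$ and bookkeeping the forest and subdivision counts yields
\[ C(k,\ell,s)\ \leq\ k^{\ell-1}\ell^{k-1}(k+\ell)^{3s/2}\paren{\frac{c_2}{s}}^{s/2}, \]
as required. The main obstacle is a clean accounting of the bipartite parity/side-placement constraints in the subdivision step while preserving the clean $k^{\ell-1}\ell^{k-1}$ leading factor from Scoins' formula; one must be careful that the ratio $k/\ell\in(1/5,5)$ is exactly the hypothesis needed so that the ``forest generating function'' has a common saddle on both sides, enabling the uniform bound above. Everything else (the kernel enumeration of Step~2 and the Wright-type dimensional count of Step~3) is essentially combinatorial bookkeeping once the right bipartite version of the forest formula is in place.
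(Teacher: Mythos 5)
The paper does not prove this proposition; it is cited directly from Do, Erde, Kang, and Missethan~\cite[Theorem~3.5]{do2021component}, with the footnote remarking that their argument extends to $k/\ell \in (1/5,5)$. So there is no internal proof to compare against, though the paper does prove the matching \emph{lower} bound in \cref{sec:proof-of-lower-bound-bipartite} by the same kernel--core decomposition you propose, and that appendix is the right template to follow (it already assembles the stars-and-bars subdivision count, the configuration-model kernel count, and Moon's bipartite forest formula $F(k,\ell,a,b)=k^{\ell-b-1}\ell^{k-a-1}(a\ell+bk-ab)$).

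Your overall strategy is correct, but two quantitative steps are off in a way that won't just wash out. In Step~2 you claim the number of bipartite multigraphs on a fixed vertex set of size $\leq 2s$ with $\leq 3s$ edges and minimum degree $\geq 3$ is $(c's)^s$. Counting labelled such multigraphs as edge-multisets of size $\leq 3s$ from $\leq s^2$ slots gives $\binom{s^2+3s}{3s}\approx (e s/3)^{3s}$, which is $s^{3s}$, not $s^s$; the $s^s$ scale only appears \emph{after} absorbing the $(k_0!\,\ell_0!)^{-1}\approx (s!)^{-2}$ from the binomial coefficients $\binom{k}{k_0}\binom{\ell}{\ell_0}$ used to pick the kernel vertices (this is precisely what happens in the appendix's computation of $(3m)!/(m!)^2 36^m\approx m^m c^m$). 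As written, Step~2 is stated on a fixed vertex set and so does not include that $(s!)^2$ cancellation.

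Step~3's dimensional argument (``each of the $\sim 3s$ subdivisions contributes $k+\ell$, each of the $\sim 2s$ roots contributes $k+\ell$, balanced against $s!$'') would give $(k+\ell)^{5s}/s^s$, which is far too large, and does not produce the target $(k+\ell)^{3s/2}(c_2/s)^{s/2}$. The missing ingredient is the sum over the core size $u$ (the number of $2$-core vertices on each side) and the identification of its saddle point at $u\asymp\sqrt{(k+\ell)s}$: each kernel edge is then subdivided into a path of typical length $\sqrt{(k+\ell)/s}$, not $k+\ell$, and the roots in Moon's formula are the $\Theta(u)=\Theta(\sqrt{(k+\ell)s})$ core vertices, not the $O(s)$ kernel vertices. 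It is exactly the balance $m^m \cdot (u/m)^{3m}\cdot k^u\ell^u/(u!)^2 \cdot k^{\ell-u}\ell^{k-u}$, with $u\asymp\sqrt{(k+\ell)m}$, that yields the $(k+\ell)^{3s/2}s^{-s/2}$ factor. Without this optimization over $u$ (and a Chernoff/concentration argument to control the tails of the $u$-sum), the bound cannot be closed. Your hypothesis $k/\ell\in(1/5,5)$ is needed precisely at this saddle step so that the two sides contribute comparably, as you suspected.
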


The upper bound is proved by Do, Erde, Kang, and Missethan in \cite{do2021component}.  We will prove the lower bound in \cref{sec:proof-of-lower-bound-bipartite}.

\begin{proof}[Proof of \cref{lem:Pnp-m0}]
    Recall by \cref{eq:Pnp-Qnp-ratio},
    \[ \frac{Q_{n,m,p}(k,\ell)}{P_{n,m,p}(k,\ell)} \leq \frac{S_{2p-p^2}(k,\ell)}{S_p(k,\ell)}, \]
    where
    \[ S_p(k,\ell) = \sum_{s=0}^{k\ell - (k+\ell-1)} \frac{C(k,\ell,s)}{k^{\ell-1} \ell^{k-1}} \paren{\frac{p}{1-p}}^s. \]

    Let $s^*$ be the index of a maximum summand in $S_p(k,\ell)$.  Then
    \begin{align*}
        S_p(k,\ell) & = \sum_{s=0}^{s^*-1} \frac{C(k,\ell,s)}{k^{\ell-1} \ell^{k-1}} \paren{\frac{p}{1-p}}^s + \sum_{s=s^*}^{k\ell - (k+\ell-1)} \frac{C(k,\ell,s)}{k^{\ell-1} \ell^{k-1}} \paren{\frac{p}{1-p}}^s \\
        & \leq s^* \frac{C(k,\ell,s^*)}{k^{\ell-1} \ell^{k-1}} \paren{\frac{p}{1-p}}^{s^*} + \sum_{s=s^*}^{k\ell - (k+\ell-1)} \frac{C(k,\ell,s)}{k^{\ell-1} \ell^{k-1}} \paren{\frac{p}{1-p}}^s \\
        & \leq (s^*+1) \sum_{s=s^*}^{k\ell - (k+\ell-1)} \frac{C(k,\ell,s)}{k^{\ell-1} \ell^{k-1}} \paren{\frac{p}{1-p}}^s \\
        & \leq 2^{-s^*} (s^*+1) \sum_{s=s^*}^{k\ell - (k+\ell-1)} \frac{C(k,\ell,s)}{k^{\ell-1} \ell^{k-1}} \paren{\frac{2p}{1-p}}^s \\
        & = 2^{-s^*} (s^*+1) \sum_{s=s^*}^{k\ell - (k+\ell-1)} \frac{C(k,\ell,s)}{k^{\ell-1} \ell^{k-1}} \paren{\frac{2p-2p^2}{(1-p)^2}}^s \\
        & \leq 2^{-s^*} (s^*+1) \sum_{s=s^*}^{k\ell - (k+\ell-1)} \frac{C(k,\ell,s)}{k^{\ell-1} \ell^{k-1}} \paren{\frac{2p-p^2}{1 - (2p-p^2)}}^s \\
        & \leq 2^{-s^*} (s^*+1) S_{2p-p^2}(k,\ell).
    \end{align*}

    Thus
    \[ P_{n,m,p}(k,\ell) \leq 2^{-s^*} (s^*+1) Q_{n,m,p}(k,\ell). \]
Note that for $s^* > 1$, the RHS is a decreasing function of $s^*$.  Thus we must only show $s^* \geq s_0$ to get \cref{eq:Pnp-m0}.

    Recalling $c_1$ from \cref{prop:lower-bound-on-bipartite-connected-graphs}, let
    \[ s_1 = c_1 \frac{p^2(k+\ell)^3}{e(1-p)^2}, \]
    so that by \cref{prop:lower-bound-on-bipartite-connected-graphs}, the $s_1$th summand is bounded below by
    \begin{equation}\label{eq:summand-s1} \frac{C(k,\ell,s_1)}{k^{\ell-1} \ell^{k-1}} \paren{\frac{p}{1-p}}^{s_1} \geq \paren{\frac{c_1 p^2 (k+\ell)^3}{s_1(1-p)^2}}^{s_1/2} = \exp\left\{ c_1 \frac{p^2(k+\ell)^3}{2e(1-p)^2} \right\}. \end{equation}

    Define
    \[ c_0 = \frac{c_1^2}{c_2}, \hspace{20pt} s_0 = c_0 \frac{p^2(k+\ell)^3}{e(1-p)^2}. \]

    We will show that the first $s_0$ terms are all at most \cref{eq:summand-s1}.  First, we compute the logarithmic derivative of the upper bound from \cref{prop:upper-bound-on-bipartite-connected-graphs}
    \[ \frac{\mr d}{\mr ds} \log\paren{\frac{c_2 p^2 (k+\ell)}{s(1-p)^2}}^{s/2} = \frac12\paren{\log\paren{\frac{c_2 p^2 (k+\ell)^3}{e (1-p)^2}} - \log s}, \]
    showing that the upper bound is increasing for all $s < s_0$ since $c_0 < c_2$.  Thus we need only check that term $s_0$ is at most the bound from \cref{eq:summand-s1}.  Indeed, using \cref{prop:upper-bound-on-bipartite-connected-graphs}
    \begin{align*}
        \frac{C(k,\ell,s_0)}{k^{\ell-1} \ell^{k-1}} \paren{\frac{p}{1-p}}^{s_0} & \leq \paren{\frac{c_2 p^2 (k+\ell)^3}{s_0(1-p)^2}}^{s_0/2} \\
        & = \exp\left\{ c_0 \frac{p^2 (k+\ell)^3}{2e (1-p)^2} \log\paren{\frac{c_2 e}{c_0}} \right\} \\
        & = \exp\left\{ c_1 \frac{p^2(k+\ell)^3}{2e(1-p)^2} \cdot 2 \frac{c_1}{c_2} \log\paren{e^{1/2} \ \frac{c_2}{c_1}} \right\}.
    \end{align*}

    The result follows from the numerical inequality $2x \log(e^{1/2}/x) < 1$, valid for all $x \in (0,1)$.
\end{proof}

\subsubsection{Moment estimates}

The goal of this next subsection is to bound $\sum_k \sum_\ell k^a P_{n,m,p}(k,\ell)$ and $\sum_k \sum_\ell \ell^a P_{n,m,p}(k,\ell)$ for several key values of $a$.  These quantities are difficult to handle, so we will instead control $\sum_k \sum_\ell (k+\ell)^a P_{n,m,p}(k,\ell)$, which is obviously an upper bound for both.

\begin{lemma}\label{lem:Qnmp-expanded-formula}
    For $j n^{-2/3}$ bounded and $j \geq 400$, we may write
    \begin{multline*} \sum_{k + \ell = j} Q_{n,m,p}(k,\ell) = \frac{1}{pn\sqrt{\pi} j^{3/2}} \exp \bigg\{ -\frac{j \eps^2}{2} \paren{1 + O(\eps) + O\paren{\frac1\lambda}} \\ + O\paren{\frac{(\log j)^{3/2}}{j^{1/2}}} + O\paren{\frac{j^3}{n^2}} + O\paren{\frac{j \log j}{n}} \bigg\}. \end{multline*}
\end{lemma}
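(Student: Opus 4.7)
Substitute $k = j/2 + \delta$ and $\ell = j/2 - \delta$ so that the sum becomes $\sum_\delta Q_{n,m,p}(j/2+\delta, j/2-\delta)$ over (half-)integers $\delta \in [-j/2, j/2]$. I will factor the summand as $Q_{n,m,p}(j/2+\delta, j/2-\delta) = Q_{n,m,p}(j/2, j/2)\exp\{g(\delta)\}$ and evaluate by Laplace's method: first I extract the Gaussian profile of $g(\delta)$, then I estimate the central value $Q_{n,m,p}(j/2, j/2)$ via Stirling.

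First, the $\delta$-dependence. Stirling applied to $k^\ell\ell^k/(k!\ell!)$ produces the dominant quadratic $-8\delta^2/j$ with no cubic term (by the odd-in-$\delta$ symmetry) and an $O(\delta^4/j^3)$ remainder. The binomials $\binom{n}{k}\binom{m}{\ell}$, the factor $(1-p)^{2n\ell+2mk}$ via the identity $2n\ell + 2mk = (n+m)j + 2(m-n)\delta$, and the implicit $(n/m)^\delta$ that arises from $(2p)^j = (1+\eps)^j(mn)^{-j/2}$ together contribute a linear-in-$\delta$ term $a\delta$ with coefficient $a = O(n^{-1/3})$, using the assumption $|n-m|\leq Cn^{2/3}$. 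Completing the square then gives
\[ \sum_\delta e^{g(\delta)} = \sqrt{\pi j/8}\cdot\exp\left\{ a^2 j/32 + O((\log j)^{3/2}/j^{1/2}) + O(1/j) \right\}, \]
and $a^2 j/32 = O(jn^{-2/3}) = O(1)$ is absorbed into the stated error $O(j^3/n^2)$.

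Second, evaluation at the center $\delta = 0$. Combining $\binom{n}{j/2}\binom{m}{j/2}(j/2)^j = \frac{(mn)^{j/2}e^j}{\pi j}\exp\{-\frac{j^2}{4n}(1 + O(n^{-1/3})) + O(j^3/n^2)\}$ with $(2p-p^2)^j = (1+\eps)^j(mn)^{-j/2}\exp\{-jp/2 + O(jp^2)\}$ and $(1-p)^{(n+m)j - 2j - j^2/2 + 2}$ via \cref{eq:Qnp-Sp}, the principal cancellation takes the form
\[ j + j\log(1+\eps) - (n+m)jp = -\tfrac{j\eps^2}{2}(1 + O(\eps)) + O(jn^{-2/3}), \]
using $(n+m)/(2\sqrt{mn}) = 1 + O(n^{-2/3})$. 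The residual quadratic $\frac{j^2 p}{2} - \frac{j^2}{4n}(1+O(n^{-1/3})) = \frac{j^2\eps}{4n} + O(j^2/n^{5/3})$ folds into the $(1 + O(1/\lambda))$ factor on $-j\eps^2/2$ via the identity $\frac{j^2\eps}{4n} = \frac{j}{n\eps}\cdot\frac{j\eps^2}{4} = O(1/\lambda)\cdot\frac{j\eps^2}{2}$, using $\eps = \lambda (mn)^{-1/6}$ and $j\leq Cn^{2/3}$.

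\textbf{Main obstacle.} The connectivity factor $S_{2p-p^2}(j/2,j/2) = 1 + O(j^{3/2}p) = 1 + O(j^{3/2}/n)$ from \cref{eq:Sp-leading-order} is a $\Theta(1)$ multiplier when $j\asymp n^{2/3}$, not $1+o(1)$. The AM--GM bound $j^{3/2}/n \leq (j^3/n^2 + 1)/2$ lets $\log S_{2p-p^2}(j/2,j/2)$ be written as $O(j^3/n^2) + O(1)$, with the $O(1)$ part joined to the other bounded-but-non-vanishing corrections (the $a^2 j/32$ term, and the $O(n^{-1/3})$ Stirling remainders). The delicate bookkeeping step --- verifying that every $O(1)$ contribution folds into $O(j^3/n^2)$, $O(j\log j/n)$, $O((\log j)^{3/2}/j^{1/2})$, or the $(1+O(\eps)+O(1/\lambda))$ multiplier on $-j\eps^2/2$ while preserving the stated leading constant $1/(pn\sqrt\pi j^{3/2})$ --- is the hardest part of the proof, made harder than in the non-bipartite setting of~\cite{bollobas2001scaling} by the coarser available estimates for $C(k,\ell,s)$.
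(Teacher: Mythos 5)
Your route---center the sum at $k=\ell=j/2$, Stirling for the central value, and a Gaussian (Laplace) sum over the imbalance $\delta=(k-\ell)/2$ with quadratic $-8\delta^2/j$ and a linear term of size $O(n^{-1/3})\delta$ coming from the $n$ versus $m$ asymmetry---is essentially the paper's own proof, which expands via \cref{eq:Pnp-Sp} and Stirling, reparametrizes in $i=(k+\ell)/2$, $d=(k-\ell)/2$, and sums a Gaussian in $d$. Your main cancellation $j+j\log(1+\eps)-(n+m)jp=-\frac{j\eps^2}{2}(1+O(\eps))+O(jn^{-2/3})$ and the routing of $\frac{j^2\eps}{4n}$ into the $O(1/\lambda)$ factor are correct and match the paper.

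However, the step you yourself flag as the main obstacle is exactly where your proposal goes wrong or stops. First, the completed-square term $a^2j/32=O(jn^{-2/3})$ is \emph{not} $O(j^3/n^2)$: the statement only assumes $400\le j\le Cn^{2/3}$, and for $j\ll n^{2/3}$ one has $jn^{-2/3}\gg j^3/n^2$, so that absorption fails. Likewise $\log S_{2p-p^2}(j/2,j/2)=O(j^{3/2}/n)$ cannot be left as a floating additive $O(1)$ in the exponent: the statement has no $O(1)$ slot, and an uncontrolled $e^{O(1)}$ factor would break the downstream use in \cref{lem:Qnmp-sum-k/ell-bounded}, where the additive errors must be $o_{\eps,\lambda}(1)$. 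The correct (and simple) destination for all of these terms---$a^2 j/32$, the $\beta^2 j$-type contributions from $\frac{n+m}{2\sqrt{mn}}=1+O(n^{-2/3})$ with $\beta=\frac12\log(n/m)$, and $\log S_{2p-p^2}=O(j^{3/2}/n)\le C^{1/2}\,jn^{-2/3}$---is the multiplicative correction on the main term: since $\eps=\lambda(mn)^{-1/6}$ gives $n^{-2/3}=(1+o(1))\eps^2/\lambda^2$ and $|\lambda|\ge\lambda_0$, each such term is $\frac{j\eps^2}{2}\cdot O(1/\lambda)$, i.e.\ it lives inside the factor $\paren{1+O(\eps)+O(1/\lambda)}$; that is precisely the role this factor plays in the statement. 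Second, you assert the Gaussian-sum value with error $O((\log j)^{3/2}/j^{1/2})$ but never control the tail $|\delta|\gtrsim\sqrt{j\log j}$, where the expansion with remainder $O(\delta^4/j^3)$ is useless (there $\ell$ can be of order $1$); the paper devotes a separate argument (the crude bound \cref{eq:Pnmp-trivial-upper-bound} plus a monotonicity-in-$d$ computation) to show this region contributes at most $2(j/2)^{-199}$, and some truncation-plus-tail step of this kind is required before completing the square over all of $[-j/2,j/2]$. Finally, a caution on your claim of ``preserving the stated leading constant'': carrying your outline through (central prefactor $\frac{1}{npj}\cdot\frac{1}{\pi j}$ from \cref{eq:Qnp-Sp} and Stirling, times Gaussian mass $\sqrt{\pi j/8}$) actually yields $\frac{1}{2\sqrt{2\pi}\,pn\,j^{3/2}}$, consistent with the Borel$(1)$ cluster-size heuristic at criticality, rather than $\frac{1}{\sqrt{\pi}\,pn\,j^{3/2}}$; the constant is immaterial to every later application, but it shows that the constant-tracking is not free and must be done explicitly.
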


\begin{remark}
    This approximation holds for $Q_{n,m,p}(k,\ell)$ as written.  It also holds for $P_{n,m,p}(k,\ell)$ in this range, as by \cref{eq:Qnp-minus-Pnp}, the ratio $Q_{n,m,p}(k)/P_{n,m,p}(k) = 1+o(1)$.  Let $R_{n,m,p}(k,\ell)$ be the probability that $L^+_{n,m,p}(x)$ is strictly distinct, a tree, and $|L^+_{n,m,p}(x) \cap X| = k$, $|L^+_{m,n,p}(x) \cap Y| = \ell$.  Then this approximation also holds for $R_{n,m,p}(k,\ell)$, as $R_{n,m,p}(k,\ell)$ satisfies \cref{eq:Pnp-Sp} replacing $S_p(k,\ell)$ with $1$, and by \cref{eq:Sp-leading-order}, $S_p(k,\ell) = 1+o(1)$ in this range.
\end{remark}

\begin{proof}
    We will use \cref{eq:Pnp-Sp}.  First, using Stirling's formula, $2p(mn)^{1/2} = 1+\eps$, and Taylor expanding $\log(1+\eps)$,
    \begin{align*}
        \binom nk (2p\ell)^k & = (2pne)^k \frac{(\ell/e)^k}{k!} \prod_{i=0}^{k-1} \paren{1-\frac in} \\
        & = \frac{\exp[-\Theta(1/k)]}{\sqrt{2 \pi k}} (2pne \ell/k)^k \prod_{i=0}^{k-1} \paren{1-\frac in} \\
        & = \frac{1}{\sqrt{2\pi k}} \exp \left\{ k\paren{1 + \log\frac\ell k + \eps - \frac12\eps^2 + O(\eps^3)} - \frac{k^2}{2n} - \frac{k^3}{6n^2} + O(k/n) \right\},
    \end{align*}
    and similarly,
    \[ \binom m\ell (2pk)^\ell = \frac{1}{\sqrt{2\pi \ell}} \exp \left\{ \ell \paren{1 - \log\frac\ell k + \eps - \frac12\eps^2 + O(\eps^3)} - \frac{\ell^2}{2m} - \frac{\ell^3}{6m^2} + O(\ell/m) \right\}. \]

    We can also evaluate, using $\log(1-p) = -p + O(1/n^2)$ and $2p(mn)^{1/2} = 1+\eps$,
    \begin{align*} (1-p)^{2n\ell + 2mk - (k+\ell) - 2k\ell + 1} & = \exp\left\{ -(1+\eps)(k \sqrt{m/n} + \ell \sqrt{n/m}) + \frac{(1+\eps)k\ell}{\sqrt{mn}} + O\paren{\frac{k+\ell}{n}} \right\}.
    \end{align*}
    When we put this all together, we will not get perfect cancellation.  Let us handle one term now.  Define $\beta = \frac12\log(n/m)$; note $\beta$ may be negative. 
    \begin{align*}
        & \qquad (1+\eps)(k+\ell) - (1+\eps)(k \sqrt{m/n} - \ell \sqrt{n/m}) \\
        & = (1+\eps) \left[ k \paren{1 - e^{-\beta}} + \ell \paren{1 - e^{\beta}} \right] \\
        & = (1+\eps) \left[ k \paren{\beta - \frac{\beta^2}{2} + O(\beta^3)} + \ell \paren{-\beta - \frac{\beta^2}{2} + O(\beta^3)}  \right] \\
        & = (1+\eps)\left[ \beta(k-\ell) - \frac{\beta^2}{2} (k+\ell) + O\paren{\frac{k+\ell}{n}} \right].
    \end{align*}

    The last line came as $\beta = O(n^{-1/3})$ as $|m-n| \leq C n^{2/3}$ by assumption.  Putting this all together with \cref{eq:Pnp-Sp} and noting that $\eps k \ell/\sqrt{mn} = O(\eps^2 (k+\ell)^2/(n^{2/3} \lambda)) = \eps^2(k+\ell) O(1/\lambda)$,
    \begin{multline}\label{eq:multiline-k-ell} P_{n,m,p}(k,\ell) = \frac{S_p(k,\ell)}{4\pi \ell \sqrt{nmk\ell}} \exp\bigg\{ -\frac{(k+\ell)\eps^2}{2}\paren{1+O(\eps) + O\paren{\frac1\lambda}} - \frac{k^3}{6n^2} - \frac{\ell^3}{6m^2} \\ - \frac{(k\sqrt m-\ell \sqrt n)^2}{2mn} + (1+\eps)\beta (k-\ell) - \frac{\beta^2}{2} (k+\ell) + (k-\ell)\log\frac\ell k + O\paren{\frac{k+\ell}{n}}  \bigg\} . \end{multline}

    Note that \cref{eq:multiline-k-ell} holds for any $k,\ell$; we have not yet assumed any bounds.  Assuming $(k+\ell)/n^{2/3}$ is bounded, many of these terms are easily controlled.
    We now reparametrize $i = (k+\ell)/2$, $d = (k-\ell)/2$ to get
    \[ \sum_{k + \ell = 2i} P_{n,m,p}(k,\ell) = \sum_{d = -10\sqrt{i \log i}}^{10\sqrt{i \log i}} P_{n,m,p}(i+d,i-d) + {\sum_{|d| > 10 \sqrt{i \log i}}} P_{n,m,p}(i+d,i-d). \]

    We will begin by showing that the second sum does not contribute.  We will go through \cref{eq:multiline-k-ell} term-by-term.  We will trivially upper bound the leading fraction by 1, and the first two terms in the exponential by 0.  This gives
    \begin{equation}\label{eq:Pnmp-trivial-upper-bound} P_{n,m,p}(k,\ell) \leq \exp\left[ (1+\eps)2 \beta d + 2d \log\paren{1 - \frac{2d}{i-d}} + O\paren{\frac{i}{n^{2/3}}}  \right]. \end{equation}

    Applying $\partial/\partial d$ to the logarithm of \cref{eq:Pnmp-trivial-upper-bound}, we get
    \[ 2\beta(1+\eps) + 2 \log\paren{1 - \frac{2d}{i-d}} - \frac{4id}{i^2-d^2}. \]

    This is symmetric and decreases as $|d|$ grows.  Further, evaluating at $d = 10\sqrt{i \log i}$,
    \[ 2\beta(1+\eps) - 4\log i + O\paren{\frac{(\log i)^{3/2}}{i^{1/2}}} \]
    is already negative as $2\beta(1+\eps) = O(n^{-1/3})$ by assumption.  By the same logic, by $d = -10\sqrt{i \log i}$, the derivative is positive and so further decreasing $d$ will decrease the function.  Thus the maximum of \cref{eq:Pnmp-trivial-upper-bound} for $|d| \geq 10\sqrt{i \log i}$ is when $d = \pm10\sqrt{i \log i}$.  We bound for large $d$
    \begin{align*}
        & \qquad P_{n,m,p}(i+d,i-d) \\ & \leq \exp\left\{ (1+\eps)2\beta O(\sqrt{i \log i}) + 20 \sqrt{i \log i} \log\paren{1 - \frac{20 \sqrt{i \log i}/i}{1 - 10\sqrt{i \log i}/i}} + O \paren{\frac{i}{n^{2/3}}} \right\} \\
        & \leq \exp\left\{ - \frac{400 \log i}{1 - 10\sqrt{(\log i)/i
        }} + O\paren{\frac{i^{1/2}}{n^{1/3}} (\log i)^{1/2}} \right\} \leq i^{-200}.
    \end{align*}

    The second line used the numerical inequality $\log(1+x) \leq x$ and the assumption $\beta = O(n^{-1/3})$.  The final inequality holds for $i > 200$.  Thus
    \begin{equation}\label{eq:large-d-no-contribution} \sum_{10\sqrt{i \log i} \leq |d| \leq i} P_{n,m,p}(i+d,i-d) \leq 2i^{-199}. \end{equation}

    We now deal with the main term
    \[ \sum_{d = -10\sqrt{i \log i}}^{10\sqrt{i \log i}} P_{n,m,p}(i+d,i-d). \]

    For this range of $d$, \cref{eq:multiline-k-ell} becomes
    \begin{multline*} P_{n,m,p}(i+d,i-d) = \frac{1+O(i^{-1/2})}{4n\pi i^2} \exp \bigg\{ -i \eps^2 \paren{1 + O(\eps) + O\paren{\frac1\lambda}} \\ - O\paren{\frac{i^2}{n^{5/3}}} + (1+\eps)2 \beta d - \frac{4d^2}{i} + O\paren{\frac{(\log i)^{3/2}}{i^{1/2}}} + O\paren{\frac{i^3}{n^2}} + O\paren{\frac{i \log i}{n}} \bigg\}. \end{multline*}

    We now estimate the sum by an integral. By, e.g., \cite[Theorem 4.3]{spencer2014asymptopia} and a standard $u$-substitution,
    \begin{align*}
        \sum_{d = -10\sqrt{i \log i}}^{10 \sqrt{i \log i}} \exp\left\{ (1+\eps)2\beta d -\frac{4d^2}{i} \right\} & = \int_{-10\sqrt{i \log i}}^{10 \sqrt{i \log i}} \exp\left\{ \alpha d -\frac{4d^2}{i} \right\} \mr{d}d + O\paren{\max \exp\left\{ \alpha d - \frac{4d^2}{i} \right\}} \\
        & = \paren{1+O(i^{-50})} \sqrt{\frac{\pi i}{2}} + \exp[n^{-\Omega(1)}].
    \end{align*}
    Thus
    \begin{multline*} \sum_{d = -10 \sqrt{i \log i}}^{10 \sqrt{i \log i}} P_{n,m,p}(i+d,i-d) = \frac{1}{2n(2\pi)^{1/2} i^{3/2}} \exp \bigg\{ -i \eps^2 \paren{1 + O(\eps) + O\paren{\frac1\lambda}} \\ + O\paren{\frac{(\log i)^{3/2}}{i^{1/2}}} + O\paren{\frac{i^3}{n^2}} + O\paren{\frac{i \log i}{n}} \bigg\}.  \end{multline*}

    Combining with \cref{eq:large-d-no-contribution} and changing variables to $j = 2i$ gives the desired result.
\end{proof}

In addition, we will want to control the case where $k$ and $\ell$ are far apart.

\begin{lemma}\label{lem:k/ell-bounded}
    Suppose $k/\ell \notin (1/5,5)$.  Then $P_{n,p}(k,\ell) = \exp[-\Omega(k+\ell)]$.
\end{lemma}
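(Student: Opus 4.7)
The plan is to derive an explicit exponential bound for $P_{n,m,p}(k,\ell)$ by starting from the closed form \cref{eq:Pnp-Sp} and isolating $(k-\ell)\log(\ell/k)$ as the dominant negative term whenever $k/\ell$ is bounded away from $1$.

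First I would bound $S_p(k,\ell)$ using the crude spanning-tree union bound $\mb{P}(G_{k,\ell,p}\text{ connected}) \leq k^{\ell-1}\ell^{k-1}p^{k+\ell-1}$, which is equivalent to $S_p(k,\ell) \leq (1-p)^{-(k\ell-k-\ell+1)}$. Inserting this into \cref{eq:Pnp-Sp}, applying $\binom{n}{k} \leq (en/k)^k/\sqrt{2\pi k}$ (and similarly for $\binom{m}{\ell}$), using $\log(1-p) \leq -p$ together with the identities $2p\sqrt{mn} = 1+\eps$ and $\log(1+\eps) - \eps = -\eps^2/2+O(\eps^3)$, and the estimate $\beta := \tfrac12\log(n/m) = O(n^{-1/3})$, one arrives at
\[ P_{n,m,p}(k,\ell) \leq \frac{1}{4\pi n p \ell \sqrt{k\ell}}\exp\left\{(k-\ell)\log\tfrac{\ell}{k} + 3pk\ell + O\big((\eps^2 + n^{-2/3})(k+\ell)\big)\right\}. \]
This is essentially the derivation in the proof of \cref{lem:Qnmp-expanded-formula}, but kept in inequality form so that it holds for all admissible $(k,\ell)$ rather than only in the regime $(k+\ell)^{3/2}p \to 0$.

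Next, writing $u := k/(k+\ell)$, one has $(k-\ell)\log(\ell/k) = (k+\ell)\,g(u)$ with $g(u) := (2u-1)\log((1-u)/u)$. The hypothesis $k/\ell \notin (1/5,5)$ forces $u \notin (1/6, 5/6)$, and since $g$ is symmetric about $1/2$ and strictly decreasing in $|u-1/2|$, one has $g(u) \leq g(5/6) = -(2/3)\log 5 \approx -1.073$. Thus this term contributes at most $-(2/3)(\log 5)(k+\ell)$.

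It remains to dominate $3pk\ell$. Whenever $k/\ell \notin (1/5,5)$ the elementary inequality $k\ell/(k+\ell)^2 \leq 5/36$ holds, with maximum attained at $k/\ell = 5$ or $1/5$. Combined with $(k+\ell)/\sqrt{mn} \leq 2+o(1)$, which follows from $k \leq n$, $\ell \leq m$, and $|m-n|=O(n^{2/3})$, together with $p = (1+\eps)/(2\sqrt{mn})$, this yields $3pk\ell \leq \tfrac{5}{12}(1+O(\eps)+o(1))(k+\ell)$, which is strictly less than $(2/3)(\log 5)(k+\ell)$ with room to spare for $\eps_0$ small. The polynomial prefactor $1/(4\pi np\ell\sqrt{k\ell})$ is bounded since $2np = (1+\eps)\sqrt{n/m}$ is $\Theta(1)$ and $k,\ell \geq 1$, so absorbing it yields the claimed $\exp[-\Omega(k+\ell)]$ bound. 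The only real point to check is the numerical inequality $(2/3)\log 5 > 5/12$, which holds comfortably; everything else is careful bookkeeping of the Taylor expansions already used in the previous lemma.
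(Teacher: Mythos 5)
Your proof is correct and takes essentially the same route as the paper: start from \cref{eq:Pnp-Sp}, bound $S_p(k,\ell)$ by the spanning-tree union bound, apply Stirling and the Taylor expansion of $\log(1+\eps)$ (that is, the same manipulations underlying \cref{eq:multiline-k-ell}, which the paper already records hold for all $k,\ell$), and show that $(k-\ell)\log(\ell/k)$ dominates the $pk\ell$-type term once $k/\ell\notin(1/5,5)$ -- your $u=k/(k+\ell)$ parametrization is just the paper's $(i,d)$ reparametrization in different clothes, and your slightly looser $3pk\ell$ (the paper cancels $2pk\ell$ against a completed nonpositive square, leaving only $pk\ell$) still leaves ample margin since $\tfrac23\log5>\tfrac{5}{12}$. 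One small slip: the cross-term $(2+\eps)\beta(k-\ell)$ with $\beta=O(n^{-1/3})$ makes the residual $O\bigl((\eps^2+n^{-1/3})(k+\ell)\bigr)$ rather than $O\bigl((\eps^2+n^{-2/3})(k+\ell)\bigr)$, which is immaterial because all that is needed is $o(k+\ell)$.
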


\begin{proof}
    Recall \cref{eq:multiline-k-ell} holds for any $k,\ell$.  Applying some trivial inequalities
    \begin{equation}\label{eq:k/ell-bound-with-k-ell}
        P_{n,m,p}(k,\ell) \leq S_p(k,\ell) \exp\left\{ (1+\eps)\beta (k-\ell) + (k-\ell) \log \frac\ell k + O(1) \right\}.
    \end{equation}

    We now note
    \begin{align*}
        S_p(k,\ell) & = p^{-(k+\ell-1)} (1-p)^{-k\ell + (k+\ell-1)} \frac{1}{k^{\ell-1} \ell^{k-1}} \mb{P}[G_{k,\ell,p} \text{ is connected}] \\
        & \leq p^{-(k+\ell-1)} (1-p)^{-k\ell + (k+\ell-1)} \frac{1}{k^{\ell-1} \ell^{k-1}} p^{k+\ell-1} C(k,\ell,0) \\
        & \leq \exp\left\{ p k\ell \right\}.
    \end{align*}

    The second line comes as all connected graphs must contain a tree.  The last line comes as $C(k,\ell,0) = k^{\ell-1} \ell^{k-1}$.

    Combining this with \cref{eq:k/ell-bound-with-k-ell} and reparametrizing with $i = (k+\ell)/2$, $d = (k-\ell)/2$, we get
    \[ P_{n,m,p}(i+d,i-d) \leq \exp\left\{ 2(1+\eps) \beta d + 2d \log \paren{\frac{i - d}{i + d}} + (i^2 - d^2) p + O(1) \right\}. \]

    By assumption, $|d| \geq 3i/5$.  This gives the bound
    \[ P_{n,m,p}(i+d,i-d) \leq \exp\left\{ 2(1+\eps) |\beta| i - \frac{6i}{5} \log 4 + \frac{7(1+\eps)}{8} \frac{i^2}{(mn)^{1/2}} + O(1) \right\}. \]

    Note that $i \leq (m+n)/2 \leq \frac87(mn)^{1/2}$ as $|m-n| = O(n^{2/3})$.  Thus as $\beta = O(n^{-1/3})$,
    \[ P_{n,m,p}(i+d,i-d) \leq \exp\left\{ \paren{- \frac{6 \log 4}{5} + (1+\eps) + O(n^{-1/3}) } i + O(1) \right\} = \exp[-\Omega(i)] \]
    for sufficiently large $n$ and small $\eps_0$.
\end{proof}

We will now bound the moments of $Q_{n,m,p}$, towards bounding the moments of $P_{n,m,p}$.

\begin{lemma}\label{lem:Qnmp-sum-k/ell-bounded}
    There exist constants $c,\eps_0,\lambda_0$ such that the following hold for $\lambda_0 < |\lambda| < \eps_0 n^{1/3}$.
    \begin{enumerate}
        \item If $\lambda < 0$, then
        \begin{equation} \label{eq:moment-large-k} \sum_{k + \ell \geq n^{2/3}/|\lambda|} (k+\ell)^a Q_{n,m,p}(k,\ell) = O\paren{\left[ \frac{2}{\eps^2} \right]^{a-1/2} e^{-c|\lambda|}}, \end{equation}
        where the implicit constant depends on $a$.
        \item For both positive and negative $\lambda$, we have
        \begin{equation}\label{eq:moment-small-k} \sum_{k + \ell \leq n^{2/3}/|\lambda|} (k+\ell)^a Q_{n,m,p}(k,\ell) = (1 + o_{\eps,\lambda}(1)) \left[ \frac{2}{\eps^2} \right]^{a-1/2}, \end{equation} 
        where the implicit constants again depend on $a$.
    \end{enumerate}
\end{lemma}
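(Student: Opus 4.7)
The approach is to apply \cref{lem:Qnmp-expanded-formula} to obtain a closed form for $\sum_{k+\ell=j} Q_{n,m,p}(k,\ell)$, then approximate the outer sum in $j$ by an integral and substitute $u = j\eps^2/2$ to identify the result with an (incomplete) Gamma function. Noting $pn = (1+o(1))/2$ from $|n-m| = O(n^{2/3})$, \cref{lem:Qnmp-expanded-formula} gives, for $400 \leq j$ with $jn^{-2/3}$ bounded,
\[ \sum_{k+\ell=j} Q_{n,m,p}(k,\ell) = \frac{2(1+o(1))}{\sqrt{\pi}\, j^{3/2}} \exp\!\left\{-\frac{j\eps^2}{2}\bigl(1 + O(\eps) + O(1/\lambda)\bigr) + E(j) \right\}, \]
with $E(j) = O((\log j)^{3/2}/j^{1/2}) + O(j^3/n^2) + O(j\log j/n)$.

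For claim (2), restrict to $j \leq n^{2/3}/|\lambda|$, where $E(j) = o(1)$ whenever $j \to \infty$ and the contribution from bounded $j$ is $O(1)$, negligible compared to $(2/\eps^2)^{a-1/2} \to \infty$. Approximating the sum by an integral and substituting $u = j\eps^2/2$ yields
\[ \sum_{j} j^a \sum_{k+\ell=j}Q_{n,m,p}(k,\ell) = (1+o(1)) \cdot \frac{2}{\sqrt\pi} \paren{\frac{2}{\eps^2}}^{a-1/2} \int_{o(1)}^{|\lambda|/2} u^{a-3/2} e^{-u}\, \mathrm{d}u. \]
The integrand $u^{a-3/2}e^{-u}$ is concentrated on $u = O(1)$, where the multiplicative correction $e^{-u\cdot O(\eps)}$ in the exponent is $(1+o(1))$; the tails at both ends of the $u$-integral contribute only $o(1)$ relative to $\Gamma(a-1/2)$. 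Hence the integral tends to $\Gamma(a-1/2)$ (for $a > 1/2$), proving the claim with implicit constant $2\Gamma(a-1/2)/\sqrt\pi$.

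For claim (1), split at $j = \eps_0 n^{2/3}$. For $j \in [n^{2/3}/|\lambda|,\, \eps_0 n^{2/3}]$, \cref{lem:Qnmp-expanded-formula} continues to apply with $E(j) = O(\eps_0^3) = O(1)$, giving only a multiplicative constant rather than $(1+o(1))$. The same $u$-substitution yields
\[ O\!\paren{\paren{\frac{2}{\eps^2}}^{a-1/2}} \cdot \int_{|\lambda|/2}^{\eps_0 \lambda^2/2} u^{a-3/2} e^{-u}\, \mathrm{d}u = O\!\paren{\paren{\frac{2}{\eps^2}}^{a-1/2} |\lambda|^{a-3/2} e^{-|\lambda|/2}}, \]
using $\Gamma(s,x) \sim x^{s-1}e^{-x}$ as $x \to \infty$. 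The polynomial factor $|\lambda|^{a-3/2}$ is absorbed into $e^{-c|\lambda|}$ by taking $c < 1/2$ and $\lambda_0$ large enough that $|\lambda|^{a-3/2} \leq e^{(1/2-c)|\lambda|}$ for $|\lambda| \geq \lambda_0$.

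For the far tail $j > \eps_0 n^{2/3}$, we invoke the standard subcritical local limit theorem: in a subcritical bipartite branching process with mean $(1+\eps)^2 < 1$, the total progeny $T$ satisfies $\mb{P}[T = j] \leq C j^{-3/2} e^{-c\eps^2 j}$ for universal constants $C, c$. Applied to the exploration process of $G_{n,m,2p-p^2}$, this gives $\sum_{k+\ell=j}Q_{n,m,p}(k,\ell) \leq C j^{-3/2} e^{-c\eps^2 j}$, and the resulting tail sum, via the same integral substitution, is of order $(2/\eps^2)^{a-1/2} \cdot O(\lambda^{2a-3} e^{-c\eps_0 \lambda^2})$. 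Taking $\lambda_0$ large enough (depending on $a, c, \eps_0, c'$) makes $c\eps_0 \lambda^2 \geq c'|\lambda| + (2a-3)\log\lambda$ uniformly for $\lambda \geq \lambda_0$, so this tail contribution is absorbed into the target bound. The main obstacle is this last step—establishing the local-limit-style tail bound with the correct $j^{-3/2}$ prefactor for the bipartite exploration process—which follows classical subcritical branching-process techniques but must be carefully tracked with the bipartite structure.
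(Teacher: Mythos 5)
Your treatment of \cref{eq:moment-small-k} is essentially the paper's own argument: apply \cref{lem:Qnmp-expanded-formula} in the range $1/|\eps| \lesssim j \leq n^{2/3}/|\lambda|$, dismiss the very small $j$ as lower order, approximate by an integral, substitute $u = j\eps^2/2$, and observe that the $O(\eps)+O(1/\lambda)$ multiplicative error in the exponent is harmless because the mass sits at $u = O(1)$ (the paper phrases this as sandwiching the sum between $B(2-\delta)$ and $B(2+\delta)$ with rates $\eps^2/(2\pm\delta)$); this part is fine.

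For \cref{eq:moment-large-k} you take a genuinely different, and more laborious, route than the paper, and the one ingredient you flag as the ``main obstacle'' is exactly where your write-up is not yet a proof. The paper does not touch \cref{lem:Qnmp-expanded-formula} in this regime and does not split at $\eps_0 n^{2/3}$: it observes that the cluster size in $G(n,m,2p-p^2)$ is stochastically dominated by the total progeny of a \emph{single-type} Poisson birth process with parameter $\max\{m,n\}\log\paren{1/(1-(2p-p^2))} = (1+\eps)(1+O(n^{-1/3}))$, and then quotes the Borel-type tail computation of \cite[eq.~(6.12)]{bollobas2001scaling}, which yields $\sum_{k+\ell=j} Q_{n,m,p}(k,\ell) \leq C j^{-3/2}\exp\{-j\eps^2(1+o(1))/2\}$ uniformly in $j$; summing $j^{a-3/2}e^{-j\eps^2(1+o(1))/2}$ over $j \geq n^{2/3}/|\lambda|$ then gives $[2/\eps^2]^{a-1/2}|\lambda|^{a-3/2}e^{-(1+o(1))|\lambda|/2}$ in one stroke, which is your target after absorbing the polynomial factor. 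In particular, the bipartite structure you propose to ``carefully track'' in a two-type local limit theorem can simply be sidestepped: in the exploration each literal contributes at most $\operatorname{Binom}(\max\{m,n\},2p-p^2)$ new neighbours regardless of which side it lies on, so the single-type domination already delivers the $j^{-3/2}$ prefactor and the correct exponential rate that your far-tail estimate assumes. So your assertion $\mb{P}[T=j] \leq Cj^{-3/2}e^{-c\eps^2 j}$ is true, but as written it is invoked without proof and carries the weight of the whole far tail (and, as you correctly note, the crude bound without the $j^{-3/2}$ factor would not suffice for $|\lambda|$ of constant order); close this step by the domination argument above, at which point your near-tail analysis via \cref{lem:Qnmp-expanded-formula} becomes redundant, since the dominated Borel tail covers all $j \geq n^{2/3}/|\lambda|$ at once.
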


We will only need to apply this lemma for $a=1,a=3/2,a=2$, so we have no worries about $a$ dependence.

\begin{proof}
    To show the first bound, notice that the cluster size in $G(n,m,\widetilde{p})$ is stochastically dominated by a Poisson birth process with parameter $\max\{m,n\} \log(1/(1-\widetilde{p}))$, giving
    \begin{align*}
        & \qquad \sum_{k+\ell \geq n^{2/3}/|\lambda|} (k+\ell)^a Q_{n,m,p}(k,\ell) \\ & \leq \frac{1+O(1/n)}{2p\max\{m,n\}\sqrt{2\pi}} \sum_{k+\ell \geq n^{2/3}/|\lambda|} (k+\ell)^{a-3/2} \exp\left[ -\frac{(k+\ell)\eps^2}{2}(1+o_n(1)) \right] \\
        & = O\paren{\left[ \frac2{\eps^2} \right]^{a-1/2} e^{-c|\lambda|}},
    \end{align*}
    using \cite[eq.~(6.12)]{bollobas2001scaling}.

    For \cref{eq:moment-small-k}, we will need to break up the sum into two regions of ``very small'' and ``intermediate'' $k+\ell$.  In the very small region, the sum is
    \begin{equation}\label{eq:small-j-no-contribution} \sum_{k+\ell < 1/|\eps|} (k+\ell)^a Q_{n,m,p}(k,\ell) = O((1/\eps)^{a-1/2}). \end{equation}

    In the ``intermediate'' region, we now write
    \[ \sum_{j = 1/|\eps|}^{n^{1/3}/|\eps|} j^a \sum_{k+\ell = j} Q_{n,m,p}(k,\ell). \]

    Inside this range of $j$, we will use \cref{lem:Qnmp-expanded-formula}.  The additive errors inside the exponential can be controlled as can the multiplicative.  This gives, for $\delta = o_{\eps,\lambda}(1)$,
    \begin{equation}\label{eq:B(delta)} \frac{1+o_{\eps,\lambda}(1)}{2pn \sqrt{2\pi}} B(2-\delta) \leq \sum_{j = 1/|\eps|}^{n^{1/3}/|\eps|} j^a \sum_{k+\ell=j} Q_{n,m,p}(k,\ell) \leq \frac{1+o_{\eps,\lambda}(1)}{2pn \sqrt{2\pi}} B(2+\delta), \end{equation}
    where
    \[ B(t) = \sum_{k=1/|\eps|}^{n^{1/3}/|\eps|} j^{a-3/2} \exp\left\{- j\eps^2 \right\}. \]

    We may approximate the sum with an integral (see, e.g.~\cite[Theorem 3.4]{spencer2014asymptopia})
    \begin{align*}
        B(t) & = \int_{1/|\eps|}^{n^{1/3}/|\eps|} j^{a-3/2} \exp\left\{ -\frac{j\eps^2}{t} \right\} \mr{d}j + O\paren{\max j^{a-3/2} \exp\left\{ -\frac{k \eps^2}{t} \right\} } \\
        & = \left[ \frac{t}{\eps^2} \right]^{a - 1/2} \int_{|\eps|/t}^{|\lambda|/t} u^{a-3/2} \exp\{-u\} \mr{d}u + O\paren{\left[ \frac{1}{\eps^2} \right]^{a - 3/2}} \\
        & = \left[ \frac{t}{\eps^2} \right]^{a - 1/2} \paren{\Gamma(a - 1/2) + o_{\eps,\lambda}(1)}.
    \end{align*}

    Combining with \cref{eq:B(delta),eq:small-j-no-contribution} completes the proof of \cref{eq:moment-small-k}.
\end{proof}

We will now need the following standard results on the asymptotics of $Q_{n,m,p}(k,\ell)$.

\begin{lemma}\label{lem:Qnp-bounds}
    There are constants $c,\eps_0,\lambda_0$ such that for $\lambda_0 < \lambda < \eps_0 n^{1/3}$ and $p = (1+\lambda n^{-1/3})/(2n)$, we have
    \[ \sum_{k+\ell \geq \lambda n^{2/3}} Q_{n,m,p}(k,\ell) \leq (2+o_{\lambda,\eps}(1)) \eps \] 
    and
    \[ \sum_{k + \ell \in (n^{2/3}/\lambda, \lambda n^{2/3})} Q_{n,m,p}(k) = \exp\{-\Omega(\lambda) \} n^{-1/3}. \]
\end{lemma}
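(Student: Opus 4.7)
The plan is to prove the two estimates separately, combining the explicit asymptotic of \cref{lem:Qnmp-expanded-formula} on the range where it applies with a branching-process / giant-component comparison for clusters beyond that range.

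For the second estimate, I would split the sum at $j := k+\ell = C n^{2/3}$ for a large absolute constant $C$.  On the lower range $j \in (n^{2/3}/\lambda,\,Cn^{2/3})$ the hypothesis $jn^{-2/3}$ bounded holds, so \cref{lem:Qnmp-expanded-formula} gives
\[
\sum_{k+\ell = j} Q_{n,m,p}(k,\ell) = (1+o(1))\,\frac{1}{pn\sqrt{\pi}\,j^{3/2}}\exp\!\left\{-\tfrac{j\eps^2}{2}\right\}.
\]
Approximating the sum over $j$ by a Riemann integral and substituting $u = j\eps^2/2$ (using $\eps^2 n^{2/3} \asymp \lambda^2$ since $\eps = (1+o(1))\lambda n^{-1/3}$) yields $\tfrac{\eps(1+o(1))}{pn\sqrt{2\pi}}\int_{\lambda/2}^{C\lambda^2/2} u^{-3/2}e^{-u}\,\mathrm{d}u$.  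The integral is bounded by $(\lambda/2)^{-3/2}e^{-\lambda/2} = O(\lambda^{-3/2}e^{-\lambda/2})$, and with $pn = \tfrac12 + o(1)$ this whole piece is $O(\eps\lambda^{-3/2}e^{-\lambda/2}) = \exp(-\Omega(\lambda))\,n^{-1/3}$.  On the upper range $j \in (Cn^{2/3},\,\lambda n^{2/3})$, which is non-empty only for $\lambda > C$, I would appeal to the near-critical giant-component picture for the bipartite random graph $G(n,m,2p-p^2)$: whp the giant has size $(2+o(1))\lambda n^{2/3}$ while all other components have size $O(n^{2/3}(\log n)/\lambda^2)$, so the event $|C(x)| \in (Cn^{2/3},\,\lambda n^{2/3})$ forces the giant to miss its mean by a constant factor, which has probability $\exp(-\Omega(\lambda))$ via the standard exploration-walk coupling.

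For the first estimate the sum equals $\mb{P}[|C(x)| \geq \lambda n^{2/3}]$ for a fixed $x \in X$ in $G(n,m,2p-p^2)$.  Comparing to the two-type Galton--Watson process with Poisson offspring of means $\mu = m(2p-p^2)$ from $X$-type vertices and $\nu = n(2p-p^2)$ from $Y$-type vertices, one has $\mu\nu = (1+\eps)^2$ and, because $|m-n| = O(n^{2/3})$, both $\mu,\nu = 1 + O(n^{-1/3})$.  Expanding the fixed-point equations $s_X = 1 - e^{-\mu s_Y}$, $s_Y = 1 - e^{-\nu s_X}$ to second order yields the $X$-side survival probability
\[
s_X = \frac{4\eps}{\nu(1+\mu)}(1+o(1)) = (2+o(1))\eps.
\]
Standard branching-process tail estimates give $\mb{P}[\text{total progeny} \geq \lambda n^{2/3}] = s_X + o(\eps)$ as $\lambda \to \infty$, and since the cluster of $x$ is stochastically dominated by the total progeny of this branching process, the desired bound $(2+o(1))\eps$ follows.

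The main obstacle is the comparison beyond $j \gg n^{2/3}$, where \cref{lem:Qnmp-expanded-formula}'s $O(j^3/n^2)$ error term blows up.  Both the upper portion of the second estimate and the entire first estimate rely on a coupling of the cluster exploration to a random walk with drift $\eps$ and then to the bipartite Galton--Watson process; this is the classical Aldous-type argument for near-critical $G(n,p)$ used in \cite{bollobas2001scaling}, and the bipartite setting only adds the bookkeeping of tracking which side the current vertex sits on.
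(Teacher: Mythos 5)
Your treatment of the range $n^{2/3}/\lambda < k+\ell \leq Cn^{2/3}$ via \cref{lem:Qnmp-expanded-formula} is sound (the error terms $O(j^3/n^2)=O(C^3)$ and the multiplicative $O(\eps)+O(1/\lambda)$ corrections only cost constant factors that $\lambda>\lambda_0$ absorbs), and your first estimate runs roughly parallel to the paper's, which dominates the cluster exploration by a single-type Poisson birth process of parameter $(1+\eps)(1+O(n^{-1/3}))$ and cites the corresponding tail computation of \cite{bollobas2001scaling}; your two-type fixed-point computation of $s_X=(2+o(1))\eps$ is a reasonable variant, though the domination should use Poisson mean $m\log(1/(1-\wt p))$ rather than $m\wt p$, and the claim that the total-progeny tail at $\lambda n^{2/3}$ equals $s_X+o(\eps)$ for the two-type process is asserted rather than proved.

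The genuine gap is the upper range $Cn^{2/3} < k+\ell < \lambda n^{2/3}$ of the second estimate, which is precisely the hard part. You invoke "the near-critical giant-component picture for the bipartite random graph": that whp the giant has size $(2+o(1))\lambda n^{2/3}$ and all other components are $O(n^{2/3}\log n/\lambda^2)$. First, a "whp" ($1-o(1)$) statement can only yield an $o(1)$ bound, whereas the lemma demands $\exp\{-\Omega(\lambda)\}n^{-1/3}$ — you need both an exponential-in-$\lambda$ failure probability for the structural event and an extra factor of order $\eps=\Theta(\lambda n^{-1/3})$ accounting for the fixed literal landing in a large component at all; in the paper this factorization is exactly what the FKG step $\mb{P}[r^*,s^*\geq n^{2/3}/\lambda]\cdot\mb{P}[\exists r,s:\,X_{r,s},Y_{r,s}\leq 0]$ delivers. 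Second, the quantitative second-largest-component/giant statements you appeal to are not available off the shelf for the bipartite model: the paper explicitly remarks, right after \cref{lem:Qnp-bounds}, that such results exist for non-bipartite $G(n,p)$ only in restricted ranges of $\lambda$ and that the statement for the whole range is not in the literature, which is why \cref{app:branching-process} proves it from scratch via Karp's two-dimensional exploration $(X_{r,s},Y_{r,s})$, a return-counting second-moment argument, and a local CLT for the walk's zeros. Third, your reduction "the event $|C(x)|\in(Cn^{2/3},\lambda n^{2/3})$ forces the giant to miss its mean" is not correct as stated: it could instead be that the giant is of typical size and $x$ lies in a large non-giant component, and ruling that out with probability $1-\exp\{-\Omega(\lambda)\}$ is again the unproved bipartite input. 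So as written the proposal assumes the substance of the lemma in its hardest regime rather than proving it.
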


Note that this result is only for the weakly supercritical regime $\lambda > 0$.  While various papers~\cite{bollobas1984evolutionrandom,janson1993birth,luczak1990component} have proved sharper results for non-bipartite random graphs for smaller regimes of $\lambda$, this statement for the whole range of $\lambda$ is not in the literature.  As the proof is rather technical and closely follows that of \cite[Lemma 5.5]{bollobas2001scaling}, we leave the proof to \cref{app:branching-process}.

\begin{lemma}\label{lem:moments-of-Pnp}
    There are constants $\eps_0,\lambda_0$ such that for any fixed $a > 1/2$ and $\lambda_0 < |\lambda| < \eps_0 n^{1/3}$, we have
    \[ \sum_k \sum_\ell (k+\ell)^a P_{n,m,p}(k,\ell) = \frac{1+o(1)}{2pn} \frac{\Gamma(a-1/2)}{\sqrt{2\pi}} \left[ \frac{2}{\eps^2} \right]^{a-1/2} \]
    and
    \[ \sum_{k + \ell \geq n^{2/3}/|\lambda|} (k+\ell)^a P_{n,m,p}(k,\ell) = \left[ \frac{1}{\eps^2} \right]^{a-1/2} \exp\left\{ - \Omega(|\lambda|^{3/5}) \right\}. \]
\end{lemma}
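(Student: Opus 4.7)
The plan is to split the double sum according to whether $k/\ell$ is balanced in the range $(1/5,5)$ and according to the size of $k+\ell$, leveraging the two complementary transfer lemmas: \cref{lem:Qnp-minus-Pnp} compares $P_{n,m,p}$ to $Q_{n,m,p}$ when $(k+\ell)^{3/2}p$ is bounded, and \cref{lem:Pnp-m0} provides the much stronger bound $P \leq s_0 2^{-s_0} Q$ when $(k+\ell)^{3/2}p \geq 1$. The unbalanced part is handled at once: by \cref{lem:k/ell-bounded}, $\sum_{k/\ell \notin (1/5,5)} (k+\ell)^a P_{n,m,p}(k,\ell) = O(1)$, which is negligible compared to the target expression $\sim n^{(2a-1)/3}/|\lambda|^{2a-1}$ since $|\lambda| \ll n^{1/3}$.

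For the main identity, I would restrict to the balanced range with $k+\ell \leq n^{2/3}/|\lambda|$, where $(k+\ell)^{3/2}p = O(|\lambda|^{-3/2}) = o(1)$, so \cref{lem:Qnp-minus-Pnp} gives $P_{n,m,p}(k,\ell) = (1+o_{\eps,\lambda}(1)) Q_{n,m,p}(k,\ell)$. Applying the precise form of \cref{lem:Qnmp-sum-k/ell-bounded}, equation \cref{eq:moment-small-k}, which from its proof carries the $a$-dependent prefactor $\frac{1}{2pn\sqrt{2\pi}}\Gamma(a-1/2)$, yields the desired asymptotic. Verifying that the tail $k+\ell > n^{2/3}/|\lambda|$ contributes $o(1)$ times this main term is precisely the content of the second identity, so the two assertions reduce to controlling the tail.

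For the tail, the argument bifurcates on the sign of $\lambda$. When $\lambda < 0$, the trivial bound $P_{n,m,p} \leq Q_{n,m,p}$ together with \cref{lem:Qnmp-sum-k/ell-bounded}, equation \cref{eq:moment-large-k}, yields $\sum_{k+\ell \geq n^{2/3}/|\lambda|} (k+\ell)^a P_{n,m,p} = O([2/\eps^2]^{a-1/2} e^{-c|\lambda|})$, stronger than required since $e^{-c|\lambda|} \leq e^{-c|\lambda|^{3/5}}$. When $\lambda > 0$, I further split into the intermediate range $[n^{2/3}/\lambda, \lambda n^{2/3}]$ and the giant-component range $[\lambda n^{2/3}, \infty)$. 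The intermediate range is controlled by $P \leq Q$ and the intermediate bound in \cref{lem:Qnp-bounds}, giving a contribution of order $\lambda^a n^{(2a-1)/3} e^{-\Omega(\lambda)}$, super-polynomially smaller than the target.

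The main obstacle is the giant-component range in the supercritical case. Here $P \leq Q$ is insufficient: the giant component carries mass $\Theta(\eps)$ at typical size $\Theta(\eps n)$, so $\sum_{k+\ell \geq \lambda n^{2/3}} (k+\ell)^a Q = \Theta(\lambda^{a+1} n^{(2a-1)/3})$, which actually \emph{exceeds} the target. This is where \cref{lem:Pnp-m0} is essential: on this range $(k+\ell)^3 p^2 \geq c\lambda^3$, so $s_0 \geq c_0 \lambda^3$, giving $P \leq c_0 \lambda^3\, 2^{-c_0 \lambda^3} Q$. Since $2^{-c_0 \lambda^3}$ dominates any polynomial in $\lambda$ and is much smaller than $\exp\{-\Omega(\lambda^{3/5})\}$ for $\lambda \gg 1$, the giant-component contribution is negligible. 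Combining the unbalanced, balanced-small, intermediate, and giant-component estimates assembles both assertions of the lemma.
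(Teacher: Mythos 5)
Your decomposition (balanced vs.\ unbalanced via \cref{lem:k/ell-bounded}; $k+\ell \leq n^{2/3}/|\lambda|$ via \cref{lem:Qnp-minus-Pnp} and \cref{eq:moment-small-k}; the subcritical tail via $P\leq Q$ and \cref{eq:moment-large-k}; the intermediate supercritical range via \cref{lem:Qnp-bounds}) is exactly the paper's, and those parts are fine. The problem is the supercritical giant range $k+\ell \geq \lambda n^{2/3}$, where your argument as written has a genuine gap in two places.

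First, the claim $s_0 \geq c_0\lambda^3$ ignores the cap in \cref{lem:Pnp-m0}: there $s_0 = \min\{c_0(k+\ell)^3p^2/(1-p)^2,\, n^{1/5}\}$. Since $\lambda$ is allowed up to $\eps_0 n^{1/3}$, one can have $\lambda^3$ as large as $\eps_0^3 n \gg n^{1/5}$, so for large clusters the decay factor is only $2^{-n^{1/5}}$, not $2^{-c_0\lambda^3}$. Because $n^{1/5} \geq (\lambda/\eps_0)^{3/5}$, what survives is $\exp\{-\Omega(\min\{\lambda^3,n^{1/5}\})\} = \exp\{-\Omega(\lambda^{3/5})\}$ --- this cap is precisely why the exponent $3/5$ appears in the statement of \cref{lem:moments-of-Pnp}, and your assertion that the factor is ``much smaller than $\exp\{-\Omega(\lambda^{3/5})\}$'' is only correct after making this correction. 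Second, and more substantively, you dispose of the weight $(k+\ell)^a$ on this range by asserting $\sum_{k+\ell\geq\lambda n^{2/3}}(k+\ell)^a Q_{n,m,p}(k,\ell) = \Theta(\lambda^{a+1}n^{(2a-1)/3})$ from the ``typical giant size $\Theta(\eps n)$'' heuristic. No such moment bound is available: \cref{lem:Qnp-bounds} only gives the zeroth moment $\sum_{k+\ell\geq\lambda n^{2/3}}Q \leq (2+o(1))\eps$, and making your heuristic rigorous would require a tail estimate for the supercritical bipartite cluster size above its typical value, which is comparable in difficulty to (and not contained in) the appendix proof of \cref{lem:Qnp-bounds}. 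The crude substitute $(k+\ell)^a\leq (2n)^a$ does not work either: it loses a factor of order $n^{a/3}\lambda^{2a}$ relative to the target, which cannot be absorbed into $\exp\{-\Omega(\lambda^{3/5})\}$ when $\lambda$ is of constant order $\lambda_0$. The paper avoids both issues by bounding the pointwise prefactor $j^a(s_0(j)+1)2^{-s_0(j)}$ uniformly over $j\geq n^{2/3}\lambda$: one checks its logarithmic derivative is nonpositive there (choosing $\lambda_0 = \lambda_0(c_0,a)$ large), so the maximum is attained either at $j=n^{2/3}\lambda$ or where the cap $s_0=n^{1/5}$ kicks in, giving the uniform bound $n^{2a/3}\exp[-\Omega(\lambda^{3/5})]$; only then is the zeroth-moment bound from \cref{lem:Qnp-bounds} applied. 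You need either this maximization step or a new cluster-tail estimate; as written, the giant-range contribution is not controlled.

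A minor additional point: in the small balanced region you implicitly also need the unbalanced analogue of \cref{lem:k/ell-bounded} for $Q_{n,m,p}$ (to pass from the balanced sum to the full sum in \cref{eq:moment-small-k} from below); this follows by the same computation at parameter $2p-p^2$, but it should be said.
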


\begin{proof}
    We will consider several regions.

    First, for any $\lambda$, for $k + \ell \leq 2n^{2/3}/|\lambda|$, combining \cref{eq:Qnp-minus-Pnp} and \cref{eq:moment-small-k} gives
    \[\sum_{k + \ell \leq n^{2/3}/|\lambda|} (k+\ell)^a P_{n,m,p}(k,\ell) = \frac{1 + o_{\eps,\lambda}(1)}{2pn} \left[ \frac{2}{\eps^2} \right]^{a-1/2}. \]

    For $\lambda < 0$, we will show there is no contribution outside this interval.  We use the trivial bound $P_{n,m,p}(k,\ell) \leq Q_{n,m,p}(k,\ell)$ and \cref{eq:moment-large-k} to get $o_{\lambda,\eps}(1)$ contribution from larger $k+\ell$.

    For $\lambda > 0$, the situation is more delicate and we must break into two further regions.  First, we have
    \begin{align*}
        \sum_{n^{2/3}/\lambda \leq k+\ell \leq n^{2/3}\lambda} (k+\ell)^a P_{n,m,p}(k,\ell) & \leq (n^{2/3}\lambda)^a \sum_{n^{2/3}/\lambda \leq k+\ell \leq n^{2/3}\lambda} Q_{n,m,p}(k,\ell) \\
        & \leq (n^{2/3}\lambda)^a \exp[-\Theta(\lambda)] n^{-1/3} \\
        & = \left[ \frac{1}{\eps^2} \right]^{a-1/2} \times \exp[-\Theta(\lambda)].
    \end{align*}

    The second line uses \cref{lem:Qnp-bounds}.  Thus we have $o_\lambda(1)$ contribution from this middle region.

    Finally, we must handle $k+\ell$ large for $\lambda>0$.  We begin by rewriting
    \[ \sum_{j \geq n^{2/3}\lambda} j^a \left[ \sum_{\substack{k + \ell = j \\ k/\ell \in (1/5,5)}} + \sum_{\substack{k+\ell = j \\ k/\ell \notin (1/5,5)}} \right] P_{n,m,p}(k,\ell). \]

    By \cref{lem:k/ell-bounded},
    \[ \sum_{k + \ell \geq n^{2/3}|\lambda|} (k+\ell)^a P_{n,m,p}(k,\ell) = \sum_{j \geq n^{2/3}|\lambda|} j^a \paren{ \exp[-\Omega(j)] + \sum_{\substack{k + \ell = j \\ k/\ell \in (1/5,5)}} P_{n,m,p}(k,\ell)}. \]

    By \cref{eq:Pnp-m0},
    \[  \sum_{j \geq n^{2/3}|\lambda|} j^a \sum_{\substack{k + \ell = j \\ k/\ell \in (1/5,5)}} P_{n,m,p}(k,\ell) \leq \sum_{j \geq n^{2/3} |\lambda|} j^a (s_0+1) 2^{-s_0} \sum_{\substack{k + \ell = j \\ k/\ell \in (1/5,5)}} Q_{n,m,p}(k,\ell). \]
    
    Note that we can factor it like this as $s_0$ is only a function of $j$.  Call $j^a s_0 2^{-s_0}$ the prefactor; we will give this a universal upper bound.  Provided $s_0 (j+1) < n^{1/5}$, recalling the definition of $s_0$, we can differentiate
    \[ \frac{\mr{d}}{\mr{d}j} \log\paren{ j^a \paren{c_0 \frac{j^3p^2}{(1-p)^2} + 1} 2^{-c_0 j^3p^2(1-p)^{-2}} } = \frac{a+3}{j} - 3j^2 c_0 p^2 (1-p)^{-2} \log 2. \]

    We will choose $\lambda_0 = \lambda_0(c_0)$ so that this derivative is nonpositive for $j>n^{2/3} |\lambda_0|$.  Thus the largest value of the prefactor occurs at either $j = \lambda_0$ or when $s_0 = n^{1/5}$.  In this range, we can see that the prefactor is $j^a n^{1/5} 2^{-n^{1/5}}$.  This is clearly increasing with $j$, so the max is at $j = m+n$, where we will get $n^{1/5+a} 2^{-n^{1/5}}$.  Thus the prefactor is at most
    \[ \max\left\{ c_0 (n^{2/3}\lambda)^a 2^{-\Theta(\lambda^3)}, n^{a+1/5} 2^{-n^{1/5}} \right\}. \]

    Since $|\lambda| \leq n^{1/3}$, we may rewrite this
    \[ \max\left\{ n^{(2/3)a} \exp[-\Theta(\lambda^3)], n^{a+1/5} \exp[-\Theta(n^{1/5}) - \Omega(\lambda^{3/5})] \right\} \leq n^{(2/3)a} \exp[-\Omega(\lambda^{3/5})]. \]

    Thus this is an absolute bound for the prefactor.  We now rewrite the above equation
    \begin{align*}
        \sum_{j \geq n^{2/3} |\lambda|} j^a \sum_{\substack{k + \ell = j \\ k/\ell \in (1/5,5)}} P_{n,m,p}(k,\ell) & \leq n^{(2/3)a} \exp[-\Omega(\lambda^{3/5})] \sum_{j \geq n^{2/3} |\lambda|} \sum_{\substack{k + \ell = j \\ k/\ell \in (1/5,5)}} Q_{n,m,p}(k,\ell) \\
        & \leq n^{(2/3)a - 1/3} \exp[-\Omega(\lambda^{3/5})],
    \end{align*}
    where the last line uses \cref{lem:Qnp-bounds}.
\end{proof}

\subsection{First moment of the spine}

We now show the following result.
\begin{proposition}\label{prop:spine-expectation}
    We have
    \[ \mb{P}\paren{x \squig{n,m} \bar x} = \begin{cases} O(n^{-1/3} |\lambda|^{-2}) & \lambda < 0, \\ O(n^{-1/3} |\lambda|) & \lambda > 0. \end{cases} \]
\end{proposition}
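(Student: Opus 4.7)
The plan is to express $\mb{P}(x \squig{} \bar x)$ as a telescoping sum involving the quantities $Q_{n,m,p}(k,\ell)$ and $P_{n,m,p}(k,\ell)$ and then apply the moment estimates from the preceding lemmas. The starting identity is
\[ \mb{P}(x \squig{} \bar x) = \sum_{k,\ell}\paren{Q_{n,m,p}(k,\ell)-P_{n,m,p}(k,\ell)}, \]
which follows directly from the definitions: $Q_{n,m,p}(k,\ell)$ is the probability that $|L^+(x)\cap X|=k$ and $|L^+(x)\cap Y|=\ell$, while $P_{n,m,p}(k,\ell)$ records the same event together with $x\notsquig{}\bar x$, so summing the difference over $(k,\ell)$ yields exactly $\mb{P}(x\squig{}\bar x)$.

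I would then split the sum at the threshold $k+\ell=j_0$ with $j_0 := \lfloor(2p)^{-2/3}\rfloor = \Theta(n^{2/3})$, which is precisely the range where \cref{lem:Qnp-minus-Pnp} applies. On the small side, using $k\ell\le (k+\ell)^2/4$, that lemma gives
\[ Q_{n,m,p}(k,\ell)-P_{n,m,p}(k,\ell) = O\paren{(k+\ell)^{3/2}\,p} \cdot P_{n,m,p}(k,\ell). \]
Factoring out $p$ and invoking \cref{lem:moments-of-Pnp} with $a=3/2$ (so that $\Gamma(a-1/2)=\Gamma(1)=1$) yields a small-range contribution of $O(p)\cdot O\paren{1/(pn\eps^2)} = O(1/(n\eps^2)) = O(n^{-1/3}\lambda^{-2})$.

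On the large side I would use the monotone bound $Q-P\le Q$ and invoke the appropriate tail estimate. When $\lambda<0$, applying \cref{lem:Qnmp-sum-k/ell-bounded}(1) with $a=0$ yields a contribution of $O(\eps\, e^{-c|\lambda|}) = O(n^{-1/3}|\lambda|\,e^{-c|\lambda|})$, which is absorbed into the small-range bound $O(n^{-1/3}\lambda^{-2})$. When $\lambda>0$, the supercritical tail estimate in \cref{lem:Qnp-bounds} gives the larger contribution $O(\eps) = O(\lambda n^{-1/3})$, arising from the emerging giant component; for $\lambda^3\gg 1$ this dominates the small-range term and matches the claimed $O(\lambda n^{-1/3})$.

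All the substantial combinatorial and enumerative work has already been packaged into \cref{lem:Qnp-minus-Pnp,lem:moments-of-Pnp,lem:Qnmp-sum-k/ell-bounded,lem:Qnp-bounds}, so this proposition is essentially their corollary and reduces to careful bookkeeping. The main point to verify is the compatibility of the split point $j_0$ with the regimes of those lemmas: \cref{lem:Qnp-minus-Pnp} requires $(k+\ell)^{3/2}p$ bounded, which holds for $k+\ell\le j_0$ by construction, and in the supercritical case the large-range bound requires $j_0 \ge n^{2/3}/\lambda$, which holds whenever $\lambda>\lambda_0$ with $\lambda_0$ a sufficiently large absolute constant.
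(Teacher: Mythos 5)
Your proposal is essentially the paper's own argument: the same identity $\mb{P}(x \squig{} \bar x) = \sum_{k,\ell}(Q_{n,m,p}(k,\ell)-P_{n,m,p}(k,\ell))$, a split according to the size of $k+\ell$, the small range handled via \cref{lem:Qnp-minus-Pnp} together with the $a=3/2$ moment from \cref{lem:moments-of-Pnp} (giving $O(n^{-1/3}\lambda^{-2})$), and the large range handled by $Q-P\le Q$ plus the tail bounds (\cref{lem:Qnmp-sum-k/ell-bounded} when $\lambda<0$, \cref{lem:Qnp-bounds} when $\lambda>0$). The only substantive differences are cosmetic: you cut at $j_0=\Theta(n^{2/3})$ rather than at $n^{2/3}/|\lambda|$ and $n^{2/3}|\lambda|$, and you use the $a=0$ tail moment where the paper uses $a=1$ with a prefactor $|\lambda|n^{-2/3}$; both routes give the same bounds.

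Two bookkeeping points you should add. First, \cref{eq:Qnp-minus-Pnp} is proved via \cref{eq:Sp-leading-order}, whose derivation (Clancy's asymptotics and the enumeration bounds) requires $k/\ell$ bounded; applying it uniformly over the whole small range, including very unbalanced pairs, is not justified by the toolkit as proved. The paper handles this by splitting the small-range sum into $k/\ell\in(1/5,5)$, where \cref{eq:Qnp-minus-Pnp} applies, and $k/\ell\notin(1/5,5)$, where \cref{lem:k/ell-bounded} makes the contribution exponentially negligible; you should do the same. Second, in the supercritical case your large range $(j_0,\infty)$ starts well below $\lambda n^{2/3}$, so the first estimate of \cref{lem:Qnp-bounds} alone does not cover it; you also need its second estimate on $(n^{2/3}/\lambda,\lambda n^{2/3})$, which contributes only $n^{-1/3}e^{-\Omega(\lambda)}$ and keeps the total at $O(\eps)=O(\lambda n^{-1/3})$. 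With these two adjustments the proof is complete and matches the paper's.
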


\begin{proof}
We have
\begin{align*}
    \mb{P}(x \squig{n,m} \bar x) & = 1 - \sum_k \sum_\ell P_{n,m,p}(k,\ell) = \sum_k \sum_\ell (Q_{n,m,p}(k,\ell) - P_{n,m,p}(k,\ell)) \\
    & = \left[ \sum_{k + \ell \leq n^{2/3}/|\lambda|} + \sum_{n^{2/3}/|\lambda| \leq k + \ell \leq n^{2/3} |\lambda|} + \sum_{k+\ell \geq n^{2/3}|\lambda|} \right] (Q_{n,m,p}(k,\ell) - P_{n,m,p}(k,\ell)).
\end{align*}

We will control the sums in all three ranges.

First, for small $k+\ell$, we break up
\begin{align*}
    \sum_{k + \ell \leq n^{2/3}/|\lambda|} (Q_{n,m,p}(k,\ell) - P_{n,m,p}(k,\ell)) & = \left[ \sum_{\substack{k + \ell \leq n^{2/3}/|\lambda| \\ k/\ell \in (1/5,5)}} + \sum_{\substack{k + \ell \leq n^{2/3}/|\lambda| \\ k/\ell \notin (1/5,5)}} \right] (Q_{n,m,p}(k,\ell) - P_{n,m,p}(k,\ell)) \\
    & = \sum_{\substack{k + \ell \leq n^{2/3}/|\lambda|}} (1+o_{\eps,\lambda}(1)) \sqrt{\frac\pi8} P_{n,m,p}(k,\ell) \frac{(k\ell(k+\ell))^{1/2}p}{1-p} \\
    & \leq (1+o_{\eps,\lambda}(1)) p\sqrt{\frac\pi8} \sum_{\substack{k + \ell \leq n^{2/3}/|\lambda|}} (k+\ell)^{3/2} P_{n,m,p}(k,\ell) \\
    & \leq (1+o_{\eps,\lambda}(1)) p\sqrt{\frac\pi8} \times \frac{1+o(1)}{2pn} \frac{1}{\sqrt{2\pi}} \left[ \frac2\eps \right] \\
    & = (1+o_{\eps,\lambda}(1)) \frac{1}{4n^{1/3} \lambda^2}, \label{eq:spine-moment-j-small} \addtocounter{equation}{1}\tag{\theequation}
\end{align*}
where the second line uses \cref{lem:k/ell-bounded,eq:Qnp-minus-Pnp}.  We now specialize to each regime.

\subsubsection*{Subcritical regime (\texorpdfstring{$\lambda<0$}{negative})}  In this regime, we will combine the two regions where $k+\ell$ is larger.

\begin{align*}
    0 & \leq \sum_{k+\ell \geq n^{2/3}/|\lambda|} (Q_{n,m,p}(k,\ell) - P_{n,m,p}(k,\ell)) \leq \sum_{k+\ell \geq n^{2/3}/|\lambda|} Q_{n,m,p}(k,\ell) \\
    & \leq \frac{|\lambda|}{n^{2/3}} \sum_{k+\ell \geq n^{2/3}/|\lambda|} (k+\ell) Q_{n,m,p}(k,\ell) = O\paren{n^{-1/3} \exp\{-\Omega(|\lambda|) \}},
\end{align*}
where the last line uses \cref{lem:Qnmp-sum-k/ell-bounded}.

Combining this with \cref{eq:spine-moment-j-small} gives
\[ \mb{P}\paren{x \squig{n,m} \bar x} \leq  \frac{1+o_{\eps,\lambda}(1)}{4n^{1/3} \lambda^2} \]
when $\lambda < 0$.

\subsubsection*{Supercritical regime (\texorpdfstring{$\lambda > 0$}{positive})}

For the middle region of $k+\ell$, we bound
\[ 0 \leq \smashoperator{ \sum_{n^{2/3}/|\lambda| \leq k + \ell \leq n^{2/3} |\lambda|} } (Q_{n,m,p}(k,\ell) - P_{n,m,p}(k,\ell)) \leq \smashoperator{\sum_{n^{2/3}/|\lambda| \leq k + \ell \leq n^{2/3} |\lambda|}} Q_{n,m,p}(k,\ell) = n^{-1/3} \exp[-\Omega(\lambda)], \]
which will not contribute.

For large $k+\ell$, the proof of \cref{lem:moments-of-Pnp} shows that $P_{n,m,p}(k,\ell) = Q_{n,m,p}(k,\ell) \exp[-\Omega(\lambda^{3/5})]$.  Thus
\begin{align*}
    \sum_{k + \ell > n^{2/3} \lambda} (Q_{n,m,p}(k,\ell) - P_{n,m,p}(k,\ell)) & = (1 - \exp[-\Omega(\lambda^{3/5})]) \sum_{k + \ell > n^{2/3} \lambda} Q_{n,m,p}(k,\ell) \\
    & \leq (2 + o_{\lambda,\eps}(1)) \lambda n^{-1/3},
\end{align*}
where the second line uses \cref{lem:Qnp-bounds}.

This is the dominant term, so
\[ \mb{P} \paren{x \squig{n,m} \bar x} \leq (2 + o_{\lambda,\eps}(1)) \lambda n^{-1/3} \]
when $\lambda > 0$. 
\end{proof}

\subsection{Second moment of the spine}

Armed with these moments for $L^+_{n,m,p}(x)$, we can now approximate $\mb{P}(x \rightsquigarrow \bar x, y \rightsquigarrow \bar y)$ as desired.

\begin{remark}
    Throughout all of the previous sections, we have various results on $P_{n,m,p}(k,\ell)$.  In this section, at points, we will need to control instead $P_{m,n,p}(k,\ell)$.  Note that $P_{n,m,p}(k,\ell)$ is not symmetric in any way because the initial vertex is on the side with $n$ vertices.  However, our only assumption on $m$ and $n$ has been $|n-m| \leq C n^{2/3}$.  This condition is close to symmetric, and implies for a different constant $C'$ that $|n-m| \leq C' m^{2/3}$.  We can choose $\lambda_0 = \lambda_0(C')$ (resp.~$\eps_0 = \eps_0(C')$) so that all earlier computations hold for $P_{m,n,p}(k,\ell)$ as well.

    We will also at points need to deal with $P_{n-i,m-j,p}(k,\ell)$ where $i,j \leq n^{2/3}/\lambda_0$.  Similarly, this will only change $C$ by some (arbitrarily small) constant amount.  We may choose $\lambda_0$ (resp.~$\eps_0$) sufficiently large (resp.~small) that all earlier computations also hold for $n,m$ shifted by $n^{2/3}/\lambda_0$.
\end{remark}

The following result says that the FKG inequality is approximately tight.

\begin{lemma}\label{lem:FKG-for-Pxx}
    We have
    \[ \mb{P}(x \squig{n,m} \bar x, y \squig{m,n} \bar y) - \mb{P}(x \squig{n,m} \bar x) \mb{P}(y \squig{m,n} \bar y) = \begin{cases} O(n^{-2/3} |\lambda|^{-4}) & \lambda < 0, \\ O(n^{-2/3} |\lambda|^{-1}) & \lambda > 0. \end{cases} \]
\end{lemma}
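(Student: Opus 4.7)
The plan is to expose the trimmed outgraph $H := L^+(x)$ and note that $\mbf{1}_A$ is measurable with respect to $H$, since the event $x \squig{F} \bar x$ is detected during the BFS by the ``skip'' on $\bar x$. Writing $|H \cap X| = k$ and $|H \cap Y| = \ell$ and setting $V_1 := H$, $V_2 := \bar H$, and $V_3 := V(G) \setminus (V_1 \cup V_2)$, the BFS reveals every arc with tail in $V_1$; the duality $zz' \in E(G) \iff \bar z'\bar z \in E(G)$ therefore also reveals every arc with head in $V_2$. In particular, all $V_1 \to V_3$ arcs (and dually $V_3 \to V_2$ arcs) are forced to be absent, while the status of every other arc remains an independent clause with probability $p$.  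Consequently, the subformula $F' := F|_{V_3}$ is distributed as $F_{n-k, m-\ell, p}$, and the key identity becomes
\begin{equation*}
    \mb{P}(A \cap B) - \mb{P}(A)\mb{P}(B) = \mb{E}\bigl[\mbf{1}_A\bigl(\mb{P}(B \mid H) - \mb{P}(B)\bigr)\bigr].
\end{equation*}

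Conditional on $y, \bar y \in V_3$ (the complementary event contributes $O((k+\ell)/n)\cdot\mb{P}(B)$, which is of lower order), I will decompose $B = B_0 \cup B_1$, where $B_0 := \{y \squig{F'} \bar y\}$ is the event of a witness path confined to $V_3$ and $B_1$ is the complementary event of a witness path exiting $V_3$.  Because $V_3 \to V_2$ arcs are absent, any such excursion must begin with a $V_3 \to V_1$ arc out of $L^+_{F'}(y)$; conversely, because $V_1 \to V_3$ arcs are absent, the path can re-enter $V_3$ (to reach $\bar y$) only through an unprobed $V_2 \to V_3$ arc.  Applying \cref{prop:spine-expectation} to $F'$, whose effective parameter is $\lambda' = \lambda + O((k+\ell) n^{-1/3})$, gives
\begin{equation*}
    \mb{P}(B_0 \mid H) = \mb{P}(B) + O\!\left(\frac{k+\ell}{n^{1/3}}\,\mb{P}(B)\right)
\end{equation*}
for $k + \ell = o(n^{1/3}|\lambda|)$, while a union bound over the first $V_3 \to V_1$ arc times a spine-closure factor gives $\mb{P}(B_1 \mid H) \leq p(k+\ell)\cdot\mb{E}|L^+_{F'}(y)|\cdot\mb{P}(B)$.

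Integrating against $\mbf{1}_A$ and splitting at $k + \ell \lessgtr n^{2/3}/|\lambda|$, I obtain
\begin{equation*}
    \mb{P}(A \cap B) - \mb{P}(A)\mb{P}(B) \leq O\!\left(\frac{\mb{P}(B)}{n^{1/3}}\,\mb{E}\bigl[(k+\ell)\mbf{1}_A\bigr]\right) + (\text{tail terms}),
\end{equation*}
where $\mb{E}[(k+\ell)\mbf{1}_A] = \sum_{k,\ell}(k+\ell)(Q_{n,m,p} - P_{n,m,p})(k,\ell)$ is controlled by combining \cref{lem:Qnmp-sum-k/ell-bounded} and \cref{lem:moments-of-Pnp} on the ``small'' regime, while the tail $k+\ell > n^{2/3}/|\lambda|$ is handled by the exponential decay of \cref{lem:Qnp-bounds} and the suppression factor $s_0 2^{-s_0}$ from \cref{lem:Pnp-m0}.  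Plugging in the parameter asymptotics then produces $O(n^{-2/3}|\lambda|^{-4})$ for $\lambda < 0$ and $O(n^{-2/3}|\lambda|^{-1})$ for $\lambda > 0$.

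The main obstacle will be the supercritical regime: since $\mb{P}(A)\mb{P}(B) = \Theta(n^{-2/3}\lambda^2)$ already exceeds the target $O(n^{-2/3}|\lambda|^{-1})$ by a factor of $\lambda^3$, the cancellation between the leading order of $\mb{P}(B \mid H)$ and $\mb{P}(B)$ must be retained to the first nontrivial order; a crude pointwise bound of the form $\mb{P}(B \mid H) \leq C\,\mb{P}(B)$ is insufficient.  This requires careful bookkeeping of the linearization $\lambda' - \lambda = O((k+\ell) n^{-1/3})$ and a tight control of $\mb{E}[(k+\ell)\mbf{1}_A]$ in the supercritical case, where $|L^+(x)|$ can be as large as $\Theta(n^{2/3}\lambda)$ on the event $A$, so the tail analysis is more delicate than in the subcritical case.
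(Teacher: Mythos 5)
The central gap is your treatment of $B_0$. The quantity you actually need to control is the difference $\mb{P}(y \squig{m,n} \bar y) - \mb{P}(y \squig{m-\ell,n-k} \bar y)$ of spine probabilities in the original and the reduced formula, and this does not follow from \cref{prop:spine-expectation}: that proposition is a one-sided $O(\cdot)$ bound on each probability separately, and two upper bounds never bound a difference. This comparison is precisely the technical heart of the paper's proof, which establishes it by a dedicated coupling argument (\cref{lem:inductive-step-for-x-to-x}: add one variable at a time, case-analyse the arcs into the new pair of literals, then telescope), yielding a difference of order $O((k+\ell)/(n|\lambda|^3))$ for $\lambda<0$ and $O((k+\ell)/n)$ for $\lambda>0$; nothing in your outline substitutes for it. (Incidentally, the effective-parameter shift is $\lambda'-\lambda = O((k+\ell)n^{-2/3})$, not $O((k+\ell)n^{-1/3})$.) Moreover, even taken at face value your claimed error $O((k+\ell)n^{-1/3}\mb{P}(B))$ is too lossy: weighting by $\mbf{1}_A$, i.e.\ by $Q_{n,m,p}-P_{n,m,p} \approx \sqrt{\pi/8}\,p\,(k\ell(k+\ell))^{1/2}P_{n,m,p}$, and summing via \cref{lem:moments-of-Pnp} at $a=5/2$ gives $\mb{E}[(k+\ell)\mbf{1}_A] = O(n^{1/3}|\lambda|^{-4})$ in the subcritical case, so your main term is only $O(n^{-1/3}|\lambda|^{-6})$, which exceeds the target $O(n^{-2/3}|\lambda|^{-4})$ whenever $|\lambda| \ll n^{1/6}$ — and $\lambda$ is allowed to be an absolute constant.

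Second, conditioning on the spine event $A$ and exposing $L^+(x)$ puts the exploration on the wrong side in the supercritical regime. On $A$ with $\lambda>0$ the explored set typically has $k+\ell = \Theta(\lambda n^{2/3})$; removing that many variables shifts $\eps$ by $\Theta(\eps)$, so the deficit $\mb{P}(B)-\mb{P}(B_0\mid H)$ is of order $\mb{P}(B)=\Theta(\lambda n^{-1/3})$ itself, and the excursion term is comparably large (with $|L^+_{F'}(y)|$ at its typical scale $n^{1/3}/\lambda$, the chance of a fresh arc from $L^+_{F'}(y)$ into $V_1$ is already $\Theta(1)$). Your plan — drop the negative $B_0$-deficit and union-bound $B_1$ — therefore cannot do better than the trivial product bound $\Theta(n^{-2/3}\lambda^{2})$ unless the union bound on $B_1$ is accurate to relative error $\lambda^{-3}$, which is exactly the cancellation you flag as "the main obstacle" but never supply a mechanism for; the tail lemmas you cite do not help, since on $A$ the mass of $Q-P$ sits at $k+\ell \geq \lambda n^{2/3}$ where \cref{lem:Qnp-bounds} only controls the unweighted mass $O(\eps)$ and \cref{lem:Pnp-m0} suppresses $P$, not $Q-P$. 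The paper sidesteps both problems by decomposing over the complement on the $y$-side (the events $\hat P_{m,n,p}(k,\ell;y)$, weighted by $P_{m,n,p}(k,\ell)$, whose large sizes are exponentially suppressed) and then comparing $\mb{P}(x \squig{n,m} \bar x)$ with $\mb{P}(x \squig{n-\ell,m-k} \bar x)$ via \cref{lem:inductive-step-for-x-to-x}; you should either restructure along those lines or prove a genuine analogue of that comparison lemma before the rest of your outline can go through.
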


To begin moving towards this lemma, define the event $\hat{P}_{n,m,p}(k,\ell;z)$ to mean that literal $z$ is a member of a part $Z$ with $n$ variables while the other part has $m$ variables, that $|L^+(z) \cap Z| = k$, $|L^+(z) \setminus Z| = \ell$, and that $L^+_{n,m,p}(z)$ is strictly distinct.

We will decompose
\[ 1 = \mathbbm{1}_{y \squig{m,n} \bar y} + \sum_k \sum_\ell \mathbbm{1}_{\hat{P}_{m,n,p}(k,\ell;y)}, \]
giving
\begin{multline*}
    \mb{P}(x \squig{n,m} \bar x, y \squig{m,n} \bar y) + \sum_k \sum_\ell \mb{P}(x \squig{n,m} \bar x, \hat{P}_{m,n,p}(k,\ell;y)) \\ = \mb{P}(x \squig{n,m} \bar x) \paren{ \mb{P}(y \squig{m,n} \bar y) + \sum_k \sum_\ell P_{m,n,p}(k,\ell) }, \end{multline*} which we may reorganize into\begin{multline*}
    \mb{P}(x \squig{n,m} \bar x, y \squig{m,n} \bar y) - \mb{P}(x \squig{n,m} \bar x) \mb{P}(y \squig{m,n} \bar y) \\= \sum_k \sum_\ell \paren{\mb{P}(x \squig{n,m} \bar x) - \mb{P}(x \squig{n,m} \bar x \,|\, \hat{P}_{m,n,p}(k,\ell;y))} P_{m,n,p}(k,\ell).
\end{multline*}

We may lower bound
\[ \mb{P}(x \squig{n,m} \bar x \,|\, \hat{P}_{m,n,p}(k,\ell;y)) \geq \frac{n-\ell}{n} \mb{P}(x \squig{n-\ell,m-k} \bar x). \]

This comes as there is a $\ell/n$ chance that one of $x$ and $\bar x$ is in $L^+(y)$.  In this case, any path $x \rightsquigarrow \bar x$ must avoid $L^+(y)$, so we may as well explore $G \setminus L^+(y)$ instead, which is a bipartite graph on $(n-\ell) + (m-k)$ vertices.

Thus we get
\begin{align*}
    & \qquad \mb{P}(x \squig{n,m} \bar x, y \squig{m,n} \bar y) - \mb{P}(x \squig{n,m} \bar x) \mb{P}(y \squig{m,n} \bar y) \\
    & \leq \sum_k \sum_\ell P_{m,n,p}(k,\ell) \paren{\mb{P}(x \squig{n,m} \bar x) - \frac{n-\ell}{n} \mb{P}(x \squig{n-\ell,m-k} \bar x)} \\
    & = \mb{P}(x \squig{n,m} \bar x) \sum_k \sum_\ell \frac{\ell}{n} P_{m,n,p}(k,\ell) + \sum_k \sum_\ell \frac{n-\ell}{n} P_{m,n,p}(k,\ell) \paren{\mb{P}(x \squig{n,m} \bar x) - \mb{P}(x \squig{n-\ell,m-k} \bar x)} \\
    & \leq \frac1n \mb{P}(x \squig{n,m} \bar x) \sum_k \sum_\ell P_{m,n,p}(k,\ell) \ell + \sum_k \sum_\ell P_{m,n,p}(k,\ell) \paren{\mb{P}(x \squig{n,m} \bar x) - \mb{P}(x \squig{n-\ell,m-k} \bar x)}. \addtocounter{equation}{1}\tag{\theequation}\label{eq:FKG-n-minus-ell}
\end{align*}

We already have the machinery built up to handle the first term, but the second term will be dominant in both regimes.  This lemma allows us to control the difference term.

\begin{lemma}\label{lem:inductive-step-for-x-to-x}
    We have
    \[ \mb{P}(x \squig{n+1,m} \bar x) - \mb{P}(x \squig{n,m} \bar x) = \begin{cases} O(1/(n|\lambda|^3)) & \lambda < 0, \\ O(1/n) & \lambda > 0, \end{cases} \]
    and
    \[ \mb{P}(x \squig{n,m+1} \bar x) - \mb{P}(x \squig{n,m} \bar x) = \begin{cases} O(1/(n|\lambda|^3)) & \lambda < 0, \\ O(1/n) & \lambda > 0. \end{cases} \]
\end{lemma}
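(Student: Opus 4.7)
The plan is to couple $F_{n,m,p}$ with $F_{n+1,m,p}$ so that the latter is obtained from the former by adjoining the variable $x_{n+1}$ together with the $4m$ potential bipartite clauses involving $x_{n+1}$ or $\bar{x}_{n+1}$, each present independently with probability $p$. Monotonicity of reachability yields $\{x \squig{n,m} \bar x\} \subseteq \{x \squig{n+1,m} \bar x\}$, so the difference equals $\mb{P}(x \squig{n+1,m} \bar x,\, x \notsquig{n,m} \bar x)$. On this event every directed witness path from $x$ to $\bar x$ in $F_{n+1,m,p}$ must traverse $x_{n+1}$ or $\bar{x}_{n+1}$; the negation-reverse symmetry $a \to b \iff \bar b \to \bar a$ pairs each path through $x_{n+1}$ bijectively with a dual path through $\bar{x}_{n+1}$, so it suffices to analyze the former.

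The dominant contribution comes from paths that visit $x_{n+1}$ exactly once and do not visit $\bar{x}_{n+1}$. Such a path requires exactly two new arcs: an entrance arc $v \to x_{n+1}$ with $v \in Y$ and $x \squig{n,m} v$, and an exit arc $x_{n+1} \to w$ with $w \in Y$ and $w \squig{n,m} \bar x$. These correspond respectively to the distinct clauses $(x_{n+1} \vee \bar v)$ and $(\bar{x}_{n+1} \vee w)$, and all clauses involving the new variable are independent of each other and of $F_{n,m,p}$. Writing $L_Y := L^+_{n,m,p}(x) \cap Y$ and noting that $|\{w \in Y : w \squig{n,m} \bar x\}| = |L_Y|$ by the negation-reverse symmetry on $\{x \notsquig{n,m} \bar x\}$, a conditional union bound over the two independent families of clauses yields
\[ \mb{P}\bigl(\text{entrance and exit arcs both exist} \,\big|\, F_{n,m,p}\bigr) \leq (p\,|L_Y|)^2. \]
Taking expectation gives the leading bound
\[ \mb{P}(x \squig{n+1,m} \bar x) - \mb{P}(x \squig{n,m} \bar x) \leq p^2 \, \mb{E}\bigl[|L^+_{n,m,p}(x)|^2 \, \mathbbm{1}_{x \notsquig{n,m} \bar x}\bigr] + \mathcal{R}, \]
where $\mathcal{R}$ collects contributions from paths that hit the new literals more than once in total.

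Applying \cref{lem:moments-of-Pnp} at $a = 2$, combined with $pn = \Theta(1)$ and $\eps^3 = \Theta(\lambda^3/n)$, yields $\mb{E}[|L^+|^2 \mathbbm{1}_{x \notsquig \bar x}] = O(n/|\lambda|^3)$, so the main term is $p^2 \cdot O(n/|\lambda|^3) = O(1/(n|\lambda|^3))$. This matches the claimed bound for $\lambda < 0$ exactly, and is dominated by $O(1/n)$ for $\lambda > 0$ since $|\lambda| \geq \lambda_0 \gg 1$. Controlling $\mathcal{R}$ is the main technical obstacle: each extra visit to a new literal forces two more new arcs in the path, so for every $k \geq 2$ the configurations using exactly $2k$ new arcs contribute at most $O(p^{2k})$ times the $2k$-th moment of $|L^+|$, which by \cref{lem:moments-of-Pnp} at $a = 2k$ is $O(n^{(4k-1)/3}/|\lambda|^{4k-1})$. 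This yields a per-$k$ contribution of $O(n^{-(2k+1)/3}/|\lambda|^{4k-1})$, which is $o(1/(n|\lambda|^3))$ for every $k \geq 2$, so the main term indeed dominates. The second inequality of the lemma, for $\mb{P}(x \squig{n,m+1} \bar x) - \mb{P}(x \squig{n,m} \bar x)$, follows by the completely symmetric argument adjoining $y_{m+1}$ to the $Y$-side with the roles of $X$ and $Y$ exchanged throughout.
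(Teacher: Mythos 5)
Your coupling, the reduction to $\mb{P}(x \squig{n+1,m} \bar x,\ x \notsquig{n,m} \bar x)$, and the bound $p^2\,\mb{E}\bigl[|L^+_{n,m,p}(x)\cap Y|^2\,\mathbbm{1}_{x \notsquig{n,m} \bar x}\bigr] = O(1/(n|\lambda|^3))$ for witness paths using exactly two new arcs all match the paper's first term. The gap is in your remainder $\mathcal{R}$. A simple witness path uses at most four new arcs (it visits each of $x_{n+1},\bar x_{n+1}$ at most once, and no clause joins them since both are $X$-literals), and in the four-arc configuration $x \squig{} v_1 \to x_{n+1} \to w_1 \squig{} v_2 \to \bar x_{n+1} \to w_2 \squig{} \bar x$ the middle literals $w_1,v_2$ do \emph{not} lie in $L^+_{n,m,p}(x)$: the out-set of $x$ is closed under old arcs, and an old-arc excursion from it into $\bar x_{n+1}$'s in-neighborhood would collapse to the two-arc case. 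So the probability of this configuration is not controlled by $p^{2k}$ times moments of $|L^+(x)|$; it is governed by the event that the fresh variable $x_{n+1}$ itself lies in the spine of the formula with $L^+(x)$ deleted, i.e.\ by $\mb{P}(x \squig{n+1-k,m-\ell} \bar x)$, and the correct bound for this case is $2p\ell\,\mb{P}(x \squig{n+1-k,m-\ell} \bar x)$. (Also, only $k=2$ can occur in your sum, not ``every $k\ge 2$''.)

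Quantitatively, your conclusion $\mathcal{R} = o(1/(n|\lambda|^3))$ fails in the supercritical regime: by \cref{prop:spine-expectation} the spine probability is only $O(n^{-1/3}\lambda)$ for $\lambda>0$, so the four-arc term contributes $p\,\mb{E}[\ell\,\mathbbm{1}]\cdot O(n^{-1/3}\lambda) = O(1/n)$, which is exactly why the lemma's bound degrades from $O(1/(n|\lambda|^3))$ to $O(1/n)$ when $\lambda>0$; your estimate would prove the strictly stronger bound $O(1/(n|\lambda|^3))$ for both signs, and the argument for it (bounding the middle segment by moments of $|L^+(x)|$) is invalid for the reason above. The missing ingredient is the paper's recursion: condition on $L^+_{n,m,p}(x) = Z$ with $|Z\cap X|=k$, $|Z\cap Y|=\ell$, split according to whether $Z \to x_{n+1}$ and/or $Z \to \bar x_{n+1}$, bound the conditional probability by $p^2\ell^2 + 2p\ell\,\mb{P}(x \squig{n+1-k,m-\ell} \bar x)$, and then apply \cref{lem:moments-of-Pnp} together with \cref{prop:spine-expectation} --- a result your sketch never invokes but cannot do without.
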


\begin{proof}
We will use a coupling of these events.  Given a sample on vertex set $X \cup Y$ of $2n + 2m$ vertices, we will add vertices $x_{n}$ and $\bar x_{n+1}$ and extend to a formula on the new vertex set.  Thus the events $x \squig{n,m} \bar x$ and $x \squig{n+1,m} \bar x$ have been coupled so that
\[ \mb{P}(x \squig{n+1,m} \bar x) - \mb{P}(x \squig{n,m} \bar x) = \mb{P}(x \squig{n+1,m} \bar x, \ x \notsquig{n,m} \bar x). \] 

Suppose $x \notsquig{n,m} \bar x$.  Let $Z = L_{n,m,p}^+(x)$, which must be strictly distinct.

\textbf{Case 1.} $Z \not\to x_{n+1}$ and $Z \not\to \bar x_{n+1}$.  Then $x \notsquig{n+1,m} \bar x$.

\textbf{Case 2.} $Z \to x_{n+1}$ and $Z \to \bar x_{n+1}$.  Then $x \squig{n+1,m} \bar x$.

\textbf{Case 3.} $Z \to x_{n+1}$ and $Z \not\to \bar x_{n+1}$.  In this case, we must have $x_{n+1} \squig{V \setminus Z} \bar x_{n+1}$.  Thus here we get (if $|Z \cap X| = k$ and $|Z \cap Y| = \ell$)
\[ \mb{P}(x \squig{n+1-k,m-\ell} \bar x). \]

\textbf{Case 4.} $Z \not\to x_{n}$ and $Z \to \bar x_{n}$.  By symmetry this is Case 3 again.

Thus, for $Z$ strictly distinct with $|Z \cap X| = k$ and $|Z \cap Y| = \ell$,
\begin{align*}
    \mb{P}(x \squig{n,m} \bar x \,|\, L_{n,m,p}^+(x) = Z) & = [1 - (1-p)^\ell]^2 + 2[1 - (1-p)^\ell](1-p)^\ell \mb{P}(x \squig{n+1-k,n-\ell} \bar x) \\
    & \leq p^2 \ell^2 + 2p\ell \, \mb{P}(x \squig{n+1-k,m-\ell} \bar x).
\end{align*}

We now get
\begin{align*}
    \mb{P}(x \squig{n+1,m} \bar x) - \mb{P}(x \squig{n,m} \bar x) & = \sum_{Z} \mb{P}(L_{n,m,p}^+(x) = Z) \mb{P}(x \squig{n+1,m} \bar x \,|\, L_{n,m,p}^+(x) = Z) \\
    & \leq \sum_k \sum_\ell P_{n,m,p}(k,\ell) \left[ p^2 \ell^2 + 2p\ell \, \mb{P}(x \squig{n+1-k,m-\ell} \bar x) \right] \\
    & \leq p^2 \sum_k \sum_\ell P_{n-1,m,p}(k,\ell) \ell^2 + 2p\, \mb{P}(x \squig{n,m} \bar x) \sum_k \sum_\ell P_{n,m,p}(k,\ell) \ell \\
    & \leq \frac{1+o_\eps(1)}{n^2} \frac{1}{|\eps|^3} + \frac2n \mb{P}(x \squig{n,m} \bar x) \frac{1+o_\eps(1)}{|\eps|} \\
    & = \frac{1+o_\eps(1)}{n |\lambda|^3} + \frac{2+o_{\eps,\lambda}(1)}{n^{2/3} |\lambda|}\mb{P}(x \squig{n,m} \bar x),
\end{align*}
where the fourth line uses \cref{lem:moments-of-Pnp}.

Now, \cref{prop:spine-expectation} gives us
\[ \mb{P}(x \squig{n+1,m} \bar x) - \mb{P}(x \squig{n,m} \bar x) = \begin{cases} O(1/(n|\lambda|^3)) & \lambda < 0, \\ O(1/n) & \lambda > 0. \end{cases} \]

    The other argument is almost identical.  For this case, we will be adding $y_{m+1}$ and $\bar y_{m+1}$ to $X \cup Y$, coupling the distributions, and examining $\mb{P}(x \squig{n,m+1} \bar x \setminus x \squig{n,m} \bar x)$.

    \textbf{Case 1.} $Z \not\to y_{m+1}$ and $Z \not\to \bar y_{m+1}$.  Then $x \notsquig{n,m+1} \bar x$.

    \textbf{Case 2.} $Z \to y_{m+1}$ and $Z \to \bar y_{m+1}$.  Then $x \squig{n,m+1} \bar x$.

    \textbf{Case 3.} $Z \to y_{m+1}$ and $Z \not\to \bar y_{m+1}$.  Then we get, for $|Z \cap X| = \ell$ and $|Z \cap Y| = k$,
    \[ \mb{P}(x \squig{n-\ell,m+1-k} \bar x). \]

    \textbf{Case 4.} $Z \not\to y_{m+1}$ and $Z \to \bar y_{m+1}$.  This is the same as Case 3.
    
    Thus for $|Z \cap X| = \ell$ and $|Z \cap Y| = k$,
    \begin{align*}
        \mb{P}(x \squig{n,m+1} \bar x \,|\, L_{n,m,p}^+(x) = Z) & = [1 - (1-p)^k]^2 + 2[1 - (1-p)^k](1-p)^k \mb{P}(x \squig{n-\ell,m+1-k} \bar x) \\
        & \leq p^2 k^2 + 2pk \, \mb{P}(x \squig{n-\ell,m+1-k} \bar x) \\
        \mb{P}(x \squig{n,m+1} \bar x) - \mb{P}(x \squig{n,m} \bar x) & \leq p^2 \sum_k \sum_\ell P_{m,n,p}(k,\ell) k^2 + 2p\, \mb{P}(x \squig{n,m} \bar x) \sum_k \sum_\ell P_{m,n,p}(k,\ell) k \\
        & \leq \begin{cases} O(1/(n|\lambda|^3)) & \lambda < 0, \\ O(1/n) & \lambda > 0, \end{cases}
    \end{align*}
    using the same arguments as for the previous.
\end{proof}

\begin{corollary}
    If $k,\ell \leq n^{2/3}/|\lambda|$, then
    \[ \mb{P}(x \squig{n,m} \bar x) - \mb{P}(x \squig{n-\ell,m-k} \bar x) = \begin{cases} O((k+\ell)/(n|\lambda|^3)) & \lambda < 0, \\ O((k+\ell)/n) & \lambda > 0. \end{cases} \]
\end{corollary}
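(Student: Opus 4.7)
The plan is a direct telescoping argument using \cref{lem:inductive-step-for-x-to-x}. Decompose the difference by first growing the first side from $n-\ell$ to $n$ (keeping the second side at $m-k$) and then growing the second side from $m-k$ to $m$:
\begin{align*}
\mb{P}(x \squig{n,m} \bar x) - \mb{P}(x \squig{n-\ell,m-k} \bar x) &= \sum_{i=0}^{\ell-1}\bigl[\mb{P}(x \squig{n-i,m-k} \bar x) - \mb{P}(x \squig{n-i-1,m-k} \bar x)\bigr] \\
&\quad + \sum_{j=0}^{k-1}\bigl[\mb{P}(x \squig{n,m-j} \bar x) - \mb{P}(x \squig{n,m-j-1} \bar x)\bigr].
\end{align*}
Each of the $\ell$ summands in the first sum is a single-variable expansion on the $n$-side, and each of the $k$ summands in the second sum is a single-variable expansion on the $m$-side; both cases are handled directly by \cref{lem:inductive-step-for-x-to-x}. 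Applying that lemma termwise gives a per-step bound of $O(1/(n|\lambda|^3))$ when $\lambda<0$ and $O(1/n)$ when $\lambda>0$, so summing $k+\ell$ such terms yields exactly the claimed bound.

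The only point requiring a moment's attention is verifying that the hypotheses of the per-step lemma remain in force at each intermediate pair $(n',m')$ along the telescope. Since $k,\ell \leq n^{2/3}/|\lambda|$, throughout the range we have $|n-n'|, |m-m'| \leq n^{2/3}/|\lambda|$, so $|n'-m'| \leq |n-m| + O(n^{2/3}/|\lambda|) = O(n^{2/3})$, and the induced critical parameter $\lambda' := (2(n'm')^{1/2}p - 1)(n'm')^{1/6}$ satisfies $\lambda' = (1+o(1))\lambda$. Possibly after enlarging $\lambda_0$ and shrinking $\eps_0$ by absolute constants (as noted in the remark preceding \cref{lem:inductive-step-for-x-to-x}), the per-step bound therefore applies uniformly, and the $(1+o(1))$ slip in $n'$ and $\lambda'$ is absorbed into the $O(\cdot)$. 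There is no substantive obstacle here; all of the real work was done in the previous lemma.
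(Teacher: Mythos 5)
Your proof is correct and follows essentially the same route as the paper: the same telescoping decomposition (first along the $n$-side at fixed $m-k$, then along the $m$-side at fixed $n$), with each increment bounded by \cref{lem:inductive-step-for-x-to-x}, together with the same check that every intermediate pair $(n',m')$ still satisfies $|n'-m'|=O(n^{2/3})$ and has effective parameter $\lambda'=(1+o(1))\lambda$ so the per-step bound applies uniformly.
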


\begin{proof}
    We write the telescoping sum
    \begin{multline*} \mb{P}(x \squig{n,m} \bar x) - \mb{P}(x \squig{n-\ell,m-k} \bar x) = \sum_{i=n-\ell}^{n-1} \paren{\mb{P}(x \squig{i+1,m-k} \bar x) - \mb{P}(x \squig{i,n-k} \bar x)} \\ + \sum_{j=m-k}^{m-1} \paren{\mb{P}(x \squig{n,j+1} \bar x) - \mb{P}(x \squig{n,j} \bar x) }. \end{multline*}

    Fix some pair $n',m'$.  Define
    \[ \lambda' = \frac{(m'n')^{1/2} - (mn)^{1/2}}{(mn)^{1/2}} (m'n')^{1/6} + \lambda \paren{\frac{m'n'}{mn}}^{2/3} \]
    so that
    \[ p = \frac{1+\lambda (mn)^{-1/6}}{2(mn)^{1/2}} = \frac{1 + \lambda' (m'n')^{1/6}}{2(m'n')^{1/2}}. \]

    Observe that, for pairs $n',m'$ that will appear in the telescoping sum, we have $\lambda' = (1+o_{\lambda,\eps}(1)) \lambda$.  Thus, using our remark that our earlier work is sufficiently strong to handle $P_{n',m',p}(k,\ell)$, we bound each term in the summation by \cref{lem:inductive-step-for-x-to-x} to give the result.
\end{proof}

Returning to \cref{eq:FKG-n-minus-ell}, for $\lambda < 0$,
\begin{align*}
    & \qquad \mb{P}(x \squig{n,m} \bar x, y \squig{n,m} \bar y) - \mb{P}(x \squig{n,m} \bar x) \mb{P}(y \squig{n,m} \bar y) \\
    & \leq \frac1n \mb{P}(x \squig{n,m} \bar x) \sum_k \sum_\ell P_{m,n,p}(k,\ell) \ell + \sum_k \sum_\ell P_{m,n,p}(k,\ell) \paren{\mb{P}(x \squig{n,m} \bar x) - \mb{P}(x \squig{n-\ell,m-k} \bar x)} \\
    & \leq \frac1n O(n^{-1/3} |\lambda|^{-2}) \cdot O(|\eps|^{-1}) + \sum_{k + \ell \leq n^{2/3}/|\lambda|} P_{m,n,p}(k,\ell) \paren{\mb{P}(x \squig{n,m} \bar x) - \mb{P}(x \squig{n-\ell,m-k} \bar x)} \\ & \qquad \qquad + \sum_{k + \ell \geq n^{2/3}/|\lambda|} P_{m,n,p}(k,\ell) \mb{P}(x \squig{n,m} \bar x) \\
    & \leq O(n^{-1} |\lambda|^{-3}) + \frac{O(1)}{n|\lambda|^3} \sum_{k,\ell} P_{m,n,p}(k,\ell) (k+\ell) + |\eps|^{-1} \exp[-\Omega(|\lambda|^{3/5})] \cdot O(n^{-1/3} |\lambda|^{-2}) \\
    & \leq O(n^{-1} |\lambda|^{-3}) + \frac{O(1)}{n|\lambda|^3} O(|\eps|^{-1}) + n^{-1/3} |\lambda|^{-2} |\eps|^{-1} \exp[-\Omega(|\lambda|^{3/5})] \\
    & \leq O(n^{-1} |\lambda|^{-3}) + O(n^{-2/3} |\lambda|^{-4}) + n^{-2/3} \exp[-\Omega(|\lambda|^{3/5})] \\
    & = O(n^{-2/3} |\lambda|^{-4}).
\end{align*}

For $\lambda > 0$,
\begin{align*}
    & \qquad \mb{P}(x \squig{n,m} \bar x, y \squig{m,n} \bar y) - \mb{P}(x \squig{n,m} \bar x) \mb{P}(y \squig{m,n} \bar y) \\
    & \leq \frac1n \mb{P}(x \squig{n,m} \bar x) \sum_k \sum_\ell P_{m,n,p}(k,\ell) \ell + \sum_k \sum_\ell P_{m,n,p}(k,\ell) \paren{\mb{P}(x \squig{n,m} \bar x) - \mb{P}(x \squig{n-\ell,m-k} \bar x)} \\
    & \leq \frac1n O(n^{-1/3} |\lambda|) \cdot O(|\eps|^{-1}) + \sum_{k + \ell \geq n^{2/3}/|\lambda|} P_{m,n,p}(k,\ell) \paren{\mb{P}(x \squig{n,m} \bar x) - \mb{P}(x \squig{n-\ell,m-k} \bar x)} \\ & \qquad \qquad + \sum_{k + \ell \le n^{2/3}/|\lambda|} P_{m,n,p}(k,\ell) \mb{P}(x \squig{n,m} \bar x) \\
    & \leq O(n^{-1}) + O(1/n) \sum_{k + \ell \geq n^{2/3}/|\lambda|} P_{m,n,p}(k,\ell) (k+\ell) + |\eps|^{-1} \exp[-\Omega(|\lambda|^{3/5})] \cdot O(n^{-1/3} |\lambda|) \\
    & = O(n^{-1}) + O(n^{-2/3} \lambda^{-1}) + n^{-2/3} \exp[-\Omega(\lambda^{3/5})] = O(n^{-2/3} \lambda^{-1}).
\end{align*}

Thus
\[ \mb{P}(x \squig{n,m} \bar x, y \squig{m,n} \bar y) - \mb{P}(x \squig{n,m} \bar x) \mb{P}(y \squig{m,n} \bar y) = \begin{cases} O(n^{-2/3} |\lambda|^{-4}) & \lambda < 0, \\ O(n^{-2/3} |\lambda|^{-1}) & \lambda > 0, \end{cases} \]
completing the proof of \cref{lem:FKG-for-Pxx}.

\begin{proof}[Proof of \cref{lemma:second-moment-bound}]
    Using \cref{prop:spine-expectation} and \cref{lem:FKG-for-Pxx},
    \begin{align*}
        \mb{P}\paren{x \squig{n,m} \bar x, y \squig{m,n} \bar y} & = \left[ \mb{P}\paren{x \squig{n,m} \bar x, y \squig{m,n} \bar y} - \mb{P}\paren{x \squig{n,m} \bar x} \mb{P}\paren{y \squig{m,n} \bar y} \right] + \mb{P}\paren{x \squig{n,m} \bar x} \mb{P}\paren{y \squig{n,m} \bar y} \\
        & = \begin{cases} O(n^{-2/3} |\lambda|^{-4}) + O(n^{-2/3} |\lambda|^{-4}) & \lambda < 0, \\ O(n^{-2/3} |\lambda|^{-1}) + O(n^{-2/3} |\lambda|^2) & \lambda > 0. \end{cases} \qedhere
    \end{align*}
\end{proof}

\section{Upper bounds}\label{sec:SAT-upper-bounds}

In this section, we will bound the probability of satisfiability from above.

\begin{theorem}\label{thm:upper-bound}
    For all $C > 0$, there exist constants $\lambda_0$ sufficiently large and $\eps_0$ sufficiently small such that for two integers $n,m$ with $n \to \infty$, $|n-m| < Cn^{2/3}$, and $p = \frac{1}{2\sqrt{mn}} (1+\lambda(mn)^{-1/6})$ with $\lambda_0 < |\lambda| < \eps_0 (mn)^{1/6}$, we get the following upper bounds on bipartite random 2-SAT.
    \[ \mb{P}[\SAT(F_{n,m,p})] = \begin{cases} \exp(-\Omega(\lambda^3)) & \lambda > 0, \\ \exp(-\Omega(|\lambda|^{-3})) & \lambda < 0. \end{cases} \]
\end{theorem}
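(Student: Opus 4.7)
The plan is to apply the \emph{hourglass} framework of Bollob\'as, Borgs, Chayes, Kim, and Wilson~\cite{bollobas2001scaling} in the bipartite setting. Recall the standard 2-SAT criterion: $F$ is UNSAT iff the implication digraph $G_F$ contains a literal $v$ with both $v \squig{F} \bar v$ and $\bar v \squig{F} v$.  Call such a configuration — i.e.~a pair of directed paths, one from $v$ to $\bar v$ and one from $\bar v$ back to $v$ — an \emph{hourglass} at $v$.  Thus it suffices to lower bound the probability that $G_{F_{n,m,p}}$ contains an hourglass.

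In the subcritical regime ($\lambda < 0$) the target bound is equivalent, via $1-t \leq e^{-t}$, to $\mb{P}(\text{UNSAT}) = \Omega(|\lambda|^{-3})$.  The plan is the second moment method applied to $H$, the count of hourglasses whose two arms each have length in $[n^{2/3}/(2|\lambda|),\, 2n^{2/3}/|\lambda|]$ — the branching scale already identified in \cref{sec:SAT-lower-bounds}.  A first moment computation, carried out by exploring $L^+(v)$ and $L^+(\bar v)$ as two branching processes and then requiring a mutual collision, should give $\mb{E}H = \Theta(|\lambda|^{-3})$ by direct extension of the estimates in \cref{prop:spine-expectation} and \cref{lem:FKG-for-Pxx}.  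A second moment calculation, splitting hourglass pairs into disjoint and overlapping cases and bounding the overlap contribution via \cref{lem:moments-of-Pnp}, should then yield $\mb{E}H^2 = O((\mb{E}H)^2)$, after which the Paley--Zygmund inequality closes the case.

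In the supercritical regime ($\lambda > 0$) we need the much stronger bound $\mb{P}(\text{UNSAT}) \geq 1 - \exp(-\Omega(\lambda^3))$; the exponent $\lambda^3$ matches the expected excess of the giant strongly connected component of $G_F$.  Following the BBCKW blueprint, the plan is to locate $\Theta(\lambda^3)$ approximately independent witness substructures — bipartite analogues of ``bicycles'' — each of which, by the self-duality of the implication graph under $v \mapsto \bar v$, hosts an hourglass with probability bounded away from $0$.  A sprinkling argument that reveals these witnesses across separately sampled sub-formulas should then supply the necessary independence, giving $\mb{P}(\SAT) \leq (1-c)^{\Theta(\lambda^3)} = \exp(-\Omega(\lambda^3))$ after a product bound over witnesses.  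The moment estimates \cref{lem:Qnp-bounds} and \cref{lem:moments-of-Pnp} will feed into the witness count at the correct scale.

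The main obstacle in both regimes is the bipartite enumeration of $C(k,\ell,s)$, where the upper and lower bounds \cref{prop:upper-bound-on-bipartite-connected-graphs} and \cref{prop:lower-bound-on-bipartite-connected-graphs} differ by $\exp(\Theta(s))$ factors — unlike the non-bipartite case of Wright and of Bender--Canfield--McKay, which is sharp.  As already seen in \cref{lem:Pnp-m0} and \cref{lem:moments-of-Pnp}, the remedy is to isolate the dominant contribution from tree-like configurations (excess $s = O(1)$), where the asymptotics of~\cite{clancy2024asymptotics} give $(1+o(1))$ precision.  This is straightforward for the subcritical moment computation, and it is the technical crux of the proof in the supercritical case: the $\lambda^3$ scaling itself comes from non-tree-like excess, so the sprinkling step will have to be arranged to only ever probe tree-like hourglass cores while still recovering the correct exponent from the count of witnesses.
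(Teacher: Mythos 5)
Your proposal diverges from the paper's argument in a way that breaks both regimes. First, your ``hourglass'' is an actual contradiction (paths $v \rightsquigarrow \bar v$ and $\bar v \rightsquigarrow v$), whereas the paper's \cref{def:hourglass} is a \emph{strictly distinct} (hence non-contradictory) structure consisting of a large in-portion and out-portion; the whole point of that definition is a two-round exposure: find many disjoint benign hourglasses at a slightly smaller density (\cref{lem:hourglass-subcritical}), then sprinkle the remaining edges, so that each hourglass independently becomes contradictory with probability $\Theta(|\lambda|^{-6})$ and a union over $\Theta(|\lambda|^3)$ of them gives $\mb{P}[\mathrm{UNSAT}] = \Omega(|\lambda|^{-3})$. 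Your direct second-moment attack on the count $H$ of contradictory configurations cannot work as stated: if $\mb{E}H = \Theta(|\lambda|^{-3}) = o(1)$ and $H$ is integer-valued, then $\mb{E}[H^2] \geq \mb{E}H \gg (\mb{E}H)^2$, so the claimed bound $\mb{E}[H^2] = O((\mb{E}H)^2)$ is impossible; moreover it would give $\mb{P}[H>0] = \Omega(1)$ via Paley--Zygmund, contradicting \cref{thm:SAT-lower-bound}. The salvageable target $\mb{E}[H^2] = O(\mb{E}H)$ also fails, because contradictory literals cluster: every variable on a contradictory cycle is itself contradictory, so conditioned on $H>0$ the count is polynomially large in $n$ and the overlapping-pairs term dominates. (Your first-moment claim is also unsubstantiated: in the subcritical regime, forcing both arms to have length $\asymp n^{2/3}/|\lambda|$ costs $e^{-\Omega(|\lambda|)}$ per literal, so the restricted count is not obviously of order $|\lambda|^{-3}$.) The sprinkling construction is precisely what lets the paper sidestep these correlation issues, and your plan omits it in the subcritical case.

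In the supercritical regime the missing idea is the merge into a single \emph{giant} hourglass (\cref{lem:hourglass-supercritical}): the paper first finds $\Theta(\lambda^3)$ disjoint hourglasses at a subcritical density, shows they percolate into one hourglass with in- and out-portions of size $\Theta(\lambda (mn)^{1/3})$, and only then sprinkles, so that the failure probability of the single sprinkle is $\exp(-\Omega(\lambda^3))$. Your alternative of $\Theta(\lambda^3)$ witnesses each hosting a contradiction ``with probability bounded away from $0$'' does not come for free: witnesses of the size the subcritical search produces, $(mn)^{1/3}\lambda^{-2}$ per portion, succeed under sprinkling only with probability $\Theta(\lambda^{-6})$, and inflating them to the size needed for constant success probability makes $\Theta(\lambda^3)$ disjoint witnesses impossible to fit once $\lambda$ approaches $\eps_0 (mn)^{1/6}$; you also give no mechanism for finding the witnesses with failure probability $\exp(-\Omega(\lambda^3))$, which is exactly the content of the paper's hourglass lemmas. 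Finally, the enumeration of $C(k,\ell,s)$ is not the crux here: the upper-bound (on $\SAT$) argument only consumes the moment estimates already established in the earlier sections, and the $\exp(\Theta(s))$ gap between \cref{prop:upper-bound-on-bipartite-connected-graphs} and \cref{prop:lower-bound-on-bipartite-connected-graphs} was an obstacle dealt with in \cref{lem:Pnp-m0} for the lower bound, so no new ``tree-like core'' device is needed or used in this proof.
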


Our main tool is a bipartite version of the so-called ``hourglass theorem.''

\begin{definition}[{\cite[Definition 4.2]{bollobas2001scaling}}]\label{def:hourglass}
    An \emph{hourglass} is a triple $ (v, I_v, O_v) $ where $ v $ is a literal, and $ I_v $ and $ O_v $ are two disjoint sets of literals not containing $ v $, such that for each $ x \in I_v $, there exists a path $ x \squig{} v $ in $ I_v \cup \{v\} $, and for each $ y \in O_v $, there exists a path $ v \squig{} y $ in $ O_v \cup \{v\} $. Furthermore, we require that $ \{v\} \cup I_v \cup O_v $ is strictly distinct. We refer to $ v $ as the \emph{central literal}, $ I_v $ as the \emph{in-portion}, and $ O_v $ as the \emph{out-portion} of the hourglass.
\end{definition}

In the bipartite setting, we say that an hourglass $ (v, I_v, O_v) $ is \emph{balanced} if \[ \frac{|I_v \cap X|}{|I_v \cap Y|} \in \left(\frac{1}{5},5\right) \qquad \text{and} \qquad \frac{|O_v \cap X|}{|O_v \cap Y|} \in \left(\frac{1}{5},5\right). \]

We begin with the subcritical case, where we will have many hourglasses.

\begin{lemma}\label{lem:hourglass-subcritical}
    Let $m,n,p,\lambda$ be as in \cref{thm:upper-bound} with $\lambda < 0$.  Then with probability $1 - \exp(-\Omega(|\lambda|^3))$, there are at least $\Theta(|\lambda|^3)$ disjoint, mutually strictly distinct, balanced hourglasses with in-portion and out-portion each of size at least $(mn)^{1/3}/\lambda^2$.
\end{lemma}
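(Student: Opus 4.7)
The plan is to mirror the sequential-exposure hourglass construction of Bollobás, Borgs, Chayes, Kim, and Wilson~\cite{bollobas2001scaling}, using the moment estimates for $P_{n,m,p}(k,\ell)$ developed in \cref{sec:SAT-lower-bounds}. Fix the scale $K := (mn)^{1/3}/\lambda^2$, which sits at the edge of the scaling window because $K\eps^2 = \Theta(1)$. For a literal $v$ write $L^+(v) = L^+_{n,m,p}(v)$ for the trimmed outgraph, and let $L^-(v)$ be the corresponding trimmed in-graph (all literals $u$ with $u \squig{F} v$, obtained by the analogous BFS against the arrows). The formula symmetry $z \to z' \iff \bar z' \to \bar z$ ensures that $L^-(v)$ is distributed as $L^+(\bar v)$. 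Call a literal $v$ \emph{good} if $|L^+(v)|,|L^-(v)| \in [K,2K]$ are each balanced, $L^+(v) \cap L^-(v) = \{v\}$, and $\{v\} \cup L^+(v) \cup L^-(v)$ is strictly distinct; any good $v$ produces a balanced hourglass $(v, L^-(v), L^+(v))$ meeting the required size bound.

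Fix an arbitrary ordering of the $2(n+m)$ literals and process them in turn, maintaining a set of used literals (initially empty). When $v$'s turn arrives and it is still unused, expose $L^+(v)$ by BFS in the residual (unused) formula and mark the exposed vertices used; if $|L^+(v)| \in [K,2K]$ and balanced, continue by exposing $L^-(v)$, and if the compatibility conditions defining ``good'' hold, record the hourglass. By construction recorded hourglasses are disjoint and mutually strictly distinct. Two quantitative inputs drive the count. First, by the asymptotic in \cref{lem:Qnmp-expanded-formula} together with the equivalence $P_{n,m,p}(k,\ell) \sim Q_{n,m,p}(k,\ell)$ in this window (from \cref{eq:Qnp-minus-Pnp}), the probability that a fresh literal has $|L^+(v)| \in [K,2K]$ balanced is $\Theta(K^{-1/2}/(pn)) = \Theta(|\lambda|/(mn)^{1/6})$; the reverse exploration for $L^-(v)$ touches nearly-disjoint edges and succeeds independently with the same probability, so each round is successful with probability $\Theta(K^{-1}) = \Theta(\lambda^2/(mn)^{1/3})$. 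Second, by \cref{lem:moments-of-Pnp} with $a=1$, the expected number of literals consumed per round is $\Theta(K^{1/2}) = \Theta((mn)^{1/6}/|\lambda|)$. Hence the procedure runs for $T = \Theta(|\lambda|(mn)^{1/3})$ rounds before exhausting the $\Theta(\sqrt{mn})$ literals, and the expected number of recorded hourglasses is $T/K = \Theta(|\lambda|^3)$.

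The main obstacle is concentration: one must show both that the procedure does not abort early due to a BFS ``overshoot'' producing a cluster much larger than $K$, and that the number of successful rounds concentrates around its mean. For the first, the tail estimate \cref{eq:moment-large-k} yields $\mb{P}(|L^+(v)| \geq n^{2/3}/|\lambda|) = O(K^{-1/2} e^{-c|\lambda|})$, so truncating each BFS at size $n^{2/3}/|\lambda|$ kills only an $e^{-c|\lambda|}$ fraction of rounds, and a Bernstein-type bound on the sum of per-round consumptions keeps the total consumption at $(1+o(1))$ times its mean with probability $1 - \exp(-\Omega(|\lambda|^3))$. For the second, conditioned on the past each round still succeeds with probability $\Theta(K^{-1})$, since the residual formula on the unused literals is a bipartite random 2-SAT on parameters $(n-O(TK^{1/2}), m-O(TK^{1/2}), p)$, whose asymptotics differ from those of \cref{sec:SAT-lower-bounds} only at lower order. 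A Chernoff bound on $T$ such Bernoulli trials gives success count $\geq \Omega(|\lambda|^3)$ with probability $1 - \exp(-\Omega(T/K)) = 1 - \exp(-\Omega(|\lambda|^3))$. The weak cross-round correlations forced by the symmetry $z \to z' \iff \bar z' \to \bar z$ are handled exactly as in \cref{lem:FKG-for-Pxx} and contribute only lower-order error to both estimates.
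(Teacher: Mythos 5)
There is a genuine gap, and it is the central accounting of the procedure, not a technicality. Your per-round success probability $\Theta(K^{-1})$ and per-round consumption $\Theta(K^{1/2})$ (with $K=(mn)^{1/3}/\lambda^2$) are only valid while the residual formula stays inside the scaling window, i.e.\ while the number of consumed literals is $O(|\lambda|(mn)^{1/3})$: after consuming $D$ variables the effective density parameter shifts from $\eps$ to roughly $\eps-D/n$, and your plan runs until $D=\Theta(n)$, which is a \emph{leading-order} change, not ``lower order.'' Once $D\gg|\lambda|n^{2/3}$ the residual formula is deeply subcritical and the probability that a fresh literal has an out-portion of size $\geq K$ is $\exp\{-\Omega(K(D/n)^2)\}$, essentially zero. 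So only the first $O(\lambda^2 n^{1/3})$ rounds behave as you claim, and they yield in expectation only $O(\lambda^2 n^{1/3}\cdot K^{-1})=O(\lambda^4 n^{-1/3})=o(|\lambda|^3)$ hourglasses for $|\lambda|=o\paren{(mn)^{1/6}}$; the later rounds contribute a negligible additional amount. Equivalently, testing one candidate center per round costs $\Theta(K^{3/2})$ variables per hourglass ($K^{1/2}$ tries at cost $K^{1/2}$ to find a big out-portion, and only a $K^{-1/2}$ chance that the \emph{same} root also has a big in-portion), and $|\lambda|^3$ hourglasses at that price would require $\Theta(n)$ near-critical variables, which do not exist.

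This is exactly the obstacle the paper's proof (following Bollob\'as--Borgs--Chayes--Kim--Wilson) is designed to avoid: after finding one large out-tree $T$, it does not test only the root. It picks a ``promising'' vertex $v$ (the midpoint of the path to a uniformly random vertex of $T$, using that $T$ is a uniform spanning tree), so that $\Theta(|\lambda|^{-1}n^{1/3})$ vertices on the path $z\rightsquigarrow v$ all share the large subtree below $v$ as an out-portion, and then tests each of these path vertices for a large in-graph. This amortization drops the cost to $\Theta(K)$ variables per hourglass, so all $\Theta(|\lambda|^3)$ hourglasses are found using only $O(|\lambda| n^{2/3})=o(n)$ variables; a reservoir plus a hard per-search budget of $O(K)$ variables then keeps the exploration distribution fixed and makes the searches independent Bernoulli trials, giving the $1-\exp(-\Omega(|\lambda|^3))$ guarantee by a plain Chernoff bound. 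Two secondary points in your write-up would also need repair even granting the counting: truncating the BFS at $n^{2/3}/|\lambda|$ gives a Bernstein exponent of order $|\lambda| n^{1/3}$, which is smaller than $|\lambda|^3$ for $|\lambda|\gg (mn)^{1/12}$ (truncating at $O(K)$, which is harmless since you only want portions of size $O(K)$, fixes this); and the asserted near-independence of the in- and out-explorations of the same literal needs the reservoir/depletion bookkeeping rather than an appeal to \cref{lem:FKG-for-Pxx}, which concerns a different pair of events.
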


\begin{proof}
    Recall $\lambda_0 < |\lambda| < \eps_0 (mn)^{1/6}$ for $\lambda_0$ an arbitrarily large constant and $\eps_0$ an arbitrarily small constant to be chosen later.  We will explore the in- and out-graphs to grow hourglasses one by one.

    To avoid changing of the distribution of hourglasses during the process of depleting variables, we keep a reservoir of variables and only check the others so that we may replace them as we go.  Let $N = \min\{n,m\} - |\lambda| n^{2/3}$ and define $\lambda'$ by
    \[
       \frac{1}{2\sqrt{mn}}(1+\lambda(mn)^{-1/6}) = p = \frac{1}{2N}(1+\lambda'N^{-1/3}).
    \]
    Clearly $\lambda' = (1+o(1))\lambda$.  Place arbitrary total orders on the variables $\{x_1,\ldots,x_n\}$ and $\{y_1,\ldots,y_m\}$.  We will now search for an hourglass in only the first $N$ variables from each side.

    Let $z$ be the first literal in the larger part of $X$ and $Y$, breaking ties arbitrarily.  Explore the graph from $z$ by exposing only its neighbors whose literal uses one of the first $N$ variables from its component.  Iterate this on the neighbors to find only those vertices $u$ such that there is a path $z \squig{} u$ that does not use any $x_i$ or $y_i$ for $i > N$.  Let $T$ be the induced graph on these vertices.  Recall $R_{{n},{m},p}(k,\ell)$ denotes the probability that for an arbitrary $x \in X$, $|L^+_{n,m,p}(x) \cap X| = k$, $|L^+_{n,m,p}(x) \cap Y| = \ell$, and the induced subgraph on $L^+_{n,m,p}(x)$ is a (directed) tree.  By \cref{lem:Qnmp-expanded-formula,lem:k/ell-bounded}, with probability $(c+o(1))|\lambda| n^{-1/3}$ for $c>0$ an absolute constant, $T$ is a tree with $|V(T)| \in [2|\lambda|^{-2}N^{2/3}, 4|\lambda|^{-2}n^{2/3}]$ and $T$ is balanced, i.e.~${|T \cap X|}/{|T \cap Y|} \in (1/5,5)$.  For $v \in V(T)$, we say that $v$ is ``promising'' if the path from $z$ to $v$ has length at least $\sqrt{|T|/2} \geq |\lambda|^{-1}n^{\frac{1}{3}}$ and $v$ has at least $|T|/2$ descendants in $T$.

    We will find $v$ by choosing $w \ne z$ uniformly at random and letting $v$ be the midpoint of the unique path $w \squig{} z$.  Note that conditioned on being a tree, $T$ is uniformly distributed among spanning trees on $V(T)$ respecting the bipartition, so with probability at least $(1-o_{|T|}(1))e^{-1}$, the path $z \squig{} w$ has length at least $\sqrt{2|T|}$ (see \cite[Section 6]{aldous1990random}).  By symmetry and uniform randomness, with probability at least $1/2$, at least half of the remaining vertices of the tree will be connected to $v$ via the path from $v$ to $w$ and so be in the out-graph of $v$.  Thus
   \[
    \mb{P}[v\text{ is promising}] = (1+o(1))\frac{c|\lambda|}{2en^{1/3}}.
   \]

    Given that $v$ is promising, we check the first $|\lambda|^{-1}n^{1/3}$ vertices on the path $z \squig{} v$ to find one with a large in-graph.  Again by \cref{lem:Qnmp-expanded-formula,lem:k/ell-bounded}, each individual in-graph will independently be balanced with size at least $|\lambda|^{-2}n^{2/3}$ with probability at least $(c+o(1))(|\lambda| n^{-1/3})$.  Thus the probability all of these vertices fail is at most $(1+o(1))e^{-c}$.  
   If any one of them does not fail, then we have found an hourglass centered at $v$ with out-portion and in-portion both of size at least $\lambda^{-2}n^{2/3}$.

   Therefore whenever we select a literal $z$ and search for a large, balanced hourglass as outlined above, we find one with probability at least
   \[
        (1+o(1))\frac{c(1-e^c)|\lambda|}{2en^{1/3}}.
   \]

   Once this process is complete, we will discard both literals from all variables that we have explored, replace them from the reservoir and repeat.

    We now compute the number of variables expended in one search.  For any in- or out-graph explored in this process, by \cref{lem:Qnmp-sum-k/ell-bounded}, the expected number of variables searched is $(1+o(1)) n^{1/3}/|\lambda|$.  We always explore only one out-graph and with probability $(1+o(1))[c/(2e)]|\lambda| n^{-1/3}$ (if $v$ is promising) we explore at most $\left\lceil |\lambda| n^{1/3}\right\rceil$ in-graphs. Hence, the total expected number of variables explored is
   \[
        (1+o(1)) \frac{2e+c}{4e} \cdot |\lambda|^{-1}n^{1/3}.
   \]
   
   If we search for an hourglass $\frac{4e}{c(1-e^c)} |\lambda|^{-1}n^{1/3}$ times, the probability of failing to find one is $(1+o(1))e^{-2}$, and the expected number of variables that have been used up is
   \[
        (1+o(1)) \frac{2e+c}{c(1-e^c)}|\lambda|^{-2}n^{2/3}. 
   \]
   
   The probability that we use up more than $3(1 + o(1))$ times the expected number of variables is at most $\frac{1}{3} +o(1)$. Therefore, with probability at least $1-e^{-2}-\frac{1}{3}-o(1) \geq \frac{1}{2}$ we find an hourglass, and do not use up more than $b|\lambda|^{-2}n^{2/3}$ variables, where $K = \frac{3(2e+c)}{c(1-e^c)}$.

   Now consider the following modification to the procedure described above: as before, we employ the local search method to explore the trimmed out-graphs and in-graphs in an attempt to locate an hourglass. However, we terminate each search as soon as we exhaust $b|\lambda|^{-2}n^{2/3}$ variables. This process can be repeated $|\lambda|^3/b$ times while ensuring that no variables beyond the first $n$ are used.  Furthermore, the number of disjoint hourglasses found follows a distribution that stochastically dominates the binomial distribution $\on{Bin}(|\lambda|^3/b, 1/2)$. By the Chernoff bound, the probability of finding fewer than half the expected number of hourglasses is at most $\exp[-|\lambda|^3/(8b)]$.
\end{proof}

In the supercritical regime, we instead have a giant hourglass.

\begin{lemma}\label{lem:hourglass-supercritical}
    Let $m,n,p,\lambda$ be as in \cref{thm:upper-bound} with $\lambda > 0$.  Then with probability $1 - \exp(-\Omega(|\lambda|^3))$, there is an hourglass with in-portion and out-portion each of size $\Theta(\lambda(mn)^{1/3})$.
\end{lemma}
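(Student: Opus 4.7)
The plan is a sprinkling argument. I would decompose $F_{n,m,p} = F_{n,m,p_1} \cup F_{n,m,p_2}$ into two independent copies with $p_1 = p_2 = 1 - \sqrt{1-p}$, so that $p_1 + p_2 - p_1 p_2 = p$ and both remain in the supercritical scaling window with parameter of order $\lambda$. The hourglass is to be built around a central literal $v$ whose trimmed out-graph in $F_{p_1}$ and trimmed in-graph in $F_{p_2}$ are both of the desired size: because these two structures are determined by independent edge randomness, they can be disentangled into disjoint sets after a small amount of pruning.

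To produce such a center, I would first strengthen \cref{lem:Qnp-bounds} with a matching lower bound, obtainable via the same branching-process survival argument underlying the proof given in \cref{app:branching-process}: in $F_{p_1}$, a uniformly random literal $v$ satisfies $|L^+_{p_1}(v)| \in [c\lambda(mn)^{1/3}, C\lambda(mn)^{1/3}]$ with probability $\Theta(\lambda(mn)^{-1/6}) = \Theta(\eps)$. The analogous statement for the trimmed in-graph in $F_{p_2}$ follows from the symmetry $z \to z' \iff \bar z' \to \bar z$ in the implication digraph. By independence of $F_{p_1}$ and $F_{p_2}$, a fixed $v$ is \emph{doubly good}---large trimmed out-graph in $F_{p_1}$ and large trimmed in-graph in $F_{p_2}$---with probability $\Theta(\lambda^2 (mn)^{-1/3})$. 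Summing over the $\Theta((mn)^{1/2})$ literals gives expected number of doubly good centers of order $\Theta(\lambda^2 (mn)^{1/6})$. A Chernoff-type concentration argument---using that doubly-goodness of $v$ depends essentially only on sprinkled edges in an $O(\lambda(mn)^{1/3})$-neighborhood of $v$, so well-separated literals have approximately independent goodness events---then yields at least one doubly good center with probability $1 - \exp[-\Omega(\lambda^2 (mn)^{1/6})] \geq 1 - \exp[-\Omega(\lambda^3)]$, using $\lambda < \eps_0 (mn)^{1/6}$.

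Fixing such a doubly good $v$, I would set $O^0 = L^+_{p_1}(v) \setminus \{v\}$ and $I^0 = L^-_{p_2}(v) \setminus \{v\}$; each has size $\Theta(\lambda(mn)^{1/3})$ and is individually strictly distinct by construction of the trimmed graphs. The main obstacle---and the hardest part of the argument---is to prune $O^0, I^0$ to disjoint, jointly strictly distinct sets $O_v, I_v$ that still satisfy the connectivity-through-themselves property defining an hourglass. Independence of the two sprinklings gives that both $\mb{E}|O^0 \cap I^0|$ and $\mb{E}|\{w \in O^0 : \bar w \in I^0\}|$ are of order $O(\lambda^2 (mn)^{2/3}/(m+n)) = O(\lambda^2 (mn)^{1/6})$, which is $o(\lambda(mn)^{1/3})$ since $\lambda = o((mn)^{1/6})$. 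Because in the scaling window $L^+_{p_1}(v)$ and $L^-_{p_2}(v)$ are essentially tree-like (the excess of the underlying bipartite connected subgraph is $O(1)$ in expectation, by the enumerative bounds \cref{prop:upper-bound-on-bipartite-connected-graphs} together with \cref{lem:Qnmp-expanded-formula}), removing this $o(\lambda(mn)^{1/3})$-sized set of bad vertices disconnects only a subdominant number of descendants from $v$, leaving sets $O_v, I_v$ of size $\Theta(\lambda(mn)^{1/3})$ witnessing the hourglass.
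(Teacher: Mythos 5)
Your proposal has a fatal flaw at the very first step. You cannot decompose the near-critical formula into two independent copies that are both near-critical: with $p_1=p_2=1-\sqrt{1-p}\approx p/2$, each round has edge probability about $\tfrac{1}{4\sqrt{mn}}$, i.e.\ a constant factor \emph{below} the critical density $\tfrac{1}{2\sqrt{mn}}$, so the corresponding $\eps$-parameter is about $-1/2$, not $\Theta(\lambda(mn)^{-1/6})$. In such a strictly subcritical round the trimmed out-graph of a literal has exponential tails with constant rate, so the probability that $|L^+_{p_1}(v)|=\Theta(\lambda(mn)^{1/3})$ is $\exp\{-\Omega(\lambda(mn)^{1/3})\}$, not $\Theta(\eps)$; there are no ``doubly good'' centers, and the expected-count and concentration computations that follow collapse. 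Nor can the split be repaired: any two-round exposure with $p_1+p_2-p_1p_2=p$ and both rounds at or above criticality would require $p_1+p_2\gtrsim 2p_c>p$, which is impossible. This is exactly why the paper (following \cite{bollobas2001scaling}) keeps essentially all of the randomness in one round at a slightly \emph{subcritical} parameter $q=(1+\tfrac12\lambda(mn)^{-1/6})/(2\sqrt{mn})$ (or with $\lambda$ replaced by $-|\lambda|$), uses \cref{lem:hourglass-subcritical} to find $\Theta(\lambda^3)$ disjoint balanced hourglasses each of size $\lambda^{-2}(mn)^{1/3}$, and then sprinkles only the tiny increment $\Theta(\lambda(mn)^{-2/3})$ of extra edge probability to connect out-portions to in-portions; the resulting random digraph on the hourglasses has constant expected degree, and a lemma of \cite{bollobas2001scaling} produces one hourglass whose in- and out-reachable set covers $\Theta(\lambda^3)$ of the small hourglasses, yielding in- and out-portions of size $\Theta(\lambda^3\cdot\lambda^{-2}(mn)^{1/3})=\Theta(\lambda(mn)^{1/3})$, with the $\exp(-\Omega(\lambda^3))$ failure probability inherited from the subcritical count.

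Two secondary gaps, for completeness: the matching \emph{lower} bound on the probability that a literal has a trimmed out-graph of size $\Theta(\lambda(mn)^{1/3})$ is not available in the paper (\cref{lem:Qnp-bounds} is only an upper bound), so you would have to prove a bipartite survival estimate from scratch; and your final pruning step is not justified as stated, since deleting even $o(\lambda(mn)^{1/3})$ vertices from a tree-like out-graph can disconnect a constant fraction (or all) of it if the deleted vertices sit near the root, so ``removing bad vertices only loses a subdominant number of descendants'' needs an actual argument rather than an appeal to tree-likeness.
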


\begin{proof}
    We will follow the previous proof in the subcritical regime and then show that, after increasing $p$, the hourglasses will percolate into one giant hourglass whp.

    Let $q = (1 - \lambda (mn)^{-1/6}) / (2 \sqrt{mn})$.  By the previous section, with probability at least $1-\exp(-\Theta(\lambda^3)) $, there will be $\Theta(\lambda^3)$ balanced hourglasses in $G_{F_q}$, each with an in-portion and out-portion of size at least $\lambda^{-2}(mn)^{1/3}$.  Create a directed graph $\Gamma$ with $V(\Gamma)$ the collection of $\Theta(\lambda^3)$ hourglasses and $E(\Gamma) = \{ ((v,I_v,O_v), (u,I_u,O_u) ) : v \squig{F_p} u \}$.

    We claim $E(\Gamma)$ stochastically dominates independent random choice with probability $\frac1{75} M/\lambda^3$.  Indeed, for each pair $(v,I_v,O_v)$ and $(u,I_u,O_u)$, if there is an edge introduced between $O_v$ and $I_u$, then $v \squig{} u$.  Thus
    \begin{multline*} \mb{P}[v \squig{F_p} u] \geq \mb{P}[e(O_v,I_u) > 0] = 1 - (1-(p-q))^{|I_u \cap X| |O_v \cap Y| + |I_u \cap Y| |O_v \cap X|} \\ \geq 1 - \exp\paren{-M \lambda (mn)^{-2/3} \paren{\frac{\lambda^{-2} (mn)^{1/3}}{5}}^2 } \geq \frac1{75} M \lambda^{-3}. \end{multline*}

    Further, since all sets $\{I_u\}$ and $\{O_u\}$ are mutually disjoint across all $u$, these are independent events.  Thus $\Gamma$ has $\Theta(\lambda^3)$ vertices and edge probability at least $\frac13 M \lambda^{-3}$, and so by choosing $M$ sufficiently large based on the implicit constants, we can ensure $\Gamma$ has expected average degree $6$.  By \cite[Lemma 9.1]{bollobas2001scaling}, this means $\Gamma$ has a vertex $u$ with in-neighborhood and out-neighborhood both of constant size, i.e.~$\Theta(\lambda^3)$ vertices.  Since each of these vertices corresponds to an hourglass with in- and out-portions of size $(mn)^{1/3} \lambda^{-2}$, this means that the central vertex of $u$ has both in- and out-portions of size $\Theta((mn)^{1/3} \lambda)$ in the original graph $G_{F_p}$.
\end{proof}

\begin{proof}[Proof of \cref{thm:upper-bound}]
    The proof here stems from the fact that given an hourglass $(v,I_v,O_v)$, for any $u,u' \in I_v$, $w,w' \in O_v$, adding the clauses $u \vee u'$ and $\bar w \vee \bar w'$ guarantees unsatisfiability.

    \subsection*{Subcritical regime (\texorpdfstring{$\lambda < 0$}{negative lambda})}
    Let $q = (1 + 2\lambda(mn)^{-1/6})/(2(mn)^{1/2})$.  By \cref{lem:hourglass-subcritical}, in $G_{F_q}$, there are $\Theta(|\lambda|^3)$ hourglasses each of size $\frac14 (mn)^{1/3}|\lambda|^{-2}$.  We now increase from $q$ to $p$ by adding each missing edge with probability $(q-p)/(1-p) = \Theta(|\lambda| (mn)^{-2/3})$.  For any fixed hourglass found by \cref{lem:hourglass-subcritical}, we have added both $u \vee u'$ and $\bar w \vee \bar w'$ for $u,u' \in I_v$, $w,w' \in O_v$ with probability $\Theta(|\lambda|^{-6})$.  By inclusion-exclusion, we have created a contradictory cycle via one of the hourglasses with probability at least $\Theta(|\lambda|^{-3})$, and so we are satisfiable with probability only
    \[ 1 - \Omega(|\lambda|^{-3}) = \exp(-\Omega(|\lambda|^{-3})). \]

    \subsection*{Supercritical regime (\texorpdfstring{$\lambda > 0$}{positive lambda})}
    Let $q = (1 + \frac12\lambda(mn)^{-1/6})/(2(mn)^{1/2})$.  By \cref{lem:hourglass-supercritical}, in $G_{F_q}$, we have a giant hourglass except with probability $\exp(-\Theta(\lambda^3))$.  We now increase from $q$ to $p$ by adding each missing edge with probability $(q-p)/(1-p) = \Theta(\lambda (mn)^{-2/3})$.  The probability we do not add $u \vee u'$ for any $u,u' \in I_v$ is at most
    \[ \paren{1 - \Theta(\lambda (mn)^{-2/3})}^{\Theta(\lambda (mn)^{1/3})^2} = \exp(-\Omega(\lambda^3)), \]
    and similarly we add $\bar v \vee \bar v'$ for some $v,v' \in O_v$ except with probability $\exp(-\Omega(\lambda^3))$, completing the proof.
\end{proof}

\section*{Acknowledgments}

CM supported in part by a student fellowship from the IDEAL Institute for Data, Econometrics, Algorithms, and Learning and by a Simons Dissertation Fellowship.  WP supported in part by NSF grant DMS-2348743.  

\providecommand{\bysame}{\leavevmode\hbox to3em{\hrulefill}\thinspace}
\providecommand{\MR}{\relax\ifhmode\unskip\space\fi MR }
\providecommand{\MRhref}[2]{%
  \href{http://www.ams.org/mathscinet-getitem?mr=#1}{#2}
}
\providecommand{\href}[2]{#2}

\appendix

\section{Proof of \texorpdfstring{\cref{lem:expander}}{expander lemma}}\label{app:expander}

\begin{proof}
    By \cref{lem:strongly-balanced}, we have $|A|,|B| \geq n/3$.  For any $v \in A$, recall $d_\mbf{G}(v,B) = |E_\mathrm{cr} \cap (\{v\} \times B)|$ with $E_\mathrm{cr}$ a sample from the hard-core model.  
    Thus by \cref{lem:hardcore-quasirandom},
    \[ \mb{P} \left[ d_\mbf{G}(v,B) \leq \frac{\lambda n}{30}\right] \leq \Pr\left[d_\mbf{G}(v,B) \leq \frac{\lambda |B|}{10}\right] \leq e^{-\lambda |B|/8} \leq e^{-\lambda n/24} \]
    and
    \[ \mb{P} \left[ d_\mbf{G}(v,B) \geq 5\lambda n \right] \leq \Pr\left[d_\mbf{G}(v,B) \geq 5 \lambda |B| \right] \leq e^{-\lambda|B|} \leq e^{-\lambda n/3}. \]
    
    By a union bound and $n/3 \leq |B| \leq n$,
    \[ \Pr\left[\frac{\lambda n}{30} \leq d_\mbf{G}(v,B) \leq 5\lambda n  \text{ for all }v \in A\right] \geq 1 - n(e^{-\lambda n/24}+e^{-2\lambda n/3}) \geq 1 - e^{-\lambda n/25}. \]

    Fix $X$ and $Y$ as in the statement of \cref{lem:expander}.  We will prove a stronger statement.  As $d_\mbf{G}(X,Y) = |E_\mathrm{cr} \cap X \times Y|$, by \cref{lem:hardcore-quasirandom}, we get
    \[ \Pr\left[d_\mbf{G}(X,Y) \leq \frac{\lambda |X| |Y|}{10}\right] \leq e^{-\lambda |X||Y|/8} \leq e^{-\lambda^2n^2/7200}. \]

    By a union bound over all possible choices of $X$ and $Y$, we get
    \[ \Pr\left[d_\mbf{G}(X,Y) \geq \frac{\lambda |X||Y|}{10} \text{ for all such }X,Y\right] \geq 1 - 2^n e^{-\lambda^2n^2/7200} = 1 - e^{-\Omega(n \log n)}, \]
    where the last equality uses $\lambda = \Theta((\log n/n)^{1/2})$.  As $\lambda |X||Y|/10 = \Omega(1)$, this gives the desired result on $d_\mbf{G}(X,Y)$.
\end{proof}

\section{Approximately counting connected bipartite graphs}\label{sec:proof-of-lower-bound-bipartite}

Here we show a matching lower bound for $C(k,\ell,s)$ to the upper bound of Do, Erde, Kang, and Missethan~\cite[Theorem 3.5]{do2021component}.  The two bounds are off by an exponential factor in $s$, but this is sufficient for our purposes.

\begin{proof}[Proof of \cref{prop:lower-bound-on-bipartite-connected-graphs}]
    Let $m = s-1$ so that we are counting graphs with $k+\ell+m$ edges.  We will use the so-called kernel-core method (see \cite{bollobas1984evolutionsparse,luczak1990component,luczak1990number}).  Given a graph $G$, the \textit{core}, or 2-core, refers to the unique maximal subgraph of minimum degree 2.  This can be attained by recursively removing isolated vertices and leaves.  The \textit{kernel} is the multigraph obtained by recursively removing vertices of degree 2 from the core and connecting their two neighbors with an edge.  The vertices of the kernel are exactly the vertices of the core of degree $> 2$.

    We will count only graphs whose kernel is a $2m$-vertex 3-regular bipartite graph.  This core will have $3m$ edges and we will connect the remaining $k+\ell-2m$ vertices as a forest (one edge each) guaranteeing $k+\ell+m$ edges.  We first choose the vertices of the kernel, for which there are
    \[ \binom km \binom \ell m \]
    choices.  Next, we choose the 3-regular bipartite graph on the kernel.  Using the configuration model (see \cite{mckay1984asymptotics}), the number of labelled 3-regular bipartite graphs with $m$ vertices per side is
    \[ \exp[-2 + o(1)] {\frac{(3m)!}{(3!)^{2m}}}. \]

    Now, we must choose the remaining vertices in the core.  Let $u$ be the number of vertices on each side in the core of $G$ (including the $m$ vertices in the kernel).  Thus we have
    \[ \binom{k-m}{u-m} \binom{\ell-m}{u-m} \]
    choices to finish the core.

    We now must enumerate ways to add these remaining core vertices while respecting the kernel and the bipartition.  First, order the $u-m$ new vertices on each side.  This pairs up vertices on opposite sides of the bipartition, so that they can be added together, turning paths of length 1 into paths of length 3 and so respecting the bipartition.  Finally, we must choose how much we wish to extend each edge from the kernel.  This can be counted via ``stars and bars,'' giving
    \[ ((u-m)!)^2 \binom{u+2m-1}{3m-1} \]
    choices.

    Finally, we must assign a forest on the non-core vertices.  We will use the following result of Moon~\cite{moon1967enumerating}.

    \begin{lemma}[\cite{moon1967enumerating}]
        Given $k,\ell,a,b \in \mb{N}$ satisfying $a \leq k$ and $b \leq \ell$, let $F(k,\ell,a,b)$ denote the number of bipartite forests with partition classes $I = \{x_1,\ldots,x_k\}$ and $J = \{y_1,\ldots,y_\ell\}$ with $a+b$ components where the vertices $x_1,\ldots,x_a,y_1,\ldots,y_b$ belong to distinct components.  Then
        \[ F(k,\ell,a,b) = k^{\ell-b-1} \ell^{k-a-1} (a\ell + bk - ab). \]
    \end{lemma}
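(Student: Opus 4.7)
The plan is to prove Moon's formula by appealing to the All Minors Matrix-Tree Theorem (Chaiken), sometimes called the Matrix-Forest Theorem. Setting $R := \{x_1,\ldots,x_a\} \cup \{y_1,\ldots,y_b\}$, this theorem identifies $F(k,\ell,a,b)$ with $\det L_R$, where $L_R$ is the principal submatrix of the Laplacian of $K_{k,\ell}$ obtained by deleting the rows and columns indexed by $R$. This reduces the enumerative problem to a single linear-algebra computation, which is tractable because $K_{k,\ell}$ is highly symmetric.

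The Laplacian has the block form
\[ L(K_{k,\ell}) = \begin{pmatrix} \ell\, I_k & -J \\ -J^\top & k\, I_\ell \end{pmatrix}, \]
where $J$ is the all-ones $k \times \ell$ matrix. Deleting the rows and columns indexed by $R$ preserves this block structure with $(k,\ell)$ replaced by $(k-a,\ell-b)$. I would then apply the Schur complement identity with pivot block $\ell\, I_{k-a}$ to obtain
\[ \det L_R = \ell^{k-a}\,\det\!\Bigl(k\, I_{\ell-b} - \tfrac{1}{\ell}\, J^\top J\Bigr). \]
Since $J^\top J$ is $(k-a)$ times the $(\ell-b)\times(\ell-b)$ all-ones matrix, the inner matrix is a rank-one perturbation of $k\, I_{\ell-b}$, whose eigenvalues are $k$ with multiplicity $\ell-b-1$ together with a simple eigenvalue $k - (k-a)(\ell-b)/\ell$. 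Simplifying the exceptional eigenvalue as $(a\ell + bk - ab)/\ell$ and taking the product yields exactly $\ell^{k-a-1}\,k^{\ell-b-1}\,(a\ell + bk - ab)$.

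The step requiring the most care is verifying that the combinatorial object counted by $\det L_R$ matches $F(k,\ell,a,b)$. The All Minors Matrix-Tree Theorem gives that $\det L_R$ counts spanning forests of $K_{k,\ell}$ in which each connected component contains exactly one vertex of $R$. This is the same as Moon's condition: any such forest has precisely $|R| = a+b$ components and each distinguished vertex lies in a distinct component; conversely, any bipartite forest on $I \cup J$ with $a+b$ components in which $x_1,\ldots,x_a,y_1,\ldots,y_b$ lie in pairwise distinct components automatically has exactly one distinguished vertex per component. With this identification the determinant calculation completes the proof, and no genuinely combinatorial obstacle arises beyond invoking the (standard) matrix-forest theorem.
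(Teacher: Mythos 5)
The paper does not supply its own proof of this lemma: it is cited directly from Moon's 1967 enumerative survey and used as a black box, so there is no internal argument to compare against. Your proof via the principal-minors (Chaiken) form of the Matrix--Tree Theorem is correct and is a genuinely different route from Moon's original combinatorial derivation. The combinatorial identification is right: with $R=\{x_1,\ldots,x_a,y_1,\ldots,y_b\}$, the determinant $\det L_R$ counts spanning forests in which every tree meets $R$ in exactly one vertex, and since $|R|=a+b$ this is the same condition as having $a+b$ components with the distinguished vertices pairwise separated. The linear algebra also checks out. With $J$ the $(k-a)\times(\ell-b)$ all-ones block one has $J^{\top}J=(k-a)\,\mathbf{1}$, so the Schur complement of the pivot $\ell I_{k-a}$ is $kI_{\ell-b}-\tfrac{k-a}{\ell}\mathbf{1}$, a rank-one perturbation of a scalar matrix whose eigenvalues are $k$ with multiplicity $\ell-b-1$ and
\[
k-\frac{(k-a)(\ell-b)}{\ell}=\frac{a\ell+bk-ab}{\ell}
\]
with multiplicity one; multiplying by $\ell^{\,k-a}$ recovers $\ell^{\,k-a-1}k^{\,\ell-b-1}(a\ell+bk-ab)$. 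Two minor points worth including in a clean write-up: when $a=k$ or $b=\ell$ the off-diagonal block is empty, so one should read off the remaining diagonal block directly rather than form a Schur complement, and the formula still reproduces that value after the apparent $\ell^{-1}$ or $k^{-1}$ cancels; and for $a=b=0$ one recovers the full singular Laplacian, consistent with the vanishing factor $a\ell+bk-ab$. As for what each approach buys: Moon's route is self-contained and elementary, fitting an enumerative survey, whereas yours is shorter once Chaiken's theorem is granted and exploits the $K_{k,\ell}$ symmetry to make the determinant trivial, at the cost of outsourcing the core combinatorics to the matrix-forest machinery.
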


    We will use this with $a=b=u$.  Putting this all together, we get
    \begin{align*}
        C(k,\ell,m) & \gtrsim \sum_u \binom km \binom \ell m \frac{(3m)!}{36^m} \binom{k-m}{u-m} \binom{\ell-m}{u-m} ((u-m)!)^2 \binom{u+2m-1}{3m-1} F(k,\ell,u,u) \\
        & = \frac{(3m)!}{(m!)^2 36^m} k^{\ell-1} \ell^{k-1} \sum_u \frac{(k)_u (\ell)_u}{k^u \ell^u} \binom{u+2m-1}{3m-1} \paren{u\ell + uk - u^2} \\
        & \gtrsim m^m c^m k^{\ell-1} \ell^{k-1} \sum_{u=\sqrt{(k+\ell)m}}^{2\sqrt{(k+\ell)m}} \exp\left[ -\frac{u^2}{(k+\ell)} \right] \paren{\frac{u+2m-1}{3m-1}}^{3m-1} {u(k+\ell)} \\
        & \gtrsim m^m c^m k^{\ell-1} \ell^{k-1} \sqrt{(k+\ell)m} \exp[-4m] \paren{\frac{k+\ell}{c'm}}^{(3m-1)/2} {(k+\ell)^{3/2} m^{1/2}} \\
        & \gtrsim k^{\ell-1} \ell^{k-1} (c'')^m \paren{\frac{(k+\ell)^3}{m}}^{(m+1)/2}.
    \end{align*}

    Recalling $s = m+1$ (and paying a factor of $1/e$), we have the result.
\end{proof}

\section{Proof of \texorpdfstring{\cref{claim:variable-partial-assignment}}{triangle claim}}\label{app:variable-partial-assignment}
This proof will follow from the machinery developed in \cref{sec:SAT-lower-bounds}.  We will use the language of that section; i.e.~$n$, $\lambda$, $p$, $X$, $Y$ refer to the quantities defined in \cref{sec:2-SAT-intro}.

The content of the claim is that if $z \in X \cup Y$ has a variable $v$ with $v \to z$ and $\bar v \to z$, then whp, $z \notin S(F)$.  

\begin{proof}[Proof of \cref{claim:variable-partial-assignment}]
Let
\[ Z = \{ z \in X \cup Y : \exists v \text{ s.t. } v \to z, \ \bar v \to z \}. \]

First, we compute that $\mb{E}[Z] = O(1)$.  Indeed, let $x \in X$.  Then by a union bound,
\[ \mb{P}[x \in Z] \leq m p^2 = \frac{(1+\eps)^2}{2n} \leq \frac2{3n}, \]
so $\mb{E}|Z \cap X| \leq 2/3$.  Similarly, $\mb{E}|Z \cap Y| \leq 2/3$, so $\mb{E}|Z| \leq 4/3$.  We will union bound over the elements of $Z$.

Let $x \in Z \cap X$.  Then there exists a variable $y \in Y$ with $y \to x, \bar y \to x$.  For $x$ to cause unsatisfiability, we must have $x \in S(F)$.  Consider $\widetilde{X} = X, \widetilde{Y} = Y \setminus \{y,\bar y\}$.  In this set, $x$ is generic.  Then
\begin{align*}
    \mb{P}[x \squig{X,Y} \bar x] & \leq \mb{P}[x \squig{X,Y \setminus \{y,\bar y\}} \bar x] + \mb{P}[L^+_{m,n,p}(x) \to y] + \mb{P}[L^+_{m,n,p}(x) \to \bar y] \\
    & \leq \mb{P}[x \squig{n,m-2} \bar x] + 2\sum_k \sum_\ell P_{n,m,p}(k,\ell) pk \\
    & \leq O(n^{-1/3} |\lambda|) + p \cdot O(n^{1/3}/|\lambda|)
    = O(|\eps|).
\end{align*}

The second line uses a union bound and the fact that $x$ is generic in $X \cup (Y \setminus \{y,y'\})$.  The third uses \cref{prop:spine-expectation,lem:moments-of-Pnp}.
The same argument with $m$ and $n$ exchanged will work for the case $x \in Y$.

Thus by a union bound,
\[ \mb{P}[Z \cap S(F) \ne \emptyset] \leq \mb{E}|Z \cap S(F)| = \mb{E}|Z| \mb{P}[z \in S(F) \,|\, z \in Z] = O(|\eps|). \]

$Z \cap S(F) = \emptyset$ whp and so the existence of $Z$ does not affect satisfiability.
\end{proof}

\section{Proof of \texorpdfstring{\cref{lem:Qnp-bounds}}{branching process lemma}}\label{app:branching-process}

\begin{proof}
We begin with the first inequality.  Recall as mentioned in the proof of \cref{lem:Qnp-minus-Pnp} that $Q_{n,m,p}(k,\ell)$ is exactly the distribution the cluster size in $G(n,m,\wt p)$ where $\wt p := 2p-p^2$.  The cluster size distribution in $G(n,m,\wt p)$ is stochastically dominated by a $\on{Binom}(\max\{m,n\},\wt p)$ birth process, which is in turn stochastically dominated by a a Poisson birth process with parameter
\[ \hat{\kappa} := \max\{m,n\} \log(1/(1-(2p-p^2))) = 2(mn)^{1/2} p(1+O(n^{-1/3})) = (1+\eps)(1+O(n^{-1/3})). \]

Thus it suffices to analyze this Poisson birth process instead.  By \cite[eq.~(B.1)--(B.8)]{bollobas2001scaling}, this gives
\[ \sum_{k+\ell \geq \lambda n^{2/3}} Q_{n,m,p}(k,\ell) \leq (2+o_{\lambda,\eps}(1))\eps. \]

We now show the second inequality, studying cluster size with the method of Karp~\cite{karp1990transitive}.

Let $N_0 = n-1$ and $N_t \sim \on{Binom}(N_{t-1},1-\wt{p})$.  Let $M_0 = m$ and $M_t \sim \on{Binom}(M_{t-1},1-\wt{p})$.  Let $X_{r,s} = n - N_r - s$ and $Y_{r,s} = m - M_s - r$.  Finally, let $(r^*,s^*)$ be the $r$ and $s$ minimizing $r+s$ such that $X_{r,s} = Y_{r,s} = 0$. 

\begin{claim}\label{app:claim:cluster-size}
    Let $A,B$ be sets of size $n,m$ respectively and let $v \in A$.
    The law of $(r^*,s^*)$ is exactly the law of $(|C(v) \cap A|, |C(v) \cap B|)$ in $G(A,B,\wt{p})$.
\end{claim}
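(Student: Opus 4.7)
The plan is to couple the Markov chains $N_\cdot$ and $M_\cdot$ with a Karp-style exploration of $C(v)$ in $G(A,B,\wt p)$ and identify the exploration's stopping rule with the minimizer defining $(r^*, s^*)$. Classify vertices as explored, active, or neutral, starting with only $v$ active. At each step pick any active vertex $u$: if $u \in A$, reveal independent $\on{Bernoulli}(\wt p)$ edges from $u$ to every currently neutral $B$-vertex, promoting each hit neutral to active; symmetrically if $u \in B$. Mark $u$ as explored and continue until no active vertex remains, at which point the explored set equals $C(v)$.

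The distributional match uses the principle of deferred decisions. Since the edges of $G(A,B,\wt p)$ are jointly independent, each exploration step uses fresh, independent Bernoullis. In particular, each $B$-exploration leaves a $\on{Binom}(\cdot, 1-\wt p)$ count of neutral $A$-vertices, so the sequence of neutral-$A$ counts after successive $B$-explorations has the law of $N_\cdot$ with $N_0 = n-1$ (since $v$ is initially active); symmetrically for $M_\cdot$ with $M_0 = m$. These two chains are independent because $B$-explorations only probe $A$-side potential edges while $A$-explorations only probe $B$-side ones, so they are driven by disjoint collections of i.i.d.\ edge indicators.

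A short bookkeeping calculation shows that after $r$ $B$-explorations and $s$ $A$-explorations, $|\text{active } A| = n - N_r - s = X_{r,s}$ and $|\text{active } B| = m - M_s - r = Y_{r,s}$. Hence the halting condition is precisely $X_{r,s} = Y_{r,s} = 0$, and the terminal $(r,s)$ equals the pair of sizes of $C(v)$ on the two sides (with the labeling indicated by the statement). To identify this terminal point with $(r^*, s^*)$, note that $(r,s)$ moves by single-coordinate increments along the exploration and we never halt while either $X$ or $Y$ is positive, so no trajectory point strictly before halting is a candidate minimizer of $r+s$; an additional short check, monitoring $X_{r,s}$ and $Y_{r,s}$ as functions on $\mb Z_{\geq 0}^2$, rules out off-trajectory lattice points with smaller $r+s$.

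The main obstacle is the careful verification that the Bernoulli trials used to update $N_\cdot$ and those used to update $M_\cdot$ really are drawn from disjoint, independent collections of edge indicators along an arbitrary exploration order. This is where bipartiteness is essential: each edge $\{a,b\}$ with $a \in A,\ b \in B$ is probed at most once (either when $a$ is explored and $b$ is still neutral, or when $b$ is explored and $a$ is still neutral), so the split into independent $N$-updates and $M$-updates goes through. This is the simplification over the monopartite case of \cite{karp1990transitive} and is what makes the two chains in the statement independent.
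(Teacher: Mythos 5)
Your proof is correct and follows essentially the same argument as the paper: set up a Karp-style exploration of $C(v)$ in $G(A,B,\widetilde p)$, use deferred decisions to identify the evolution of the neutral counts on each side with the binomial chains $N_\cdot$ and $M_\cdot$, and verify via the explored/seen(active)/unseen accounting that the active counts are exactly $X_{r,s}$ and $Y_{r,s}$, so the exploration halts precisely when both vanish. You spell out the independence of the $N$ and $M$ chains via the bipartite ``each edge probed at most once'' observation, and you flag (without fully carrying out, just as the paper leaves implicit) the final step that the halting point of any monotone exploration trajectory is the global minimizer of $r+s$ among lattice zeroes of $(X,Y)$; neither of these deviates in substance from the paper's proof.
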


\begin{proof}
    At step $(r,s)$, we have viewed the (out)neighborhoods of $s$ vertices on the LHS and $r$ vertices on the RHS.  There are $N_r$ RHS vertices and $M_s$ LHS vertices that have gone unviewed, and so $n-N_r$ and $m-M_s$ vertices in our component thus far.  Thus we need $n-N_r \geq s$ and $m-M_s \geq r$ so that we have only explored vertices in our component.

    Equivalently, we divide each side into 3 components: explored, seen, unseen.  The sizes are then
    \begin{center}
    \begin{tabular}{c|cc}
         & LHS & RHS \\ \hline 
        Explored & $s$ & $r$ \\
        Seen & $n - N_r - s$ & $m - M_s - r$ \\
        Unseen & $N_r$ & $M_s$
    \end{tabular}
    \end{center}

    This elucidates the definitions $X_{r,s} := n - N_r - s$ and $Y_{r,s} := m - M_s - r$.  We may keep exploring as long as there is a ``seen'' but not ``explored'' vertex.
\end{proof}

We will also need the following claim.

\begin{claim}\label{app:claim:XY-boundary}
    Suppose there exist $r,s \in (n^{2/3}/\lambda,n^{2/3}\lambda)$ with $X_{r,s}, Y_{r,s} \leq 0$.  Then one of the following is true:
    \begin{enumerate}
        \item there exist $r,s \in (n^{2/3}/\lambda,n^{2/3}\lambda)$ with $X_{r,s} = Y_{r,s} = 0$,
        \item $X_{n^{2/3}/\lambda,n^{2/3}/\lambda} \leq 0$,
        \item $Y_{n^{2/3}/\lambda,n^{2/3}/\lambda} \leq 0$.
    \end{enumerate}
\end{claim}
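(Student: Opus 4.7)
The plan is to recast the claim as a discrete fixed-point problem for the zero-level curves of $X$ and $Y$ and then iterate to a joint zero.

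First I would observe that since $X_{r,s} = n - N_r - s$ decreases by exactly $1$ as $s$ increments, for each $r$ there is a unique integer $s_X(r) := n - N_r$ with $X_{r,s_X(r)} = 0$, and $X_{r,s} \leq 0$ iff $s \geq s_X(r)$. Symmetrically, $r_Y(s) := m - M_s$ is the unique integer with $Y_{r_Y(s),s} = 0$, and $Y_{r,s} \leq 0$ iff $r \geq r_Y(s)$. Because $N_r$ and $M_s$ are weakly decreasing, both $s_X$ and $r_Y$ are weakly increasing. In this language, the negations of (2) and (3) read $s_0 < s_X(r_0)$ and $r_0 < r_Y(s_0)$ where $r_0 = s_0 := n^{2/3}/\lambda$, while the hypothesis supplies $(r^\dagger, s^\dagger)$ in the open square $(r_0, n^{2/3}\lambda)^2$ with $s^\dagger \geq s_X(r^\dagger)$ and $r^\dagger \geq r_Y(s^\dagger)$.

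Assuming that (2) and (3) both fail, I would run the monotone iteration $r_1 := r_Y(s_0)$, $s_1 := s_X(r_1)$, and $r_{k+1} := r_Y(s_k)$, $s_{k+1} := s_X(r_{k+1})$ for $k \geq 1$. Monotonicity of $s_X$ and $r_Y$ immediately gives $r_k \leq r_{k+1}$ and $s_k \leq s_{k+1}$. The failure of (3) yields $r_1 > r_0$, and hence $s_1 \geq s_X(r_0) > s_0$ by monotonicity and the failure of (2). A parallel induction, anchored on $r_1 = r_Y(s_0) \leq r_Y(s^\dagger) \leq r^\dagger$ and $s_1 = s_X(r_1) \leq s_X(r^\dagger) \leq s^\dagger$, keeps $r_k \leq r^\dagger$ and $s_k \leq s^\dagger$ for every $k$. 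Being integer-valued, weakly increasing, and bounded above, the sequences $(r_k), (s_k)$ stabilize at some $(r^*, s^*) \in (r_0, r^\dagger] \times (s_0, s^\dagger]$. At this stable point, $r^* = r_Y(s^*)$ and $s^* = s_X(r^*)$, which is exactly $X_{r^*, s^*} = Y_{r^*, s^*} = 0$, and since $r^\dagger, s^\dagger < n^{2/3}\lambda$ the pair $(r^*, s^*)$ lies in the required open square, so (1) holds.

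The argument is essentially syntactic once the zero-curve reformulation is made, and I do not anticipate a hard step. The main bookkeeping point is to verify that the iteration cannot leave $(r_0, n^{2/3}\lambda) \times (s_0, n^{2/3}\lambda)$, which rests entirely on the monotonicity of $s_X$ and $r_Y$ together with the boundary inequalities supplied by the hypothesized point $(r^\dagger, s^\dagger)$.
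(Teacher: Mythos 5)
Your proof is correct, and it takes a somewhat different route from the paper's. The paper argues by extremality: it takes $(r,s)$ minimizing $r+s$ among all pairs with $X_{r,s},Y_{r,s}\le 0$ and $r,s\ge n^{2/3}/\lambda$; if this minimizer is not a joint zero, say $X_{r,s}<0$, then decreasing $s$ by one keeps both quantities nonpositive (since $X$ changes by exactly one unit in $s$ and $Y$ only decreases as $s$ decreases), contradicting minimality unless $s$ already sits on the boundary $n^{2/3}/\lambda$, and then monotonicity of $X$ in $r$ transfers the inequality to the corner, giving case (2) (symmetrically (3)). You instead encode $\{X\le 0\}$ and $\{Y\le 0\}$ as the regions on or above the weakly increasing zero-curves $s_X(r)=n-N_r$ and $r_Y(s)=m-M_s$ and run a monotone fixed-point iteration started at the corner and bounded above by the hypothesized point $(r^\dagger,s^\dagger)$; since the iterates are integer-valued, nondecreasing, and bounded, they stabilize, and the stable point is simultaneously on both zero-curves, hence a joint zero lying strictly inside the square. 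Both arguments use exactly the same two structural facts (unit decrements in one coordinate, weak monotonicity in the other), but yours is constructive — it exhibits the joint zero and shows it lies weakly below $(r^\dagger,s^\dagger)$ — at the cost of the anchoring and induction bookkeeping ($r_1>r_0$, $s_1>s_0$, and $r_k\le r^\dagger$, $s_k\le s^\dagger$), which is precisely where the negations of (2) and (3) and the hypothesized point enter; the paper's minimality contradiction is shorter but nonconstructive. Your verification of the anchoring and of the upper bounds is complete, so there is no gap.
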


\begin{proof}
    Choose the $r,s \in (n^{2/3}/\lambda,n^{2/3}\lambda)$ minimizing $r+s$ among all those $r,s \geq n^{2/3}/\lambda$ with $X_{r,s},Y_{r,s} \leq 0$.  If $X_{r,s} = Y_{r,s} = 0$, then we are done.

    Otherwise, suppose $X_{r,s} < 0$.  Then $X_{r,s-1} = X_{r,s} + 1 \leq 0$ and $Y_{r,s-1} \leq Y_{r,s} \leq 0$, which would contradict that $r,s$ minimizes $r+s$ unless $s = n^{2/3}/\lambda$.  Thus $X_{r,n^{2/3}/\lambda} \leq 0$.  However, $X_{r,s}$ is monotonically increasing in $r$ and so $X_{n^{2/3}/\lambda,n^{2/3}/\lambda} \leq X_{r,n^{2/3}/\lambda} \leq 0$.

    Similarly, if $Y_{r,s} < 0$, then $r = n^{2/3}/\lambda$ and so by monotonicity $Y_{n^{2/3}/\lambda,n^{2/3}/\lambda} \leq 0$.
\end{proof}

We can now evaluate
\begin{align*}
    & \qquad \mb{P}[n^{2/3}/\lambda \leq r^*,s^* \leq n^{2/3}\lambda] \\ & \leq \mb{P}[n^{2/3}/\lambda \leq r^*,s^*] \mb{P}[\exists r,s \in (n^{2/3}/\lambda,n^{2/3}\lambda) : X_{r,s}, Y_{r,s} \leq 0] \\
    & = O(\eps) \paren{ \mb{P}[\exists r,s \in (n^{2/3}/\lambda,n^{2/3}\lambda) : X_{r,s} = Y_{r,s} = 0] + \mb{P}[X_{n^{2/3}/\lambda,n^{2/3}/\lambda} \leq 0] + \mb{P}[Y_{n^{2/3}/\lambda,n^{2/3}/\lambda} \leq 0] } \\
    & = O(\lambda n^{-1/3}) \paren{ \mb{P}[\exists r,s \in (n^{2/3}/\lambda,n^{2/3}\lambda) : X_{r,s} = Y_{r,s} = 0] + e^{-\Omega(\lambda)} \} }. \label{app:eq:master} \stepcounter{equation} \tag{\theequation}
\end{align*}

The first inequality is FKG.  The second follows from the first inequality in \cref{lem:Qnp-bounds} (which we have already proven) and a union bound via \cref{app:claim:XY-boundary}.  The third is a Chernoff bound; this is also shown in \cite[(B.23)]{bollobas2001scaling}, as when $r=s$ our random variable has identical distribution to theirs.  We now must show this last probability is exponentially small in $\lambda$.

Let $A_{r,s}$ denote the event $\{ X_{r,s} = Y_{r,s} = 0\}$ and $B_{r,s}$ that $A_{r,s}$ holds and there is no $n^{2/3/\lambda} \leq r' \leq r$ and $n^{2/3}/\lambda \leq s' \leq s$ with $A_{r',s'}$ holding unless $r'=r$ and $s'=s$.  Let
\[ S = \sum_{n^{2/3}/\lambda \leq r+s \leq \lambda n^{2/3} + n^{2/3}/\lambda^2} \mathbbm{1}_{A_{r,s}} \geq \sum_{n^{2/3}/\lambda \leq r+s \leq \lambda n^{2/3}} \mathbbm{1}_{B_{r,s}} \sum_{0 \leq \Delta_r, \Delta_s \leq n^{2/3}/\lambda^2} \mathbbm{1}_{A_{r + \Delta_r, s + \Delta_s}} \]
so that
\begin{align*}
    \mb{E}[S] & \geq \sum_{n^{2/3}/\lambda \leq r+s \leq \lambda n^{2/3}} \mb{P}[B_{r,s}] \mb{E}\left[ \sum_{0 \leq \Delta_r, \Delta_s \leq n^{2/3} / \lambda^2} \mathbbm{1}_{A_{r+\Delta_r,s+\Delta_s}} \,|\, B_{r,s} \right] \\
    & \geq \sum_{n^{2/3}/\lambda \leq r+s \leq \lambda n^{2/3}} \mb{P}[B_{r,s}] \min_{r,s} \sum_{0 \leq \Delta_r,\Delta_s \leq n^{2/3}/\lambda^2} \mb{P}[A_{r+\Delta_r, s+\Delta_s} \,|\, A_{r,s}] \\
    & \geq \mb{P}[\exists r,s \in (n^{2/3}/\lambda,\lambda n^{2/3}) : X_{r,s} = Y_{r,s} = 0] \min_{r,s} \sum_{0 \leq \Delta_r,\Delta_s \leq n^{2/3}/\lambda^2} \mb{P}[A_{r+\Delta_r, s+\Delta_s} \,|\, A_{r,s}]. \stepcounter{equation} \tag{\theequation} \label{app:eq:ES-with-min}
\end{align*}

This used the fact that the events $\{A_{r,s}\}$ are Markovian.

We now control this final probability using a central limit theorem.  Fix $r,s,\Delta_r,\Delta_s$.  Define $Z_1,Z_2$ by
\[ X_{r+\Delta_r,s+\Delta_s} - X_{r,s} = Z_1 - \Delta_s, \qquad Y_{r+\Delta_r,s+\Delta_s} - Y_{r,s} \equiv Z_2 - \Delta_r. \]

Note that
\begin{align*}
Z_1 & = Z_1(r,\Delta_r) \sim \on{Binom}(N_r,1-(1-\wt p)^{\Delta_r}), \\
Z_2 & = Z_2(s,\Delta_s) \sim \on{Binom}(M_s,1-(1-\wt p)^{\Delta_s}).
\end{align*}

Thus conditioning on the values of $X_{r,s}$ and $Y_{r,s}$ and using independence of $Z_1$ and $Z_2$, we get
\begin{equation}\label{app:eq:XYZ}
\mb{P}[X_{r+\Delta_r,s+\Delta_s} = 0 = Y_{r+\Delta_r,s+\Delta_s} \,|\, X_{r,s}, Y_{r,s}] = \mb{P}[Z_1 = \Delta_s - X_{r,s}] \mb{P}[Z_2 = \Delta_r - Y_{r,s}]. \end{equation}

Note that we may rewrite the deviation from the expectation
\[ \Delta_s - X_{r,s} - \mb{E}Z_1 = -X_{r,s} + (\Delta_s - \Delta_r) + \Delta_r(1 - \wt pn + \wt ps + \wt pX_{r,s} + O(\Delta_r n \wt p^2)). \]

Let $\Delta = (\Delta_s + \Delta_r)/2$ and let $\delta = (\Delta_s - \Delta_r)/2$.  
Assuming $X_{r,s} = O(1)$, $s = O(\lambda n^{2/3})$, this becomes
\[ \Delta_s - X_{r,s} - \mb{E}Z_1 = -X_{r,s} + 2\delta + O(\Delta \eps), \]
and similarly
\[ \Delta_r - Y_{r,s} - \mb{E}Z_2 = -Y_{r,s} - 2\delta + O(\Delta \eps). \]

If $\delta = \Omega(\Delta^{7/12})$, then by a Chernoff bound, $\mb{P}[Z_1 - \mb{E}Z_1 \geq 2\delta] \leq \exp[-\Omega(\Delta^{1/6})]$.

Otherwise, we will control each term in \cref{app:eq:XYZ} with a normal approximation.  This is valid when
\[ \Delta_s - X_{r,s} - \mb{E}Z_1 \ll \on{Var}(Z_1)^{2/3}. \]

Thus since $\Delta_s - X_{r,s} - \mb{E}Z_1 = O(\Delta^{7/12}) = O(\on{Var}(Z_1)^{7/12})$, valid for $\Delta = O(1/\eps^2) = O(n^{2/3}/\lambda^2)$, we have
\[ \mb{P}[Z_1 = \Delta_s + X_{r,s}] \sim \frac{1}{\sqrt{2\pi\Delta}} \exp\left\{ - \frac{(2\delta -X_{r,s} + O(\Delta \eps))^2}{(2+o(1)) \Delta} \right\} \]
and
\[ \mb{P}[Z_2 = \Delta_r + Y_{r,s}] \sim \frac{1}{\sqrt{2\pi\Delta}} \exp\left\{ - \frac{(2\delta + Y_{r,s} + O(\Delta \eps))^2}{(2+o(1)) \Delta} \right\} \]
and so
\begin{align*}
    & \qquad \sum_{0 \leq \Delta_r \leq n^{2/3}/\lambda^2} \sum_{0 \leq \Delta_s \leq n^{2/3}/\lambda^2} \mb{P}[X_{r+\Delta_r,s+\Delta_s} = 0 \,|\, X_{r,s} = 0] \mb{P}[Y_{r+\Delta_r,s+\Delta_s} = 0 \,|\, Y_{r,s} = 0] \\
    & = \sum_{0 \leq \Delta \leq n^{2/3}/\lambda^2} \sum_{-\Delta^{7/12} \leq \delta \leq \Delta^{7/12}} \mb{P}[X_{r+\Delta-\delta,s+\Delta+\delta} = 0 \,|\, X_{r,s} = 0] \mb{P}[Y_{r+\Delta-\delta,s+\Delta+\delta} = 0 \,|\, Y_{r,s} = 0] \\
    & = \sum_{0 \leq \Delta \leq n^{2/3}/\lambda^2} \sum_{-\Delta^{7/12} \leq \delta \leq \Delta^{7/12}} \frac{O(1)}{\sqrt{\Delta^2-\delta^2}} \exp \left\{ - \frac{(2\delta + O(\Delta^{1/2}))^2}{\Delta - \delta} - \frac{(-2\delta + O(\Delta^{1/2}))^2}{\Delta + \delta} \right\} \\
    & = \sum_{0 \leq \Delta \leq n^{2/3}/\lambda^2} \frac{O(1)}{\Delta} \sum_{-\Delta^{7/12} \leq \delta \leq \Delta^{7/12}} \exp\left\{ - \frac{8\delta^2}{\Delta} + O(\delta \Delta^{-1/2}) + O(1) \right\} \\
    & = \sum_{0 \leq \Delta \leq n^{2/3}/\lambda^2} \Theta(1/\Delta) \cdot \Theta(\Delta^{1/2}) = \sum_{0 \leq \Delta \leq n^{2/3}/\lambda^2} \Theta(\Delta^{-1/2}) = \Theta(n^{1/3}/\lambda). \stepcounter{equation} \tag{\theequation} \label{eq:bound-on-Xrs-Yrs}
\end{align*}

Finally, we must lower bound $\mb{E}[S]$.  By much the same logic except plugging in $X_{0,0} = 1, Y_{0,0} = 0$, we get
\begin{align*}
    \mb{E}[S] & = \sum_{n^{2/3}/\lambda \leq r \leq \lambda n^{2/3} + n^{2/3}/\lambda^2} \ \sum_{n^{2/3}/\lambda \leq s \leq \lambda n^{2/3} + n^{2/3}/\lambda^2} \mb{P}[A_{r,s}] \\
    & \leq \sum_{n^{2/3}/\lambda \leq \Delta \leq \lambda n^{2/3} + n^{2/3}/\lambda^2} \ \sum_{-\Delta \leq \delta \leq \Delta} \mb{P}[A_{r,s}] \\
    & = \sum_{n^{2/3}/\lambda \leq \Delta \leq \lambda n^{2/3} + n^{2/3}/\lambda^2} \ \sum_{-\Delta^{7/12} \leq \delta \leq \Delta^{7/12}} \mb{P}[A_{r,s}] + \exp\{-\Delta^{\Omega(1)} \} \\
    & = e^{-n^{\Omega(1)}} + \sum_{n^{2/3}/\lambda \leq \Delta \leq \lambda n^{2/3} + n^{2/3}/\lambda^2} \ \sum_{-\Delta^{7/12} \leq \delta \leq \Delta^{7/12}} O(\Delta^{-1}) \\ & \qquad \qquad \exp\left\{ - \frac{(\Delta\wt\eps + 2\delta + 1 + O(\delta\eps))^2 + (\Delta\wt\eps - 2\delta + 1 + O(\delta\eps))^2}{(2+o(1))\Delta} \right\}  \\
    & = e^{-n^{\Omega(1)}} + \sum_{n^{2/3}/\lambda \leq \Delta \leq \lambda n^{2/3} + n^{2/3}/\lambda^2} \frac1\Delta \sum_{-\Delta^{7/12} \leq \delta \leq \Delta^{7/12}} \exp\left\{ -\Theta(\delta^2 \Delta^{-1} + \Delta\wt\eps^2) \right\} \\
    & = e^{-n^{\Omega(1)}} + e^{-\Omega(\lambda)} \sum_{n^{2/3}/\lambda \leq \Delta \leq \lambda n^{2/3} + n^{2/3}/\lambda^2} \frac1\Delta \sum_{-\Delta^{7/12} \leq \delta \leq \Delta^{7/12}} \exp\left\{ -\Theta(\delta^2 \Delta^{-1}) \right\} \\
    & = e^{-n^{\Omega(1)}} + e^{-\Omega(\lambda)} \sum_{n^{2/3}/\lambda \leq \Delta \leq \lambda n^{2/3} + n^{2/3}/\lambda^2} \Theta(\Delta^{-1/2}) = e^{-\Omega(\lambda)} n^{-1/3}. \stepcounter{equation} \tag{\theequation} \label{app:eq:ES}
\end{align*}

Thus combining several equations \cref{app:eq:ES-with-min}, \cref{eq:bound-on-Xrs-Yrs}, and \cref{app:eq:ES} we get
\[ \mb{P}[\exists r,s \in (n^{2/3}/\lambda,\lambda n^{2/3}) : X_{r,s} = Y_{r,s} = 0] = \exp(-\Omega(\lambda)). \]

Now plugging it all back into \cref{app:eq:master} and recalling \cref{app:claim:cluster-size},
\[ \mb{P}[n^{2/3}/\lambda \leq |C(v)| \leq \lambda n^{2/3}] = O(n^{-1/3} \exp(-\Omega(\lambda))). \]

This completes the proof of the second inequality.
\end{proof}

\end{document}